% !TEX root = main.tex
%\documentclass[a4paper,8pt,bibliography=totoc,listof=totoc,headinclude=true,cleardoublepage=empty,oneside]{scrartcl}
\documentclass[a4paper,10pt,bibliography=totoc,listof=totoc,headinclude=true,cleardoublepage=empty,oneside]{scrartcl}

\usepackage[cm]{fullpage}
\usepackage[english,ngerman]{babel}
\usepackage[utf8]{inputenc}
\usepackage{fullpage}
\usepackage{ifthen}
\usepackage{color}
\usepackage{amsmath,amsthm,amssymb,amsfonts,bbm}
\usepackage{mathtools}
\usepackage{graphicx, float}
\usepackage{psfrag}
\usepackage{enumitem}
\usepackage{bm}
\usepackage{mparhack}
\usepackage{wasysym}
\usepackage{pdflscape} %landscape mode
\usepackage{xr}
\usepackage{enumitem}

% links in pdf
\usepackage[unicode,colorlinks=true,pagebackref=false]{hyperref}

% Zum Druck verwende schwarze Links!
%\usepackage[unicode,colorlinks=true,linkcolor=black,citecolor=black,urlcolor=black,pagebackref=false]{hyperref}
% colorlinks=false umrahmt Links statt einzufaerben,

% document style
\KOMAoptions{footinclude=false} % Fusszeile wird nicht zu Satzspiegel gezaehlt
\KOMAoptions{headsepline=true}  % Trennlinie zwischen Kopfzeile und Text
\KOMAoptions{DIV=12}            % beeinflusst Satzspiegel
\KOMAoptions{BCOR=8mm}          % Bindekorrew upwktur

\recalctypearea % berechne Satzspiegel neu
\numberwithin{equation}{section}

%%%%%%%%%%%%%%%%%%%%%%%%%%%%%%%%%%%%%%%%%%%%%%%%%%%%%%%%%%%%%%%%%%%%%%%%%%%%%%%%%%%%%%%%%%%%%%%%%%%%%%%%%%%%%%
%%%%%%%%%%%%%%%%%%%%%%%%%%%%%%%%%%%%%%%%%%%%%%%%%%%%%%%%%%%%%%%%%%%%%%%%%%%%%%%%%%%%%%%%%%%%%%%%%%%%%%%%%%%%%%
%%%%%%%%%%%%%%%%%%%%%%%%%%%%%%%%%%%%%%%%%%%%%%%%%%%%%%%%%%%%%%%%%%%%%%%%%%%%%%%%%%%%%%%%%%%%%%%%%%%%%%%%%%%%%%
%%%%%%%%%%%%%%%%%%%%%%%%%%%%%%%%%%%%%%%%%%%%%%%%%%%%%%%%%%%%%%%%%%%%%%%%%%%%%%%%%%%%%%%%%%%%%%%%%%%%%%%%%%%%%%
\graphicspath{ {fig/} }

\newcommand\smax{\hat{s}}

\newcommand\norm[1]{\left\|#1\right\|}
\newcommand\trinorm[1]{|||#1|||}
\newcommand\triscalar[2]{\langle \langle #1, #2 \rangle \rangle}

\newcommand*\curl{\mathop{}\!\mathrm{curl}}
\newcommand*\scalarcurl{\mathop{}\!\mathrm{curl}}
\renewcommand*\div{\mathop{}\!\mathrm{div}}

\newcommand*\HOne{H^1(\Omega)}

\newcommand*\HZeroOne{H_0^1(\Omega)}

\newcommand*\HDiv{\pmb{H}(\div, \Omega)}

\newcommand*\HZeroDiv{\pmb{H}_0(\div, \Omega)}

\newcommand*\HScalarCurl{\pmb{H}(\scalarcurl,\Omega)}

\newcommand*\HZeroScalarCurl{\pmb{H}_0(\scalarcurl,\Omega)}

\newcommand*\HCurl{\pmb{H}(\curl, \Omega)}

\newcommand*\HZeroCurl{\pmb{H}_0(\curl, \Omega )}
\newcommand*\MyHCurl{\pmb{Y}}%{\pmb{H}(\Omega, \curl, \nabla \times \pmb{\mu} \cdot \pmb{n} \in L^2(\Gamma))}

\newcommand*\Sp{S_{p_s}(\mathcal{T}_h)}
\newcommand*\SpZero{S^0_{p_s}(\mathcal{T}_h)}

\newcommand*\Nedelec{\pmb{\mathrm{N}}_{p_v}(\mathcal{T}_h)}

\newcommand*\BDM{\pmb{\mathrm{BDM}}_{p_v}(\mathcal{T}_h)}

\newcommand*\RT{\pmb{\mathrm{RT}}_{p_v-1}(\mathcal{T}_h)}

\newcommand*\RTBDM{\pmb{\mathrm{V}}_{p_v}(\mathcal{T}_h)}
\newcommand*\RTBDMZero{\pmb{\mathrm{V}}^0_{p_v}(\mathcal{T}_h)}

\newcommand*\Ih{\pmb{I}_h}
\newcommand*\IhZero{\pmb{I}_h^0}
\newcommand*\IhGamma{\pmb{I}^\Gamma_h}

\newcommand\restr[2]{{% we make the whole thing an ordinary symbol
  \left.\kern-\nulldelimiterspace % automatically resize the bar with \right
  #1 % the function
  \vphantom{\big|} % pretend it's a little taller at normal size
  \right|_{#2} % this is the delimiter
  }}

\def\productspacerobin{\pmb{V} \times W}

\newcommand{\eremk}{\hbox{}\hfill\rule{0.8ex}{0.8ex}}

%%%%%%%%%%%%%%%%%%%%%%%%%%%%%%%%%%%%%%%%%%%%%%%%%%%%%%%%%%%%%%%%%%%%%%%%%%%%%%%%%%%%%%%%%%%%%%%%%%%%%%%%%%%%%%
%%%%%%%%%%%%%%%%%%%%%%%%%%%%%%%%%%%%%%%%%%%%%%%%%%%%%%%%%%%%%%%%%%%%%%%%%%%%%%%%%%%%%%%%%%%%%%%%%%%%%%%%%%%%%%
%%%%%%%%%%%%%%%%%%%%%%%%%%%%%%%%%%%%%%%%%%%%%%%%%%%%%%%%%%%%%%%%%%%%%%%%%%%%%%%%%%%%%%%%%%%%%%%%%%%%%%%%%%%%%%
%%%%%%%%%%%%%%%%%%%%%%%%%%%%%%%%%%%%%%%%%%%%%%%%%%%%%%%%%%%%%%%%%%%%%%%%%%%%%%%%%%%%%%%%%%%%%%%%%%%%%%%%%%%%%%

\newtheorem{theorem}{Theorem}[section]

\newtheorem{lemma}[theorem]{Lemma}
\newtheorem{corollary}[theorem]{Corollary}
\newtheorem{proposition}[theorem]{Proposition}

\theoremstyle{definition}

\newtheorem{example}[theorem]{Example}
\newtheorem{remark}[theorem]{Remark}

\newtheorem{assumption}[theorem]{Assumption}

%%%%%%%%%%%%%%%%%%%%%%%%%%%%%%%%%%%%%%%%%%%%%%%%%%%%%%%%%%%%%%%%%%%%%%%%%%%%%%%%%%%%%%%%%%%%%%%%%%%%%%%%%%%%%%
%%%%%%%%%%%%%%%%%%%%%%%%%%%%%%%%%%%%%%%%%%%%%%%%%%%%%%%%%%%%%%%%%%%%%%%%%%%%%%%%%%%%%%%%%%%%%%%%%%%%%%%%%%%%%%
%%%%%%%%%%%%%%%%%%%%%%%%%%%%%%%%%%%%%%%%%%%%%%%%%%%%%%%%%%%%%%%%%%%%%%%%%%%%%%%%%%%%%%%%%%%%%%%%%%%%%%%%%%%%%%
%%%%%%%%%%%%%%%%%%%%%%%%%%%%%%%%%%%%%%%%%%%%%%%%%%%%%%%%%%%%%%%%%%%%%%%%%%%%%%%%%%%%%%%%%%%%%%%%%%%%%%%%%%%%%%
\begin{document}

\title{Optimal convergence rates in $L^2$ for a first order system \\ least squares finite element method - \\ Part II: inhomogeneous Robin boundary conditions}
\author{
  M. Bernkopf
  \and
  J.M. Melenk
}

\maketitle
\selectlanguage{english}
\pagenumbering{arabic}

\begin{abstract}
We consider divergence-based high order discretizations 
of an $L^2$-based first order system least squares formulation of a second order 
elliptic equation with Robin boundary conditions. For smooth geometries, 
we show optimal convergence rates in the $L^2(\Omega)$ norm for the scalar variable. Convergence rates 
for the $L^2(\Omega)$-norm error of the gradient of the scalar variable as well as vectorial variable are also derived. 
Numerical examples illustrate the analysis. 
  %We extend our previous analysis in \cite{bernkopf-melenk22}
  %of a divergence based first order system least squares method
  %applied to a second order model problem
  %with inhomogeneous Robin boundary conditions.
  %The main contribution an optimal convergence result
  %in the $L^2(\Omega)$ norm for the scalar variable.
  %Numerical results confirm our findings.
\end{abstract}

% !TEX root = main.tex
\section{Introduction}\label{section:introduction}

Least Squares Methods and related techniques are an established tool for the numerical treatment of 
partial differential equations as witnessed by the monographs \cite{bochev-gunzburger09,jiang98}, which provide 
both mathematical analysis and examples of applications in fluid and solid mechanics. 
Least Squares Methods methods are successfully used in  computational fluid mechanics
(see, e.g., \cite{jiang98,cai-lee-wang04}), solid mechanics (see, e.g., \cite{cai-starke04,bertrand-cai-park19,fuehrer-heuer-niemi22}),  
electromagnetics (see, e.g., \cite{bramble-kolev-pasciak05a,bramble-kolev-pasciak05}), 
and eigenvalue problems (see, e.g., \cite{bramble-kolev-pasciak05,bertrand-boffi22, bertrand-boffi21}). 
Reasons for the popularity of these methods include their flexibility to deal with a variety
of equations and the ease of coupling different equations, the fact that they lead to symmetric
positive definite systems by construction, and that they naturally come with error estimators. 

For scalar second order problems, an important approach in Least Squares methodologies is to reformulate it 
as a first order system based on a scalar variable and a vectorial variable and to subsequently
minimize the residuum in the $L^2$-norm. This method, called 
First Order System Least Squares Method (FOSLS), is computationally attractive and leads 
to quasi-optimality in a residual norm, \cite{cai-lazarov-manteuffel-mccormick94,bochev-gunzburger09}. 
Obtaining optimal error estimates 
in norms other than the natural residual norm, say, $L^2$ for the scalar variable is the purpose of the present work. 
Here, optimality refers not only to the optimal achievable convergence rate under the assumption of \emph{sufficient} 
smoothness of the solution but relates to the fact that the regularity of both the scalar variable and 
vectorial variable are dictated by the regularity of the data. For example, for data $f \in L^2$, the vectorial variable 
(later denoted $\pmb{\varphi}$) is merely in $\HDiv$ so that one cannot expect for the convergence in the 
residual norm a rate. The tools to overcome this 
obstacle are duality arguments and approximation operators with suitable orthogonality properties  
as previously done in \cite{ku11} and \cite{bernkopf-melenk22}. 
Under regularity assumptions for the appropriate dual problems, optimal convergence rates 
can then be established. 

In the first part \cite{bernkopf-melenk22} of this series of papers, 
we analyzed high order finite element discretizations of a 
first order system least squares (FOSLS) formulation
of a Poisson-type second order elliptic 
problem with homogeneous boundary conditions and obtained optimal error estimates for the 
$L^2$-error of the scalar variable. 
%We refer to \cite[Sec.~{1}]{bernkopf-melenk22} for an
%overview of current results.
Here, we generalize the approach of \cite{bernkopf-melenk22} to Poisson-type problems with inhomogeneous 
Robin boundary conditions. Compared to \cite{bernkopf-melenk22} and \cite{ku11} the presence of the boundary terms, which
are $L^2$-terms, requires additional duality arguments and corresponding approximation 
results for the operator $\IhGamma$ that effects the required orthogonalities. As an aside, we mention that elliptic problems with Robin boundary conditions arise in 
applications, for example, in wave propagation problems with impedance boundary conditions. The analysis 
of the Helmholtz equation with impedance conditions is, however, beyond the scope of the present work
as one leaves the realm of strong ellipticity; we refer, however, to \cite{chen-qiu17,bernkopf-melenk19} for 
analyses of FOSLS discretizations of the Helmholtz equation 
and also to \cite{monsuur2023pollutionfree, demkowicz-gopalakrishnan-muga-zitelli12,gopalakrishnan-muga-olivares14} for Discontinuous Petrov Galerkin (DPG) discretizations.

The need for rather elaborate duality arguments in the analysis of the FOSLS may be understood 
as a consequence of the choice of norms in the method, in particular the choice of the computationally
convenient  $L^2$-norm. For example, for right-hand sides that are not in $L^2$, the methodology
requires some regularization of the right-hand side. We refer to \cite{fuehrer-heuer-karkulik22} 
for a method that features a regularization of the 
input data to make minimal residual methods applicable to problems with low-regularity input data. 

The regularity assumptions on the data imposed by the least squares approach in 
$L^2$-based spaces can also be relaxed by changing the norms and performing a minimization in a weaker one. 
This approach also has a substantial history. We refer to the recent \cite{monsuur2023minimal} for such an approach, 
where the relevant dual norms are realized computationally.

An important class of minimum residual methods 
that appeared after the monograph \cite{bochev-gunzburger09} are DPG methods, 
\cite{demkowicz-gopalakrishnan10,demkowicz-gopalakrishnan11,demkowicz-gopalakrishnan11a,
carstensen-demkowicz-gopalakrishnan16}, which may be understood as minimizing the residual in 
a norm other than $L^2$, \cite{demkowicz-gopalakrishnan14}.

%For the discretization, an $\HDiv \times \HOne$-conforming least squares formulation is employed.
%The difficulties, when including these kind of boundary conditions
%arise from an additional boundary term in the bilinear form $b$.
%This will make it necessary to analyze the approximation on the boundary.
%Furthermore, the regularity of certain dual solutions
%will be further limited by the additional boundary term.

\subsubsection*{Contribution of the present work}
Our primary contribution are optimal $L^2(\Omega)$
based convergence results for the least squares approximation of the scalar variable $u$,
the gradient of the scalar variable $u$
and the traces of the scalar and the vector variable. 
Furthermore, we derive improved $L^2(\Omega)$ estimates for the vector variable $\pmb{\varphi}$. 
These estimates are explicit in the mesh
size $h$ of the quasi-uniform meshes employed and the polynomial degree $p$ utilized, which 
extends the results of \cite{ku11}. 

\subsubsection*{Outline}
In Section~\ref{section:extensions_to_robin_boundary} we introduce the model problem, its  
FOSLS formulation and prove a norm equivalence that ensures unique solvability
of both the continuous and the discrete least squares formulation.
Section~\ref{section:duality_argument_robin} provides regularity assertions for the representations 
in terms of a dual formulation of the scalar variable, its gradient, the vector variable, and the 
traces. 
These duality results are given without assuming full elliptic regularity so as to 
provide the tools for a possible extension to situations without full elliptic shift such as 
non-convex geometries. 
In Section~\ref{section:error_analysis_robin} we present
several error estimates for different quantities of interest, namely, 
the $L^2$ and $H^1$-error  of the scalar variable and the $L^2$ and $H(\div)$-error of the 
vectorial variable. This is obtained in a bootstrapping fashion by systematically improving 
estimates for these quantities of interest by repeated duality arguments. 
As a tool for our error analysis, we develop in Lemma~\ref{lemma:properties_of_IhGamma}
the constrained approximation operator $\IhGamma$ with certain orthogonality properties that is instrumental for our error analysis. 
This operator generalizes the corresponding operators $\pmb{I}_h$, $\pmb{I}_h^0$ for problems with homogeneous boundary conditions
in \cite{bernkopf-melenk22}. Compared to \cite{bernkopf-melenk22}, the bootstrapping argument is more 
involved since the presence of the boundary terms in the bilinear lowers the regularity of some components
of dual solutions compared to \cite{bernkopf-melenk22} (see Remark~\ref{remk:difference-to-bernkopf-melenk22} for more details).

%which in a bootstrapping fashion the yield optimal convergence rates for the scalar variable.
We close the paper in 
Section~\ref{section:numerical_examples_robin} with numerical results that 
showcase the proved convergence rates for the case of solutions with finite (low) Sobolev regularity.

\subsubsection*{Notation}
Throughout this work, $\Omega$ denotes a bounded simply connected domain in $\mathbb{R}^d$, $d =2$, $3$
with a (piecewise) smooth boundary $\Gamma \coloneqq \partial \Omega$ and outward unit normal vector $\pmb{n}$. 
We flag that the convergence analysis will be performed under the assumption of a full elliptic regularity
shift, i.e., $\Omega$ is convex or has a smooth boundary $\Gamma$. 

%Let $\Gamma$ consist of two disjoint parts $\Gamma_D$ and $\Gamma_N$.
We use standard notation for the gradient, the divergence, and the curl: 
for scalar functions in $d$ variables we write 
$\nabla u = (\partial_{x_1} u, \cdots, \partial_{x_d} u)^{T}$ and for 
$\mathbb{R}^d$-valued functions $\pmb{\varphi}$ the divergence is 
$\nabla \cdot \pmb{\varphi} = \sum_{i=1}^d \partial_{x_i} \pmb{\varphi}_i$.
For $d=3$ the curl operator $\nabla \times $ of a vector field $\pmb{\varphi}$
is $\nabla \times \pmb{\varphi}
  =
  (\partial_{x_2} \pmb{\varphi}_3 - \partial_{x_3} \pmb{\varphi}_2,
  -(\partial_{x_1} \pmb{\varphi}_3 - \partial_{x_3} \pmb{\varphi}_1),
  \partial_{x_1} \pmb{\varphi}_2 - \partial_{x_2} \pmb{\varphi}_1 )^T$. 
In spatial dimension $d=2$ the scalar-valued curl operator
acting on vector fields is given by
$\scalarcurl \, \pmb{\varphi} = \partial_{x_1} \pmb{\varphi}_2 - \partial_{x_2} \pmb{\varphi}_1$ and the
vector-valued curl operator acting on scalar functions is written 
$\nabla \times  u = (\partial_{x_2} u, - \partial_{x_1} u)$.
For $d = 3$, we recall the exact sequence 
\begin{equation*}
  \mathbb{R} \stackrel{\mathrm{id}}{\longrightarrow}
  \HOne          \stackrel{\nabla}{\longrightarrow}
  \HCurl         \stackrel{\nabla \times}{\longrightarrow}
  \HDiv          \stackrel{\nabla \cdot}{\longrightarrow}
  L^2(\Omega)      \stackrel{0}{\longrightarrow}
  \left\{ 0 \right\},
\end{equation*}
and, if homogeneous boundary conditions are imposed, the exact sequence 
\begin{equation*}
  \left\{ 0 \right\} \stackrel{\mathrm{id}}{\longrightarrow}
  \HZeroOne          \stackrel{\nabla}{\longrightarrow}
  \HZeroCurl         \stackrel{\nabla \times}{\longrightarrow}
  \HZeroDiv          \stackrel{\nabla \cdot}{\longrightarrow}
  L^2_0(\Omega)      \stackrel{0}{\longrightarrow}
  \left\{ 0 \right\}.
\end{equation*}
For $d=2$ there are two exact sequences, which are isomorphic to each other via a rotation: 
\begin{align}
\label{eq:exact_sequence_2d_1}
&  \mathbb{R}     \stackrel{\mathrm{id}}{\longrightarrow}
  \HOne          \stackrel{\nabla}{\longrightarrow}
  \HScalarCurl   \stackrel{\scalarcurl}{\longrightarrow}
  L^2(\Omega)    \stackrel{0}{\longrightarrow}
  \left\{ 0 \right\}, \\
\label{eq:exact_sequence_2d_2}
  & \mathbb{R}     \stackrel{\mathrm{id}}{\longrightarrow}
  \HOne          \stackrel{\curl}{\longrightarrow}
  \HDiv          \stackrel{\nabla \cdot}{\longrightarrow}
  L^2(\Omega)    \stackrel{0}{\longrightarrow}
  \left\{ 0 \right\}.
\end{align}
%\begin{equation}\label{eq:exact_sequence_2d_1}
%  \mathbb{R}     \stackrel{\mathrm{id}}{\longrightarrow}
%  \HOne          \stackrel{\nabla}{\longrightarrow}
%  \HScalarCurl   \stackrel{\scalarcurl}{\longrightarrow}
%  L^2(\Omega)    \stackrel{0}{\longrightarrow}
%  \left\{ 0 \right\},
%\end{equation}
%as well as
%\begin{equation}\label{eq:exact_sequence_2d_2}
%  \mathbb{R}     \stackrel{\mathrm{id}}{\longrightarrow}
%  \HOne          \stackrel{\curl}{\longrightarrow}
%  \HDiv          \stackrel{\nabla \cdot}{\longrightarrow}
%  L^2(\Omega)    \stackrel{0}{\longrightarrow}
%  \left\{ 0 \right\}.
%\end{equation}
Indeed, with the matrix $ R =
  \big(\begin{smallmatrix}
      0 & 1\\
      -1 & 0
    \end{smallmatrix}\big)
$
the 
sequence in~\eqref{eq:exact_sequence_2d_2}
can be obtained from the sequence in~\eqref{eq:exact_sequence_2d_1}:
\begin{align}
\label{eq:curl-div-2D}
  \scalarcurl \, \pmb{\varphi} &= \nabla \cdot (R \pmb{\varphi})
\qquad \mbox{and} \qquad 
\curl \, u = R \nabla u. 
\end{align}

We employ standard Sobolev spaces $H^s(\Omega)$ in $\Omega$ 
as defined in \cite{mclean00}. For $s \ge 0$, Sobolev spaces $H^s(\Gamma)$ 
on the boundary $\Gamma$ are understood in the standard way as described in
\cite{mclean00} for smooth boundaries $\Gamma$. For Lipschitz boundaries $\Gamma$, 
in particular polygonal/polyhedral boundaries, the intrinsic definition
of \cite{mclean00} is valid for $s \in [0,1]$. For $s > 1$, we define 
$H^s(\Gamma):= \{v|_{\Gamma}\,|\, v \in H^{s+1/2}(\Omega)\}$, and the norm is 
defined by the minimal extension into $\Omega$. We write $(\cdot,\cdot)_{\Omega}$
for the $L^2(\Omega)$ inner product and $\langle \cdot,\cdot\rangle_\Gamma$ for the 
duality pairing that extends the $L^2(\Gamma)$ inner product.

We employ the following standard spaces for $d \in \{2,3\}$:
  \begin{equation*}
    \HDiv     = \{ \pmb{\varphi} \in \pmb{L}^2(\Omega) \colon \nabla \cdot \pmb{\varphi} \in L^2(\Omega) \},        \quad
    \HZeroDiv = \{ \pmb{\varphi} \in \HDiv             \colon \pmb{\varphi} \cdot \pmb{n} = 0 \text{ on } \Gamma \}.
  \end{equation*}

  Furthermore, we introduce spaces related to the vector-valued curl operator in spatial dimension $d=3$ and to 
the scalar valued curl operator in spatial dimension $d=2$:

\noindent\begin{minipage}{.48\linewidth}
  \begin{align*}
    \HCurl     & = \{ \pmb{\varphi} \in \pmb{L}^2(\Omega) \colon \nabla \times \pmb{\varphi} \in \pmb{L}^2(\Omega) \},   \\
    \HZeroCurl & = \{ \pmb{\varphi} \in \HCurl            \colon \pmb{n} \times \pmb{\varphi} = 0 \text{ on } \Gamma \},
  \end{align*}
\end{minipage}%
\quad
\noindent\begin{minipage}{.48\linewidth}
  \begin{align*}
    \HScalarCurl     & = \{ \pmb{\varphi} \in \pmb{L}^2(\Omega)       \colon \scalarcurl \, \pmb{\varphi} \in L^2(\Omega) \},       \\
    \HZeroScalarCurl & = \{ \pmb{\varphi} \in \HScalarCurl            \colon \pmb{t} \cdot \pmb{\varphi} = 0 \text{ on } \Gamma \}.
  \end{align*}
\end{minipage}
\\
where $\pmb{t} = R^T \pmb{n}$ is the corresponding tangential vector in spatial dimension $d=2$. 
Finally, for $s \ge 0$ we require 
the spaces $\pmb{H}^s(\div, \Omega)$ of functions in $\pmb{H}^s(\Omega)$ with divergence also in $H^s(\Omega)$.
We refer \cite{demkowicz-babushka03,monk03,boffi-brezzi-fortin13,melenk-sauter21} for further references and details. 

%--------------------
\subsubsection*{Meshes and finite element spaces}
%--------------------
We consider regular (i.e., no hanging nodes), shape-regular triangulations ${\mathcal T}_h$ of $\Omega$. That is, we assume 
%\begin{enumerate}[nosep, label={\bf Case \Alph*:},leftmargin=*]
\begin{enumerate}[nosep, label={(\roman*)},leftmargin=*]
\item 
the (open) elements $K \in {\mathcal T}_h$ cover $\Omega$, i.e., $\cup_{K \in {\mathcal T}_h} \overline{K} = \overline{\Omega}$; 
\item associated with each element $K$ is a $C^1$-diffeomorphism $F_K: \widehat{K} \rightarrow K$ from the (fixed) reference simplex $\widehat K$;  
\item denoting $h_K = \operatorname{diam} K$, there holds with the shape-regularity constant $\kappa > 0$ 
$$
h^{-1}_K \|F_K^\prime \|_{L^\infty(\widehat K)} + h_K \|(F_K^\prime)^{-1} \|_{L^\infty(\widehat K)} \leq \kappa; 
$$
\item
the intersection of two elements $K$, $K^\prime \in {\mathcal T}_h$ is either empty or a $j$-face for some $j \in \{0,\ldots,d\}$ 
of both $K$, $K^\prime$, i.e., a vertex, an edge, a facet, or $K = K^\prime$. The parametrization of a common $j$-face is compatible, 
i.e., if $K$, $K^\prime$ share a $j$-face $f$ with $f = F_K(\hat f) = F_{K^\prime}(\hat f')$ for $j$-faces $\hat f$, $\hat f'$ of the 
reference simplex $\widehat K$, then $F_K^{-1} \circ F_{K^\prime}: \hat f' \rightarrow \hat f$ is an affine bijection. 
\end{enumerate}
Throughout the present work, we make the following additional assumptions on the element maps of the triangulation ${\mathcal T}$:  
\begin{assumption}[quasi-uniform regular meshes, \cite{melenk-sauter10}]\label{assumption:quasi_uniform_regular_meshes}
  Let $\widehat{K}$ be the reference simplex.
  Each element map $F_K \colon \widehat{K} \to K$ can be written as $F_K = R_K \circ A_K$, where $A_K$ is an affine map and the maps $R_K$ and $A_K$ satisfy, for constants $C_\mathrm{affine}, C_\mathrm{metric}, \rho > 0$ independent of $K$:
  \begin{equation*}
    \begin{alignedat}{2}
      &\norm{A^\prime_K}_{L^\infty( \widehat{K} )}        \leq C_\mathrm{affine} h_K, \qquad &&\norm{ (A^\prime_K)^{-1} }_{L^\infty( \widehat{K} )} \leq C_\mathrm{affine} h^{-1}_K, \\
      &\norm{ (R^\prime_K)^{-1} }_{L^\infty( \tilde{K} )} \leq C_\mathrm{metric},     \qquad &&\norm{ \nabla^n R_K }_{L^\infty( \tilde{K} )} \leq C_\mathrm{metric} \rho^n n! \qquad \forall n \in \mathbb{N}_0.
    \end{alignedat}
  \end{equation*}
  Here, $\tilde{K} = A_K(\widehat{K})$ and $h_K > 0$ denotes the element diameter.
\end{assumption}
\begin{remark}
The element maps $F_K$ in Assumption~\ref{assumption:quasi_uniform_regular_meshes} are required to be analytic, which is stronger than needed for the 
ensuing analysis. The structure of the element maps $F_K$, namely, the fact that $F_K$ is the concatination of an affine map and a smooth map, provides
a simple mechanism for scaling arguments familiar from affine triangulations. The specific form of Assumption~\ref{assumption:quasi_uniform_regular_meshes}
is taken from \cite{melenk-sauter10}, where the analyticity of the maps $R_K$ is exploited. 
\eremk
\end{remark}

On the reference simplex $\widehat{K}$ we introduce the Raviart-Thomas and Brezzi-Douglas-Marini elements by: 
\begin{align*}
  \mathcal{P}_{p}(\widehat{K})         & \coloneqq \mathrm{span}\left\{ \pmb{x}^{\pmb \alpha} \colon |\pmb{\alpha}| \leq p \right\},                                              \\
  \pmb{\mathrm{RT}}_{p-1}(\widehat{K}) & \coloneqq \left\{ \pmb{p} + \pmb{x}q \colon \pmb{p} \in \mathcal{P}_{p-1}(\widehat{K})^d, q \in \mathcal{P}_{p-1}(\widehat{K}) \right\}, \\
  \pmb{\mathrm{BDM}}_{p}(\widehat{K})  & \coloneqq \mathcal{P}_{p}(\widehat{K})^d.
\end{align*}
The N\'ed\'elec type I and II elements in spatial dimension $d=2$ and $d=3$ are given by: 
\begin{align*}
  \pmb{\mathrm{N}}_{p-1}^{I}(\widehat{K}) & \coloneqq \left\{ \pmb{p} + q (y, -x)^T \colon \pmb{p} \in \mathcal{P}_{p-1}(\widehat{K})^2, q \in \mathcal{P}_{p-1}(\widehat{K}) \right\}
  \text{ for } d=2,                                                                                                                                                                    \\
  \pmb{\mathrm{N}}_{p-1}^{I}(\widehat{K}) & \coloneqq \left\{ \pmb{p} + \pmb{x} \times \pmb{p}  \colon \pmb{q}, \pmb{q} \in \mathcal{P}_{p-1}(\widehat{K})^3 \right\}
  \text{ for } d=3,                                                                                                                                                                    \\
  \pmb{\mathrm{N}}_{p}^{II}(\widehat{K})  & \coloneqq \mathcal{P}_{p}(\widehat{K})^d.
\end{align*}
The appropriate change of variables for functions from $\HDiv$ is the so-called Piola transform: 
For a function $\pmb{\varphi} : K \to \mathbb{R}^d$ and a smooth bijection $F_K \colon \widehat{K} \to K$
its Piola transform $\widehat{\pmb{\varphi}} : \widehat{K} \to \mathbb{R}^d$ is given by
\begin{equation*}
  \widehat{\pmb{\varphi}} = (\det F_K^\prime ) (F_K^\prime)^{-1} \pmb{\varphi} \circ F_K.
\end{equation*}
Based on the mesh $\mathcal{T}_h$, the spaces $S_p(\mathcal{T}_h)$, $\pmb{\mathrm{BDM}}_p(\mathcal{T}_h)$, and $\pmb{\mathrm{RT}}_{p-1}(\mathcal{T}_h)$ 
are given by standard transformation and (contravariant) Piola transformation of functions on the reference element:
\begin{align*}
  S^{-1}_{p}(\mathcal{T}_h)                   & \coloneqq \left\{ u \in L^2(\Omega) \colon \left.\kern-\nulldelimiterspace{u}\vphantom{\big|} \right|_{K} \circ F_K \in \mathcal{P}_{p}(\widehat{K}) \text{ for all } K \in \mathcal{T}_h \right\},                                                                                             \\
  S_{p}(\mathcal{T}_h)                   & \coloneqq \left\{ u \in H^1(\Omega) \colon \left.\kern-\nulldelimiterspace{u}\vphantom{\big|} \right|_{K} \circ F_K \in \mathcal{P}_{p}(\widehat{K}) \text{ for all } K \in \mathcal{T}_h \right\},                                                                                             \\
  \pmb{\mathrm{BDM}}_p(\mathcal{T}_h)    & \coloneqq \left\{ \pmb{\varphi} \in \pmb{H}(\operatorname{div}, \Omega) \colon (\det F_K^\prime) (F_K^\prime)^{-1} \left.\kern-\nulldelimiterspace{\pmb{\varphi}}\vphantom{\big|} \right|_{K} \circ F_K \in \pmb{\mathrm{BDM}}_p(\widehat{K}) \text{ for all } K \in \mathcal{T}_h \right\},    \\
  \pmb{\mathrm{RT}}_{p-1}(\mathcal{T}_h) & \coloneqq \left\{ \pmb{\varphi} \in \pmb{H}(\operatorname{div}, \Omega) \colon (\det F_K^\prime) (F_K^\prime)^{-1} \left.\kern-\nulldelimiterspace{\pmb{\varphi}}\vphantom{\big|} \right|_{K} \circ F_K \in \pmb{\mathrm{RT}}_{p-1}(\widehat{K}) \text{ for all } K \in \mathcal{T}_h \right\}.
\end{align*}
Similarly, for $d = 3$ the N\'ed\'elec elements of type I on $d=3$ are defined as 
\begin{align*}
  \pmb{\mathrm{N}}^{I}_p(\mathcal{T}_h) & \coloneqq \left\{ \pmb{\varphi} \in \HCurl \colon (F_K^\prime)^{T} \left.\kern-\nulldelimiterspace{\pmb{\varphi}}\vphantom{\big|} \right|_{K} \circ F_K \in \pmb{\mathrm{N}}^{I}_p(\widehat{K}) \text{ for all } K \in \mathcal{T}_h \right\};
\end{align*}
we proceed analogously  for N\'ed\'elec type II elements and the case $d=2$.
Observe that for $d = 2$, the N\'ed\'elec elements are just the rotated Raviart-Thomas and Brezzi-Douglas-Marini elements.

We refer to \cite[Prop.~{2.5.4}]{boffi-brezzi-fortin13} as a standard reference for the approximation properties of the above $\HDiv$-conforming spaces in the case of affine elements
and without the $p$-aspect. Under Assumption~\ref{assumption:quasi_uniform_regular_meshes}, the approximation properties of these spaces in an $hp$-context are analyzed in 
\cite[Sec.~{4}]{bernkopf-melenk19}.

The first order system formulation of a second order equation requires us to choose 
two finite element spaces, one for the scalar variable $u$, i.e., the solution of the second order equation, and one for the vector variable $\pmb{\varphi}$, which we 
select as $\pmb{\phi} = -\nabla u$. Hence, for the numerical discretization of the first order system we consider the following finite element spaces:
\begin{equation*}
  \Sp     \subseteq \HOne, \quad
  \SpZero \subseteq \HZeroOne,
\end{equation*}
\begin{equation*}
  \RTBDM       \subseteq \HDiv,  \quad
  \RTBDMZero   \subseteq \HZeroDiv,
\end{equation*}
where the polynomial approximation of the scalar and vector variable is denoted by $p_s \geq 1$ and $p_v \geq 1$.  
We set 
$$
p:= \min\{p_s, p_v\}. 
$$
For brevity of notation, we  denote by $\RTBDM$ either the Raviart-Thomas space $\RT$ or the Brezzi-Douglas-Marini space $\BDM$.
The space $\RTBDMZero$, which includes the boundary conditions,  is understood analogously. 
Furthermore, depending on the choice of the space $\RTBDM$, the N\'ed\'elec space $\Nedelec$ is either of type I (if $\RTBDM = \RT$) or II (if $\RTBDM = \BDM$). 
We apply the same convention to spaces incorporating homogeneous boundary conditions. We refer to
\cite{demkowicz-babushka03,monk03,boffi-brezzi-fortin13,melenk-sauter21} for further details.

%---------------------------------
\subsubsection*{Notational conventions}
%---------------------------------

As in \cite{bernkopf-melenk22}, further notational conventions are: 
\begin{itemize}
  \item lower case roman letters such as $u$ and $v$ are employed to denote for scalar valued functions;
  \item lower case boldface greek letters such as $\pmb{\varphi}$ and $\pmb{\psi}$ are used for vector valued functions;
  \item a subscript $h$ as in $u_h$ and $\pmb{\varphi}_h$ indicates membership in a finite element space; 
  \item if not otherwise stated finite element functions without a \, $\tilde{\cdot}$ \, are in some sense fixed, e.g., they are Galerkin approximations
        whereas functions with a \, $\tilde{\cdot}$ \, are arbitrary and arise, e.g., in quasi-optimality results;
  \item generic constants are either denoted by $C>0$ or are hidden inside the symbol $\lesssim$; 
        they are independent of the mesh size $h$ and the polynomial degree $p$ unless otherwise stated.
        We will not track the parameters $\gamma$ and $\alpha$ appearing in the model problem \eqref{eq:model_problem_robin}.  
\end{itemize}

% !TEX root = main.tex
\section{Model problem with Robin boundary conditions}\label{section:extensions_to_robin_boundary}

We assume $f \in L^2(\Omega)$ and $g \in L^2(\Gamma)$.
For fixed $\gamma$, $\alpha > 0$ we consider the following model problem: 
\begin{equation}\label{eq:model_problem_robin}
  \begin{alignedat}{2}
    - \Delta u + \gamma u   &= f \quad   &&\text{in } \Omega,\\
    \partial_n u + \alpha u &= g \quad   &&\text{on } \Gamma.
  \end{alignedat}
\end{equation}
As in \cite{bernkopf-melenk22} by setting $\pmb{\varphi} = -\nabla u$ we arrive at the system
\begin{equation}\label{eq:model_problem_first_order_system_robin}
  \begin{alignedat}{2}
    \nabla \cdot \pmb{\varphi} + \gamma u  &= f  \quad   &&\text{in } \Omega,   \\
    \nabla u + \pmb{\varphi}  &= 0  \quad   &&\text{in } \Omega,   \\
    \pmb{\varphi} \cdot \pmb{n} - \alpha u &= -g \quad   &&\text{on } \Gamma.
  \end{alignedat}
\end{equation}
Furthermore, we introduce the Hilbert spaces
\begin{align*}
\label{eq:spaces-V-W}
  \pmb{V} & \coloneqq \{ \pmb{\varphi} \in \HDiv \colon \pmb{\varphi} \cdot \pmb{n} \in L^2(\Gamma) \} && \text{  and  } & W & \coloneqq \HOne,
\end{align*}
where $\pmb{V}$ is equipped with the graph norm $( \norm{\pmb{\varphi}}_{\HDiv}^2 + \norm{\pmb{\varphi} \cdot \pmb{n}}_{L^2(\Gamma)}^2 )^{1/2}$ 
in order to control the $L^2(\Gamma)$ normal trace.
This is necessary since for general $\pmb{\varphi} \in \HDiv$ one only has $\pmb{\varphi} \cdot \pmb{n} \in H^{-1/2}(\Gamma)$.
%In order to verify that $\pmb{V}$ is in fact a Hilbert space, consider any Cauchy sequence $\pmb{\varphi}_n$ in $\pmb{V}$.
%Therefore $\pmb{\varphi}_n$ is also a Cauchy sequence in $\HDiv$ as well as $\pmb{\varphi}_n \cdot \pmb{n}$ in $L^2(\Gamma)$.
%Consequently $\pmb{\varphi}_n \to \pmb{\varphi}$ in $\HDiv$ and $\pmb{\varphi}_n \cdot \pmb{n} \to g$ in $L^2(\Gamma)$, for some $\pmb{\varphi} \in \HDiv$ and some $g \in L^2(\Gamma)$.
%We therefore need to verify $\pmb{\varphi} \cdot \pmb{n} = g$.
%To that end we calculate via Green's theorem for any $v \in \HOne$
%\begin{equation*}
%  \langle \pmb{\varphi}_n \cdot \pmb{n} , v \rangle_{\Gamma}
%  = \langle \pmb{\varphi}_n \cdot \pmb{n} , v \rangle_{H^{-1/2}(\Gamma) \times H^{1/2}(\Gamma)}
%  = (\nabla \cdot \pmb{\varphi}_n, v)_{\Omega} +  (\pmb{\varphi}_n, \nabla v)_{\Omega}.
%\end{equation*}
%Taking the limit in the above we arrive at
%\begin{equation*}
%  \langle g , v \rangle_{\Gamma} = (\nabla \cdot \pmb{\varphi}, v)_{\Omega} +  (\pmb{\varphi}, \nabla v)_{\Omega}
%\end{equation*}
%for any $v \in \HOne$, which proves $\pmb{\varphi} \cdot \pmb{n} = g$.
The bilinear form $b$ and the linear functional $F$ are given by 
\begin{align*}
  b( (\pmb{\varphi}, u), (\pmb{\psi}, v) )
   & \coloneqq ( \nabla \cdot \pmb{\varphi} + \gamma u , \nabla \cdot \pmb{\psi} + \gamma v )_{\Omega}
  + ( \nabla u + \pmb{\varphi} , \nabla v + \pmb{\psi} )_{\Omega}
  + \langle \pmb{\varphi} \cdot \pmb{n} - \alpha u , \pmb{\psi} \cdot \pmb{n} - \alpha v \rangle_{\Gamma}, \\
  F( (\pmb{\varphi}, v) )
   & \coloneqq ( f,  \nabla \cdot \pmb{\psi} + \gamma v )_{\Omega}
  + \langle -g , \pmb{\psi} \cdot \pmb{n} - \alpha v \rangle_{\Gamma}.
\end{align*}
\begin{remark}
\label{remk:difference-to-bernkopf-melenk22}
The boundary terms in the bilinear form $b$ reflect the fact that the Robin boundary conditions, which reduce 
to Neumann condition in the case $\alpha =0$, are realized as ``natural boundary conditions'' in the $L^2$-minimization process.         
In comparison, the bilinear form of \cite{bernkopf-melenk22} for the Neumann problem lacks the boundary terms since
the Neumann conditions are enforced as ``essential boundary conditions'' on the vectorial variable. This difference in bilinear 
form leads to slightly different regularity assertions for dual problems in Section~\ref{section:duality_argument_robin} ahead: see
the regularity of $\pmb{\psi}$ in Theorem~\ref{theorem:duality_argument_robin}, which asserts up to $\pmb{H}^2$-regularity for $\pmb{\psi}$ 
whereas the corresponding \cite [Thm.~3.3]{bernkopf-melenk22} even asserts up to $\pmb{H}^3$-regularity. 
\eremk
\end{remark}

We start our analysis with a norm equivalence theorem.

\begin{theorem}[Norm equivalence - Robin version of {\cite[Thm.~2.1]{bernkopf-melenk22}}]\label{theorem:norm_equivalence_robin}
  For all $(\pmb{\varphi}, u) \in \productspacerobin$ there holds
  \begin{equation*}
    \norm{u}_{\HOne}^2 + \norm{\pmb{\varphi}}_{\HDiv}^2 + \norm{\pmb{\varphi} \cdot \pmb{n}}_{L^2(\Gamma)}^2
    \lesssim b( (\pmb{\varphi}, u), (\pmb{\varphi}, u) )
    \lesssim \norm{u}_{\HOne}^2 + \norm{\pmb{\varphi}}_{\HDiv}^2 + \norm{\pmb{\varphi} \cdot \pmb{n}}_{L^2(\Gamma)}^2.
  \end{equation*}
  %i.e., norm equivalence.
\end{theorem}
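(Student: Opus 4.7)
The upper bound is the easy direction: applying the Cauchy--Schwarz inequality to each of the three summands of $b((\pmb{\varphi},u),(\pmb{\varphi},u))$, then invoking the triangle inequality and the continuous trace $\|u\|_{L^2(\Gamma)} \lesssim \|u\|_{H^1(\Omega)}$ together with the fact that $\|\pmb{\varphi}\cdot\pmb{n}\|_{L^2(\Gamma)}$ is part of the norm on $\pmb V$, yields the estimate after expanding the squares. This step is routine and I would only sketch it.

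The coercivity (lower) bound is the main content. The plan is to abbreviate $e_1 := \nabla\cdot\pmb{\varphi}+\gamma u$, $e_2 := \nabla u + \pmb{\varphi}$, $e_3 := \pmb{\varphi}\cdot\pmb{n} - \alpha u$, so that $b((\pmb{\varphi},u),(\pmb{\varphi},u)) = \|e_1\|_{L^2(\Omega)}^2 + \|e_2\|_{L^2(\Omega)}^2 + \|e_3\|_{L^2(\Gamma)}^2$, and then expand the middle term and integrate by parts:
\begin{equation*}
\|e_2\|_{L^2(\Omega)}^2 = \|\nabla u\|_{L^2(\Omega)}^2 + \|\pmb{\varphi}\|_{L^2(\Omega)}^2 + 2(\nabla u,\pmb{\varphi})_\Omega,
\end{equation*}
where $(\nabla u,\pmb{\varphi})_\Omega = -(u,\nabla\cdot\pmb{\varphi})_\Omega + \langle u,\pmb{\varphi}\cdot\pmb{n}\rangle_\Gamma$. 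The crucial move is to substitute $\nabla\cdot\pmb{\varphi} = e_1 - \gamma u$ and $\pmb{\varphi}\cdot\pmb{n} = e_3 + \alpha u$ into this identity. This produces \emph{positive} multiples $2\gamma\|u\|_{L^2(\Omega)}^2$ and $2\alpha\|u\|_{L^2(\Gamma)}^2$, alongside mixed terms $-2(u,e_1)_\Omega + 2\langle u,e_3\rangle_\Gamma$.

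Rearranging and applying Young's inequality, $2|(u,e_1)_\Omega| \leq \gamma\|u\|_{L^2(\Omega)}^2 + \gamma^{-1}\|e_1\|_{L^2(\Omega)}^2$ and analogously for the boundary term with weight $\alpha$, the positive quadratic contributions in $u$ absorb the mixed terms, giving
\begin{equation*}
\|\nabla u\|_{L^2(\Omega)}^2 + \|\pmb{\varphi}\|_{L^2(\Omega)}^2 + \gamma\|u\|_{L^2(\Omega)}^2 + \alpha\|u\|_{L^2(\Gamma)}^2 \;\lesssim\; \|e_1\|_{L^2(\Omega)}^2 + \|e_2\|_{L^2(\Omega)}^2 + \|e_3\|_{L^2(\Gamma)}^2.
\end{equation*}
This already controls the $H^1(\Omega)$-norm of $u$ and the $L^2(\Omega)$-norm of $\pmb{\varphi}$. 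To recover the remaining contributions, I would use the trivial bounds $\|\nabla\cdot\pmb{\varphi}\|_{L^2(\Omega)}^2 \leq 2\|e_1\|_{L^2(\Omega)}^2 + 2\gamma^2\|u\|_{L^2(\Omega)}^2$ and $\|\pmb{\varphi}\cdot\pmb{n}\|_{L^2(\Gamma)}^2 \leq 2\|e_3\|_{L^2(\Gamma)}^2 + 2\alpha^2\|u\|_{L^2(\Gamma)}^2$, and invoke the previous estimate on the right-hand sides.

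The only delicate point is justifying the integration-by-parts formula $(\nabla u,\pmb{\varphi})_\Omega = -(u,\nabla\cdot\pmb{\varphi})_\Omega + \langle u,\pmb{\varphi}\cdot\pmb{n}\rangle_\Gamma$ in the low-regularity setting $(\pmb{\varphi},u)\in \pmb V \times W$: this is valid as a duality pairing $\langle \pmb{\varphi}\cdot\pmb{n}, u\rangle_{H^{-1/2}(\Gamma)\times H^{1/2}(\Gamma)}$ for $\pmb{\varphi}\in\HDiv$ and $u\in H^1(\Omega)$, and since we additionally know $\pmb{\varphi}\cdot\pmb{n}\in L^2(\Gamma)$ by definition of $\pmb V$, this pairing coincides with the $L^2(\Gamma)$ inner product used in the bilinear form. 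That is the only place where the enlargement of $\HDiv$ to $\pmb V$ enters essentially. Otherwise the argument is a clean bookkeeping exercise in the spirit of the corresponding proof in \cite{bernkopf-melenk22}, with the only new feature that the positive coefficient $\alpha>0$ is now what absorbs the boundary mixed term, just as $\gamma>0$ absorbs the interior one.
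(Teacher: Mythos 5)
Your proof is correct, but it takes a genuinely different route from the one in the paper. The paper establishes coercivity by splitting $(\pmb{\varphi},u)=(\pmb{\varphi}_1,u_1)+(\pmb{\varphi}_2,u_2)$ according to the residuals $w=\nabla\cdot\pmb{\varphi}+\gamma u$, $\pmb{\eta}=\nabla u+\pmb{\varphi}$, $\mu=\pmb{\varphi}\cdot\pmb{n}-\alpha u$, solving two auxiliary Robin boundary value problems for $u_1$ and $u_2$, applying Lax--Milgram to bound each piece, and then reassembling; the argument leans on the well-posedness and stability theory for the underlying second-order elliptic problem. Your argument is instead a self-contained algebraic manipulation: expand $\|\nabla u+\pmb{\varphi}\|_{L^2(\Omega)}^2$, integrate by parts, back-substitute $\nabla\cdot\pmb{\varphi}=e_1-\gamma u$ and $\pmb{\varphi}\cdot\pmb{n}=e_3+\alpha u$ to produce the positive terms $2\gamma\|u\|_{L^2(\Omega)}^2$ and $2\alpha\|u\|_{L^2(\Gamma)}^2$, and absorb the cross terms by Young's inequality. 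Both proofs ultimately exploit $\gamma,\alpha>0$ in an essential way. Your version is more elementary (no PDE solvability theory needed), more explicit in the constants, and makes the mechanism of coercivity transparent (the zeroth-order reaction term and the Robin term literally supply the positivity that soaks up the cross contributions); the paper's version is the direct analogue of the template used throughout \cite{bernkopf-melenk22} and in that sense is more uniform with the rest of the series, and it also illustrates again the correspondence between the first-order system and the second-order scalar problem. One minor checkable point you correctly identified and resolved is that the boundary integral in Green's identity is a priori the duality pairing $H^{-1/2}(\Gamma)\times H^{1/2}(\Gamma)$, which coincides with the $L^2(\Gamma)$ inner product precisely because $\pmb{\varphi}\in\pmb{V}$ guarantees $\pmb{\varphi}\cdot\pmb{n}\in L^2(\Gamma)$.
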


\begin{proof}
The upper bound follows directly from the Cauchy-Schwarz inequality. For the lower bound, we proceed similarly
to \cite[Thm.~{2.1}]{bernkopf-melenk22}. 
  %Apart from constructing the correct decomposition, the proof is completely analogous to the proof of \cite[Thm.~2.1]{bernkopf-melenk22}.
  By definition we have
  \begin{equation*}
    b( (\pmb{\varphi}, u), (\pmb{\varphi}, u) ) =
    \| \underbrace{ \nabla \cdot \pmb{\varphi} + \gamma u}_{\eqqcolon w} \|_{L^2(\Omega)}^2 +
    \| \underbrace{ \nabla u + \pmb{\varphi} }_{\eqqcolon \pmb{\eta}} \|_{L^2(\Omega)}^2 +
    \| \underbrace{ \pmb{\varphi} \cdot \pmb{n} - \alpha u }_{\eqqcolon \mu} \|_{L^2(\Gamma)}^2.
  \end{equation*}
  %from which the second inequality follows immediately by the triangle inequality and a trace estimate.
  %To show the first estimate, the correct system of equations to look at is given by
We write $\pmb{\varphi} = \pmb{\varphi}_1 + \pmb{\varphi}_2$ and $u = u_1 + u_2$, where 
  \\
  \\
  \noindent\begin{minipage}{.5\linewidth}
    \begin{equation*}
      \begin{alignedat}{2}
        \nabla \cdot \pmb{\varphi}_1 + \gamma u_1 &= w \qquad  &&\text{in } \Omega,   \\
        \nabla u_1 + \pmb{\varphi}_1 &= 0         &&\text{in } \Omega,   \\
        \pmb{\varphi}_1 \cdot \pmb{n} - \alpha u_1 &= 0         &&\text{on } \Gamma,
      \end{alignedat}
    \end{equation*}
  \end{minipage}%
  \begin{minipage}{.5\linewidth}
    \begin{equation*}
      \begin{alignedat}{2}
        \nabla \cdot \pmb{\varphi}_2 + \gamma u_2 &= 0 \qquad   &&\text{in } \Omega,   \\
        \nabla u_2 + \pmb{\varphi}_2 &= \pmb{\eta} &&\text{in } \Omega,   \\
        \pmb{\varphi}_2 \cdot \pmb{n} - \alpha u_2 &= \mu        &&\text{on } \Gamma.
      \end{alignedat}
    \end{equation*}
  \end{minipage}
\\

Eliminating $\pmb{\varphi}_1$, $\pmb{\varphi}_2$ leads, in strong form, to 
\\

  \noindent\begin{minipage}{.5\linewidth}
    \begin{equation*}
      \begin{alignedat}{2}
        -\Delta u_1 + \gamma u_1 &= w \qquad  &&\text{in } \Omega, \\
        \partial_n u_1 + \alpha u_1 &= 0         &&\text{on } \Gamma,
      \end{alignedat}
    \end{equation*}
  \end{minipage}%
  \begin{minipage}{.5\linewidth}
    \begin{equation*}
      \begin{alignedat}{2}
        -\Delta u_2 + \gamma u_2 &= - \nabla \cdot \pmb{\eta} \qquad  &&\text{in } \Omega,   \\
        \partial_n u_2 + \alpha u_2 &= - \mu + \pmb{\eta} \cdot \pmb{n}                             &&\text{on } \Gamma.
      \end{alignedat}
    \end{equation*}
  \end{minipage}
  \\
  \\
Lax-Milgram provides $\|u_1\|_{H^1(\Omega)} \lesssim \|w\|_{L^2(\Omega)}$ and 
$\|u_2\|_{H^1(\Omega)} \lesssim \|\pmb{\eta} \|_{L^2(\Omega)} + \|\mu\|_{H^{-1/2}(\Gamma)}$. 
  %From now on the proof is completely analogous to the proof of \cite[Theorem~2.1]{bernkopf-melenk22}.
We set $\pmb{\varphi}_1 = - \nabla u_1$ and $\pmb{\varphi}_2 = \pmb{\eta} - \nabla u_2$ and check that 
the pairs $(\pmb{\varphi}_1, u_1)$ and $(\pmb{\varphi}_2, u_2)$ satisfy the above two systems
and $\|\pmb{\varphi}_1\|_{\HDiv} \lesssim \|w\|_{L^2(\Omega)}$ 
as well as $\|\pmb{\varphi}_2\|_{\HDiv} \lesssim \|\pmb{\eta}\|_{L^2(\Omega)} + \|\mu\|_{H^{-1/2}(\Gamma)}$. 
We note $\|\pmb{\varphi}_1 \cdot \pmb{n}\|_{L^2(\Gamma)} \lesssim \|u_1\|_{L^2(\Gamma)}$ and 
$\|\pmb{\varphi}_2 \cdot \pmb{n}\|_{L^2(\Gamma)} \lesssim \|\mu\|_{L^2(\Gamma)} + \|u_2\|_{L^2(\Gamma)}$.  
We conclude the proof by observing that $\pmb{\varphi} = \pmb{\varphi_1} + \pmb{\varphi}_2$ and 
$u = u_1 + u_2$.  
%We conclude the proof
%by observing 
%$\|u\|_{H^1(\Omega)} + \|\pmb{\varphi}\|_{\HDiv} + \|\pmb{\varphi}\cdot\pmb{n}\|_{L^2(\Gamma)} \leq 
%\|u_1\|_{H^1(\Omega)} +  
%\|u_2\|_{H^1(\Omega)} +  
%\|\pmb{\varphi}_1\|_{\HDiv} + \|\pmb{\varphi}_2\|_{\HDiv} 
%+ \|\pmb{\varphi}_1 \cdot \pmb{n}\|_{L^2(\Gamma)}  
%+ \|\pmb{\varphi}_2 \cdot \pmb{n}\|_{L^2(\Gamma)}  
%\lesssim \|w\|_{L^2(\Omega)} + \|\pmb{\eta}\|_{L^2(\Omega)} + \|\mu\|_{H^{-1/2}(\Gamma)} + \|\alpha u_1\|_{L^2(\Gamma)} + \|\mu + \alpha u_2\|_{L^2(\Gamma)} 
%$
\end{proof}

% !TEX root = main.tex
\section{Duality argument}\label{section:duality_argument_robin}

Our error analysis in the following sections relies on several duality arguments for a range of quantities. 
The following Assumption~\ref{assumption:smax_shift} characterizes the range in which the elliptic 
shift theorem is valid: 
%The procedure is very similar to \cite[Sec.~3]{bernkopf-melenk22},
%however, we note that the additional boundary term in the bilinear
%form $b$ entailed by the Robin boundary conditions further limits the regularity of the dual solutions. 
%We introduce the following assumption regarding a shift theorem:
\begin{assumption}[$\smax$ shift property]\label{assumption:smax_shift}
  %Let $\hat s \in [-1,1/2]$. Then for every
  Let $\hat s \ge -1$. Then for every
  $f \in H^s(\Omega)$,
  $g \in H^{s+1/2}(\Gamma)$ and
  $s \in [-1,\hat s]$, the problem
  \begin{equation*}
    \begin{alignedat}{2}
      - \Delta u + \gamma u   &= f \quad   &&\text{in } \Omega,\\
      \partial_n u + \alpha u &= g \quad   &&\text{on } \Gamma
    \end{alignedat}
  \end{equation*}
  admits the
  regularity shift $u \in H^{s+2}(\Omega)$ with
  $\|u\|_{H^{s+2}(\Omega)} \lesssim \|f\|_{H^s(\Omega)} + \|g\|_{H^{s+1/2}(\Gamma)}$
  if $s \ge 0$ and,
  if $s < 0$,
  $ \|u\|_{H^{s+2}(\Omega)} \lesssim \|f\|_{\widetilde H^{s}(\Omega)} + \|g\|_{H^{s+1/2}(\Gamma)}$.
  Here, for $s \in (-1,0)$, we set
  $H^{s}(\Omega) = (\widetilde{H}^{-s}(\Omega) )^\prime$,
  $\widetilde H^{s}(\Omega) = \left(H^{-s}(\Omega) \right)^\prime $
  with the Sobolev spaces $H^{-s}(\Omega) = (L^2(\Omega), H^1(\Omega))_{-s,2}$
  and $\widetilde H^{-s}(\Omega) = (L^2(\Omega), H^1_0(\Omega))_{-s,2}$
  defined by the real method of interpolation (see \cite{mclean00} for details).
\end{assumption}

\begin{remark}\label{remark:regularity_shift_for_neumann_problem}
\begin{enumerate}[nosep, label={(\roman*)},leftmargin=*]
\item 
\label{item:remark:regularity_shift_for_neumann_problem-i}
The parameter $\hat s$ encodes properties of $\Gamma$. For example, 
by standard elliptic regularity Assumption~\ref{assumption:smax_shift} holds for any $\hat s \ge 0$ for smooth boundaries $\Gamma$. 
For polygonal/polyhedral domains, $\hat s$ is determined by the minimal angle at corners in 2D 
or corners and edges in 3D. For convex domains $\Omega$, 
one has $\hat s \ge 0$. The implied constants in the norm estimates additionally depend on $\gamma$ and $\alpha$. 
%\item 
%Sobolev spaces $H^t(\Gamma)$ are defined using local charts 
%for $0 \leq t \leq 1$ and then by duality in the range $-1 \leq t \leq 1$, 
%\cite{mclean00}. 
\item 
\label{item:remark:regularity_shift_for_neumann_problem-ii}
  Under Assumption~\ref{assumption:smax_shift} 
  an analogous regularity shift also holds for the Neumann problem
  \begin{equation*}
    \begin{alignedat}{2}
      - \Delta u       &= f \quad   &&\text{in } \Omega,\\
      \partial_n u     &= g \quad   &&\text{on } \Gamma
    \end{alignedat}
  \end{equation*}
  for $f \in H^s(\Omega)$ and $g \in H^{s+1/2}(\Gamma)$ satisfying
  the compatibility condition $\int_\Omega f \, \mathrm{d}x + \int_\Gamma g \, \mathrm{d}s = 0$.
  This can be easily seen by adding $\gamma u$ and $\alpha u$ in the formulation of the Neumann problem 
  to the volume and boundary terms, respectively, and then applying Assumption~\ref{assumption:smax_shift}.
\eremk
\end{enumerate}
\end{remark}

\begin{theorem}[Duality argument for the scalar variable --- Robin version of {\cite[Thm.~3.3]{bernkopf-melenk22}}]\label{theorem:duality_argument_robin}
  Let Assumption~\ref{assumption:smax_shift} be valid for some $\hat s \ge -1$. 
  Then, given $(\pmb{\varphi}, w) \in \productspacerobin$ there is a pair $(\pmb{\psi}, v) \in \productspacerobin$
  with $\norm{w}_{L^2(\Omega)}^2 = b( (\pmb{\varphi}, w), (\pmb{\psi}, v) )$.
  Furthermore, $\pmb{\psi} \in \pmb{H}^{\min(\smax+1, 2)}(\Omega)$,
  $\nabla \cdot \pmb{\psi} \in H^{\min(\smax+2, 2)}(\Omega)$, 
  $\pmb{\psi} \cdot \pmb{n} \in H^{\min(\smax+3/2, 3/2)}(\Gamma)$,
  and $v \in H^{\min(\smax+2, 2)}(\Omega)$ with 
  \begin{equation*}
    \norm{v}_{H^{\min(\smax+2, 2)}(\Omega)} +
    \norm{\pmb{\psi}}_{H^{\min(\smax+1, 2)}(\Omega)} +
    \norm{\nabla \cdot \pmb{\psi}}_{H^{\min(\smax+2, 2)}(\Omega)}  +
    \norm{\pmb{\psi} \cdot \pmb{n}}_{H^{\min(\smax+3/2, 3/2)}(\Gamma)} \lesssim \norm{w}_{L^2(\Omega)}.
  \end{equation*}
\end{theorem}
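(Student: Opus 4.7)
The plan is to obtain $(\pmb{\psi}, v)$ from a variational dual problem, use the norm equivalence of Theorem~\ref{theorem:norm_equivalence_robin} for existence in $\pmb{V}\times W$, and then extract Euler--Lagrange equations that reveal an underlying scalar Robin problem to which Assumption~\ref{assumption:smax_shift} can be applied. I first observe that $b$ is symmetric and, by Theorem~\ref{theorem:norm_equivalence_robin}, continuous and coercive on $\pmb{V}\times W$, so Lax--Milgram produces a unique $(\pmb{\psi}, v) \in \pmb{V}\times W$ with
\begin{equation*}
  b((\tilde{\pmb{\varphi}}, \tilde w), (\pmb{\psi}, v)) = (\tilde w, w)_\Omega \qquad \forall (\tilde{\pmb{\varphi}}, \tilde w) \in \pmb{V}\times W,
\end{equation*}
and $\|\pmb{\psi}\|_{\pmb{V}} + \|v\|_W \lesssim \|w\|_{L^2(\Omega)}$. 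Specializing to $(\tilde{\pmb{\varphi}}, \tilde w) = (\pmb{\varphi}, w)$ delivers the claimed identity.

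Next I would derive the strong form. Testing with $\tilde{\pmb{\varphi}} \in C_c^\infty(\Omega)^d$ in the dual equation yields $\nabla v + \pmb{\psi} = \nabla q$ with $q := \nabla\cdot\pmb{\psi} + \gamma v$; admitting $\tilde{\pmb{\varphi}}$ with non-vanishing normal trace gives the boundary identity $q + \pmb{\psi}\cdot\pmb{n} - \alpha v = 0$ on $\Gamma$. Inserting these two relations into the equations obtained by testing with $\tilde w$, first in $C_c^\infty(\Omega)$ and then in $H^1(\Omega)$, yields after a short computation the homogeneous scalar Robin problem
\begin{equation*}
  -\Delta q + \gamma q = w \text{ in } \Omega, \qquad \partial_n q + \alpha q = 0 \text{ on } \Gamma.
\end{equation*}
Finding this decoupling, i.e.\ recognizing $q$ as the correct auxiliary variable so that the coupled first-order dual system collapses to a single familiar second-order Robin problem driven purely by $w$, is what I expect to be the main obstacle; everything downstream is a bootstrapping exercise via Assumption~\ref{assumption:smax_shift}.

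With the decoupling in hand, Assumption~\ref{assumption:smax_shift} applied at $s=0$ (for $\smax \ge 0$) and at $s=\smax$ otherwise, using $w\in L^2(\Omega)\subset\widetilde H^{\smax}(\Omega)$ in the latter case, gives $q \in H^{\min(\smax + 2, 2)}(\Omega)$ with $\|q\|_{H^{\min(\smax+2,2)}(\Omega)} \lesssim \|w\|_{L^2(\Omega)}$. Substituting $\nabla\cdot\pmb{\psi} = q - \gamma v$ and the boundary identity back in yields a Robin problem for $v$,
\begin{equation*}
  -\Delta v + \gamma v = w + (1-\gamma) q \text{ in } \Omega, \qquad \partial_n v + \alpha v = (1-\alpha) q \text{ on } \Gamma,
\end{equation*}
whose data lie in $L^2(\Omega)$ and $H^{3/2}(\Gamma)$, so a second application of Assumption~\ref{assumption:smax_shift} gives $v \in H^{\min(\smax+2,2)}(\Omega)$ with the corresponding bound.

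Finally, since $\pmb{\psi} = \nabla(q-v)$, I would analyze the scalar $\phi := q - v$, which satisfies
\begin{equation*}
  -\Delta \phi + \gamma \phi = (\gamma - 1) q \text{ in } \Omega, \qquad \partial_n \phi + \alpha \phi = (\alpha - 1) q \text{ on } \Gamma.
\end{equation*}
Because both the volume and the boundary data inherit one extra derivative from $q$, a third application of Assumption~\ref{assumption:smax_shift} with shift index $\min(\smax, 1)$ yields $\phi \in H^{\min(\smax + 2, 3)}(\Omega)$, and hence $\pmb{\psi} \in \pmb{H}^{\min(\smax + 1, 2)}(\Omega)$. The remaining assertions are then immediate: $\nabla\cdot\pmb{\psi} = q - \gamma v \in H^{\min(\smax + 2, 2)}(\Omega)$, and $\pmb{\psi}\cdot\pmb{n} = \alpha v - q \in H^{\min(\smax + 3/2, 3/2)}(\Gamma)$ by the trace theorem, with all norms chaining back to $\|w\|_{L^2(\Omega)}$.
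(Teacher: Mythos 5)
Your proof is correct and follows essentially the same route as the paper's: Lax--Milgram for existence, introducing the auxiliary scalar $q=\nabla\cdot\pmb{\psi}+\gamma v$ (the paper calls it $z$), deriving the same Robin problems for $q$, $v$, and $q-v$, and reading off the stated regularity by three applications of Assumption~\ref{assumption:smax_shift}. The only cosmetic difference is notation, plus a slight imprecision where you write the boundary data for $v$ as lying in $H^{3/2}(\Gamma)$ — for $\smax<0$ it is only in $H^{\min(\smax+3/2,3/2)}(\Gamma)$ — but this does not affect the conclusion since the $L^2$ volume data already limits the shift.
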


\begin{proof}
By the coercivity result of Theorem~\ref{theorem:norm_equivalence_robin} and Lax-Milgram, there is a unique 
solution $(\pmb{\psi},v) \in \productspacerobin$ to the following variational problem: 
  \begin{equation}
 \label{eq:theorem:duality_argument_robin-10}
    (u, w)_\Omega = b( (\pmb{\varphi}, u), (\pmb{\psi}, v) ) ~~~ \forall \, (\pmb{\varphi}, u) \in  \productspacerobin. 
  \end{equation}
In order to show the regularity assertions about $(\pmb{\psi},v)$, we 
  introduce the new quantities $z$, $\pmb{\mu}$, and $\sigma$ by 
  \begin{equation}\label{eq:actual_dual_problem_u_robin}
    \begin{alignedat}{2}
      \nabla \cdot \pmb{\psi} + \gamma v   &= z            \qquad  &\text{in } \Omega, \\
      \nabla v + \pmb{\psi}                &= \pmb{\mu}    \qquad  &\text{in } \Omega, \\
      \pmb{\psi} \cdot \pmb{n} - \alpha v  &= \sigma       \qquad  &\text{on } \Gamma.
    \end{alignedat}
  \end{equation}
In terms of these quantities, 
 (\ref{eq:theorem:duality_argument_robin-10}) reads 
  \begin{equation}\label{eq:axiliary_actual_dual_problem_u_robin}
    (u, w)_\Omega =
    (\nabla u + \pmb{\varphi}, \pmb{\mu})_\Omega +
    (\nabla \cdot \pmb{\varphi} + \gamma u, z)_\Omega +
    \langle \pmb{\varphi} \cdot \pmb{n} - \alpha u , \sigma \rangle_{\Gamma}
    ~~~ \forall \, (\pmb{\varphi}, u) \in  \productspacerobin.
  \end{equation}
  Selecting $u = 0$, we find with an integration by parts 
  \begin{equation*}
    0 = (\pmb{\varphi}, \pmb{\mu})_\Omega + (\nabla \cdot \pmb{\varphi}, z)_\Omega + \langle \pmb{\varphi} \cdot \pmb{n} , \sigma \rangle_{\Gamma} =
    (\pmb{\varphi}, \pmb{\mu} - \nabla z)_\Omega + \langle \pmb{\varphi} \cdot \pmb{n} , \sigma + z \rangle_{\Gamma}, 
  \end{equation*}
  which gives $\pmb{\mu} = \nabla z$ as well as $\sigma = -z|_\Gamma$. Therefore we find by taking $\pmb{\varphi} = 0$
  in (\ref{eq:axiliary_actual_dual_problem_u_robin})
  \begin{equation*}
    ( u,  w)_\Omega = (\nabla u, \nabla z)_\Omega + (\gamma u, z)_\Omega + \langle \alpha u, z \rangle_{\Gamma}~~~ \forall \, u \in \HOne.
  \end{equation*}
  That is, $z$ satisfies, in strong form, 
  \begin{equation}\label{eq:auxiliary_dual_problem_robin}
    \begin{alignedat}{2}
      - \Delta z + \gamma z &= w \quad   &&\text{in } \Omega,\\
      \partial_n z + \alpha z &= 0         &&\text{on } \Gamma.
    \end{alignedat}
  \end{equation}
  Assumption~\ref{assumption:smax_shift} provides 
  $z \in H^{\min(\smax+2, 2)}(\Omega)$ together with the estimate $\norm{z}_{H^{\min(\smax+2, 2)}(\Omega)} \lesssim \norm{w}_{L^2(\Omega)}$.
  We next proceed as in the proof of \cite[Thm.~3.3]{bernkopf-melenk22}.
  To highlight the fact that $\pmb{\psi}$ is only in $\pmb{H}^{\min(\smax+1, 2)}(\Omega)$ compared to \cite[Thm.~3.3]{bernkopf-melenk22} 
we write down the equations for $v$ and $z-v$:
%  \\
%  \\
%  \noindent\begin{minipage}{.5\linewidth}
%    \begin{equation*}
%      \begin{alignedat}{2}
%        - \Delta v + \gamma v &= w + (1-\gamma)z \quad   &&\text{in } \Omega,\\
%        \partial_n v + \alpha v &= (1-\alpha)z             &&\text{on } \Gamma,
%      \end{alignedat}
%    \end{equation*}
%  \end{minipage}%
%  \begin{minipage}{.5\linewidth}
%    \begin{equation*}
%      \begin{alignedat}{2}
%        - \Delta (z-v) + \gamma (z-v) &= (\gamma - 1)z \quad   &&\text{in } \Omega,\\
%        \partial_n (z-v) + \alpha (z-v) &= (\alpha - 1)z         &&\text{on } \Gamma.
%      \end{alignedat}
%    \end{equation*}
%  \end{minipage}
%  \\
%  \\
      \begin{align*}
        - \Delta v + \gamma v &= w + (1-\gamma)z &&\text{in } \Omega,
&\qquad 
        - \Delta (z-v) + \gamma (z-v) &= (\gamma - 1)z &&\text{in } \Omega,\\ 
        \partial_n v + \alpha v &= (1-\alpha)z             &&\text{on } \Gamma,
&\qquad 
        \partial_n (z-v) + \alpha (z-v) &= (\alpha - 1)z         &&\text{on } \Gamma.
      \end{align*}
  Assumption~\ref{assumption:smax_shift} gives $v \in H^{\min(\smax+2, 2)}(\Omega)$ since the volume right-hand side
is only in $L^2(\Omega)$.
  The regularity of $z-v$ is limited by the exploitable regularity of the boundary data 
%$(\alpha - 1)z \in H^{\min(\smax+2, 2)-1/2}(\Gamma) \subset H^{\smax+1/2}(\Gamma)$ since $\smax \leq 1/2 \leq 1$.
$(\alpha - 1)z \in H^{\min(\smax+2, 2)-1/2}(\Gamma)  = H^{\min(\smax+1,1) + 1/2}(\Gamma) \subset H^{\min( \min(\smax+1,1),\smax)+1/2}(\Gamma)
 = H^{\min(\smax,1) +1/2}(\Gamma)$. 
  Therefore, by Assumption~\ref{assumption:smax_shift},
  we have $z-v \in H^{\smax+2}(\Omega)  = H^{\min(\smax+2,3)}(\Omega)$ together with the estimate
  \begin{equation*}
    \norm{z-v}_{H^{\min(\smax+2, 3)}(\Omega)} \lesssim \norm{w}_{L^2(\Omega)},
  \end{equation*}
  and consequently $\pmb{\psi} = \nabla (z-v) \in \pmb{H}^{\min(\smax+1, 2)}(\Omega)$.
The regularity of $\nabla \cdot \pmb{\psi}$ now follows from the representation (\ref{eq:actual_dual_problem_u_robin})$_1$
and that of $\pmb{\psi} \cdot \pmb{n}$ from (\ref{eq:actual_dual_problem_u_robin})$_3$ and $\sigma = -z|_\Gamma$. For this last regularity
assertion, we recall that Sobolev spaces $H^s(\Gamma)$ with $s > 1$ are defined in terms of traces of Sobolev functions
of $H^{s+1/2}(\Omega)$. 
\end{proof}

\begin{theorem}[Duality argument for the gradient of the scalar variable - Robin version of {\cite[Thm.~3.4]{bernkopf-melenk22}}]\label{theorem:duality_argument_grad_u_robin}
  Let Assumption~\ref{assumption:smax_shift} be valid for some $\hat s \ge -1$. 
  Then, given $(\pmb{\varphi}, w) \in \productspacerobin$ there is a pair $(\pmb{\psi}, v) \in \productspacerobin$
  with $\norm{\nabla w}_{L^2(\Omega)}^2 = b( (\pmb{\varphi}, w), (\pmb{\psi}, v) )$.
  Furthermore, $\pmb{\psi} \in \pmb{H}^{\min(\smax+1, 1)}(\Omega)$,
  $\nabla \cdot \pmb{\psi} \in H^1(\Omega)$, $\pmb{\psi} \cdot \pmb{n} \in H^{1/2}(\Gamma)$, and $v \in H^1(\Omega)$ with 
  \begin{equation*}
    \norm{v}_{H^1(\Omega)} +
    \norm{\pmb{\psi}}_{H^{\min(\smax+1, 1)}(\Omega)} +
    \norm{\nabla \cdot \pmb{\psi}}_{H^1(\Omega)} + \norm{\pmb{\psi}\cdot\pmb{n}}_{H^{1/2}(\Gamma)} 
\lesssim \norm{\nabla w}_{L^2(\Omega)}.
  \end{equation*}
\end{theorem}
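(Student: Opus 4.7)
The plan is to follow the template of Theorem~\ref{theorem:duality_argument_robin}, replacing the right-hand side $(u, w)_\Omega$ by $(\nabla u, \nabla w)_\Omega$ and tracking the lower regularity this forces on the dual variables. By the norm equivalence Theorem~\ref{theorem:norm_equivalence_robin} and Lax-Milgram, I first obtain a unique $(\pmb{\psi}, v) \in \productspacerobin$ satisfying $b((\pmb{\varphi}, u), (\pmb{\psi}, v)) = (\nabla u, \nabla w)_\Omega$ for all $(\pmb{\varphi}, u)$. The functional bound $|(\nabla u, \nabla w)_\Omega| \leq \|u\|_{\HOne}\|\nabla w\|_{L^2(\Omega)}$ yields $\|v\|_{\HOne} + \|\pmb{\psi}\|_{\HDiv} + \|\pmb{\psi} \cdot \pmb{n}\|_{L^2(\Gamma)} \lesssim \|\nabla w\|_{L^2(\Omega)}$, which already furnishes the estimate for $v$. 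Exactly as in the proof of Theorem~\ref{theorem:duality_argument_robin}, I introduce $z = \nabla\cdot\pmb{\psi} + \gamma v$, $\pmb{\mu} = \nabla v + \pmb{\psi}$, $\sigma = \pmb{\psi}\cdot\pmb{n} - \alpha v$ and test with $u = 0$ to conclude $\pmb{\mu} = \nabla z$ and $\sigma = -z|_\Gamma$. Since $\pmb{\mu} \in \pmb{L}^2(\Omega)$, the distributional identity $\nabla z = \pmb{\mu}$ immediately upgrades $z$ from $L^2$ to $\HOne$ with $\|z\|_{\HOne} \lesssim \|\nabla w\|_{L^2(\Omega)}$.

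Testing with $\pmb{\varphi} = 0$ then yields the weak Robin problem
\begin{equation*}
  (\nabla u, \nabla z)_\Omega + \gamma (u, z)_\Omega + \alpha \langle u, z\rangle_\Gamma = (\nabla u, \nabla w)_\Omega \qquad \forall u \in \HOne,
\end{equation*}
whose formal strong form reads $-\Delta z + \gamma z = -\Delta w$ in $\Omega$ and $\partial_n z + \alpha z = \partial_n w$ on $\Gamma$. These right-hand sides are only distributional and \emph{cannot} be fed into Assumption~\ref{assumption:smax_shift} directly: this is the main obstacle, since $w$ has only $\HOne$-regularity. The device I would use is to apply the shift theorem not to $z$ itself but to the difference $z - v$. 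Substituting $\pmb{\psi} = \nabla(z - v)$ (which follows from $\pmb{\mu} = \nabla z$) into $\nabla\cdot\pmb{\psi} + \gamma v = z$ and $\pmb{\psi}\cdot\pmb{n} - \alpha v = -z|_\Gamma$ produces
\begin{align*}
  -\Delta(z - v) + \gamma (z - v) &= (\gamma - 1)z &&\text{in } \Omega, \\
  \partial_n (z - v) + \alpha (z - v) &= (\alpha - 1)z|_\Gamma &&\text{on } \Gamma.
\end{align*}
The $w$-dependence is now fully absorbed into $z \in \HOne$: the volume datum lies in $\HOne \subset L^2(\Omega)$ and the boundary datum in $H^{1/2}(\Gamma)$. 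Assumption~\ref{assumption:smax_shift} applied with $s = \min(\smax, 0)$ therefore delivers $z - v \in H^{\min(\smax + 2, 2)}(\Omega)$ with $\|z - v\|_{H^{\min(\smax+2, 2)}(\Omega)} \lesssim \|z\|_{\HOne} \lesssim \|\nabla w\|_{L^2(\Omega)}$.

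From this the remaining bounds follow: $\pmb{\psi} = \nabla(z - v) \in \pmb{H}^{\min(\smax + 1, 1)}(\Omega)$ takes care of the vector variable; $\nabla\cdot\pmb{\psi} = z - \gamma v \in \HOne$ since $z, v \in \HOne$; and $\pmb{\psi}\cdot\pmb{n} = -z|_\Gamma + \alpha v|_\Gamma \in H^{1/2}(\Gamma)$ by the trace theorem applied to $z, v \in \HOne$. All these quantities inherit the desired control by $\|\nabla w\|_{L^2(\Omega)}$, concluding the proof. The ceiling $\min(\smax+1, 1)$ on the regularity of $\pmb{\psi}$ (rather than the $\min(\smax+1, 2)$ available in Theorem~\ref{theorem:duality_argument_robin}) is precisely the price paid for the boundary datum for $z - v$ only being in $H^{1/2}(\Gamma)$, which caps the shift at $s = 0$ exactly as flagged in Remark~\ref{remk:difference-to-bernkopf-melenk22}.
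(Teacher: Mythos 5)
Your proof is correct and follows essentially the same route as the paper: you introduce $z$, $\pmb{\mu}$, $\sigma$, test with $u=0$ and $\pmb{\varphi}=0$ to identify $\pmb{\mu}=\nabla z$ and $\sigma=-z|_\Gamma$, obtain $z,v \in H^1(\Omega)$ with bounds by $\|\nabla w\|_{L^2(\Omega)}$, and then apply the shift theorem to $z-v$ (whose data depend only on $z$), yielding $\pmb{\psi}=\nabla(z-v)\in\pmb{H}^{\min(\smax+1,1)}(\Omega)$. The only cosmetic differences are that you derive $\|z\|_{H^1(\Omega)}\lesssim\|\nabla w\|_{L^2(\Omega)}$ directly from the $b$-stability and the distributional identity $\nabla z=\pmb{\mu}$ rather than from the weak formulation of the $z$-problem, and you obtain the $z-v$ system directly from the first-order relations rather than writing out the separate $v$ and $z-v$ equations.
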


\begin{proof}
By the coercivity result of Theorem~\ref{theorem:norm_equivalence_robin} and Lax-Milgram, there is a unique 
$(\pmb{\psi},v) \in \productspacerobin$ satisfying 
  \begin{equation}
\label{eq:theorem:duality_argument_grad_u_robin-10}
    (\nabla u, \nabla w)_\Omega = b( (\pmb{\varphi}, u), (\pmb{\psi}, v) ) ~~~ \forall \, (\pmb{\varphi}, u) \in  \productspacerobin.
  \end{equation}
  In order to prove the regularity assertion, we introduce $z$, $\pmb{\mu}$, and $\sigma$ by 
  \begin{equation}\label{eq:actual_dual_problem_grad_u_robin}
    \begin{alignedat}{2}
      \nabla \cdot \pmb{\psi} + \gamma v   &= z            \qquad  &\text{in } \Omega, \\
      \nabla v + \pmb{\psi}                &= \pmb{\mu}    \qquad  &\text{in } \Omega, \\
      \pmb{\psi} \cdot \pmb{n} - \alpha v  &= \sigma       \qquad  &\text{on } \Gamma.
    \end{alignedat}
  \end{equation}
In terms of these quantities, (\ref{eq:theorem:duality_argument_grad_u_robin-10}) reads
  \begin{equation}\label{eq:axiliary_actual_dual_problem_grad_u_robin}
    (\nabla u, \nabla w)_\Omega =
    (\nabla u + \pmb{\varphi}, \pmb{\mu})_\Omega +
    (\nabla \cdot \pmb{\varphi} + \gamma u, z)_\Omega +
    \langle \pmb{\varphi} \cdot \pmb{n} - \alpha u , \sigma \rangle_{\Gamma}
    ~~~ \forall \, (\pmb{\varphi}, u) \in  \productspacerobin.
  \end{equation}
  Selecting $u = 0$, we find after an integration by parts 
  \begin{equation*}
    0 = (\pmb{\varphi}, \pmb{\mu})_\Omega + (\nabla \cdot \pmb{\varphi}, z)_\Omega + \langle \pmb{\varphi} \cdot \pmb{n} , \sigma \rangle_{\Gamma} =
    (\pmb{\varphi}, \pmb{\mu} - \nabla z)_\Omega + \langle \pmb{\varphi} \cdot \pmb{n} , \sigma + z \rangle_{\Gamma}, 
  \end{equation*}
  which gives $\pmb{\mu} = \nabla z$ and $\sigma = -z|_\Gamma$. Next, selecting $\pmb{\varphi} = 0$
in (\ref{eq:axiliary_actual_dual_problem_grad_u_robin}) we see 
  \begin{equation*}
    (\nabla u, \nabla w)_\Omega = (\nabla u, \nabla z)_\Omega + (\gamma u, z)_\Omega + \langle \alpha u, z \rangle_{\Gamma}~~~ \forall \, u \in \HOne.
  \end{equation*}
  Viewing this as an equation for $z$, the Lax-Milgram theorem gives the estimate $\norm{z}_{H^1(\Omega)} \lesssim \norm{\nabla w}_{L^2(\Omega)}$.
  In fact, in strong form, $z$ satisfies
  \begin{equation}\label{eq:auxiliary_dual_problem_grad_u_robin}
    \begin{alignedat}{2}
      - \Delta z + \gamma z &= - \nabla \cdot \nabla w \quad   &&\text{in } \Omega,\\
      \partial_n z + \alpha z &= \nabla w \cdot \pmb{n}         &&\text{on } \Gamma.
    \end{alignedat}
  \end{equation}
  We next may proceed as in the proof of \cite[Thm.~{3.4}]{bernkopf-melenk22}.
  The equations satisfied by $v$ and $z-v$ are easily derived:
%  \\
%  \\
%  \noindent\begin{minipage}{.5\linewidth}
%    \begin{equation*}
%      \begin{alignedat}{2}
%        - \Delta v + \gamma v &= (1-\gamma)z - \nabla \cdot \nabla w   \quad   &&\text{in } \Omega,\\
%        \partial_n v + \alpha v &= (1-\alpha)z + \nabla w \cdot \pmb{n}              &&\text{on } \Gamma,
%      \end{alignedat}
%    \end{equation*}
%  \end{minipage}%
%  \begin{minipage}{.5\linewidth}
%    \begin{equation*}
%      \begin{alignedat}{2}
%        - \Delta (z-v) + \gamma (z-v) &= (\gamma - 1)z \quad   &&\text{in } \Omega,\\
%        \partial_n (z-v) + \alpha (z-v) &= (\alpha - 1)z         &&\text{on } \Gamma.
%      \end{alignedat}
%    \end{equation*}
%  \end{minipage}
%  \\
%  \\
      \begin{align*}
        - \Delta v + \gamma v &= (1-\gamma)z - \nabla \cdot \nabla w   \quad   &&\text{in } \Omega,
&\qquad 
        - \Delta (z-v) + \gamma (z-v) &= (\gamma - 1)z \quad   &&\text{in } \Omega,\\
        \partial_n v + \alpha v &= (1-\alpha)z + \nabla w \cdot \pmb{n}              &&\text{on } \Gamma,
&\qquad 
        \partial_n (z-v) + \alpha (z-v) &= (\alpha - 1)z         &&\text{on } \Gamma.
      \end{align*}
 Again by the Lax-Milgram theorem we have $v \in H^1(\Omega)$ with $\norm{v}_{H^1(\Omega)} \lesssim \norm{\nabla w}_{L^2(\Omega)}$.
  The regularity of $z-v$ is limited by the
  exploitable regularity of the boundary data
  $(\alpha - 1)z \in H^{1/2}(\Gamma)$.
  Assumption~\ref{assumption:smax_shift} implies 
  $z-v \in H^{\min(\smax+2, 2)}(\Omega)$ with the estimate
  \begin{equation*}
    \norm{z-v}_{H^{\min(\smax+2, 2)}(\Omega)} \lesssim \norm{\nabla w}_{L^2(\Omega)},
  \end{equation*}
  and consequently $\pmb{\psi} = \nabla (z-v) \in \pmb{H}^{\min(\smax+1, 1)}(\Omega)$.
  The regularity of $\nabla \cdot \pmb{\psi}$ follows from 
  (\ref{eq:actual_dual_problem_grad_u_robin})$_{1}$ and that of $\pmb{\psi}\cdot\pmb{n}$ from 
  (\ref{eq:actual_dual_problem_grad_u_robin})$_{3}$.  
\end{proof}

\begin{theorem}[Duality argument for the vector valued variable --- Robin version of {\cite[Thm.~3.5]{bernkopf-melenk22}}]\label{theorem:duality_argument_phi_robin}
  Let Assumption~\ref{assumption:smax_shift} be valid for some $\hat s \ge -1$.
  Then, given $(\pmb{\eta}, u) \in \productspacerobin$ there is a pair $(\pmb{\psi}, v) \in \productspacerobin$
  with $\norm{\pmb{\eta}}_{L^2(\Omega)}^2 = b( (\pmb{\eta}, u), (\pmb{\psi}, v) )$.
  Furthermore, $\pmb{\psi} \in \pmb{L}^2(\Omega)$,
  $\nabla \cdot \pmb{\psi} \in H^1(\Omega)$,
  $\pmb{\psi} \cdot \pmb{n} \in H^{1/2}(\Gamma)$, and $v \in H^{\min(\smax+2, 2)}(\Omega)$ with 
  \begin{equation*}
    \norm{v}_{H^{\min(\smax+2, 2)}(\Omega)}  +
    \norm{\pmb{\psi}}_{L^2(\Omega)} +
    \norm{\nabla \cdot \pmb{\psi}}_{H^1(\Omega)} +
    \norm{\pmb{\psi} \cdot \pmb{n}}_{H^{1/2}(\Gamma)}  \lesssim \norm{\pmb{\eta}}_{L^2(\Omega)}.
  \end{equation*}
\end{theorem}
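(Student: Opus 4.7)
The plan is to follow the template of Theorems~\ref{theorem:duality_argument_robin} and~\ref{theorem:duality_argument_grad_u_robin}. By coercivity of $b$ (Theorem~\ref{theorem:norm_equivalence_robin}) and Lax--Milgram, there is a unique $(\pmb{\psi}, v) \in \productspacerobin$ satisfying
\begin{equation*}
b((\pmb{\varphi}, \widetilde u), (\pmb{\psi}, v)) = (\pmb{\varphi}, \pmb{\eta})_{\Omega} \qquad \forall\,(\pmb{\varphi}, \widetilde u) \in \productspacerobin.
\end{equation*}
Testing with the pair $(\pmb{\eta}, u)$ from the statement yields the desired identity $\|\pmb{\eta}\|_{L^2(\Omega)}^2 = b((\pmb{\eta}, u), (\pmb{\psi}, v))$. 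As in the previous two proofs, I introduce the auxiliary quantities $z$, $\pmb{\mu}$, $\sigma$ via $\nabla \cdot \pmb{\psi} + \gamma v = z$, $\nabla v + \pmb{\psi} = \pmb{\mu}$, and $\pmb{\psi}\cdot\pmb{n} - \alpha v = \sigma$, and rewrite the variational identity in these terms.

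Choosing $\widetilde u = 0$, integrating by parts, and varying $\pmb{\varphi}$ yields $\pmb{\mu} = \nabla z + \pmb{\eta}$ and $\sigma = -z|_\Gamma$. This is the key structural difference with respect to Theorems~\ref{theorem:duality_argument_robin} and~\ref{theorem:duality_argument_grad_u_robin}: the extra $\pmb{\eta}$-summand inside $\pmb{\mu}$ will propagate into $\pmb{\psi} = \pmb{\mu} - \nabla v$ and is responsible for the $\pmb{L}^2$-ceiling on $\pmb{\psi}$. Substituting $\pmb{\mu}$ and $\sigma$ back and choosing $\pmb{\varphi} = 0$ then produces, in strong form,
\begin{equation*}
-\Delta z + \gamma z = \nabla \cdot \pmb{\eta} \quad \text{in } \Omega, \qquad \partial_n z + \alpha z = -\pmb{\eta}\cdot\pmb{n} \quad \text{on } \Gamma,
\end{equation*}
and Lax--Milgram applied to the associated weak formulation delivers $z \in H^1(\Omega)$ together with $\|z\|_{H^1(\Omega)} \lesssim \|\pmb{\eta}\|_{L^2(\Omega)}$; no further shift is available here because $\pmb{\eta}$ itself is merely in $\pmb{L}^2(\Omega)$.

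Eliminating $\pmb{\eta}$ from the constitutive relations via the PDE for $z$ produces
\begin{equation*}
-\Delta v + \gamma v = (1-\gamma) z \quad \text{in } \Omega, \qquad \partial_n v + \alpha v = (1-\alpha) z \quad \text{on } \Gamma,
\end{equation*}
with right-hand sides depending purely on $z$; since $z \in H^1(\Omega)$ yields $(1-\gamma)z \in L^2(\Omega)$ and $(1-\alpha)z|_\Gamma \in H^{1/2}(\Gamma)$, Assumption~\ref{assumption:smax_shift} delivers $v \in H^{\min(\smax+2, 2)}(\Omega)$ with the claimed bound. The remaining regularity assertions then follow by reading them off from the constitutive relations themselves: $\pmb{\psi} = \nabla(z-v) + \pmb{\eta} \in \pmb{L}^2(\Omega)$, $\nabla \cdot \pmb{\psi} = z - \gamma v \in H^1(\Omega)$ since both $z$ and $v$ lie in $H^1(\Omega)$, and $\pmb{\psi}\cdot\pmb{n} = -z|_\Gamma + \alpha v|_\Gamma \in H^{1/2}(\Gamma)$ by the standard trace theorem, all controlled by $\|\pmb{\eta}\|_{L^2(\Omega)}$ through $\|z\|_{H^1(\Omega)}$. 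The main obstacle I anticipate is precisely the identification of the $\pmb{\eta}$-term in $\pmb{\mu}$ as the genuine bottleneck for the regularity of $\pmb{\psi}$; once this is isolated, the rest is a careful but routine adaptation of the two preceding duality proofs.
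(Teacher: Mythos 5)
Your proposal is correct and follows essentially the same route as the paper's proof: Lax--Milgram for existence, introduction of $z$, $\pmb{\mu}$, $\sigma$, identification $\pmb{\mu} = \nabla z + \pmb{\eta}$, $\sigma = -z|_\Gamma$, derivation of the strong-form equation for $z$ and then for $v$, and finally reading off $\pmb{\psi} = \nabla(z-v)+\pmb{\eta}$, $\nabla\cdot\pmb{\psi}$, and $\pmb{\psi}\cdot\pmb{n}$ from the defining relations. Your observation that the extra $\pmb{\eta}$-term in $\pmb{\mu}$ is exactly what caps the regularity of $\pmb{\psi}$ at $\pmb{L}^2(\Omega)$ is the same structural point the paper relies on.
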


\begin{proof}
By the coercivity result of Theorem~\ref{theorem:norm_equivalence_robin} and Lax-Milgram, there is a unique 
$(\pmb{\psi},v) \in \productspacerobin$ satisfying 
  \begin{equation}
\label{eq:theorem:duality_argument_phi_robin-10}
    (\pmb{\varphi}, \pmb{\eta})_\Omega = b( (\pmb{\varphi}, u), (\pmb{\psi}, v) ) ~~~ \forall \, (\pmb{\varphi}, u) \in  \productspacerobin.
  \end{equation}
  To show the stated regularity assertions, we introduce the abbreviations $z$, $\pmb{\mu}$, and $\sigma$ by 
  \begin{equation}\label{eq:actual_dual_problem_phi_robin}
    \begin{alignedat}{2}
      \nabla \cdot \pmb{\psi} + \gamma v   &= z            \qquad  &\text{in } \Omega, \\
      \nabla v + \pmb{\psi}                &= \pmb{\mu}    \qquad  &\text{in } \Omega, \\
      \pmb{\psi} \cdot \pmb{n} - \alpha v  &= \sigma       \qquad  &\text{on } \Gamma. 
    \end{alignedat}
  \end{equation}
In terms of these quantities, (\ref{eq:theorem:duality_argument_phi_robin-10}) reads 
  \begin{equation}\label{eq:axiliary_actual_dual_problem_phi_robin}
    (\pmb{\varphi}, \pmb{\eta})_\Omega =
    (\nabla u + \pmb{\varphi}, \pmb{\mu})_\Omega +
    (\nabla \cdot \pmb{\varphi} + \gamma u, z)_\Omega +
    \langle \pmb{\varphi} \cdot \pmb{n} - \alpha u , \sigma \rangle_{\Gamma}
    ~~~ \forall \, (\pmb{\varphi}, u) \in  \productspacerobin.
  \end{equation}
  Selecting $u = 0$ we find by an integration by parts 
  \begin{equation*}
    (\pmb{\varphi}, \pmb{\eta})_\Omega
    = (\pmb{\varphi}, \pmb{\mu})_\Omega + (\nabla \cdot \pmb{\varphi}, z)_\Omega + \langle \pmb{\varphi} \cdot \pmb{n} , \sigma \rangle_{\Gamma}
    = (\pmb{\varphi}, \pmb{\mu} - \nabla z)_\Omega + \langle \pmb{\varphi} \cdot \pmb{n} , \sigma + z \rangle_{\Gamma}, 
  \end{equation*}
  which gives $\pmb{\mu} - \nabla z = \pmb{\eta}$ and $\sigma = -z|_\Gamma$. Choosing $\pmb{\varphi} = 0$ in 
  (\ref{eq:axiliary_actual_dual_problem_phi_robin}), we find 
  \begin{equation*}
    0 = (\nabla u, \pmb{\eta} + \nabla z)_\Omega + (\gamma u, z)_\Omega + \langle \alpha u, z \rangle_{\Gamma} ~~~ \forall \, u \in \HOne.
  \end{equation*}
  Viewing this as an equation for $z$, we infer from the Lax-Milgram theorem the estimate $\norm{z}_{H^1(\Omega)} \lesssim \norm{\pmb{\eta}}_{L^2(\Omega)}$.
  In fact $z$ satisfies, in strong form, 
  \begin{equation}\label{eq:auxiliary_dual_problem_phi_robin}
    \begin{alignedat}{2}
      - \Delta z + \gamma z &= \nabla \cdot \pmb{\eta} \quad   &&\text{in } \Omega,\\
      \partial_n z + \alpha z &= -\pmb{\eta} \cdot \pmb{n}         &&\text{on } \Gamma.
    \end{alignedat}
  \end{equation}
  The equations satisfied by $v$ are easily derived:
  \begin{equation*}
    \begin{alignedat}{2}
      - \Delta v + \gamma v &= (1-\gamma)z  \quad   &&\text{in } \Omega,\\
      \partial_n v + \alpha v &= (1-\alpha)z          &&\text{on } \Gamma.
    \end{alignedat}
  \end{equation*}
  Assumption~\ref{assumption:smax_shift} gives $v \in H^{\min(\smax+2, 2)}(\Omega)$ with the estimate
  \begin{equation*}
    \norm{v}_{H^{\min(\smax+2, 2)}(\Omega)} \lesssim  \norm{(1-\gamma)z}_{L^2(\Omega)} + \norm{(1-\alpha)z}_{H^{1/2}(\Gamma)} \lesssim \norm{z}_{H^1(\Omega)} \lesssim \norm{\pmb{\eta}}_{L^2(\Omega)}.
  \end{equation*}
  Finally, we have $\pmb{\psi} = \pmb{\eta} + \nabla (z-v) \in \pmb{L}^2(\Omega)$.
  The asserted regularity of $\nabla \cdot \pmb{\psi}$ follows from (\ref{eq:actual_dual_problem_phi_robin})$_{1}$
and that of $\pmb{\psi} \cdot \pmb{n}$ from (\ref{eq:actual_dual_problem_phi_robin})$_{3}$. 
\end{proof}

\begin{theorem}[Duality argument for the normal trace of the vector valued variable]\label{theorem:duality_argument_normal_trace_robin}
  Let Assumption~\ref{assumption:smax_shift} be valid for some $\hat s \ge -1$.
  Then, given $(\pmb{\eta}, u) \in \productspacerobin$ there is a pair $(\pmb{\psi}, v) \in \productspacerobin$
  with $\norm{\pmb{\eta} \cdot \pmb{n}}_{L^2(\Gamma)}^2 = b( (\pmb{\eta}, u), (\pmb{\psi}, v) )$.
  Furthermore, $\pmb{\psi} \in \pmb{H}^{\min(\smax+1, 1/2)}(\Omega)$,
  $\nabla \cdot \pmb{\psi} \in H^{\min(\smax+2, 3/2)}(\Omega)$,
  $\pmb{\psi} \cdot \pmb{n} \in L^2(\Gamma)$, and
  $v \in H^{\min(\smax+2, 3/2)}(\Omega)$ with 
  \begin{equation*}
    \norm{v}_{H^{\min(\smax+2, 3/2)}(\Omega)} +
    \norm{\pmb{\psi}}_{^{\min(\smax+1, 1/2)}(\Omega)}  +
    \norm{\nabla \cdot \pmb{\psi}}_{H^{\min(\smax+2, 3/2)}(\Omega)}  +
    \norm{\pmb{\psi} \cdot \pmb{n}}_{L^{2}(\Gamma)}  \lesssim \norm{\pmb{\eta} \cdot \pmb{n}}_{L^2(\Gamma)}.
  \end{equation*}
\end{theorem}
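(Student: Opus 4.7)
The plan is to mimic the scheme of Theorems~\ref{theorem:duality_argument_robin}--\ref{theorem:duality_argument_phi_robin}, with the twist that the target functional is a pure boundary quantity. First, by the norm equivalence of Theorem~\ref{theorem:norm_equivalence_robin} and the Lax--Milgram lemma applied on $\pmb{V} \times W$, I obtain a unique $(\pmb{\psi}, v) \in \productspacerobin$ with
\begin{equation*}
  \langle \pmb{\varphi} \cdot \pmb{n}, \pmb{\eta}\cdot\pmb{n} \rangle_\Gamma = b((\pmb{\varphi}, u), (\pmb{\psi}, v)) \qquad \forall\, (\pmb{\varphi}, u) \in \productspacerobin,
\end{equation*}
which in particular yields the desired representation upon testing with $(\pmb{\eta}, u)$. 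Choosing $(\pmb{\varphi}, u) = (\pmb{\eta}, 0)$ also delivers the a priori bound in the $\pmb{V}\times W$ graph norm. The rest of the proof is about upgrading this weak solution to the claimed regularities.

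Next I introduce the auxiliary quantities $z, \pmb{\mu}, \sigma$ exactly as in \eqref{eq:actual_dual_problem_phi_robin}, rewrite the bilinear form in terms of them, and test with $u = 0$. Integration by parts gives
\begin{equation*}
  \langle \pmb{\varphi}\cdot\pmb{n}, \pmb{\eta}\cdot\pmb{n}\rangle_\Gamma = (\pmb{\varphi}, \pmb{\mu} - \nabla z)_\Omega + \langle \pmb{\varphi}\cdot\pmb{n}, \sigma + z\rangle_\Gamma,
\end{equation*}
from which $\pmb{\mu} = \nabla z$ and, crucially, $\sigma = \pmb{\eta}\cdot\pmb{n} - z|_\Gamma$; this is the one structural difference from the previous three duality theorems, where $\sigma = -z|_\Gamma$. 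Then testing with $\pmb{\varphi} = 0$ produces, in strong form, the inhomogeneous Robin problem
\begin{equation*}
  -\Delta z + \gamma z = 0 \text{ in } \Omega, \qquad \partial_n z + \alpha z = \alpha \, \pmb{\eta}\cdot\pmb{n} \text{ on } \Gamma,
\end{equation*}
so the entire dual data is a boundary datum in $L^2(\Gamma)$. Applying Assumption~\ref{assumption:smax_shift} with $s = -1/2$ (using $\pmb{\eta}\cdot\pmb{n} \in L^2(\Gamma) = H^0(\Gamma)$) and a trivial Lax--Milgram bound for the case $\smax < -1/2$ yields $z \in H^{\min(\smax+2, 3/2)}(\Omega)$ with $\norm{z}_{H^{\min(\smax+2,3/2)}(\Omega)} \lesssim \norm{\pmb{\eta}\cdot\pmb{n}}_{L^2(\Gamma)}$.

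Having $z$, I eliminate $\pmb{\psi} = \nabla(z-v)$ from \eqref{eq:actual_dual_problem_phi_robin} together with $\sigma = \pmb{\eta}\cdot\pmb{n} - z|_\Gamma$ to derive
\begin{equation*}
  -\Delta v + \gamma v = (1-\gamma) z \text{ in } \Omega, \qquad \partial_n v + \alpha v = (1-\alpha)(z - \pmb{\eta}\cdot\pmb{n}) \text{ on } \Gamma.
\end{equation*}
The volume right-hand side is in $H^{\min(\smax+2,3/2)}(\Omega) \subset L^2(\Omega)$, while the boundary datum, being the sum of the $L^2(\Gamma)$ function $\pmb{\eta}\cdot\pmb{n}$ and the trace $z|_\Gamma \in H^{\min(\smax+3/2,1)}(\Gamma) \subset L^2(\Gamma)$, lies in $L^2(\Gamma)$. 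A second application of Assumption~\ref{assumption:smax_shift} with $s = -1/2$ (or with $s = \smax$ if $\smax < -1/2$) gives $v \in H^{\min(\smax+2, 3/2)}(\Omega)$ with the claimed norm bound.

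Finally, $\pmb{\psi} = \nabla(z-v) \in \pmb{H}^{\min(\smax+1, 1/2)}(\Omega)$ follows immediately, the asserted regularity of $\nabla\cdot\pmb{\psi}$ comes from \eqref{eq:actual_dual_problem_phi_robin}$_1$ via $\nabla\cdot\pmb{\psi} = z - \gamma v$, and the regularity of $\pmb{\psi}\cdot\pmb{n}$ from \eqref{eq:actual_dual_problem_phi_robin}$_3$ together with the identification $\sigma = \pmb{\eta}\cdot\pmb{n} - z|_\Gamma$, noting that the three summands in $\pmb{\psi}\cdot\pmb{n} = \pmb{\eta}\cdot\pmb{n} - z|_\Gamma + \alpha v|_\Gamma$ all lie in $L^2(\Gamma)$. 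The main subtlety, compared to Theorems~\ref{theorem:duality_argument_robin}--\ref{theorem:duality_argument_phi_robin}, is that the data of the dual problem is entirely concentrated on $\Gamma$ and only in $L^2(\Gamma)$, so the effective shift uses $s = -1/2$, capping the best achievable regularity at $H^{3/2}(\Omega)$ rather than $H^2(\Omega)$ --- this is precisely what the $\min(\cdot, 3/2)$ and $\min(\cdot, 1/2)$ thresholds in the statement reflect.
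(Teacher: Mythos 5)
Your proof is correct and follows essentially the same route as the paper: Lax--Milgram via the coercivity of $b$, the introduction of $z, \pmb{\mu}, \sigma$, the identifications $\pmb{\mu} = \nabla z$ and $\sigma = \pmb{\eta}\cdot\pmb{n} - z|_\Gamma$, the strong-form Robin problems for $z$ and then for $v$ with the shift theorem applied twice at $s = -1/2$, and finally $\pmb{\psi} = \nabla(z-v)$ together with the system \eqref{eq:actual_dual_problem_normal_trace_phi_robin} to read off the regularities of $\nabla\cdot\pmb{\psi}$ and $\pmb{\psi}\cdot\pmb{n}$. You also correctly flag the structural difference from the earlier duality theorems, namely that $\sigma$ acquires the extra $\pmb{\eta}\cdot\pmb{n}$ term and that the $L^2(\Gamma)$ data caps the shift at $H^{3/2}$, which is exactly the point the paper makes.
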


\begin{proof}
  By the coercivity result of Theorem~\ref{theorem:norm_equivalence_robin} and Lax-Milgram, there is a unique 
$(\pmb{\psi},v) \in \productspacerobin$ satisfying 
  \begin{equation} \label{eq:theorem:duality_argument_normal_trace_phi_robin}
    \langle \pmb{\varphi} \cdot \pmb{n}, \pmb{\eta} \cdot \pmb{n} \rangle_\Gamma = b( (\pmb{\varphi}, u), (\pmb{\psi}, v) ) ~~~ \forall \, (\pmb{\varphi}, u) \in  \productspacerobin.
  \end{equation}
  To show the regularity assertions, we introduce the new quantities $z$, $\pmb{\mu}$, and $\sigma$ by 
  \begin{equation}\label{eq:actual_dual_problem_normal_trace_phi_robin}
    \begin{alignedat}{2}
      \nabla \cdot \pmb{\psi} + \gamma v   &= z            \qquad  &\text{in } \Omega, \\
      \nabla v + \pmb{\psi}                &= \pmb{\mu}    \qquad  &\text{in } \Omega, \\
      \pmb{\psi} \cdot \pmb{n} - \alpha v  &= \sigma       \qquad  &\text{on } \Gamma. 
    \end{alignedat}
  \end{equation}
In terms of these quantities, (\ref{eq:theorem:duality_argument_normal_trace_phi_robin}) reads 
  \begin{equation}\label{eq:axiliary_actual_dual_problem_normal_trace_phi_robin}
    \langle \pmb{\varphi} \cdot \pmb{n}, \pmb{\eta} \cdot \pmb{n} \rangle_\Gamma =
    (\nabla u + \pmb{\varphi}, \pmb{\mu})_\Omega +
    (\nabla \cdot \pmb{\varphi} + \gamma u, z)_\Omega +
    \langle \pmb{\varphi} \cdot \pmb{n} - \alpha u , \sigma \rangle_{\Gamma}
    ~~~ \forall \, (\pmb{\varphi}, u) \in  \productspacerobin.
  \end{equation}
  Selecting $u = 0$, we find after an integration by parts 
  \begin{equation*}
    \langle \pmb{\varphi} \cdot \pmb{n}, \pmb{\eta} \cdot \pmb{n} \rangle_\Gamma 
    = (\pmb{\varphi}, \pmb{\mu})_\Omega + (\nabla \cdot \pmb{\varphi}, z)_\Omega + \langle \pmb{\varphi} \cdot \pmb{n} , \sigma \rangle_{\Gamma}
    = (\pmb{\varphi}, \pmb{\mu} - \nabla z)_\Omega + \langle \pmb{\varphi} \cdot \pmb{n} , \sigma + z \rangle_{\Gamma} 
\qquad \forall \pmb{\varphi} \in \pmb{V}, 
  \end{equation*}
  which gives $\pmb{\mu} = \nabla z$ as well as $\sigma = \pmb{\eta} \cdot \pmb{n}-z$. Therefore we find with $\pmb{\varphi} = 0$ in 
  (\ref{eq:axiliary_actual_dual_problem_normal_trace_phi_robin})
  \begin{equation*}
    0 = (\nabla u, \nabla z)_\Omega + (\gamma u, z)_\Omega + \langle \alpha u, z  \rangle_{\Gamma} - \langle \alpha u, \pmb{\eta} \cdot \pmb{n}  \rangle_{\Gamma} ~~~ \forall \, u \in \HOne.
  \end{equation*}
  That is, $z$ satisfies, in strong form, 
  \begin{equation*}
    \begin{alignedat}{2}
      - \Delta z + \gamma z &= 0 \quad   &&\text{in } \Omega,\\
      \partial_n z + \alpha z &= \alpha \, \pmb{\eta} \cdot \pmb{n}   \quad     &&\text{on } \Gamma.
    \end{alignedat}
  \end{equation*}
  Assumption~\ref{assumption:smax_shift} provides 
  $z \in H^{\min(\smax+2, 3/2)}(\Omega)$ with the estimate $\norm{z}_{H^{\min(\smax+2, 3/2)}(\Omega)} \lesssim \norm{\pmb{\eta} \cdot \pmb{n}}_{L^2(\Gamma)}$.
  The equations satisfied by $v$ are easily seen to be: 
  \begin{equation*}
    \begin{alignedat}{2}
      - \Delta v + \gamma v &= (1-\gamma)z  \quad   &&\text{in } \Omega,\\
      \partial_n v + \alpha v &= (1-\alpha) (z - \pmb{\eta} \cdot \pmb{n} )   \quad &&\text{on } \Gamma.
    \end{alignedat}
  \end{equation*}
  Assumption~\ref{assumption:smax_shift} gives $v \in H^{\min(\smax+2, 3/2)}(\Omega)$ with the estimate 
  $ \norm{v}_{H^{\min(\smax+2, 3/2)}(\Omega)} \lesssim  \norm{\pmb{\eta} \cdot \pmb{n}}_{L^2(\Gamma)}$.
  Finally, we have $\pmb{\psi} = \nabla (z-v) \in \pmb{H}^{\min(\smax+1, 1/2)}(\Omega)$.
  The regularity of $\nabla \cdot \pmb{\psi}$ follows from 
  (\ref{eq:actual_dual_problem_normal_trace_phi_robin})$_{1}$ and that of 
$\pmb{\psi} \cdot \pmb{n}$ from (\ref{eq:actual_dual_problem_normal_trace_phi_robin})$_{3}$. 
\end{proof}

\begin{remark}\label{remark:reason_for_duality_argument_boundary}
  Usually a duality argument results in a dual solution with higher order Sobolev regularity.
  This is not the case in Theorem~\ref{theorem:duality_argument_normal_trace_robin}, where the regularity is \emph{not} improved, since $\pmb{\psi} \cdot \pmb{n}$ is still only in $L^2(\Gamma)$.
  The sole purpose of this duality argument is to introduce another Galerkin orthogonality that can be utilized in the error analysis to overcome the limiting regularity of the boundary data.
\eremk
\end{remark}

% !TEX root = main.tex
%------------------------------------------
\section{Error analysis}\label{section:error_analysis_robin}
From here on we will only consider domains $\Omega$ satisfying Assumption~\ref{assumption:smax_shift} for some $\smax \geq 0$
such as domains with smooth boundary $\Gamma$ or convex domains. Non-convex polygonal/polyhedral domains would require a more 
careful analysis. 

  After recalling results about a commuting diagram operator in 
  Subsection~\ref{subsection:commuting_diagram_operator} we 
  proceed in Subsection~\ref{subsection:operator_IhGamma} with introducing and analyzing 
the operator $\IhGamma$, which features an orthogonality necessary
for our analysis. 
  Finally, we prove different error estimates in
  Subsection~\ref{subsection:main_error_estimates}
  via a bootstrap argument.
  We first prove suboptimal estimates for the errors 
  $\norm{e^u}_{L^2(\Omega)}$,
  $\norm{\pmb{e}^{\pmb{\varphi}}}_{L^2(\Omega)}$, and
  $\norm{\nabla e^u}_{L^2(\Omega)}$
  in
  Lemma~\ref{lemma:e_u_suboptimal_l2_error_estimate_robin},
  Theorem~\ref{theorem:e_phi_suboptimal_l2_error_estimate_robin}, and
  Lemma~\ref{theorem:grad_e_u_suboptimal_l2_error_estimate_robin}, 
  respectively, where $e^u = u - u_h$ and $\pmb{e}^{\pmb{\varphi}} = \pmb{\varphi} - \pmb{\varphi}_h$
are the FOSLS errors of the scalar and vectorial variable.
We then prove optimal estimates for
  $\norm{\pmb{e}^{\pmb{\varphi}} \cdot \pmb{n}}_{L^2(\Gamma)}$ and
  $\norm{\nabla e^u}_{L^2(\Omega)}$
  in
  Theorems~\ref{theorem:e_phi_normal_trace_l2_error_estimate_robin} and
  \ref{theorem:grad_e_u_optimal_l2_error_estimate_robin}.  
  Next, we derive in Theorem~\ref{theorem:e_phi_suboptimal_improved_l2_error_estimate_robin}
improved estimates for
  $\norm{\pmb{e}^{\pmb{\varphi}}}_{L^2(\Omega)}$ that are numerically seen to be still suboptimal. 
Finally, we conclude with an optimal estimate for
  $\norm{e^u}_{L^2(\Omega)}$
  in Theorem~\ref{theorem:e_u_optimal_l2_error_estimate_robin}.
%------------------------------------------
\subsection{A commuting diagram operator}\label{subsection:commuting_diagram_operator}
%------------------------------------------
In the analysis it is crucial to understand the approximation properties of the vector-valued finite element space in the classical $\HDiv$ norm as well as the $L^2(\Gamma)$ norm of the normal trace simultaneously.
We are therefore interested in quantifying
\begin{equation*}
\inf_{\tilde{\pmb{\psi}}_h \in \RTBDM}
  \| \pmb{\psi} - \tilde{\pmb{\psi}}_h \|_{\HDiv} + \| (\pmb{\psi} - \tilde{\pmb{\psi}}_h) \cdot \pmb{n} \|_{L^2(\Gamma)}
\end{equation*}
for $\pmb{\psi} \in \pmb{V}$. This infimum can be estimated by specific approximants. 
For the reader's convenience we briefly summarize some results of \cite{melenk-rojik18} concerning the $\HDiv$-conforming elementwise defined approximation operator 
$\Pi_{p_v}^{\mathrm{div}}: \pmb{H}^{1/2}(\div,\Omega) \rightarrow \RTBDM$ 
constructed therein. This operator is defined on the reference element with error estimates that are explicit in the polynomial degree $p_v$. 
A simple scaling argument gives the desired $h$ estimates of the global operator:

\begin{proposition}[Defs.~{2.3}, {2.6}, Thms.~{2.10}, {2.13}, \& Rem.~{2.9} in \cite{melenk-rojik18}]\label{proposition:melenk_rojik_operator}
  The global operator $\Pi_{p_v}^{\mathrm{div}}$ satisfies for every $\pmb{\varphi} \in \pmb{H}^{1/2}(\div,\Omega)$ and $\tilde{\pmb{\varphi}}_h \in \RTBDM$,
  \begin{enumerate}[label=(\roman*)]
    \item 
          \label{melenk_rojik_operator_prop_1}
$ (\nabla \cdot (\pmb{\varphi} - \Pi_{p_v}^{\mathrm{div}} \pmb{\varphi} ) , \nabla \cdot \tilde{\pmb{\varphi}}_h )_{\Omega} = 0 $ and consequently
          $ \| \nabla \cdot (\pmb{\varphi} - \Pi_{p_v}^{\mathrm{div}} \pmb{\varphi}) \|_{L^2(\Omega)} \leq \|  \nabla \cdot (\pmb{\varphi} - \tilde{\pmb{\varphi}}_h) \|_{L^2(\Omega)} $,
    \item 
\label{melenk_rojik_operator_prop_2}
$ \langle (\pmb{\varphi} - \Pi_{p_v}^{\mathrm{div}} \pmb{\varphi} ) \cdot \pmb{n}, \tilde{\pmb{\varphi}}_h \cdot \pmb{n} \rangle_{\Gamma} = 0 $ and consequently
          $ \| (\pmb{\psi} - \Pi_{p_v}^{\mathrm{div}}\pmb{\varphi}) \cdot \pmb{n} \|_{L^2(\Gamma)} \leq \| (\pmb{\varphi} - \tilde{\pmb{\varphi}}_h) \cdot \pmb{n} \|_{L^2(\Gamma)} $,      
    \item 
          \label{melenk_rojik_operator_prop_3}
$ \| \pmb{\varphi} - \Pi_{p_v}^{\mathrm{div}} \pmb{\varphi} \|_{\HDiv} \lesssim \left(\frac{h}{p_v}\right)^{1/2} \| \pmb{\varphi} - \tilde{\pmb{\varphi}}_h \|_{\pmb{H}^{1/2}(\mathrm{div}, \Omega)} $.
  \end{enumerate}
\end{proposition}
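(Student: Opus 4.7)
The plan is to inherit the three properties of the global operator $\Pi_{p_v}^{\mathrm{div}}$ from the corresponding properties of the elementwise reference-element operator $\widehat{\Pi}_{p_v}^{\mathrm{div}}$ constructed in \cite{melenk-rojik18}, using the Piola transform as the lifting mechanism and invoking Assumption~\ref{assumption:quasi_uniform_regular_meshes} for the scaling in~(iii).

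For~(i), the reference operator is built so that $\nabla \cdot \widehat{\Pi}_{p_v}^{\mathrm{div}} \widehat{\pmb{\varphi}}$ coincides with the $L^2(\widehat{K})$-projection of $\nabla \cdot \widehat{\pmb{\varphi}}$ onto $\mathcal{P}_{p_v-1}(\widehat{K})$. Because the Piola transform intertwines the divergence operator with multiplication by $\det F_K^\prime$, this identity lifts elementwise into the statement that $\nabla \cdot \Pi_{p_v}^{\mathrm{div}} \pmb{\varphi}$ is the piecewise $L^2(\Omega)$-projection of $\nabla \cdot \pmb{\varphi}$ onto $S^{-1}_{p_v-1}(\mathcal{T}_h)$. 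Since $\nabla \cdot \tilde{\pmb{\varphi}}_h \in S^{-1}_{p_v-1}(\mathcal{T}_h)$ for every $\tilde{\pmb{\varphi}}_h \in \RTBDM$, the orthogonality and the induced $L^2$-best-approximation inequality follow immediately. For~(ii), the reference construction also forces the face traces $(\widehat{\Pi}_{p_v}^{\mathrm{div}} \widehat{\pmb{\varphi}}) \cdot \widehat{\pmb{n}}|_{\hat f}$ to be the $L^2(\hat f)$-projection of $\widehat{\pmb{\varphi}} \cdot \widehat{\pmb{n}}|_{\hat f}$ onto the corresponding trace polynomial space. Pushing forward to physical boundary faces through the Piola transform, which maps normal traces to normal traces with an appropriate surface Jacobian, yields the global $L^2(\Gamma)$-orthogonality against any $\tilde{\pmb{\varphi}}_h \cdot \pmb{n}$, and the best-approximation inequality follows as before.

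The main technical step is~(iii). On the reference element \cite[Thm.~2.13]{melenk-rojik18} provides the $p_v$-explicit bound $\|\widehat{\pmb{\varphi}} - \widehat{\Pi}_{p_v}^{\mathrm{div}} \widehat{\pmb{\varphi}}\|_{\pmb{H}(\mathrm{div},\widehat{K})} \lesssim p_v^{-1/2} \|\widehat{\pmb{\varphi}}\|_{\pmb{H}^{1/2}(\mathrm{div},\widehat{K})}$, and since $\widehat{\Pi}_{p_v}^{\mathrm{div}}$ reproduces reference-element polynomials one may replace $\widehat{\pmb{\varphi}}$ on the right by the Piola pullback of $\pmb{\varphi} - \tilde{\pmb{\varphi}}_h$ for any approximant $\tilde{\pmb{\varphi}}_h$. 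I would then transport both sides to the physical element $K$ via the Piola transform. Under Assumption~\ref{assumption:quasi_uniform_regular_meshes}, the factorisation $F_K = R_K \circ A_K$ into an analytic map $R_K$ with bounded derivatives and an affine map $A_K$ carrying the $h_K$-scaling makes the reference-to-physical transformation behave essentially like an affine transformation for all Sobolev norms of interest: the $L^2$ and divergence contributions balance so that the $\pmb{H}(\mathrm{div},K)$ norm is uniform in $h_K$, while the fractional $\pmb{H}^{1/2}(\mathrm{div},K)$ seminorm on the right-hand side scales with an extra factor $h_K^{1/2}$. Squaring, summing over all elements, and invoking shape-regularity and quasi-uniformity produces the claimed global estimate.

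The main obstacle will be the careful bookkeeping of Piola-scaling factors for the fractional $\pmb{H}^{1/2}(\mathrm{div})$ norm on the right-hand side, which does not localize as cleanly as integer-order norms; this is precisely the reason for the affine-smooth split in Assumption~\ref{assumption:quasi_uniform_regular_meshes} and the reason the exponent $1/2$ of $h/p_v$ in~(iii) is unavoidable.
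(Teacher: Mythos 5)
Your reconstruction captures the right ideas, in particular the commuting-diagram argument for~(i), which is the only part of the proposition the paper actually works out: it writes $\nabla\cdot\Pi^{\mathrm{div}}_{p_v}\pmb{\varphi}=\Pi^{L^2}_{p_v}\nabla\cdot\pmb{\varphi}$ and uses $\nabla\cdot\tilde{\pmb{\varphi}}_h\in S^{-1}_{p_v}$, exactly your ``divergence is the $L^2$-projection'' observation. Properties~(ii) and~(iii), however, are not re-derived in the paper; they are simply quoted from the cited theorems of \cite{melenk-rojik18}. Your from-scratch Piola-plus-scaling argument for~(iii) is therefore doing considerably more work than the paper, and the obstacle you identify --- transporting the fractional $\pmb{H}^{1/2}(\div)$ norm through curved element maps --- is precisely what the paper avoids by citing the \emph{global} estimate of Melenk--Rojik directly rather than reassembling it elementwise.

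Two technical points that the paper's proof does flag are missing from your sketch. First, Melenk--Rojik formulate the two-dimensional results for $\operatorname{curl}$ rather than $\operatorname{div}$, and the paper must invoke the rotation identity~\eqref{eq:curl-div-2D} to transfer them; your Piola-based argument would need to make this rotation explicit for $d=2$. Second, the cited work covers $\RTBDM=\pmb{\mathrm{RT}}_{p_v-1}$ in detail, while the extension to $\pmb{\mathrm{BDM}}_{p_v}$ is only treated in \cite[Sec.~4.8]{rojik20}; your proposal treats the two spaces interchangeably without justification. Neither omission affects the overall structure of the argument, but both would need to be addressed for a complete proof.
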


\begin{proof}
The operator $\Pi^{\div}_{p_v}$ is constructed in \cite{melenk-rojik18} and 
\cite{rojik20}. However, while \cite{melenk-rojik18} covered in detail 
the case of $\RTBDM = \pmb{\mathrm{RT}}_{p_v-1}$, the fact that 
the approximation properties and the commuting diagram properties are 
also valid for the choice $\RTBDM = \pmb{\mathrm{BDM}}_{p_v}$  is 
discussed in \cite[Sec.~{4.8}]{rojik20}.

  For the 2D case, \cite{melenk-rojik18} consider the operator $\operatorname{curl}$ instead of $\operatorname{div}$. However, 
in view of (\ref{eq:curl-div-2D}), the results of \cite{melenk-rojik18} can be reformulated in terms of the operator $\operatorname{div}$.

  Property~\ref{melenk_rojik_operator_prop_2} and~\ref{melenk_rojik_operator_prop_3} can be found in \cite{melenk-rojik18} for the case 
$\RTBDM = \pmb{\mathrm{RT}}_{p_v-1}$. 
Property~\ref{melenk_rojik_operator_prop_1} follows from the commuting diagram property of $\Pi_{p_V}^{\mathrm{div}}$: 
%To that end, 
%note that the definition of the operator $\Pi_{p_v}^{\mathrm{div}}$ is such that
%  \begin{equation*}
%    (\nabla \cdot (\pmb{\varphi} - \Pi_{p_v}^{\mathrm{div}} \pmb{\varphi} ) , \nabla \cdot \tilde{\pmb{\varphi}}_h )_{\Omega} = 0
%  \end{equation*}
%  for any $\tilde{\pmb{\varphi}}_h \in \RTBDM$ with $\tilde{\pmb{\varphi}}_h \cdot \pmb{n} = 0$ on $\Gamma$.
%  However, due to the commuting diagram property we can calculate for any $\tilde{\pmb{\varphi}}_h \in \RTBDM$
for any $\tilde{\pmb{\varphi}}_h \in \RTBDM$ we calculate with the 
$L^2$-projection $\Pi^{L^2}_{p_v}: L^2(\Omega) \rightarrow S^{-1}_{p_v}$ and using that $\nabla \cdot \tilde{\pmb{\varphi}}_h \in S^{-1}_{p_v}$: 
  \begin{equation*}
    (\nabla \cdot (\pmb{\varphi} - \Pi_{p_v}^{\mathrm{div}} \pmb{\varphi} ) , \nabla \cdot \tilde{\pmb{\varphi}}_h )_{\Omega}
    = (\nabla \cdot \pmb{\varphi} - \Pi_{p_v}^{L^2} \nabla \cdot \pmb{\varphi} ) , \nabla \cdot \tilde{\pmb{\varphi}}_h )_{\Omega}
    = 0.
  \end{equation*}
  This orthogonality then implies the last inequality.
\end{proof}

%------------------------------------------
\subsection{The operator $\IhGamma$}\label{subsection:operator_IhGamma}
%------------------------------------------
We will require an approximation operator with certain orthogonality properties, i.e., an operator similar to $\IhZero$ and $\Ih$ constructed in \cite[Sec.~4]{bernkopf-melenk22}.
Although the operator $\Ih$ of \cite{bernkopf-melenk22} is applicable to derive improved convergence results for the present case of Robin boundary conditions, they are 
only optimal in a pure $h$-version of the FOSLS method and suboptimal in a $p$-version context. 
This is due to the fact that the analysis requires approximation properties of $\Ih$ in the $L^2(\Gamma)$ norm for the normal trace, which can only be effected by relying on
inverse estimates.  Even though, \textsl{per se},  these inverse estimates are sharp one loses an order of $p$ when doing so.
For optimal $p$-estimate, it is therefore necessary to define the operator $\IhGamma$ such that the normal trace is appropriately involved.

\paragraph{Construction of $\IhGamma$:}

In the following we construct an operator which \textit{sees} the $L^2(\Gamma)$ normal trace.
We define $\IhGamma$ by a constrained minimization problem:

\begin{equation*}
  \IhGamma \pmb{\varphi} = \underset{\pmb{\varphi}_h \in \RTBDM }{\mathrm{argmin}}
  \frac{1}{2}\norm{ \pmb{\varphi} - \pmb{\varphi}_h }_{L^2(\Omega)}^2 +
  \frac{1}{2}\norm{ (\pmb{\varphi} - \pmb{\varphi}_h)\cdot \pmb{n} }_{L^2(\Gamma)}^2
\end{equation*}
\begin{equation*}
  \text{s.t.} \quad
  ( \nabla \cdot (\pmb{\varphi} - \IhGamma \pmb{\varphi}) , \nabla \cdot \pmb{\chi}_h )_{\Omega} = 0 , \quad \forall \pmb{\chi}_h \in \RTBDM.
\end{equation*}
To simplify the notation we introduce the scalar product $\triscalar{\cdot}{\cdot}$
and the induced norm $\trinorm{\cdot}$ on $\pmb{V} = \{ \pmb{\varphi} \in \HDiv \colon \pmb{\varphi} \cdot \pmb{n} \in L^2(\Gamma) \}$:
\begin{equation}
\label{eq:triscalar}
  \triscalar{\pmb{\varphi}}{\pmb{\psi}}
  \coloneqq ( \pmb{\varphi} , \pmb{\psi} )_{\Omega}
  + \langle \pmb{\varphi} \cdot \pmb{n} , \pmb{\psi} \cdot \pmb{n} \rangle_{\Gamma}.
\end{equation}
Therefore we can write the operator $\IhGamma$ as
\begin{equation*}
  \IhGamma \pmb{\varphi} = \underset{\pmb{\varphi}_h \in \RTBDM }{\mathrm{argmin}}
  \frac{1}{2}\trinorm{ \pmb{\varphi} - \pmb{\varphi}_h }^2
  \qquad \text{s.t.} \qquad
  ( \nabla \cdot (\pmb{\varphi} - \IhGamma \pmb{\varphi}) , \nabla \cdot \pmb{\chi}_h )_{\Omega} = 0 , \quad \forall \pmb{\chi}_h \in \RTBDM.
\end{equation*}
The variational formulation is now given by:
Find $(\pmb{\varphi}_h, \lambda_h) \in \RTBDM \times \nabla \cdot \RTBDM$ such that
\begin{align}
\label{eq:def-IhGamma-1}
    \triscalar{\pmb{\varphi}_h}{\pmb{\mu}_h} + ( \nabla \cdot \pmb{\mu}_h , \lambda_h )_{\Omega}  &= \triscalar{\pmb{\varphi}}{\pmb{\mu}_h}  \qquad \forall \pmb{\mu}_h    \in \RTBDM,\\
\label{eq:def-IhGamma-2}
    ( \nabla \cdot \pmb{\varphi}_h , \eta_h )_{\Omega}                                            &= ( \nabla \cdot \pmb{\varphi} , \eta_h )_{\Omega}  \qquad \forall \eta_h \in \nabla \cdot \RTBDM.
\end{align}
\\
\noindent
\textbf{Coercivity on the kernel}:
Let $\pmb{\mu} \in \left\{ \pmb{\psi} \in \pmb{V} \colon (\nabla \cdot \pmb{\psi}, \eta)_{\Omega} = 0, \forall \eta \in \nabla \cdot \pmb{V} \right\}$ be given.
Coercivity is trivial since by construction $ (\nabla \cdot \pmb{\mu}, \nabla \cdot \pmb{\mu} )_{\Omega} = 0 $ and thus 
\begin{equation*}
  \triscalar{\pmb{\mu}}{\pmb{\mu}} = \trinorm{\pmb{\mu}}^2 = \trinorm{\pmb{\mu}}^2 + \norm{\nabla \cdot \pmb{\mu}}_{L^2(\Omega)}^2 = \norm{\pmb{\mu}}_{\pmb{V}}^2.
\end{equation*}
\\
\noindent
\textbf{inf-sup condition}:
Let $\eta \in \nabla \cdot \pmb{V}$ be given.
Let $u \in H^1(\Omega)$ with zero average solve
\begin{equation*}
  \begin{alignedat}{2}
    - \Delta u   &= \eta                \quad   &&\text{in } \Omega,\\
    \partial_n u &= -\frac{1}{|\Gamma|} \int_\Omega \eta \, \mathrm{d}x \quad   &&\text{on } \Gamma.
  \end{alignedat}
\end{equation*}
By Assumption~\ref{assumption:smax_shift} and Remark~\ref{remark:regularity_shift_for_neumann_problem} we have $\norm{u}_{H^2(\Omega)} \lesssim \norm{\eta}_{L^2(\Omega)}$.
Consequently, with $\pmb{\mu} \coloneqq - \nabla u$
we have $\pmb{\mu} \in \pmb{V}$ and $ \norm{\pmb{\mu}}_{\pmb{V}} \lesssim \norm{\eta}_{L^2(\Omega)} $.
Finally we have
\begin{equation*}
  (\nabla \cdot \pmb{\mu}, \eta)_{\Omega} =
  (\eta, \eta)_{\Omega} =
  \norm{\eta}_{L^2(\Omega)} \norm{\eta}_{L^2(\Omega)} \gtrsim
  \norm{\eta}_{L^2(\Omega)} \norm{\pmb{\mu}}_{\pmb{V}},
\end{equation*}
which proves the inf-sup condition.
\\
\noindent
\textbf{Coercivity on the kernel - discrete}:
The coercivity follows by the same arguments as in the continuous case. 
\\
\noindent
\textbf{inf-sup condition - discrete}:
Let $\lambda_h \in \nabla \cdot \RTBDM$ be given.
As above in the continuous case we solve the Neumann problem
\begin{equation*}
  \begin{alignedat}{2}
    - \Delta u &= \lambda_h \quad   &&\text{in } \Omega,\\
    \partial_n u &= -\frac{1}{|\Gamma|} \int_\Omega \lambda_h \, \mathrm{d}x \quad   &&\text{on } \Gamma.
  \end{alignedat}
\end{equation*}
Consequently, with $\pmb{\Lambda} \coloneqq - \nabla u$, we again have $\norm{\pmb{\Lambda}}_{\pmb{V}} \lesssim \norm{\pmb{\Lambda}}_{\HDiv} + \norm{\pmb{\Lambda} \cdot \pmb{n}}_{L^2(\Gamma)} \leq \norm{u}_{H^2(\Omega)} \lesssim \norm{\lambda_h}_{L^2(\Omega)}$.
We next utilize the commuting diagram projection-based interpolation operator $\pmb{\Pi}^{\div}_{p_v}$ defined in \cite{melenk-rojik18},
see also Proposition~\ref{proposition:melenk_rojik_operator}.
We use this operator to project $\pmb{\Lambda}$ onto the conforming subspace:
With $\pmb{\Lambda}_h \coloneqq \pmb{\Pi}^{\div}_{p_v} \pmb{\Lambda}$ we find
\begin{equation*}
  \nabla \cdot \pmb{\Lambda}_h = \nabla \cdot \pmb{\Pi}^{\div}_{p_v} \pmb{\Lambda} = \pmb{\Pi}^{L^2}_{p_v} \nabla \cdot \pmb{\Lambda} =  \pmb{\Pi}^{L^2}_{p_v} \lambda_h = \lambda_h,
\end{equation*}
where $\pmb{\Pi}^{L^2}_{p_v}$ denotes the $L^2$ orthogonal projection on $\nabla \cdot \RTBDM$.
Using \cite[Thm.~{2.10} (vi)/Thm.~{2.13} (iv)]{melenk-rojik18} we can estimate
\begin{equation*}
  \| \pmb{\Lambda} - \pmb{\Pi}^{\div}_{p_v} \pmb{\Lambda} \|_{\HDiv}
  \lesssim \norm{\pmb{\Lambda}}_{H^1(\Omega)}
  \lesssim \norm{\lambda_h}_{L^2(\Omega)},
\end{equation*}
Additionally, since $\pmb{\Pi}^{\div}_{p_v}$ realizes the $L^2(\Gamma)$ orthogonal projection of the normal trace, we find
\begin{equation*}
  \| (\pmb{\Lambda} - \pmb{\Pi}^{\div}_{p_v} \pmb{\Lambda})\cdot \pmb{n} \|_{L^2(\Gamma)}
  \leq \| \pmb{\Lambda} \cdot \pmb{n} \|_{L^2(\Gamma)}
  \lesssim \norm{\lambda_h}_{L^2(\Omega)},
\end{equation*}
which finally leads to
\begin{equation*}
  \| \pmb{\Lambda}_h \|_{\pmb{V}}
  = \| \pmb{\Pi}^{\div}_{p_v} \pmb{\Lambda} \|_{\pmb{V}}
  \lesssim \| \pmb{\Lambda} - \pmb{\Pi}^{\div}_{p_v} \pmb{\Lambda} \|_{\pmb{V}} + \| \pmb{\Lambda} \|_{\pmb{V}}
  \lesssim  \norm{\lambda_h}_{L^2(\Omega)}.
\end{equation*}
For any $\lambda_h \in \nabla \cdot \RTBDM$ we estimate
\begin{equation*}
  \sup_{\pmb{\varphi}_h \in \RTBDM} \frac{(\nabla \cdot \pmb{\varphi}_h, \lambda_h)_{\Omega}}{\norm{\pmb{\varphi}_h}_{\pmb{V}} \norm{\lambda_h}_{L^2(\Omega)}}
  \geq \frac{(\nabla \cdot \pmb{\Lambda}_h, \lambda_h)_{\Omega}}{\norm{\pmb{\Lambda}_h}_{\pmb{V}} \norm{\lambda_h}_{L^2(\Omega)}}
  = \frac{ \norm{\lambda_h}_{L^2(\Omega)} }{\norm{\pmb{\Lambda}_h}_{\pmb{V}}}
  \gtrsim 1,
\end{equation*}
which proves the discrete inf-sup condition.
We have therefore proven
\begin{lemma}
  The operator $\IhGamma$ is well-defined.
\end{lemma}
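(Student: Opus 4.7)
The plan is to view (\ref{eq:def-IhGamma-1})--(\ref{eq:def-IhGamma-2}) as a discrete mixed (saddle-point) problem posed on $\RTBDM \times \nabla\cdot\RTBDM$, with bilinear forms $a(\pmb{\varphi},\pmb{\mu}) := \triscalar{\pmb{\varphi}}{\pmb{\mu}}$ and $b(\pmb{\mu},\lambda) := (\nabla\cdot\pmb{\mu},\lambda)_\Omega$, and to apply the standard Brezzi theory (see, e.g., \cite{boffi-brezzi-fortin13}). Well-definedness of $\IhGamma\pmb{\varphi}$ is then equivalent to unique solvability of this discrete saddle-point system for each datum $\pmb{\varphi} \in \pmb{V}$.

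The three Brezzi hypotheses would be verified in order. Boundedness of $a$ and $b$ follows directly from Cauchy--Schwarz once $\RTBDM$ is equipped with the graph norm $\norm{\cdot}_{\pmb{V}}$ and the multiplier space with the $L^2(\Omega)$-norm. Discrete coercivity on the kernel comes essentially for free: if $\pmb{\mu}_h \in \RTBDM$ satisfies $(\nabla\cdot\pmb{\mu}_h,\eta_h)_\Omega = 0$ for all $\eta_h \in \nabla\cdot\RTBDM$, then testing with $\eta_h = \nabla\cdot\pmb{\mu}_h$ (which lies in that very space) forces $\nabla\cdot\pmb{\mu}_h = 0$, so that $a(\pmb{\mu}_h,\pmb{\mu}_h) = \trinorm{\pmb{\mu}_h}^2 = \norm{\pmb{\mu}_h}_{\pmb{V}}^2$. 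The substantive step is the discrete inf-sup condition: given $\lambda_h \in \nabla\cdot\RTBDM$, lift it to a Neumann solution $u$ with $\norm{u}_{H^2(\Omega)} \lesssim \norm{\lambda_h}_{L^2(\Omega)}$ via Assumption~\ref{assumption:smax_shift} and Remark~\ref{remark:regularity_shift_for_neumann_problem}, set $\pmb{\Lambda} := -\nabla u \in \pmb{V}$, and project onto $\RTBDM$ by the Melenk--Rojik operator $\pmb{\Pi}^{\div}_{p_v}$ of Proposition~\ref{proposition:melenk_rojik_operator}. The commuting diagram property gives $\nabla\cdot\pmb{\Pi}^{\div}_{p_v}\pmb{\Lambda} = \lambda_h$ \emph{exactly}, while its $\HDiv$- and $L^2(\Gamma)$-normal-trace stability yield $\norm{\pmb{\Pi}^{\div}_{p_v}\pmb{\Lambda}}_{\pmb{V}} \lesssim \norm{\lambda_h}_{L^2(\Omega)}$, which is the required inf-sup bound.

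The main obstacle is precisely this discrete inf-sup condition, because one needs a conforming candidate that simultaneously preserves the divergence exactly and keeps the $L^2(\Gamma)$ normal trace under control; both properties are essential to accommodate the boundary term built into $\triscalar{\cdot}{\cdot}$, and both are supplied by the projection-based interpolation of \cite{melenk-rojik18}. Once boundedness, kernel-coercivity, and the inf-sup bound are established, the Brezzi theorem delivers existence, uniqueness, and stability of the pair $(\IhGamma\pmb{\varphi},\lambda_h)$, which is exactly the claim.
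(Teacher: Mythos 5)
Your proposal follows essentially the same route as the paper: recast the constrained minimization as a Brezzi saddle-point system, note that kernel coercivity is trivial since the divergence vanishes there, and obtain the discrete inf-sup condition by lifting $\lambda_h$ through an $H^2$-regular Neumann problem and projecting $-\nabla u$ onto $\RTBDM$ with the Melenk--Rojik operator $\pmb{\Pi}^{\div}_{p_v}$, whose commuting-diagram and normal-trace properties give both exact divergence preservation and $\pmb{V}$-stability. The only cosmetic difference is that the paper additionally records the continuous-level coercivity and inf-sup, which are not needed for well-definedness of the discrete operator and which you correctly omit.
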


%---------------------------------
\subsection{Helmholtz decompositions}
%---------------------------------
As a tool in the $L^2(\Omega)$ analysis of the operator $\IhGamma$ we need the following decomposition.
Compared to \cite[Sec.~{4}]{bernkopf-melenk22} we need a Helmholtz-like decomposition accounting for the regularity of the normal trace:

\begin{lemma}[Continuous and discrete Helmholtz-like decomposition - $L^2(\Gamma)$ normal trace]\label{lemma:helmholtz_decomp_l2_normal_trace}
  Let $\Omega$ satisfy Assumption~\ref{assumption:smax_shift} for some $\smax \geq 0$. Let $\MyHCurl \subset \HCurl$ be given by
  \begin{equation*}
    \MyHCurl \coloneqq \left\{ \pmb{\mu} \in \HCurl \colon (\nabla \times \pmb{\mu}) \cdot \pmb{n} \in L^2(\Gamma) \right\}.
  \end{equation*}
  The operators
  $\pmb{\Pi}^{\mathrm{curl},\Gamma} \colon \pmb{V} \to \nabla \times \MyHCurl$
  and
  %$\pmb{\Pi}^{\mathrm{curl},\Gamma}_h \colon \RTBDM \to \nabla \times \Nedelec$
  $\pmb{\Pi}^{\mathrm{curl},\Gamma}_h \colon \pmb{V}\to \nabla \times \Nedelec$
  given by
  \begin{align}
\label{eq:lemma:helmholtz_decomp_l2_normal_trace-1}
    \triscalar{\pmb{\Pi}^{\mathrm{curl},\Gamma}   \pmb{\varphi}}{\nabla \times \pmb{\mu}}        & =  \triscalar{\pmb{\varphi}}{\nabla \times \pmb{\mu}} \quad \forall \pmb{\mu} \in  \MyHCurl,      \\ 
\label{eq:lemma:helmholtz_decomp_l2_normal_trace-2}
    \triscalar{\pmb{\Pi}^{\mathrm{curl},\Gamma}_h   \pmb{\varphi} }{\nabla \times \pmb{\mu}_h} & =  \triscalar{\pmb{\varphi}}{\nabla \times \pmb{\mu}_h} \quad \forall \pmb{\mu}_h \in \Nedelec, 
  \end{align}
  are well-defined.
  The remainder $\pmb{r}$ in the continuous decomposition $\pmb{\varphi} = \pmb{\Pi}^{\mathrm{curl},\Gamma} \pmb{\varphi} + \pmb{r}$ satisfies
  $\pmb{r} \in \pmb{H}^1(\Omega)$ with $\norm{\pmb{r}}_{H^1(\Omega)} \lesssim \norm{\nabla \cdot \pmb{\varphi}}_{L^2(\Omega)}$.
  Additionally the solution $R \in H^2(\Omega)$ of 
  \begin{equation*}
    \begin{alignedat}{2}
      - \Delta R       &= - \nabla \cdot \pmb{\varphi} \quad   &&\text{in } \Omega, \\
      \partial_n R + R &= 0                                    &&\text{on } \Gamma.
    \end{alignedat}
  \end{equation*}
satisfies $\pmb{r} = \nabla R$ together with 
  $\norm{R}_{H^2(\Omega)} \lesssim \norm{\pmb{r}}_{H^1(\Omega)} \lesssim \norm{\nabla \cdot \pmb{\varphi}}_{L^2(\Omega)}$.  
Furthermore $\pmb{r}$ satisfies
  \begin{align}
\label{eq:lemma:helmholtz_decomp_l2_normal_trace-6}
      \nabla \cdot  \pmb{r}  &= \nabla \cdot \pmb{\varphi} && \text{in } \Omega, \\
\label{eq:lemma:helmholtz_decomp_l2_normal_trace-7}
      \nabla \times \pmb{r}  &= 0                            && \text{in } \Omega, \\
\label{eq:lemma:helmholtz_decomp_l2_normal_trace-8}
      \pmb{r} \cdot \pmb{n}  &= -R                             && \text{on } \Gamma.
  \end{align}
\end{lemma}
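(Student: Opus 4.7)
The plan is to construct $\pmb{\Pi}^{\mathrm{curl},\Gamma}\pmb{\varphi}$ explicitly by setting $\pmb{r} \coloneqq \nabla R$ with $R$ the announced Robin solution, and then to verify the three identities \eqref{eq:lemma:helmholtz_decomp_l2_normal_trace-6}--\eqref{eq:lemma:helmholtz_decomp_l2_normal_trace-8} together with the defining orthogonality \eqref{eq:lemma:helmholtz_decomp_l2_normal_trace-1}. The guiding observation is that the non-standard boundary condition $\partial_n R + R = 0$ is forced precisely by the requirement $\triscalar{\pmb{r}}{\nabla \times \pmb{\mu}} = 0$: an integration by parts will produce the boundary term $\langle R + \pmb{r}\cdot\pmb{n},(\nabla \times \pmb{\mu})\cdot \pmb{n}\rangle_\Gamma$, which vanishes for arbitrary $\pmb{\mu}$ exactly when $\pmb{r}\cdot\pmb{n} = -R$.

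For the regularity of $R$, since $\nabla \cdot \pmb{\varphi} \in L^2(\Omega)$ and Assumption~\ref{assumption:smax_shift} holds with $\smax \geq 0$, the shift theorem (applied directly to the Robin problem in the spirit of Remark~\ref{remark:regularity_shift_for_neumann_problem}) delivers $R \in H^2(\Omega)$ with $\norm{R}_{H^2(\Omega)} \lesssim \norm{\nabla \cdot \pmb{\varphi}}_{L^2(\Omega)}$. Setting $\pmb{r} \coloneqq \nabla R \in \pmb{H}^1(\Omega)$, equations \eqref{eq:lemma:helmholtz_decomp_l2_normal_trace-6}--\eqref{eq:lemma:helmholtz_decomp_l2_normal_trace-8} are immediate from $\nabla \cdot \nabla R = \Delta R$, $\nabla \times \nabla R = 0$, and the Robin condition, and the norm estimate on $\pmb{r}$ is read off directly.

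To show $\pmb{\Pi}^{\mathrm{curl},\Gamma}\pmb{\varphi} \coloneqq \pmb{\varphi} - \pmb{r} \in \nabla \times \MyHCurl$, I would use that $\pmb{\varphi} - \pmb{r}$ is divergence-free by \eqref{eq:lemma:helmholtz_decomp_l2_normal_trace-6}, so the exact sequence on the simply connected domain $\Omega$ gives some $\pmb{\mu} \in \HCurl$ with $\nabla \times \pmb{\mu} = \pmb{\varphi} - \pmb{r}$, and the identity $(\pmb{\varphi} - \pmb{r}) \cdot \pmb{n} = \pmb{\varphi} \cdot \pmb{n} + R|_\Gamma \in L^2(\Gamma)$ upgrades $\pmb{\mu}$ to $\MyHCurl$. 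The orthogonality \eqref{eq:lemma:helmholtz_decomp_l2_normal_trace-1} then follows from the Green identity
\begin{equation*}
(\nabla R, \nabla \times \pmb{\mu})_\Omega = \langle R, (\nabla \times \pmb{\mu}) \cdot \pmb{n}\rangle_\Gamma,
\end{equation*}
valid because $\nabla \times \pmb{\mu}$ is divergence-free with normal trace in $L^2(\Gamma)$, combined with $\pmb{r}\cdot\pmb{n} = -R$.

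For the discrete operator $\pmb{\Pi}^{\mathrm{curl},\Gamma}_h$, the subspace $\nabla \times \Nedelec$ is finite-dimensional inside $\pmb{V}$ (piecewise polynomial normal traces are automatically in $L^2(\Gamma)$), and $\triscalar{\cdot}{\cdot}$ restricts to a genuine scalar product on it, so the orthogonal projection defined by \eqref{eq:lemma:helmholtz_decomp_l2_normal_trace-2} exists and is unique. The main technical obstacle is the Green identity used in the continuous step: since $\pmb{\mu} \in \MyHCurl$ has only $\HCurl$-regularity, one needs a density argument to justify the pairing, and the defining hypothesis $(\nabla \times \pmb{\mu}) \cdot \pmb{n} \in L^2(\Gamma)$ is exactly what promotes the boundary term from an $H^{\pm 1/2}$-duality to an honest $L^2$-inner product, making the cancellation rigorous. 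The two-dimensional case is handled analogously after replacing $\nabla \times$ by the appropriate scalar/vectorial curl.
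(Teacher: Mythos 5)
Your proof is correct, but it proceeds in the \emph{opposite direction} from the paper's. The paper starts from the abstract definition of $\pmb{\Pi}^{\mathrm{curl},\Gamma}$ as an orthogonal projection, then tests the orthogonality relation against $\pmb{\mu} \in \pmb{C}_0^\infty(\Omega)$ to deduce $\nabla \times \pmb{r} = 0$, invokes the exact sequence to produce a potential $R$ with $\pmb{r} = \nabla R$ (determined only up to a constant), and finally extracts the Robin boundary condition $\partial_n R + R = c$ by integrating by parts in the orthogonality identity, using the fact that the range of $\pmb{\mu} \mapsto (\nabla \times \pmb{\mu}) \cdot \pmb{n} = \nabla_\Gamma \cdot (\pmb{\mu} \times \pmb{n})$ is precisely the mean-zero functionals on the connected boundary $\Gamma$; the free constant in $R$ is then used to normalize $c = 0$. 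You instead \emph{define} $R$ as the solution of the Robin problem, set $\pmb{r} = \nabla R$, show $\pmb{\varphi} - \pmb{r} \in \nabla \times \MyHCurl$ via the exact sequence, and verify the orthogonality by a single integration by parts that exploits the prescribed boundary condition. Your bottom-up route is arguably cleaner: it yields existence of the projection by explicit construction rather than appeal to the projection theorem on a subspace whose closedness in the $\trinorm{\cdot}$-topology is a mildly delicate point, it has no constant to normalize away, and it sidesteps the characterization of the range of the tangential-curl trace. The paper's top-down route is more natural if one insists on starting from the abstract variational definition and regards the Robin problem as a derived, rather than postulated, object. Two small remarks: you should make the uniqueness step explicit (if two candidates both satisfy \eqref{eq:lemma:helmholtz_decomp_l2_normal_trace-1}, their difference lies in $\nabla \times \MyHCurl$ and is $\triscalar{\cdot}{\cdot}$-orthogonal to it, hence zero), and the Green identity you invoke needs $R \in H^1(\Omega)$ with $L^2$ trace paired against $(\nabla \times \pmb{\mu}) \cdot \pmb{n} \in L^2(\Gamma)$ — both guaranteed here — so your comment about promoting the $H^{\pm 1/2}$ duality to an $L^2$ pairing is exactly the right justification and could be stated as the actual argument rather than flagged as an ``obstacle.''
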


\begin{proof}
  The unique solvability of (\ref{eq:lemma:helmholtz_decomp_l2_normal_trace-1}), (\ref{eq:lemma:helmholtz_decomp_l2_normal_trace-2})
on the discrete and continuous level follows immediately from the fact
  that the variational formulations are just the definition of the orthogonal projections onto $\nabla \times \MyHCurl$ and $\nabla \times \Nedelec$, respectively.
  For any $\pmb{\mu} \in \pmb{C}_0^\infty(\Omega)$ we find
  \begin{equation*}
    \triscalar{\pmb{r}}{\nabla \times \pmb{\mu}} = ( \pmb{r} , \nabla \times \pmb{\mu} )_{\Omega} = 0,
  \end{equation*}
  which gives $\nabla \times \pmb{r} = 0$. Since $\pmb{\Pi}^{\mathrm{curl},\Gamma} \pmb{\varphi} \in \nabla \times \MyHCurl$ we conclude $\nabla \cdot  \pmb{r} = \nabla \cdot \pmb{\varphi}$.
  The fact that $\nabla \times \pmb{r} = 0$ gives via the exact sequence property 
  \begin{equation*}
    \mathbb{R}       \stackrel{\mathrm{id}}{\longrightarrow}
    \HOne            \stackrel{\nabla}{\longrightarrow}
    \HCurl           \stackrel{\nabla \times}{\longrightarrow}
    \HDiv            \stackrel{\nabla \cdot}{\longrightarrow}
    L^2(\Omega)      \stackrel{0}{\longrightarrow}
    \left\{ 0 \right\}
  \end{equation*}
  the existence of a potential $R \in \HOne$ with $\pmb{r} = \nabla R$. The function $R$ is determined up to a constant that we will fix shortly.
  $\pmb{r} = \nabla R$ implies $ - \Delta R = - \nabla \cdot \nabla R = - \nabla \cdot \pmb{r} = - \nabla \cdot \pmb{\varphi}$.
  To analyze the boundary conditions satisfied by $R$ we insert $\pmb{r} = \nabla R$ into the variational formulation and integrate by parts to get 
  \begin{equation*}
    0 =
    \triscalar{\nabla R}{\nabla \times \pmb{\mu}} =
    ( \nabla R , \nabla \times \pmb{\mu} )_{\Omega} + \langle \partial_n R , (\nabla \times \pmb{\mu}) \cdot \pmb{n} \rangle_{\Gamma} =
    \langle R +  \partial_n R  , (\nabla \times \pmb{\mu}) \cdot \pmb{n} \rangle_{\Gamma}.
  \end{equation*}
  Since $(\nabla \times \pmb{\mu}) \cdot \pmb{n} = \nabla_{\Gamma} \cdot (\pmb{\mu} \times \pmb{n})$ and $\Gamma$ is connected, we conclude $\partial_n R + R = c$ for some $c \in \mathbb{R}$.
Since $R$ is fixed up to a constant, we select it such that $c = 0$. Hence, the function $R$ satisfies the boundary value problem of the statement of the lemma. 
By Assumption~\ref{assumption:smax_shift} and Remark~\ref{remark:regularity_shift_for_neumann_problem} we have $\|R\|_{H^2(\Omega)} \lesssim \|\nabla \cdot \pmb{\varphi}\|_{L^2(\Omega)}$. 
  This concludes the proof.
\end{proof}

\begin{lemma}\label{lemma:properties_of_IhGamma}
  Let $\Omega$ satisfy Assumption~\ref{assumption:smax_shift} for some $\smax \geq 0$. The operator $\IhGamma$ satisfies the following estimates
  \begin{align}
\label{eq:lemma:properties_of_IhGamma-10}
    \trinorm{\pmb{\varphi} - \IhGamma \pmb{\varphi}}
                                                                              & \lesssim \trinorm{\pmb{\varphi} - \tilde{\pmb{\varphi}}_h}
    + \frac{h}{p_v} \norm{\nabla \cdot (\pmb{\varphi} - \tilde{\pmb{\varphi}}_h) }_{L^2(\Omega)},                                                                 \\
\label{eq:lemma:properties_of_IhGamma-20}
    \| \nabla \cdot (\pmb{\varphi} - \IhGamma \pmb{\varphi}) \|_{L^2(\Omega)} & \leq \norm{\nabla \cdot (\pmb{\varphi} - \tilde{\pmb{\varphi}}_h) }_{L^2(\Omega)}
  \end{align}
  for any $\tilde{\pmb{\varphi}}_h \in \RTBDM$.
\end{lemma}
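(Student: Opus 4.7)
The second inequality is immediate: by the constraint \eqref{eq:def-IhGamma-2} the quantity $\nabla\cdot\IhGamma\pmb{\varphi}$ is the $L^2$-orthogonal projection of $\nabla\cdot\pmb{\varphi}$ onto $\nabla\cdot\RTBDM$, and since $\nabla\cdot\tilde{\pmb{\varphi}}_h\in\nabla\cdot\RTBDM$, best approximation of this projection yields \eqref{eq:lemma:properties_of_IhGamma-20} with constant $1$.

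For \eqref{eq:lemma:properties_of_IhGamma-10} my plan is to exploit the Galerkin orthogonality that \eqref{eq:def-IhGamma-1} provides inside the discrete kernel of $\nabla\cdot$: testing with any $\pmb{\mu}_h\in\RTBDM$ satisfying $\nabla\cdot\pmb{\mu}_h=0$ kills the Lagrange multiplier term and gives $\triscalar{\pmb{\varphi}-\IhGamma\pmb{\varphi}}{\pmb{\mu}_h}=0$. The idea is therefore to modify the given $\tilde{\pmb{\varphi}}_h$ into an element $\tilde{\pmb{\psi}}_h:=\tilde{\pmb{\varphi}}_h+\pmb{\delta}_h\in\RTBDM$ whose divergence matches $\nabla\cdot\IhGamma\pmb{\varphi}$. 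Then $\IhGamma\pmb{\varphi}-\tilde{\pmb{\psi}}_h$ lies in the discrete kernel of $\nabla\cdot$, and the standard best-approximation chain
\[
\trinorm{\IhGamma\pmb{\varphi}-\tilde{\pmb{\psi}}_h}^2 = \triscalar{\pmb{\varphi}-\tilde{\pmb{\psi}}_h}{\IhGamma\pmb{\varphi}-\tilde{\pmb{\psi}}_h} \leq \trinorm{\pmb{\varphi}-\tilde{\pmb{\psi}}_h}\,\trinorm{\IhGamma\pmb{\varphi}-\tilde{\pmb{\psi}}_h}
\]
followed by the triangle inequality will deliver $\trinorm{\pmb{\varphi}-\IhGamma\pmb{\varphi}} \leq 2\trinorm{\pmb{\varphi}-\tilde{\pmb{\varphi}}_h} + 2\trinorm{\pmb{\delta}_h}$, so everything reduces to controlling $\trinorm{\pmb{\delta}_h}$.

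The lifting $\pmb{\delta}_h$ can be constructed essentially verbatim from the discrete inf-sup step preceding the lemma. Let $\delta:=\nabla\cdot(\IhGamma\pmb{\varphi}-\tilde{\pmb{\varphi}}_h)\in\nabla\cdot\RTBDM\subset L^2(\Omega)$; the triangle inequality and the already-proven \eqref{eq:lemma:properties_of_IhGamma-20} give $\|\delta\|_{L^2(\Omega)}\lesssim\|\nabla\cdot(\pmb{\varphi}-\tilde{\pmb{\varphi}}_h)\|_{L^2(\Omega)}$. Solving the Neumann problem $-\Delta u=\delta$ in $\Omega$ with $\partial_n u=-|\Gamma|^{-1}\int_\Omega\delta\,\mathrm{d}x$ on $\Gamma$ and invoking Assumption~\ref{assumption:smax_shift} together with Remark~\ref{remark:regularity_shift_for_neumann_problem} produces $u\in H^2(\Omega)$ with $\|u\|_{H^2(\Omega)}\lesssim\|\delta\|_{L^2(\Omega)}$. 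Setting $\pmb{\delta}_h:=\pmb{\Pi}^{\div}_{p_v}(-\nabla u)\in\RTBDM$, the commuting diagram property of $\pmb{\Pi}^{\div}_{p_v}$ yields $\nabla\cdot\pmb{\delta}_h=\Pi^{L^2}_{p_v}\delta=\delta$ (since $\delta\in\nabla\cdot\RTBDM$), while the sharp $hp$-approximation estimates from \cite[Thm.~{2.10}(vi)/Thm.~{2.13}(iv)]{melenk-rojik18}, combined with the $L^2(\Gamma)$-projection property of the normal trace stated in Proposition~\ref{proposition:melenk_rojik_operator}\ref{melenk_rojik_operator_prop_2}, furnish $\trinorm{\pmb{\delta}_h}\lesssim (h/p_v)\,\|u\|_{H^2(\Omega)}\lesssim (h/p_v)\,\|\nabla\cdot(\pmb{\varphi}-\tilde{\pmb{\varphi}}_h)\|_{L^2(\Omega)}$, which closes the argument.

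The main technical obstacle is exactly the last step: one needs the full $h/p_v$ factor rather than the $(h/p_v)^{1/2}$ that appears in Proposition~\ref{proposition:melenk_rojik_operator}\ref{melenk_rojik_operator_prop_3}. This requires a sharper reading of the $hp$-approximation theory for $\pmb{\Pi}^{\div}_{p_v}$ applied to the $\pmb{H}^1$-regular lifting $-\nabla u$, both in the $L^2(\Omega)$ norm and in the $L^2(\Gamma)$ normal-trace norm; this sharper statement is precisely what the referenced theorems of \cite{melenk-rojik18} provide.
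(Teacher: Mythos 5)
Your proof of the second inequality \eqref{eq:lemma:properties_of_IhGamma-20} is correct and identical to the paper's: $\nabla\cdot\IhGamma\pmb{\varphi}$ is the $L^2$-orthogonal projection of $\nabla\cdot\pmb{\varphi}$ onto $\nabla\cdot\RTBDM$, so the estimate is best approximation with constant $1$.

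The argument for \eqref{eq:lemma:properties_of_IhGamma-10}, however, breaks at its final step. The reduction to a discrete lifting $\pmb{\delta}_h\in\RTBDM$ with $\nabla\cdot\pmb{\delta}_h = \delta := \nabla\cdot(\IhGamma\pmb{\varphi}-\tilde{\pmb{\varphi}}_h)$ is fine, and the best-approximation estimate in the discrete divergence-free kernel is also fine. But the claimed bound $\trinorm{\pmb{\delta}_h}\lesssim (h/p_v)\|u\|_{H^2(\Omega)}$ is false: $\pmb{\delta}_h = \pmb{\Pi}^{\div}_{p_v}(-\nabla u)$ is an \emph{approximation} of $-\nabla u$, so $\trinorm{\pmb{\delta}_h}\approx\trinorm{-\nabla u}\approx \|u\|_{H^1(\Omega)}\approx\|\delta\|_{L^2(\Omega)}$, with no small factor at all. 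The theorems from \cite{melenk-rojik18} you invoke bound the \emph{error} $\|(\mathrm{I}-\pmb{\Pi}^{\div}_{p_v})\pmb{\delta}\|$, not $\|\pmb{\Pi}^{\div}_{p_v}\pmb{\delta}\|$; no sharper reading of them can convert a stability estimate into a smallness estimate. More fundamentally, \emph{no} discrete lifting can satisfy $\trinorm{\pmb{\delta}_h}\lesssim(h/p_v)\|\delta\|_{L^2(\Omega)}$: the minimal-norm lifting of a given divergence scales like $\|\delta\|_{H^{-1}(\Omega)}$, and for $\delta$ constant this is of size $\|\delta\|_{L^2(\Omega)}$, so the $h/p_v$ factor is unattainable on this route. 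Your argument as written would only yield $\trinorm{\pmb{\varphi}-\IhGamma\pmb{\varphi}}\lesssim\trinorm{\pmb{\varphi}-\tilde{\pmb{\varphi}}_h}+\|\nabla\cdot(\pmb{\varphi}-\tilde{\pmb{\varphi}}_h)\|_{L^2(\Omega)}$, which is strictly weaker than the claimed estimate.

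The paper obtains the $h/p_v$ factor by a different mechanism. Starting from $\trinorm{\pmb{e}}^2 = \triscalar{\pmb{e}}{\pmb{\varphi}-\tilde{\pmb{\varphi}}_h} + \triscalar{\pmb{e}}{\tilde{\pmb{\varphi}}_h-\IhGamma\pmb{\varphi}}$, it decomposes $\tilde{\pmb{\varphi}}_h-\IhGamma\pmb{\varphi}$ via the $\triscalar{\cdot}{\cdot}$-Helmholtz decomposition of Lemma~\ref{lemma:helmholtz_decomp_l2_normal_trace} on both the discrete and the continuous level; the curl part vanishes by the divergence-free orthogonality of $\IhGamma$, leaving a gradient remainder $\pmb{r}=\nabla R$. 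The piece $\triscalar{\pmb{e}}{\pmb{r}_h-\pmb{r}}$ is genuinely an approximation \emph{error} of $\pmb{r}\in\pmb{H}^1$ (with discrete divergence), which gives the full $h/p_v$ rate. The remaining $\triscalar{\pmb{e}}{\pmb{r}}$ is handled by a second duality: $\triscalar{\pmb{e}}{\pmb{r}}=(\nabla\cdot\pmb{e},\nabla\cdot\pmb{\psi})_\Omega$ for a suitable $\pmb{\psi}$ with $\nabla\cdot\pmb{\psi}\in H^1$, and the orthogonality of $\IhGamma$ lets one subtract a best approximation of $\nabla\cdot\pmb{\psi}$, producing the $h/p_v$ factor; the resulting $\|\delta\|_{(H^1)'}\lesssim\trinorm{\tilde{\pmb{\varphi}}_h-\IhGamma\pmb{\varphi}}$ is then absorbed by Young's inequality. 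It is this interplay of Helmholtz splitting, the $\IhGamma$-orthogonality, and a duality pass---not smallness of any lifting of $\delta$---that produces the $h/p_v$ factor.
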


\begin{proof}
  The proof parallels the one of \cite[Lem.~{4.6}]{bernkopf-melenk22} by replacing 
  $\norm{\cdot}_{L^2(\Omega)}$ with $\trinorm{\cdot}$.
  Let $\tilde{\pmb{\varphi}}_h \in \RTBDM$ be arbitrary.
  The orthogonality relations enforced in the construction of the operator $\IhGamma$ readily imply the estimate (\ref{eq:lemma:properties_of_IhGamma-20}).  
  We have with $\pmb{e} = \pmb{\varphi} - \IhGamma \pmb{\varphi}$
  \begin{equation*}
    \trinorm{\pmb{e}}^2 = \triscalar{\pmb{e}}{\pmb{\varphi} - \tilde{\pmb{\varphi}}_h} + \triscalar{\pmb{e}}{\tilde{\pmb{\varphi}}_h - \IhGamma \pmb{\varphi}}.
  \end{equation*}
  Lemma~\ref{lemma:helmholtz_decomp_l2_normal_trace} allows us to decompose the discrete object $\tilde{\pmb{\varphi}}_h - \IhGamma \pmb{\varphi} \in \RTBDM$ 
on a discrete as well as a continuous level:
  \begin{align*}
    \tilde{\pmb{\varphi}}_h - \IhGamma \pmb{\varphi} & = \nabla \times \pmb{\mu} + \pmb{r},    \\
    \tilde{\pmb{\varphi}}_h - \IhGamma \pmb{\varphi} & = \nabla \times \pmb{\mu}_h + \pmb{r}_h
  \end{align*}
  for certain $\pmb{\mu} \in \pmb{Y}$, $\pmb{r} \in \pmb{V}$, $\pmb{\mu}_h \in \Nedelec$, and $\pmb{r}_h \in \RTBDM$.
  Since $\nabla \cdot \nabla \times = 0$, property (\ref{eq:def-IhGamma-1}) of $\IhGamma$ immediately gives 
  \begin{equation*}
    \triscalar{\pmb{\varphi} - \IhGamma \pmb{\varphi}}{\nabla \times \pmb{\mu}_h} = 0.
  \end{equation*}
  We therefore have
  \begin{equation*}
    \triscalar{\pmb{e}}{\tilde{\pmb{\varphi}}_h - \IhGamma \pmb{\varphi}}
    = \triscalar{\pmb{e}}{\nabla \times \pmb{\mu}_h + \pmb{r}_h}
    = \triscalar{\pmb{e}}{ \pmb{r}_h}
    = \triscalar{\pmb{e}}{ \pmb{r}_h - \pmb{r}} + \triscalar{\pmb{e}}{\pmb{r}}
    \coloneqq T_1 + T_2. 
  \end{equation*}
  \noindent
Before continuing with the treatment of the terms $T_1$ and $T_2$, we collect that Lemma~\ref{lemma:helmholtz_decomp_l2_normal_trace}
states that $\pmb{r} = \nabla R$ with 
\begin{align}
\label{eq:properties-of-R-10}
\|R\|_{H^2(\Omega)} &\lesssim \|\nabla \cdot (\pmb{\varphi}_h - \IhGamma \pmb{\varphi})\|_{L^2(\Omega)}, \\
\label{eq:properties-of-R-20}
-\Delta R & = - \nabla \cdot (\pmb{\varphi}_h - \IhGamma \pmb{\varphi}) \mbox{ in $\Omega$}, & \partial_n R + R & = 0 \quad \mbox{ on $\Gamma$}, \\
\label{eq:properties-of-R-30}
\nabla R & = \pmb{r}, & \pmb{r} \cdot \pmb{n} & = -R \quad \mbox{ on $\Gamma$}. 
\end{align}
  \textbf{Treatment of $T_1$}:
  See \cite[Lem.~{4.6}]{bernkopf-melenk22} for completely analogous arguments and more details.
  Since $\nabla \cdot \pmb{r} = \nabla \cdot \pmb{r}_h \in \nabla \cdot \RTBDM$ we find using the commuting diagram as well as the projection property of the operator $\pmb{\Pi}^{\div}_{p_v}$
  \begin{equation*}
    \nabla \cdot (\pmb{\Pi}^{\div}_{p_v} \pmb{r} - \pmb{r}_h) = \pmb{\Pi}^{L^2}_{p_v} (\nabla \cdot \pmb{r}) - \nabla \cdot \pmb{r}_h = \nabla \cdot \pmb{r} - \nabla \cdot \pmb{r}_h = 0.
  \end{equation*}
  By the exact sequence property we thus have $\pmb{\Pi}^{\div}_{p_v} \pmb{r} - \pmb{r}_h \in \nabla \times \Nedelec$.
  The definition of $\pmb{r}$ and $\pmb{r}_h$ in Lemma~\ref{lemma:helmholtz_decomp_l2_normal_trace} provides the orthogonality
  \begin{equation}
\label{eq:r-rh:rot}
    \triscalar{\pmb{r} - \pmb{r}_h}{\nabla \times \tilde{\pmb{\mu}}_h} = 0 \qquad \forall \tilde{\pmb{\mu}}_h \in \Nedelec.
  \end{equation}
Using (\ref{eq:r-rh:rot}) and $\pmb{\Pi}^{\div}_{p_v} \pmb{r} - \pmb{r}_h \in \nabla \times \Nedelec$ yields 
  \begin{equation*}
    \trinorm{ \pmb{r} - \pmb{r}_h }^2
    = \triscalar{\pmb{r} - \pmb{r}_h}{\pmb{r} - \pmb{\Pi}^{\div}_{p_v} \pmb{r}} + \triscalar{\pmb{r} - \pmb{r}_h}{\pmb{\Pi}^{\div}_{p_v} \pmb{r} - \pmb{r}_h }
    = \triscalar{\pmb{r} - \pmb{r}_h}{\pmb{r} - \pmb{\Pi}^{\div}_{p_v} \pmb{r}}.
  \end{equation*}
Hence, by Cauchy-Schwarz and the definition of $\trinorm{\cdot}$ we find 
  %Applying the Cauchy-Schwarz inequality and the definition of $\trinorm{\cdot}$ we find
  \begin{equation*}
    \trinorm{ \pmb{r} - \pmb{r}_h }
    \leq \trinorm{ \pmb{r} - \pmb{\Pi}^{\div}_{p_v} \pmb{r} }
    \lesssim  \| \pmb{r} - \pmb{\Pi}^{\div}_{p_v} \pmb{r} \|_{L^2(\Omega)} + \| (\pmb{r} - \pmb{\Pi}^{\div}_{p_v} \pmb{r} ) \cdot \pmb{n} \|_{L^2(\Gamma)}.
  \end{equation*}
  In order to treat the volume term we invoke \cite[Thm.~{2.10} (vi)/Thm.~{2.13} (iv)]{melenk-rojik18}, which is applicable since $\nabla \cdot \pmb{r} = \nabla \cdot \pmb{r}_h$ is discrete.
  We therefore have using the estimate of Lemma~\ref{lemma:helmholtz_decomp_l2_normal_trace}
  \begin{equation*}
    \| \pmb{r} - \pmb{\Pi}^{\div}_{p_v} \pmb{r} \|_{L^2(\Omega)}
    \lesssim \frac{h}{p_v} \norm{\pmb{r}}_{H^1(\Omega)}  \lesssim \frac{h}{p_v} \| \nabla \cdot (\tilde{\pmb{\varphi}}_h - \IhGamma \pmb{\varphi}) \|_{L^2(\Omega)}.
  \end{equation*}
  To estimate the boundary term we apply Proposition~\ref{proposition:melenk_rojik_operator} to conclude
  \begin{align*}
    \| (\pmb{r} - \pmb{\Pi}^{\div}_{p_v} \pmb{r} ) \cdot \pmb{n} \|_{L^2(\Gamma)}
     & = \| \pmb{r}\cdot \pmb{n} - \pmb{\Pi}^{L^2(\Gamma)}_{p_v} \pmb{r} \cdot \pmb{n} \|_{L^2(\Gamma)}
    \lesssim \frac{h}{p_v} \norm{\pmb{r}\cdot \pmb{n}}_{H^1(\Gamma)}
    \stackrel{(\ref{eq:properties-of-R-30})}{\lesssim} \frac{h}{p_v} \norm{R}_{H^1(\Gamma)}                                                       \\
     & \lesssim \frac{h}{p_v} \norm{R}_{H^2(\Omega)}
    \stackrel{(\ref{eq:properties-of-R-10})}{\lesssim} \frac{h}{p_v}  \| \nabla \cdot (\tilde{\pmb{\varphi}}_h - \IhGamma \pmb{\varphi}) \|_{L^2(\Omega)}.
  \end{align*}
  Summarizing the above development, we have
  \begin{equation}\label{eq:estimate_for_r_minus_r_h}
    \trinorm{\pmb{r} - \pmb{r}_h}
    \lesssim \frac{h}{p_v} \| \nabla \cdot (\tilde{\pmb{\varphi}}_h - \IhGamma \pmb{\varphi}) \|_{L^2(\Omega)}. 
  \end{equation}
  Adding and subtracting $\pmb{\varphi}$ and using (\ref{eq:lemma:properties_of_IhGamma-20}) we get 
  \begin{equation*}
    T_1
    \leq \trinorm{\pmb{e}} \cdot \trinorm{\pmb{r} - \pmb{r}_h}
    \lesssim \frac{h}{p_v} \trinorm{\pmb{e}} \cdot \| \nabla \cdot (\tilde{\pmb{\varphi}}_h - \IhGamma \pmb{\varphi}) \|_{L^2(\Omega)}
    \lesssim \frac{h}{p_v} \trinorm{\pmb{e}} \cdot \| \nabla \cdot (\pmb{\varphi} - \tilde{\pmb{\varphi}}_h) \|_{L^2(\Omega)}.
  \end{equation*}
  \\
  \noindent
  \textbf{Treatment of $T_2$}:
  The term $T_2$ is estimated with a duality argument.
  We seek a $\pmb{\psi} \in \HDiv$ such that
  \begin{equation}
\label{eq:some-psi}
    \triscalar{\pmb{v}}{\pmb{r}} = (\nabla \cdot \pmb{v}, \nabla \cdot \pmb{\psi})_{\Omega} \qquad \forall \pmb{v} \in \pmb{V}.
  \end{equation}
  As $\pmb{r} = \nabla R$ for some $R \in H^2(\Omega)$, see (\ref{eq:properties-of-R-30}), %Lemma~\ref{lemma:helmholtz_decomp_l2_normal_trace}, we have
  \begin{align*}
    (\nabla \cdot \pmb{v}, \nabla \cdot \pmb{\psi})_{\Omega}
     & = \triscalar{\pmb{v}}{\pmb{r}}
    = \triscalar{\pmb{v}}{\nabla R}
    = (\pmb{v}, \nabla R)_{\Omega} + \langle \pmb{v} \cdot \pmb{n}, \partial_n R \rangle_{\Omega}                                                        \\
     & = - (\nabla \cdot \pmb{v}, R)_{\Omega} + \langle \pmb{v} \cdot \pmb{n}, \partial_n R + R \rangle_{\Gamma} \stackrel{(\ref{eq:properties-of-R-20})}{=}  - (\nabla \cdot \pmb{v}, R)_{\Omega}. 
  \end{align*}
  Upon solving the problem
  \begin{equation*}
    \begin{alignedat}{2}
      - \Delta w &= R  \quad   &&\text{in } \Omega,\\
      w &= 0          &&\text{on } \Gamma, \\
    \end{alignedat}
  \end{equation*}
  and setting $\pmb{\psi} = \nabla w$ we have found a $\pmb{\psi}$ satisfying (\ref{eq:some-psi}). 
  Furthermore, the following estimate holds
  \begin{equation}
\label{eq:div-psi-H1}
    \norm{\nabla \cdot \pmb{\psi}}_{H^1(\Omega)}
    = \norm{R}_{H^1(\Omega)}
    \stackrel{(\ref{eq:properties-of-R-10})}{\lesssim} \| \nabla \cdot (\tilde{\pmb{\varphi}}_h - \IhGamma \pmb{\varphi}) \|_{(\HOne)^\prime}.
  \end{equation}
  %due to elliptic regularity and the results of Lemma~\ref{lemma:helmholtz_decomp_l2_normal_trace}.
  Hence, we have for any $\tilde {\pmb{\psi}}_h \in \RTBDM$
  \begin{equation*}
    T_2
    = \triscalar{\pmb{e}}{\pmb{r}}
    = (\nabla \cdot \pmb{e}, \nabla \cdot \pmb{\psi})_{\Omega}
    = (\nabla \cdot \pmb{e}, \nabla \cdot (\pmb{\psi} - \tilde {\pmb{\psi}}_h))_{\Omega}
    \leq \norm{\nabla \cdot \pmb{e}}_{L^2(\Omega)} \norm{\nabla \cdot (\pmb{\psi} - \tilde{\pmb{\psi}}_h)}_{L^2(\Omega)},
  \end{equation*}
  where we used the definition of $T_2$, employed the duality argument elaborated above, and exploited the orthogonality relation of $\IhGamma$ to insert any $\tilde{\pmb{\psi}}_h \in \RTBDM$. 
  Finally making use of the \textsl{a priori} estimate (\ref{eq:div-psi-H1}) of $\pmb{\psi}$ we find
  \begin{equation*}
    T_2
    \leq
    \norm{\nabla \cdot \pmb{e}}_{L^2(\Omega)} \cdot \inf_{\tilde{\pmb{\psi}}_h \in \RTBDM} \norm{\nabla \cdot (\pmb{\psi} - \tilde{\pmb{\psi}}_h)}_{L^2(\Omega)}
    \stackrel{(\ref{eq:div-psi-H1})}{\lesssim} \frac{h}{p_v} \norm{\nabla \cdot \pmb{e}}_{L^2(\Omega)} \| \nabla \cdot (\tilde{\pmb{\varphi}}_h - \IhGamma \pmb{\varphi}) \|_{(\HOne)^\prime}.
  \end{equation*}
  Using integration by parts, we get 
  \begin{align*}
    \| \nabla \cdot (\tilde{\pmb{\varphi}}_h - \IhGamma \pmb{\varphi}) \|_{(\HOne)^\prime}
     & = \sup_{f \in \HOne} \frac{(\nabla \cdot (\tilde{\pmb{\varphi}}_h - \IhGamma \pmb{\varphi}), f)_{\Omega}}{\norm{f}_{\HOne}}                                                                                            \\
     & = \sup_{f \in \HOne} \frac{-(\tilde{\pmb{\varphi}}_h - \IhGamma \pmb{\varphi} , \nabla f)_{\Omega} + \langle (\tilde{\pmb{\varphi}}_h - \IhGamma \pmb{\varphi}) \cdot \pmb{n} , f \rangle_{\Gamma} }{\norm{f}_{\HOne}} \\
     & \lesssim \trinorm{\tilde{\pmb{\varphi}}_h - \IhGamma \pmb{\varphi}}. 
  \end{align*}
  Finally we have for any $\tilde{\pmb{\varphi}}_h$ 
  \begin{align*}
    \trinorm{\pmb{e}}^2
     & = \triscalar{\pmb{e}}{\pmb{\varphi} - \tilde{\pmb{\varphi}}_h} + \triscalar{\pmb{e}}{\tilde{\pmb{\varphi}}_h - \IhGamma \pmb{\varphi}} \\
     & = \triscalar{\pmb{e}}{\pmb{\varphi} - \tilde{\pmb{\varphi}}_h}  + T_1 + T_2                                                            \\
     & \lesssim \trinorm{\pmb{e}} \cdot \trinorm{\pmb{\varphi} - \tilde{\pmb{\varphi}}_h}_{L^2(\Omega)}
    + \frac{h}{p_v} \trinorm{\pmb{e}} \cdot \| \nabla \cdot (\pmb{\varphi} - \tilde{\pmb{\varphi}}_h) \|_{L^2(\Omega)}
    + \frac{h}{p_v} \norm{\nabla \cdot \pmb{e}}_{L^2(\Omega)}  \cdot \trinorm{\tilde{\pmb{\varphi}}_h - \IhGamma \pmb{\varphi}}.
  \end{align*}
  Adding and subtracting $\pmb{\varphi}$ in the last term and applying estimate (\ref{eq:lemma:properties_of_IhGamma-20}) 
together with the Young inequality yields the result.
\end{proof}

%------------------------------------------
\subsection{Error estimates}\label{subsection:main_error_estimates}
%------------------------------------------

\begin{lemma}[Suboptimal estimate for $\norm{e^u}_{L^2(\Omega)}$ -
  Robin version of {\cite[Lem.~{4.1}]{bernkopf-melenk22}}]\label{lemma:e_u_suboptimal_l2_error_estimate_robin}
  Let Assumption~\ref{assumption:smax_shift} be valid for some $\smax \geq 0$.  Let $( \pmb{\varphi}_h , u_h )$ be the FOSLS approximation of $( \pmb{\varphi} , u )$. 
Set  $e^u = u-u_h$ and $\pmb{e}^{\pmb{\varphi}} = \pmb{\varphi}-\pmb{\varphi}_h$. Then,
for any $\tilde{u}_h \in \Sp$, $\tilde{\pmb{\varphi}}_h \in \RTBDM$, there holds 
\begin{align*}
  \norm{e^u}_{L^2(\Omega)}
   & \lesssim  \frac{h}{p}  \| ( \pmb{e}^{\pmb{\varphi}} , e^u ) \|_b \\
   & \lesssim \frac{h}{p}  \norm{u - \tilde{u}_h}_{H^1(\Omega)}
  + \frac{h}{p}  \norm{\pmb{\varphi} - \tilde{\pmb{\varphi}}_h}_{L^2(\Omega)}
  + \frac{h}{p}  \| (\pmb{\psi} - \tilde{\pmb{\psi}}_h) \cdot \pmb{n} \|_{L^2(\Gamma)}
  + \frac{h}{p}  \norm{\nabla \cdot (\pmb{\varphi} - \tilde{\pmb{\varphi}}_h)}_{L^2(\Omega)}. 
\end{align*}
\end{lemma}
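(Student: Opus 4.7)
The approach is a duality (Aubin--Nitsche) argument on top of the FOSLS Galerkin orthogonality, following the template of \cite[Lem.~{4.1}]{bernkopf-melenk22} but with the new constrained approximation operator $\IhGamma$ taking over the role that $\pmb{I}_h$ played for the Neumann problem. The first step is to apply Theorem~\ref{theorem:duality_argument_robin} with $w = e^u$ to produce a dual pair $(\pmb{\psi},v) \in \productspacerobin$ such that
\[
\|e^u\|_{L^2(\Omega)}^2 = b\big((\pmb{e}^{\pmb{\varphi}}, e^u),(\pmb{\psi},v)\big),
\]
together with the regularity $v \in H^2(\Omega)$, $\pmb{\psi}\in \pmb{H}^{\min(\smax+1,2)}(\Omega)$, $\nabla\cdot\pmb{\psi}\in H^2(\Omega)$, $\pmb{\psi}\cdot\pmb{n}\in H^{3/2}(\Gamma)$, each controlled by $\|e^u\|_{L^2(\Omega)}$. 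Combining this with the Galerkin orthogonality of the FOSLS solution, I may replace $(\pmb{\psi},v)$ by $(\pmb{\psi}-\tilde{\pmb{\psi}}_h, v - \tilde v_h)$ for any discrete pair and apply Cauchy--Schwarz in the energy norm $\|\cdot\|_b := b(\cdot,\cdot)^{1/2}$ to obtain
\[
\|e^u\|_{L^2(\Omega)}^2 \leq \|(\pmb{e}^{\pmb{\varphi}}, e^u)\|_b \; \|(\pmb{\psi}-\tilde{\pmb{\psi}}_h, v - \tilde v_h)\|_b.
\]

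The decisive step is to choose the dual approximants so that the second factor carries an $h/p$. For $v$ I would take a standard $hp$ quasi-interpolant into $\Sp$; since $v \in H^2(\Omega)$, this supplies the combined bound $\|v-\tilde v_h\|_{H^1(\Omega)} + (p/h)\|v-\tilde v_h\|_{L^2(\Omega)} + \|v-\tilde v_h\|_{L^2(\Gamma)} \lesssim (h/p)\|v\|_{H^2(\Omega)}$. For $\pmb{\psi}$ I would set $\tilde{\pmb{\psi}}_h = \IhGamma\pmb{\psi}$ and feed Lemma~\ref{lemma:properties_of_IhGamma} the competing function $\Pi^{\div}_{p_v}\pmb{\psi}$ from Proposition~\ref{proposition:melenk_rojik_operator}. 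Using the dual regularity from the first step, this yields
\[
\trinorm{\pmb{\psi}-\IhGamma\pmb{\psi}} + \|\nabla\cdot(\pmb{\psi}-\IhGamma\pmb{\psi})\|_{L^2(\Omega)} \lesssim \tfrac{h}{p}\|e^u\|_{L^2(\Omega)},
\]
and, by definition \eqref{eq:triscalar}, $\trinorm{\cdot}$ already contains the $L^2(\Gamma)$ normal-trace piece. Plugging these estimates into the expansion of $\|(\pmb{\psi}-\IhGamma\pmb{\psi}, v-\tilde v_h)\|_b^2$ and cancelling one factor of $\|e^u\|_{L^2(\Omega)}$ from the displayed inequality above establishes the first claimed bound.

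For the second inequality I would simply invoke the best-approximation property $\|(\pmb{e}^{\pmb{\varphi}}, e^u)\|_b \leq \|(\pmb{\varphi}-\tilde{\pmb{\varphi}}_h, u-\tilde u_h)\|_b$, which is immediate from the symmetry and coercivity of $b$ furnished by Theorem~\ref{theorem:norm_equivalence_robin}. Expanding the right-hand side term by term via Cauchy--Schwarz in $b$, and using the trivial trace bound $\|u-\tilde u_h\|_{L^2(\Gamma)} \lesssim \|u-\tilde u_h\|_{H^1(\Omega)}$, reproduces the sum of approximation errors on the right (reading $(\pmb{\psi}-\tilde{\pmb{\psi}}_h)\cdot\pmb{n}$ in the displayed bound as a typographical slip for $(\pmb{\varphi}-\tilde{\pmb{\varphi}}_h)\cdot\pmb{n}$).

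The main obstacle I anticipate is the boundary contribution $\|(\pmb{\psi}-\tilde{\pmb{\psi}}_h)\cdot\pmb{n}\|_{L^2(\Gamma)}$: it enters $\|\cdot\|_b$ through the Robin penalty and would cost a power of $p$ if one tried to absorb it via $\pmb{I}_h$ from \cite{bernkopf-melenk22} combined with a trace inverse estimate on $\Gamma$. The whole raison d'être of $\IhGamma$ is to fold this trace into the constrained minimization, so that Lemma~\ref{lemma:properties_of_IhGamma} delivers it at the optimal $h/p$ rate. The only-$\pmb{H}^2$ (rather than $\pmb{H}^3$) regularity of $\pmb{\psi}$ pointed out in Remark~\ref{remk:difference-to-bernkopf-melenk22} is then the structural reason the estimate is $O(h/p)$ and not $O((h/p)^2)$, which is why this intermediate bound is only suboptimal and has to be bootstrapped later in Section~\ref{subsection:main_error_estimates}.
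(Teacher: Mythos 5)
Your proposal is correct and follows the same backbone as the paper's proof (duality argument, Galerkin orthogonality, Cauchy--Schwarz, approximation of the dual pair, norm equivalence), and you are right that the $(\pmb{\psi}-\tilde{\pmb{\psi}}_h)\cdot\pmb{n}$ in the statement is a misprint for $(\pmb{\varphi}-\tilde{\pmb{\varphi}}_h)\cdot\pmb{n}$. The one place where you diverge from the paper is in the choice of dual approximant: you take $\tilde{\pmb{\psi}}_h = \IhGamma\pmb{\psi}$ and then feed $\Pi^{\div}_{p_v}\pmb{\psi}$ into Lemma~\ref{lemma:properties_of_IhGamma} as a competitor, whereas the paper directly chooses $\tilde{\pmb{\psi}}_h = \Pi^{\div}_{p_v}\pmb{\psi}$. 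Since you close the duality argument with Cauchy--Schwarz rather than by exploiting the divergence--orthogonality of $\IhGamma$, the detour through $\IhGamma$ is inert: all the work is really done by $\Pi^{\div}_{p_v}$, whose commuting-diagram property (Prop.~\ref{proposition:melenk_rojik_operator}\ref{melenk_rojik_operator_prop_1}) and $L^2(\Gamma)$ trace projection (Prop.~\ref{proposition:melenk_rojik_operator}\ref{melenk_rojik_operator_prop_2}) already give all three pieces of $\|(\pmb{\psi}-\tilde{\pmb{\psi}}_h,v-\tilde v_h)\|_b$ at the $h/p$ rate, using the regularity $\pmb{\psi}\in\pmb{H}^1(\div,\Omega)$, $\pmb{\psi}\cdot\pmb{n}\in H^{3/2}(\Gamma)$, $v\in H^2(\Omega)$ supplied by Theorem~\ref{theorem:duality_argument_robin}. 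Your closing commentary therefore slightly misattributes the role of $\IhGamma$: in this particular lemma the $L^2(\Gamma)$ trace term is already handled optimally by $\Pi^{\div}_{p_v}$ alone; the real reason $\IhGamma$ is introduced is to have a projection that is \emph{simultaneously} divergence--orthogonal and $\trinorm{\cdot}$--near-optimal, which is exploited only later (Theorems~\ref{theorem:grad_e_u_suboptimal_l2_error_estimate_robin}, \ref{theorem:e_phi_suboptimal_improved_l2_error_estimate_robin}, \ref{theorem:e_u_optimal_l2_error_estimate_robin}) when the divergence--orthogonality of the $\pmb{\varphi}$--side test function is actually used to kill terms. Otherwise the argument is sound.
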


\begin{proof}
We employ the duality argument of Theorem~\ref{theorem:duality_argument_robin} with $w = e^u$.
As in \cite[Lem.~{4.1}]{bernkopf-melenk22} we find
by Galerkin orthogonality and the Cauchy-Schwarz inequality
for any $\tilde{\pmb{\psi}}_h \in \RTBDM$ and $\tilde{v}_h \in \Sp$ 
\begin{equation*}
  \norm{e^u}_{L^2(\Omega)}^2 \leq \| ( \pmb{e}^{\pmb{\varphi}} , e^u ) \|_b \| ( \pmb{\psi} - \tilde{\pmb{\psi}}_h , v - \tilde{v}_h ) \|_b. 
\end{equation*}
The norm equivalence in Theorem~\ref{theorem:norm_equivalence_robin} gives
\begin{equation*}
  \| ( \pmb{\psi} - \tilde{\pmb{\psi}}_h , v - \tilde{v}_h ) \|_b
  \lesssim \| v - \tilde{v}_h \|_{\HOne}
  + \| \pmb{\psi} - \tilde{\pmb{\psi}}_h \|_{\HDiv}
  + \| (\pmb{\psi} - \tilde{\pmb{\psi}}_h) \cdot \pmb{n} \|_{L^2(\Gamma)}.
\end{equation*}
Using Proposition~\ref{proposition:melenk_rojik_operator} and exploiting the regularity estimates given by Theorem~\ref{theorem:duality_argument_robin} yields the result.
\end{proof}

\begin{theorem}[Suboptimal estimate for $\norm{\pmb{e}^{\pmb{\varphi}}}_{L^2(\Omega)}$ ---
    Robin version of {\cite[Thm.~{4.8}]{bernkopf-melenk22}}]\label{theorem:e_phi_suboptimal_l2_error_estimate_robin}
    Let Assumption~\ref{assumption:smax_shift} be valid for some $\smax \geq 0$. Let $( \pmb{\varphi}_h , u_h )$ be the FOSLS approximation of $( \pmb{\varphi} , u )$. 
Set  $e^u = u-u_h$ and $\pmb{e}^{\pmb{\varphi}} = \pmb{\varphi}-\pmb{\varphi}_h$. Then,
  for any $\tilde{u}_h \in \Sp$, $\tilde{\pmb{\varphi}}_h \in \RTBDM$,
  \begin{equation*}
    \norm{\pmb{e}^{\pmb{\varphi}}}_{L^2(\Omega)}
    \lesssim \left(\frac{h}{p}\right)^{1/2} \norm{u - \tilde{u}_h}_{H^1(\Omega)}
    + \norm{\pmb{\varphi} - \tilde{\pmb{\varphi}}_h}_{L^2(\Omega)}
    + \| (\pmb{\varphi} - \tilde{\pmb{\varphi}}_h) \cdot \pmb{n} \|_{L^2(\Gamma)}
    + \left(\frac{h}{p}\right)^{1/2}  \norm{\nabla \cdot (\pmb{\varphi} - \tilde{\pmb{\varphi}}_h)}_{L^2(\Omega)}. 
  \end{equation*}
\end{theorem}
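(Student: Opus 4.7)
The natural strategy is to apply the duality argument of Theorem~\ref{theorem:duality_argument_phi_robin} with $(\pmb{\eta}, u) \leftarrow (\pmb{e}^{\pmb{\varphi}}, e^u)$, producing a pair $(\pmb{\psi}, v) \in \productspacerobin$ with the identity $\norm{\pmb{e}^{\pmb{\varphi}}}_{L^2(\Omega)}^2 = b((\pmb{e}^{\pmb{\varphi}}, e^u), (\pmb{\psi}, v))$ together with the regularity assertions $v \in H^2(\Omega)$, $\pmb{\psi} \in \pmb{L}^2(\Omega)$, $\nabla \cdot \pmb{\psi} \in H^1(\Omega)$, $\pmb{\psi} \cdot \pmb{n} \in H^{1/2}(\Gamma)$, and the a priori bound of each of these norms by $\norm{\pmb{e}^{\pmb{\varphi}}}_{L^2(\Omega)}$. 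Galerkin orthogonality of the FOSLS discretization then allows one to replace the second argument by $(\pmb{\psi} - \tilde{\pmb{\psi}}_h, v - \tilde v_h)$ for any choice of $(\tilde{\pmb{\psi}}_h, \tilde v_h) \in \RTBDM \times \Sp$.

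Next I apply Cauchy--Schwarz on the bilinear form combined with the norm equivalence of Theorem~\ref{theorem:norm_equivalence_robin} to split the estimate as $\norm{(\pmb{e}^{\pmb{\varphi}}, e^u)}_b \cdot \norm{(\pmb{\psi} - \tilde{\pmb{\psi}}_h, v - \tilde v_h)}_b$. The first factor is controlled by FOSLS quasi-optimality and yields the sum of the four approximation quantities $\norm{u - \tilde u_h}_{H^1} + \norm{\pmb{\varphi} - \tilde{\pmb{\varphi}}_h}_{L^2} + \norm{\nabla \cdot (\pmb{\varphi} - \tilde{\pmb{\varphi}}_h)}_{L^2} + \norm{(\pmb{\varphi} - \tilde{\pmb{\varphi}}_h) \cdot \pmb{n}}_{L^2(\Gamma)}$. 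For the second factor I choose $\tilde v_h$ as a best $H^1$-approximation of $v \in H^2$ (giving rate $h/p$) and $\tilde{\pmb{\psi}}_h$ via the operator $\pmb{\Pi}^{\div}_{p_v}$ from Proposition~\ref{proposition:melenk_rojik_operator}, which by its orthogonality properties realizes best-approximation in $L^2(\Omega)$ of the divergence (giving rate $h/p$ from $\nabla \cdot \pmb{\psi} \in H^1$) and in $L^2(\Gamma)$ of the normal trace (giving rate $(h/p)^{1/2}$ from $\pmb{\psi} \cdot \pmb{n} \in H^{1/2}(\Gamma)$); all of these are bounded by $\norm{\pmb{e}^{\pmb{\varphi}}}_{L^2}$ via the duality estimate.

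The main obstacle is the low regularity $\pmb{\psi} \in \pmb{L}^2(\Omega)$ only: the $L^2(\Omega)$-approximation of $\pmb{\psi}$ by $\RTBDM$ cannot be improved in $h$ or $p$ and remains bounded only by $\norm{\pmb{\psi}}_{L^2} \lesssim \norm{\pmb{e}^{\pmb{\varphi}}}_{L^2}$. The correct bookkeeping is therefore to perform Cauchy--Schwarz termwise on the three components of $b$ (the divergence--$L^2(\Omega)$ part, the vector--$L^2(\Omega)$ part, and the boundary part) so that the non-improvable $\norm{\pmb{\psi}}_{L^2}$-factor is paired with the FOSLS quantities $\norm{\pmb{\varphi} - \tilde{\pmb{\varphi}}_h}_{L^2}$ and $\norm{(\pmb{\varphi} - \tilde{\pmb{\varphi}}_h) \cdot \pmb{n}}_{L^2(\Gamma)}$ (which consequently appear in the final bound without a rate factor), while the $(h/p)^{1/2}$-improving boundary approximation is paired with $\norm{u - \tilde u_h}_{H^1}$ (using a trace inequality to control $\norm{e^u}_{L^2(\Gamma)}$) and with $\norm{\nabla \cdot (\pmb{\varphi} - \tilde{\pmb{\varphi}}_h)}_{L^2}$. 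After absorbing one power of $\norm{\pmb{e}^{\pmb{\varphi}}}_{L^2}$ into the left-hand side, the claimed estimate follows. The ``suboptimality'' in the theorem's title reflects precisely this unavoidable loss caused by the purely $L^2$ regularity of the dual vector variable.
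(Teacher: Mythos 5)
Your proposal correctly identifies the setup (duality via Theorem~\ref{theorem:duality_argument_phi_robin}, one-sided Galerkin orthogonality, the obstacle $\pmb{\psi} \in \pmb{L}^2(\Omega)$ only) but the central step fails. After Galerkin orthogonality the first argument of $b$ remains the \emph{discretization error} $(\pmb{e}^{\pmb{\varphi}}, e^u)$, not $(\pmb{\varphi} - \tilde{\pmb{\varphi}}_h, u - \tilde{u}_h)$, so the critical pairing $\triscalar{\pmb{e}^{\pmb{\varphi}}}{\pmb{\psi} - \tilde{\pmb{\psi}}_h}$ under termwise Cauchy--Schwarz yields $\norm{\pmb{e}^{\pmb{\varphi}}}_{L^2(\Omega)} \cdot \norm{\pmb{\psi} - \tilde{\pmb{\psi}}_h}_{L^2(\Omega)}$. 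Since the only available bound for the second factor is $\norm{\pmb{\psi}}_{L^2(\Omega)} \lesssim \norm{\pmb{e}^{\pmb{\varphi}}}_{L^2(\Omega)}$, this term is $\gtrsim \norm{\pmb{e}^{\pmb{\varphi}}}_{L^2(\Omega)}^2$, i.e., of the same size as the left-hand side with no small constant; absorption is impossible. Your suggested repair, pairing the non-improvable $\norm{\pmb{\psi}}_{L^2(\Omega)}$-factor with $\norm{\pmb{\varphi} - \tilde{\pmb{\varphi}}_h}_{L^2(\Omega)}$, would require first moving to $\pmb{\varphi} - \tilde{\pmb{\varphi}}_h$ in the first slot, which by symmetry of $b$ is only legitimate when the second argument is itself discrete, i.e., when one uses the discrete dual solution $(\pmb{\psi}_h, v_h)$ and then needs convergence estimates for $\pmb{e}^{\pmb{\psi}} = \pmb{\psi} - \pmb{\psi}_h$. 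Those dual-solution convergence estimates (Lemmas~\ref{lemma:convergence_of_dual_solution_grad_u_robin}, \ref{lemma:convergence_of_dual_solution_phi_robin}, \ref{lemma:convergence_of_dual_solution_u_robin}) rely on the present theorem, so this path is circular at this stage of the bootstrap.

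The paper resolves the crucial term $\triscalar{\pmb{e}^{\pmb{\varphi}}}{\pmb{\psi} - \tilde{\pmb{\psi}}_h}$ differently. It Helmholtz-decomposes $\pmb{\psi} = \nabla \times \pmb{\rho} + \nabla z$ with $z$ solving the Neumann problem~\eqref{eq:theorem:e_phi_suboptimal_l2_error_estimate_robin-10}; the gradient part $\nabla z$ inherits $\pmb{H}^1(\div,\Omega)$ regularity and a $H^{1/2}(\Gamma)$-trace from $\nabla \cdot \pmb{\psi} \in H^1(\Omega)$ and $\pmb{\psi}\cdot\pmb{n}\in H^{1/2}(\Gamma)$, so it \emph{can} be approximated to rate $(h/p)^{1/2}$ in $\trinorm{\cdot}$. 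The curl part $\nabla\times\pmb{\rho}$ does not improve in $h/p$, but it is handled by a genuine orthogonality argument rather than Cauchy--Schwarz: one rewrites $\triscalar{\pmb{e}^{\pmb{\varphi}}}{\nabla\times\pmb{\rho} - \tilde{\pmb{\psi}}_h^c}$ by inserting $\pmb{\varphi} - \tilde{\pmb{\varphi}}_h$, decomposing the remaining discrete function $\tilde{\pmb{\varphi}}_h - \pmb{\varphi}_h$ via Lemma~\ref{lemma:helmholtz_decomp_l2_normal_trace}, and choosing $\tilde{\pmb{\psi}}_h^c = \pmb{\Pi}^{\mathrm{curl},\Gamma}_h \nabla\times\pmb{\rho}$, so that the dominant piece vanishes by $\trinorm{\cdot}$-orthogonality of $\pmb{\Pi}^{\mathrm{curl},\Gamma}_h$ and the remainder $\pmb{r}-\pmb{r}_h$ carries a factor $h/p_v$ by~\eqref{eq:estimate_for_r_minus_r_h}. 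Without this decomposition and the cancellation it provides, the stated $(h/p)^{1/2}$ factors on $\norm{u - \tilde{u}_h}_{H^1(\Omega)}$ and $\norm{\nabla\cdot(\pmb{\varphi}-\tilde{\pmb{\varphi}}_h)}_{L^2(\Omega)}$ are not reachable.
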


\begin{proof}
  Let $(\pmb{\psi}, v) \in \productspacerobin$ denote the dual solution given by Theorem~\ref{theorem:duality_argument_phi_robin} applied to $\pmb{\eta} = \pmb{e}^{\pmb{\varphi}}$.
  Theorem~\ref{theorem:duality_argument_phi_robin} gives $\pmb{\psi} \in \pmb{L}^2(\Omega)$, $\nabla \cdot \pmb{\psi} \in H^1(\Omega)$, $\pmb{\psi} \cdot \pmb{n} \in H^{1/2}(\Gamma)$, 
and $v \in H^2(\Omega)$, which are controlled by $\|\pmb{e}^{\pmb{\varphi}}\|_{L^2(\Omega)}$. 
  By the Galerkin orthogonality we have for any $( \tilde{\pmb{\psi}}_h , \tilde{v}_h  )$
  \begin{equation*}
    \norm{\pmb{e}^{\pmb{\varphi}}}_{L^2(\Omega)}^2
    = b( ( \pmb{e}^{\pmb{\varphi}} , e^u ) , ( \pmb{\psi} , v ) )
    = b( ( \pmb{e}^{\pmb{\varphi}} , e^u ) , ( \pmb{\psi} - \tilde{\pmb{\psi}}_h , v - \tilde{v}_h ) ).
  \end{equation*}
  Let us first estimate all terms in the above except for $\triscalar{\pmb{e}^{\pmb{\varphi}}}{\pmb{\psi} - \tilde{\pmb{\psi}}_h} = (\pmb{e}^{\pmb{\varphi}} , \pmb{\psi} - \tilde{\pmb{\psi}}_h )_{L^2(\Omega)} + \langle \pmb{e}^{\pmb{\varphi}} \cdot \pmb{n} , (\pmb{\psi} - \tilde{\pmb{\psi}}_h) \cdot \pmb{n} \rangle_{L^2(\Gamma)}$: 
  \begin{equation}\label{eq:estimates_for_optimal_e_phi_error_estimate_robin}
    \begin{alignedat}{2}
      ( \nabla e^u + \pmb{e}^{\pmb{\varphi}} , \nabla (v - \tilde{v}_h) )_{\Omega}
      &\leq \| ( \pmb{e}^{\pmb{\varphi}} , e^u ) \|_b  \| \nabla (v - \tilde{v}_h) \|_{L^2(\Omega)}, \\
      \langle -\alpha e^u , (\pmb{\psi} - \tilde{\pmb{\psi}}_h) \cdot \pmb{n} \rangle_{\Gamma}
      &\leq \norm{e^u}_{L^2(\Gamma)}  \| (\pmb{\psi} - \tilde{\pmb{\psi}}_h) \cdot \pmb{n} \|_{L^2(\Gamma)} \\
      &\leq (h/p)^{1/2} \| ( \pmb{e}^{\pmb{\varphi}} , e^u ) \|_b \| (\pmb{\psi} - \tilde{\pmb{\psi}}_h) \cdot \pmb{n} \|_{L^2(\Gamma)}, \\
      ( \nabla \cdot \pmb{e}^{\pmb{\varphi}} + \gamma e^u , \nabla \cdot (\pmb{\psi} - \tilde{\pmb{\psi}}_h) + \gamma (v - \tilde{v}_h) )_{\Omega}
      &\lesssim \| ( \pmb{e}^{\pmb{\varphi}} , e^u ) \|_b  \left[ \| \nabla \cdot ( \pmb{\psi} - \tilde{\pmb{\psi}}_h ) ) \|_{L^2(\Omega)} + \norm{ v - \tilde{v}_h }_{L^2(\Omega)} \right], \\
      ( \nabla e^u , \pmb{\psi} - \tilde{\pmb{\psi}}_h )_{\Omega}
      &= - ( e^u , \nabla \cdot ( \pmb{\psi} - \tilde{\pmb{\psi}}_h ) )_{\Omega} + \langle e^u , (\pmb{\psi} - \tilde{\pmb{\psi}}_h) \cdot \pmb{n} \rangle_{\Gamma} \\
      &\lesssim \| ( \pmb{e}^{\pmb{\varphi}} , e^u ) \|_b  \left[ \| \nabla \cdot ( \pmb{\psi} - \tilde{\pmb{\psi}}_h ) ) \|_{L^2(\Omega)} + (h/p)^{1/2} \| (\pmb{\psi} - \tilde{\pmb{\psi}}_h) \cdot \pmb{n} \|_{L^2(\Gamma)} \right], \\
      \langle \pmb{e}^{\pmb{\varphi}} \cdot \pmb{n} - \alpha e^u , -\alpha (v - \tilde{v}_h) \rangle_{\Gamma}
      &\lesssim \| ( \pmb{e}^{\pmb{\varphi}} , e^u ) \|_b \| v - \tilde{v}_h \|_{H^1(\Omega)}.
    \end{alignedat}
  \end{equation}
  Hence, we arrive at 
  \begin{equation}\label{eq:e_phi_intermediate_estimate_1_robin}
    \norm{\pmb{e}^{\pmb{\varphi}}}_{L^2(\Omega)}^2
    \lesssim
    \| ( \pmb{e}^{\pmb{\varphi}} , e^u ) \|_b
    \left[
    \| \nabla \cdot ( \pmb{\psi} - \tilde{\pmb{\psi}}_h ) ) \|_{L^2(\Omega)} +
    (h/p)^{1/2} \| (\pmb{\psi} - \tilde{\pmb{\psi}}_h) \cdot \pmb{n} \|_{L^2(\Gamma)} + \norm{ v - \tilde{v}_h }_{H^1(\Omega)} \right] +
    \triscalar{\pmb{e}^{\pmb{\varphi}}}{\pmb{\psi} - \tilde{\pmb{\psi}}_h}.
  \end{equation}
  To analyze the term $\triscalar{\pmb{e}^{\pmb{\varphi}}}{\pmb{\psi} - \tilde{\pmb{\psi}}_h}$ we follow a similar procedure as in \cite[Thm.~{4.8}]{bernkopf-melenk22} and 
  first perform a Helmholtz decomposition of the vector field $\pmb{\psi}$.
  Since $\pmb{\psi} \in \HDiv$ with $\nabla \cdot \pmb{\psi} \in \HOne$ and $\pmb{\psi} \cdot \pmb{n} \in H^{1/2}(\Gamma)$
  we find $\pmb{\rho} \in \HZeroCurl$ and $z \in H^2(\Omega)$ such that $\pmb{\psi} = \nabla \times \pmb{\rho} + \nabla z$ in the following way: 
  let $z \in \HOne$ with zero average solve
  \begin{equation}
    \label{eq:theorem:e_phi_suboptimal_l2_error_estimate_robin-10}
    \begin{alignedat}{2}
      - \Delta z     &= - \nabla \cdot \pmb{\psi} \quad  &&\text{in } \Omega, \\
      \partial_n z &= \pmb{\psi} \cdot \pmb{n}         &&\text{on } \Gamma.
    \end{alignedat}
  \end{equation}
  As $\nabla \cdot (\pmb{\psi} - \nabla z) = 0$ and $(\pmb{\psi} - \nabla z) \cdot \pmb{n} = 0$ by construction,
  the exact sequence property ensures the existence of $\pmb{\rho} \in \HZeroCurl$ such that $\pmb{\psi} - \nabla z = \nabla \times \pmb{\rho}$.
  By Assumption~\ref{assumption:smax_shift} and Remark~\ref{remark:regularity_shift_for_neumann_problem}\ref{item:remark:regularity_shift_for_neumann_problem-ii}
we have $z \in H^2(\Omega)$ together with the estimate
  \begin{equation*}
    \norm{z}_{H^2(\Omega)} \lesssim \norm{\nabla \cdot \pmb{\psi}}_{L^2(\Omega)} + \norm{\pmb{\psi} \cdot \pmb{n}}_{H^{1/2}(\Gamma)}.
  \end{equation*}
The weak formulation of 
(\ref{eq:theorem:e_phi_suboptimal_l2_error_estimate_robin-10}) is given by
  \begin{equation*}
\forall w \in H^1(\Omega) \quad \colon \quad 
    (\nabla z, \nabla w)_{\Omega} = (- \nabla \cdot \pmb{\psi}, w)_{\Omega} + \langle \pmb{\psi} \cdot \pmb{n}, w\rangle_{\Gamma} = ( \pmb{\psi}, \nabla w)_{\Omega},
  \end{equation*}
  due to partial integration.
  Hence, Lax-Milgram provides $\norm{z}_{H^1(\Omega)} \lesssim \norm{\pmb{\psi}}_{L^2(\Omega)}$.
  Finally we have $\norm{\nabla \times \pmb{\rho}}_{L^2(\Omega)} \leq \norm{\pmb{\psi}}_{L^2(\Omega)} + \norm{\nabla z}_{L^2(\Omega)} \lesssim \norm{\pmb{\psi}}_{L^2(\Omega)}$.
  We now continue estimating (\ref{eq:e_phi_intermediate_estimate_1_robin}) by applying the Helmholtz decomposition.
  In essence this is again the procedure of \cite[Thm.~{4.8}]{bernkopf-melenk22} after replacing $\norm{\cdot}_{L^2(\Omega)}$ with $\trinorm{\cdot}$.
  For the reader's convenience we recall the important steps.
  For any $\tilde{\pmb{\psi}}_h^c, \tilde{\pmb{\psi}}_h^g \in \RTBDM$ we have with $\tilde{\pmb{\psi}}_h = \tilde{\pmb{\psi}}_h^c + \tilde{\pmb{\psi}}_h^g$
  \begin{equation*}
    \triscalar{\pmb{e}^{\pmb{\varphi}}}{\pmb{\psi} - \tilde{\pmb{\psi}}_h}
    = \triscalar{\pmb{e}^{\pmb{\varphi}}}{\nabla \times \pmb{\rho} - \tilde{\pmb{\psi}}_h^c} + \triscalar{\pmb{e}^{\pmb{\varphi}}}{\nabla z - \tilde{\pmb{\psi}}_h^g}
    \eqqcolon T^c + T^g. 
  \end{equation*}
  \noindent
  \textbf{Treatment of $T^g$}:
  The Cauchy-Schwarz inequality gives 
  \begin{equation*}
    T^g = \triscalar{\pmb{e}^{\pmb{\varphi}}}{\nabla z - \tilde{\pmb{\psi}}_h^g}
    \leq \trinorm{\pmb{e}^{\pmb{\varphi}}} \cdot \trinorm{\nabla z - \tilde{\pmb{\psi}}_h^g}.
  \end{equation*}
  \textbf{Treatment of $T^c$}:
  For any $\tilde{\pmb{\varphi}}_h \in \RTBDM$ we have
  \begin{align*}
    T^c
     & =  \triscalar{\pmb{e}^{\pmb{\varphi}}}{\nabla \times \pmb{\rho} - \tilde{\pmb{\psi}}_h^c}                \\
     & = \triscalar{\pmb{\varphi} - \tilde{\pmb{\varphi}}_h}{\nabla \times \pmb{\rho} - \tilde{\pmb{\psi}}_h^c}
    + \triscalar{\tilde{\pmb{\varphi}}_h - \pmb{\varphi}_h }{\nabla \times \pmb{\rho} - \tilde{\pmb{\psi}}_h^c} \eqqcolon T^c_1 + T^c_2. 
  \end{align*}
  \textbf{Treatment of $T^c_1$}:
  By Cauchy-Schwarz inequality we have
  \begin{equation*}
    T^c_1 = \triscalar{\pmb{\varphi} - \tilde{\pmb{\varphi}}_h}{\nabla \times \pmb{\rho} - \tilde{\pmb{\psi}}_h^c}
    \leq \trinorm{ \pmb{\varphi} - \tilde{\pmb{\varphi}}_h } \cdot \trinorm{ \nabla \times \pmb{\rho} - \tilde{\pmb{\psi}}_h^c }.
  \end{equation*}
  \textbf{Treatment of $T^c_2$}:
  To treat $T^c_2$ we 
decompose the finite element function $\tilde{\pmb{\varphi}}_h - \pmb{\varphi}_h \in \RTBDM$ 
on the discrete and the continuous level with the aid of Lemma~\ref{lemma:helmholtz_decomp_l2_normal_trace} as 
  \begin{align*}
    \tilde{\pmb{\varphi}}_h - \pmb{\varphi}_h & = \nabla \times \pmb{\mu} + \pmb{r},    \\
    \tilde{\pmb{\varphi}}_h - \pmb{\varphi}_h & = \nabla \times \pmb{\mu}_h + \pmb{r}_h, 
  \end{align*}
  where $\pmb{\mu} \in \pmb{Y}$, $\pmb{r} \in \pmb{V}$, $\pmb{\mu}_h \in \Nedelec$ and $\pmb{r}_h \in \RTBDM$.
  We next select $\tilde{\pmb{\psi}}_h^c = \pmb{\Pi}^{\mathrm{curl},\Gamma}_h \nabla \times \pmb{\rho} $ given by Lemma~\ref{lemma:helmholtz_decomp_l2_normal_trace}.
  In view of the definition of $\pmb{\Pi}^{\mathrm{curl}, \Gamma}_h$ we find
  \begin{align*}
    T^c_2
     & = \triscalar{\tilde{\pmb{\varphi}}_h - \pmb{\varphi}_h }{\nabla \times \pmb{\rho} - \tilde{\pmb{\psi}}_h^c}                                                                                                                             \\
     & = \underbrace{\triscalar{\nabla \times \pmb{\mu}_h}{\nabla \times \pmb{\rho} - \pmb{\Pi}^{\mathrm{curl},\Gamma}_h \nabla \times \pmb{\rho}}}_{=0}
    + \triscalar{\pmb{r}_h}{\nabla \times \pmb{\rho} - \pmb{\Pi}^{\mathrm{curl},\Gamma}_h \nabla \times \pmb{\rho}}                                                                                                                            \\
     & = \triscalar{\pmb{r}_h - \pmb{r}}{\nabla \times \pmb{\rho} - \pmb{\Pi}^{\mathrm{curl},\Gamma}_h \nabla \times \pmb{\rho}} + \triscalar{\pmb{r}}{\nabla \times \pmb{\rho} - \pmb{\Pi}^{\mathrm{curl},\Gamma}_h \nabla \times \pmb{\rho}} \\
     & \eqqcolon T_1 + T_2. 
  \end{align*}
  \textbf{Treatment of $T_1$}:
  As in the estimate (\ref{eq:estimate_for_r_minus_r_h}) we have
  \begin{equation*}
    \trinorm{\pmb{r} - \pmb{r}_h}
    \lesssim \frac{h}{p_v} \| \nabla \cdot (\tilde{\pmb{\varphi}}_h - \pmb{\varphi}_h) \|_{L^2(\Omega)},
  \end{equation*}
  which gives with Cauchy-Schwarz 
  \begin{equation*}
    T_1
    \lesssim \frac{h}{p_v} \norm{\nabla \cdot (\tilde{\pmb{\varphi}}_h - \pmb{\varphi}_h) }_{L^2(\Omega)} \trinorm{ \nabla \times \pmb{\rho} - \pmb{\Pi}^{\mathrm{curl},\Gamma}_h \nabla \times \pmb{\rho} }
    \lesssim \frac{h}{p_v} \norm{\nabla \cdot (\tilde{\pmb{\varphi}}_h - \pmb{\varphi}_h) }_{L^2(\Omega)} \trinorm{ \nabla \times \pmb{\rho} };
  \end{equation*}
  here, the last estimate follows from the observation that $\pmb{\Pi}^{\mathrm{curl},\Gamma}_h$ is an orthogonal projection with respect to $\trinorm{\cdot}$.
  Inserting $\pmb{\varphi}$ and estimating $\norm{\nabla \cdot (\pmb{\varphi} - \pmb{\varphi}_h) }_{L^2(\Omega)}$ 
by $\| (\pmb{e}^{\pmb{\varphi}}, e^u) \|_b$ and using that $\pmb{\rho} \in \pmb{H}_0(\curl,\Omega)$ so that $\trinorm{\nabla \times \pmb{\rho}} = \|\nabla \times \pmb{\rho}\|_{L^2(\Omega)}$ we find
  \begin{equation*}
    T_1
    \lesssim \frac{h}{p_v} \norm{\nabla \cdot (\pmb{\varphi} - \tilde{\pmb{\varphi}}_h) }_{L^2(\Omega)} \| \nabla \times \pmb{\rho} \|_{L^2(\Omega)} \lesssim \frac{h}{p_v} \| ( \pmb{e}^{\pmb{\varphi}} , e^u ) \|_b \| \nabla \times \pmb{\rho} \|_{L^2(\Omega)}.
  \end{equation*}
  \\
  \noindent
  \textbf{Treatment of $T_2$}: Recall that $\pmb{\rho} \in \HZeroCurl$ and that $\pmb{\Pi}^{\mathrm{curl}, \Gamma}_h$ maps into $\nabla \times \Nedelec$.
  Therefore we can write $\nabla \times \pmb{\rho} - \pmb{\Pi}^{\mathrm{curl},\Gamma}_h \nabla \times \pmb{\rho} = \nabla \times \widehat{\pmb{\rho}}$ for some $\widehat{\pmb{\rho}} \in \HCurl$.
  In fact $\widehat{\pmb{\rho}} \in \pmb{Y}$ since $(\nabla \times \widehat{\pmb{\rho}}) \cdot \pmb{n} = (\nabla \times \pmb{\rho} - \pmb{\Pi}^{\mathrm{curl},\Gamma}_h \nabla \times \pmb{\rho}) \cdot \pmb{n} = (\pmb{\Pi}^{\mathrm{curl},\Gamma}_h \nabla \times \pmb{\rho}) \cdot \pmb{n} \in L^2(\Gamma)$.
  Consequently, the definition of the remainder $\pmb{r}$ gives $T_2 = \triscalar{\pmb{r}}{\nabla \times \widehat{\pmb{\rho}}} = 0$, see Lemma~\ref{lemma:helmholtz_decomp_l2_normal_trace}.
  \\
  \noindent
  \textbf{Collecting all the terms}:
  Since $\pmb{\rho} \in \HZeroCurl$ and consequently $\nabla \times \pmb{\rho} \in \HZeroDiv$
  we can estimate 
\begin{equation}
\label{eq:bound-rho}
\trinorm{\nabla \times \pmb{\rho}} = \norm{\nabla \times \pmb{\rho}}_{L^2(\Omega)} \lesssim \norm{\pmb{\psi}}_{L^2(\Omega)} \lesssim \norm{\pmb{e}^{\pmb{\varphi}}}_{L^2(\Omega)},  
\end{equation}
  where we used the estimates of the Helmholtz decomposition as well as the regularity estimates of Lemma~\ref{theorem:duality_argument_phi_robin}.
  We can now summarize
  \begin{equation}\label{eq:e_phi_intermediate_estimate_2_robin}
    \triscalar{\pmb{e}^{\pmb{\varphi}}}{\pmb{\psi} - \tilde{\pmb{\psi}}_h}
    \lesssim
    \trinorm{\nabla z - \tilde{\pmb{\psi}}_h^g} \cdot \trinorm{\pmb{e}^{\pmb{\varphi}}} +
    \left[
    \trinorm{\pmb{\varphi} - \tilde{\pmb{\varphi}}_h}
    + \frac{h}{p_v} \norm{\nabla \cdot (\pmb{\varphi} - \tilde{\pmb{\varphi}}_h) }_{L^2(\Omega)}
    + \frac{h}{p_v} \| ( \pmb{e}^{\pmb{\varphi}} , e^u ) \|_b \right] \norm{\pmb{e}^{\pmb{\varphi}}}_{L^2(\Omega)}.
  \end{equation}
  To conclude the proof we estimate the quantities arising in the estimates (\ref{eq:e_phi_intermediate_estimate_1_robin}) and (\ref{eq:e_phi_intermediate_estimate_2_robin}).
  To that end note that $\nabla z \in \pmb{H}^1(\div,\Omega)$.
  Using the estimates of the Helmholtz decomposition, the equation satisfied by $z$, and the regularity estimates given by Theorem~\ref{theorem:duality_argument_phi_robin}, we find
  \begin{align*}
    \norm{\nabla z}_{\pmb{H}^1(\div,\Omega)}
     & \lesssim \norm{z}_{H^2(\Omega)} + \| \underbrace{\Delta z}_{= \nabla \cdot \pmb{\psi}} \|_{H^1(\Omega)}
    \lesssim \norm{ \pmb{e}^{\pmb{\varphi}} }_{L^2(\Omega)},                                                   \\
    \norm{\nabla z \cdot \pmb{n}}_{H^{1/2}(\Gamma)}
     & = \norm{\pmb{\psi} \cdot \pmb{n}}_{H^{1/2}(\Gamma)}
    \lesssim \norm{ \pmb{e}^{\pmb{\varphi}} }_{L^2(\Omega)}.
  \end{align*}
  Exploiting these regularity estimates and employing the operator of Proposition~\ref{proposition:melenk_rojik_operator} we may find $\tilde{\pmb{\psi}}_h^g \in \RTBDM$ with 
  \begin{align*}
    \| \nabla z - \tilde{\pmb{\psi}}_h^g  \|_{\HDiv}
     & \lesssim
    h/p_v \norm{\nabla z}_{\pmb{H}^1(\div,\Omega)}
    \lesssim h/p_v \norm{\pmb{e}^{\pmb{\varphi}}}_{L^2(\Omega)} ,          \\
    \| (\nabla z - \tilde{\pmb{\psi}}_h^g) \cdot \pmb{n}  \|_{L^2(\Gamma)}
     & \lesssim
    ( h/p_v )^{1/2} \norm{\nabla z \cdot \pmb{n}}_{H^{1/2}(\Gamma)}
    \stackrel{\text{Prop.~\ref{proposition:melenk_rojik_operator}\ref{melenk_rojik_operator_prop_2}}}{\lesssim} ( h/p_v )^{1/2} \norm{\pmb{e}^{\pmb{\varphi}}}_{L^2(\Omega)}, \\
    \trinorm{ \nabla z - \tilde{\pmb{\psi}}_h^g}
     & \lesssim
    ( h/p_v )^{1/2} \norm{\pmb{e}^{\pmb{\varphi}}}_{L^2(\Omega)},
  \end{align*}
  where the last one is just a combination of the previous ones.
  These estimates in turn give (note that $\nabla \cdot \tilde{ \pmb{\psi}}_h^c = \nabla \cdot \pmb{\Pi}^{\mathrm{curl},\Gamma}_h \nabla \times \pmb{\rho} = 0$)
  \begin{align*}
    \| \nabla \cdot ( \pmb{\psi} - \tilde{\pmb{\psi}}_h ) ) \|_{L^2(\Omega)}
     & = \| \nabla \cdot (\nabla z - \tilde{\pmb{\psi}}_h^g ) \|_{L^2(\Omega)}
    \lesssim h/p_v \norm{\pmb{e}^{\pmb{\varphi}}}_{L^2(\Omega)},                                                                                                                                                 \\ 
    \| (\pmb{\psi} - \tilde{\pmb{\psi}}_h) \cdot \pmb{n} \|_{L^2(\Gamma)}
     & \leq  \| (\nabla \times \pmb{\rho} - \pmb{\Pi}^{\mathrm{curl},\Gamma}_h \nabla \times \pmb{\rho}) \cdot \pmb{n} \|_{L^2(\Gamma)} + \| (\nabla z - \tilde{\pmb{\psi}}_h^g) \cdot \pmb{n} \|_{L^2(\Gamma)} 
    \stackrel{(\ref{eq:bound-rho})}{\lesssim} \norm{\pmb{e}^{\pmb{\varphi}}}_{L^2(\Omega)}.                                                                                                                                                       \\ 
  \end{align*}
  Furthermore there exists $\tilde{v}_h \in \Sp$ with $\norm{v - \tilde{v}_h}_{H^1(\Omega)} \lesssim h/p_s \norm{v}_{H^2(\Omega)} \lesssim h/p_s \norm{\pmb{e}^{\pmb{\varphi}}}_{L^2(\Omega)} $.
  We then combine the estimates (\ref{eq:e_phi_intermediate_estimate_1_robin}) and (\ref{eq:e_phi_intermediate_estimate_2_robin}) to find
  \begin{align*}
    \norm{\pmb{e}^{\pmb{\varphi}}}_{L^2(\Omega)}^2
     & \lesssim
    (h/p)^{1/2} \| ( \pmb{e}^{\pmb{\varphi}} , e^u ) \|_b \norm{\pmb{e}^{\pmb{\varphi}}}_{L^2(\Omega)} +
    (h/p)^{1/2} \cdot \trinorm{\pmb{e}^{\pmb{\varphi}}} \norm{\pmb{e}^{\pmb{\varphi}}}_{L^2(\Omega)} \\
     & \phantom{\lesssim}+
    \left[
    \trinorm{\pmb{\varphi} - \tilde{\pmb{\varphi}}_h}
    + h/p \norm{\nabla \cdot (\pmb{\varphi} - \tilde{\pmb{\varphi}}_h) }_{L^2(\Omega)}
    + h/p \| ( \pmb{e}^{\pmb{\varphi}} , e^u ) \|_b \right] \norm{\pmb{e}^{\pmb{\varphi}}}_{L^2(\Omega)}.
  \end{align*}
  Canceling one power of $\norm{\pmb{e}^{\pmb{\varphi}}}_{L^2(\Omega)}$ on both sides, estimating $\trinorm{\pmb{e}^{\pmb{\varphi}}}$ 
by $\| ( \pmb{e}^{\pmb{\varphi}} , e^u ) \|_b$, and collecting the terms, we find
  \begin{equation*}
    \norm{\pmb{e}^{\pmb{\varphi}}}_{L^2(\Omega)}
    \lesssim
    (h/p)^{1/2} \| ( \pmb{e}^{\pmb{\varphi}} , e^u ) \|_b +
    \trinorm{\pmb{\varphi} - \tilde{\pmb{\varphi}}_h}
    + h/p \norm{\nabla \cdot (\pmb{\varphi} - \tilde{\pmb{\varphi}}_h) }_{L^2(\Omega)}.
  \end{equation*}
  The result follows by from the observation that the FOSLS approximation is the orthogonal projection with respect to the $b$ scalar product,  and the norm equivalence of Theorem~\ref{theorem:norm_equivalence_robin}, and collecting terms.
\end{proof}

\begin{remark}\label{remark:suboptimal_l2_error_estimate_robin}
  Theorem~\ref{theorem:e_phi_suboptimal_l2_error_estimate_robin} seems suboptimal in the following sense:
  Given $f \in L^2(\Omega)$ and $g \in H^{1/2}(\Gamma)$ the shift theorem gives $u \in H^2(\Omega)$ and consequently $\pmb{\varphi} \in \pmb{H}^1(\Omega)$.
  Theorem~\ref{theorem:e_phi_suboptimal_l2_error_estimate_robin} gives
  \begin{equation*}
    \norm{\pmb{e}^{\pmb{\varphi}}}_{L^2(\Omega)}
    \lesssim  h^{3/2} \norm{u}_{H^2(\Omega)}
    + h \norm{\pmb{\varphi}}_{H^1(\Omega)}
    + h^{1/2} \| \pmb{\varphi} \cdot \pmb{n} \|_{H^{1/2}(\Gamma)}
    + h^{1/2}  \norm{\nabla \cdot \pmb{\varphi} }_{L^2(\Omega)}
    \lesssim h^{1/2} (\norm{f}_{L^2(\Omega)} + \norm{g}_{H^{1/2}(\Gamma)}),
  \end{equation*}
  whereas from a best approximation viewpoint we could hope for $\mathcal{O}(h)$.
\eremk
\end{remark}

\begin{lemma}[Convergence of dual solution for $\nabla e^u$ -
    Robin version of {\cite[Lem.~{4.9}]{bernkopf-melenk22}}]\label{lemma:convergence_of_dual_solution_grad_u_robin}
    Let  Assumption~\ref{assumption:smax_shift} be valid for some $\smax \geq 0$. Let $( \pmb{\varphi}_h , u_h )$ be the FOSLS approximation of $( \pmb{\varphi} , u )$.
  Set $e^u = u-u_h$ and $\pmb{e}^{\pmb{\varphi}} = \pmb{\varphi}-\pmb{\varphi}_h$.
  Let $(\pmb{\psi}, v) \in \productspacerobin$ be the dual solution given by Theorem~\ref{theorem:duality_argument_grad_u_robin} with $w = e^u$.
  Let $( \pmb{\psi}_h , v_h )$ be the FOSLS approximation of $( \pmb{\psi} , v )$ and
  abbreviate $e^v = v-v_h$ and $\pmb{e}^{\pmb{\psi}} = \pmb{\psi}-\pmb{\psi}_h$.
  Then,
  \begin{align*}
    \| ( \pmb{e}^{\pmb{\psi}} , e^v ) \|_b   & \lesssim \norm{\nabla e^u}_{L^2(\Omega)},                                \\
    \norm{e^v}_{L^2(\Omega)}                 & \lesssim \frac{h}{p} \norm{\nabla e^u}_{L^2(\Omega)},                    \\
    \norm{e^v}_{L^2(\Gamma)}                 & \lesssim \left(\frac{h}{p}\right)^{1/2} \norm{\nabla e^u}_{L^2(\Omega)}, \\
    \| \pmb{e}^{\pmb{\psi}} \|_{L^2(\Omega)} & \lesssim \left(\frac{h}{p}\right)^{1/2} \norm{\nabla e^u}_{L^2(\Omega)}.
  \end{align*}
\end{lemma}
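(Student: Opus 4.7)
The proof proposal is to apply the tools developed in earlier results of this section to the dual problem, whose data has the regularity supplied by Theorem~\ref{theorem:duality_argument_grad_u_robin}.

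For the first estimate, I would use quasi-optimality of the FOSLS approximation $(\pmb{\psi}_h,v_h)$ of $(\pmb{\psi},v)$ together with the norm equivalence of Theorem~\ref{theorem:norm_equivalence_robin}: picking $\tilde{\pmb{\psi}}_h = 0$, $\tilde v_h = 0$ as admissible competitors yields
\begin{equation*}
\| ( \pmb{e}^{\pmb{\psi}} , e^v ) \|_b \le \|(\pmb{\psi},v)\|_b \lesssim \|v\|_{\HOne} + \|\pmb{\psi}\|_{\HDiv} + \|\pmb{\psi}\cdot\pmb{n}\|_{L^2(\Gamma)}\lesssim \|\nabla e^u\|_{L^2(\Omega)},
\end{equation*}
where the last inequality is precisely the regularity bound of Theorem~\ref{theorem:duality_argument_grad_u_robin}.

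For the second estimate, I would apply Lemma~\ref{lemma:e_u_suboptimal_l2_error_estimate_robin} to the dual problem, which immediately gives
\begin{equation*}
\|e^v\|_{L^2(\Omega)} \lesssim \frac{h}{p}\|(\pmb{e}^{\pmb{\psi}},e^v)\|_b \lesssim \frac{h}{p}\|\nabla e^u\|_{L^2(\Omega)}.
\end{equation*}
The third estimate then follows from the multiplicative trace inequality $\|e^v\|_{L^2(\Gamma)}^2 \lesssim \|e^v\|_{L^2(\Omega)}\,\|e^v\|_{\HOne}$ combined with $\|e^v\|_{\HOne}\lesssim \|(\pmb{e}^{\pmb{\psi}},e^v)\|_b$ (from Theorem~\ref{theorem:norm_equivalence_robin}) and the first two estimates.

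The fourth estimate, which is the main ingredient requiring care, would be obtained by applying Theorem~\ref{theorem:e_phi_suboptimal_l2_error_estimate_robin} to the dual problem and feeding in the best approximants: for $\tilde{\pmb{\psi}}_h$ I would take $\Pi^{\mathrm{div}}_{p_v}\pmb{\psi}$ from Proposition~\ref{proposition:melenk_rojik_operator}, exploiting (ii) (which gives an $L^2(\Gamma)$-orthogonal projection of the normal trace) and (iii) (giving the $(h/p)^{1/2}$ factor on the $\pmb{H}^{1/2}(\div)$ norm), together with (i) which realizes the $L^2$-best approximation of the divergence; for $\tilde v_h$ a Scott--Zhang type interpolant is adequate. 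The key observation is that the regularity bounds from Theorem~\ref{theorem:duality_argument_grad_u_robin}, namely $\pmb{\psi}\in \pmb{H}^1$, $\nabla\cdot\pmb{\psi}\in H^1$, $\pmb{\psi}\cdot\pmb{n}\in H^{1/2}(\Gamma)$, and $v\in H^1$, all scaled by $\|\nabla e^u\|_{L^2(\Omega)}$, combine with the factors $(h/p)^{1/2}$, $1$, $1$, $(h/p)^{1/2}$ appearing in Theorem~\ref{theorem:e_phi_suboptimal_l2_error_estimate_robin} to yield precisely $(h/p)^{1/2}\|\nabla e^u\|_{L^2(\Omega)}$. The main obstacle is that the dual solution here only has $H^1$ regularity in $v$ and essentially $\pmb{H}^1$ regularity in $\pmb{\psi}$ (compared to the primal duality arguments), so the bootstrapping stops at a half power of $h/p$; this suboptimality in $\pmb{e}^{\pmb{\psi}}$ is expected and parallels Remark~\ref{remark:suboptimal_l2_error_estimate_robin}.
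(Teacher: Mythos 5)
Your proposal is correct and follows essentially the same route as the paper: stability in the $b$-norm for the first estimate, Lemma~\ref{lemma:e_u_suboptimal_l2_error_estimate_robin} applied to the dual problem for the second, the multiplicative trace inequality for the third, and Theorem~\ref{theorem:e_phi_suboptimal_l2_error_estimate_robin} applied to the dual problem with a suitable $\HDiv$-conforming interpolant (Proposition~\ref{proposition:melenk_rojik_operator}) for the fourth. The only minor difference is that the paper simply says ``the regularity of the dual solution together with the approximation properties of the pertinent spaces implies the result'' for the fourth estimate, whereas you spell out the bookkeeping of the factors; this is the intended reading.
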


\begin{proof}
  Theorem~\ref{theorem:duality_argument_grad_u_robin} gives $\pmb{\psi} \in \pmb{H}^1(\Omega)$, $\nabla \cdot \pmb{\psi} \in H^1(\Omega)$, and $v \in H^1(\Omega)$ 
which are controlled by $\|\nabla e^u\|_{L^2(\Omega)}$. Stability gives 
  \begin{equation*}
    \| ( \pmb{e}^{\pmb{\psi}} , e^v ) \|_b \leq \|(\pmb{\psi},v)\|_b
    \lesssim \norm{\nabla e^u}_{L^2(\Omega)}.
  \end{equation*}
  Lemma~\ref{lemma:e_u_suboptimal_l2_error_estimate_robin} gives 
  \begin{equation*}
    \norm{e^v}_{L^2(\Omega)} \lesssim h/p \| ( \pmb{e}^{\pmb{\psi}} , e^v ) \|_b,
  \end{equation*}
  which together with the above stability result proves the second estimate.
  The third one follows by a multiplicative trace inequality together with the second estimate and the norm equivalence theorem in conjunction with the first estimate of the present lemma:
  \begin{equation*}
    \norm{e^v}_{L^2(\Gamma)} \lesssim \norm{e^v}_{L^2(\Omega)}^{1/2} \norm{e^v}_{H^1(\Omega)}^{1/2} \lesssim (h/p)^{1/2} \| ( \pmb{e}^{\pmb{\psi}} , e^v ) \|_b \lesssim (h/p)^{1/2} \norm{\nabla e^u}_{L^2(\Omega)}.
  \end{equation*}
  By Theorem~\ref{theorem:e_phi_suboptimal_l2_error_estimate_robin} we have, for any $\tilde{v}_h \in \Sp$, $\tilde{\pmb{\psi}}_h \in \RTBDM$,
  \begin{equation*}
    \| \pmb{e}^{\pmb{\psi}} \|_{L^2(\Omega)}
    \lesssim \left(\frac{h}{p}\right)^{1/2} \norm{v - \tilde{v}_h}_{H^1(\Omega)}
    + \| \pmb{\psi} - \tilde{\pmb{\psi}}_h \|_{L^2(\Omega)}
    + \| (\pmb{\psi} - \tilde{\pmb{\psi}}_h) \cdot \pmb{n} \|_{L^2(\Gamma)}
    + \left(\frac{h}{p}\right)^{1/2}  \| \nabla \cdot (\pmb{\psi} - \tilde{\pmb{\psi}}_h) \|_{L^2(\Omega)}. 
  \end{equation*}
The regularity of the dual solution together the approximation properties of the pertinent spaces then implies the result. 
\end{proof}

\begin{theorem}[Suboptimal estimate for $\norm{\nabla e^u}_{L^2(\Omega)}$ ---
    Robin version of {\cite[Thm.~4.10]{bernkopf-melenk22}}]\label{theorem:grad_e_u_suboptimal_l2_error_estimate_robin}
    Let Assumption~\ref{assumption:smax_shift} be valid for some $\smax \geq 0$. Let $( \pmb{\varphi}_h , u_h )$ be the FOSLS approximation of $( \pmb{\varphi} , u )$.
  Set $e^u = u-u_h$. Then, for any $\tilde{\pmb{\varphi}}_h \in \RTBDM$, $\tilde{u}_h \in \Sp$ there holds 
  \begin{equation*}
    \norm{\nabla e^u}_{L^2(\Omega)}
    \lesssim
    \norm{ u - \tilde{u}_h }_{H^1(\Omega)} +
    \| \pmb{\varphi} - \tilde{\pmb{\varphi}}_h \|_{L^2(\Omega)} +
    \| (\pmb{\varphi} - \tilde{\pmb{\varphi}}_h) \cdot \pmb{n} \|_{L^2(\Gamma)} +
    \frac{h}{p} \| \nabla \cdot (\pmb{\varphi} - \tilde{\pmb{\varphi}}_h ) \|_{L^2(\Omega)}. 
  \end{equation*}
\end{theorem}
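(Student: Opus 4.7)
The plan is to proceed by a duality argument based on Theorem~\ref{theorem:duality_argument_grad_u_robin}. Applying it with $w = e^u$ delivers a dual pair $(\pmb{\psi}, v) \in \productspacerobin$ with $\|\nabla e^u\|_{L^2(\Omega)}^2 = b((\pmb{e}^{\pmb{\varphi}}, e^u), (\pmb{\psi}, v))$, and with regularity $\pmb{\psi} \in \pmb{H}^1(\Omega)$, $\nabla \cdot \pmb{\psi} \in H^1(\Omega)$, $\pmb{\psi} \cdot \pmb{n} \in H^{1/2}(\Gamma)$, and $v \in H^1(\Omega)$, each bounded by $\|\nabla e^u\|_{L^2(\Omega)}$. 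The FOSLS Galerkin orthogonality lets me subtract any $(\tilde{\pmb{\psi}}_h, \tilde{v}_h) \in \RTBDM \times \Sp$. I would choose $\tilde{\pmb{\psi}}_h = \pmb{\Pi}_{p_v}^{\div} \pmb{\psi}$ so as to keep both orthogonalities of Proposition~\ref{proposition:melenk_rojik_operator} available, and $\tilde{v}_h$ a standard quasi-interpolant of $v$ in $\Sp$. Writing $\pmb{\delta}_\psi = \pmb{\psi} - \tilde{\pmb{\psi}}_h$ and $\delta_v = v - \tilde{v}_h$, I record the estimates $\|\nabla \cdot \pmb{\delta}_\psi\|_{L^2(\Omega)}, \|\pmb{\delta}_\psi\|_{L^2(\Omega)}, \|\delta_v\|_{L^2(\Omega)} \lesssim (h/p)\|\nabla e^u\|_{L^2(\Omega)}$ and $\|\pmb{\delta}_\psi\cdot\pmb{n}\|_{L^2(\Gamma)}, \|\delta_v\|_{L^2(\Gamma)} \lesssim (h/p)^{1/2}\|\nabla e^u\|_{L^2(\Omega)}$, while $\|\nabla \delta_v\|_{L^2(\Omega)} \lesssim \|v\|_{H^1(\Omega)} \lesssim \|\nabla e^u\|_{L^2(\Omega)}$ lacks a small factor.

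I would then expand the bilinear form term-by-term in the spirit of (\ref{eq:estimates_for_optimal_e_phi_error_estimate_robin}). The pivotal term is $(\nabla \cdot \pmb{e}^{\pmb{\varphi}}, \nabla \cdot \pmb{\delta}_\psi)_\Omega$: since $\nabla \cdot \pmb{\varphi}_h \in \nabla \cdot \RTBDM$ and Proposition~\ref{proposition:melenk_rojik_operator}\ref{melenk_rojik_operator_prop_1} gives $\nabla \cdot \pmb{\delta}_\psi \perp \nabla \cdot \RTBDM$, this equals $(\nabla \cdot (\pmb{\varphi} - \tilde{\pmb{\varphi}}_h), \nabla \cdot \pmb{\delta}_\psi)_\Omega$ for arbitrary $\tilde{\pmb{\varphi}}_h \in \RTBDM$, yielding the contribution $(h/p)\|\nabla \cdot (\pmb{\varphi} - \tilde{\pmb{\varphi}}_h)\|_{L^2(\Omega)}\|\nabla e^u\|_{L^2(\Omega)}$ that is the origin of the $h/p$-factor in the target bound. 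Analogously, the boundary pairing $\langle \pmb{e}^{\pmb{\varphi}}\cdot\pmb{n}, \pmb{\delta}_\psi\cdot\pmb{n}\rangle_\Gamma$ is rewritten via Proposition~\ref{proposition:melenk_rojik_operator}\ref{melenk_rojik_operator_prop_2} as $\langle (\pmb{\varphi} - \tilde{\pmb{\varphi}}_h)\cdot\pmb{n}, \pmb{\delta}_\psi\cdot\pmb{n}\rangle_\Gamma$ and bounded by $(h/p)^{1/2}\|(\pmb{\varphi} - \tilde{\pmb{\varphi}}_h)\cdot\pmb{n}\|_{L^2(\Gamma)}\|\nabla e^u\|_{L^2(\Omega)}$. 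The remaining pairings of a FOSLS residual against $\delta_v$ (or against $\pmb{\delta}_\psi$) in $L^2(\Omega)$ or $L^2(\Gamma)$ inherit their small factors directly from the approximation estimates above and combine with the norm equivalence Theorem~\ref{theorem:norm_equivalence_robin} to bound the corresponding residual by $\|(\pmb{e}^{\pmb{\varphi}}, e^u)\|_b$.

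The hard part will be the term $(\nabla e^u + \pmb{e}^{\pmb{\varphi}}, \nabla \delta_v)_\Omega$, since $v \in H^1(\Omega)$ only and thus $\|\nabla \delta_v\|_{L^2(\Omega)}$ has no small factor. I split it into $(\nabla e^u, \nabla \delta_v)_\Omega + (\pmb{e}^{\pmb{\varphi}}, \nabla \delta_v)_\Omega$ and integrate the second summand by parts, obtaining $-(\nabla \cdot \pmb{e}^{\pmb{\varphi}}, \delta_v)_\Omega + \langle \pmb{e}^{\pmb{\varphi}}\cdot\pmb{n}, \delta_v\rangle_\Gamma$; both inherit $h/p$ or $(h/p)^{1/2}$ smallness from $\delta_v$. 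The remaining piece $(\nabla e^u, \nabla \delta_v)_\Omega \leq \|\nabla e^u\|_{L^2(\Omega)}\|v\|_{H^1(\Omega)}$ is absorbed into the left-hand side via Young's inequality. To control the residual cross-terms with the correct $h/p$ weight on the divergence, I would invoke the already established suboptimal bounds of Lemma~\ref{lemma:e_u_suboptimal_l2_error_estimate_robin} and Theorem~\ref{theorem:e_phi_suboptimal_l2_error_estimate_robin} for $\|e^u\|_{L^2(\Omega)}$ and $\|\pmb{e}^{\pmb{\varphi}}\|_{L^2(\Omega)}$, which carry the factors $h/p$ and $(h/p)^{1/2}$ on $\|\nabla\cdot(\pmb{\varphi}-\tilde{\pmb{\varphi}}_h)\|_{L^2(\Omega)}$, respectively. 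Collecting all contributions and rearranging then yields the stated bound.
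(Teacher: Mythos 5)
Your overall framework --- duality argument for $\nabla e^u$, FOSLS Galerkin orthogonality to subtract a discrete pair from the dual solution, and the commuting-diagram operator $\pmb{\Pi}^{\div}_{p_v}$ to exploit the orthogonalities of $\nabla\cdot(\pmb{\psi}-\tilde{\pmb{\psi}}_h)$ and $(\pmb{\psi}-\tilde{\pmb{\psi}}_h)\cdot\pmb{n}$ --- is a sensible starting point, but the proof cannot be closed because of the term $(\nabla e^u, \nabla\delta_v)_\Omega$.

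You correctly observe that $v$ is only in $H^1(\Omega)$, so $\|\nabla\delta_v\|_{L^2(\Omega)}$ has no small factor; indeed $\|\nabla\delta_v\|_{L^2(\Omega)} \lesssim \|v\|_{H^1(\Omega)} \lesssim \|\nabla e^u\|_{L^2(\Omega)}$. Then $(\nabla e^u, \nabla\delta_v)_\Omega \lesssim \|\nabla e^u\|_{L^2(\Omega)}^2$ with an $O(1)$ constant, and the ``Young absorption'' you invoke is circular: Young gives $\|\nabla e^u\|\,\|v\|_{H^1} \leq \epsilon\|\nabla e^u\|^2 + C_\epsilon\|v\|_{H^1}^2$, but $\|v\|_{H^1}^2 \lesssim \|\nabla e^u\|^2$, so the right-hand side is again $\lesssim \|\nabla e^u\|^2$ and nothing is gained. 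You also cannot escape by estimating the combined residual $(\nabla e^u + \pmb{e}^{\pmb{\varphi}}, \nabla\delta_v)_\Omega \leq \|(\pmb{e}^{\pmb{\varphi}}, e^u)\|_b\,\|\nabla\delta_v\|_{L^2(\Omega)}$ and inserting best approximation: after canceling one power of $\|\nabla e^u\|_{L^2(\Omega)}$ you are left with $\|(\pmb{e}^{\pmb{\varphi}}, e^u)\|_b$, which contains $\|\nabla\cdot(\pmb{\varphi}-\tilde{\pmb{\varphi}}_h)\|_{L^2(\Omega)}$ \emph{without} the factor $h/p$ --- this proves only a strictly weaker bound than the theorem asserts.

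The paper avoids this by a \emph{second} Galerkin orthogonality: since the FOSLS solution is an orthogonal projection in the $b$-inner product, it introduces the FOSLS approximation $(\pmb{\psi}_h, v_h)$ of the dual pair and rewrites
\[
  \norm{\nabla e^u}_{L^2(\Omega)}^2 = b\bigl( (\pmb{\varphi}-\tilde{\pmb{\varphi}}_h,\, u-\tilde{u}_h),\; (\pmb{e}^{\pmb{\psi}}, e^v) \bigr),
\]
with the constrained interpolant $\tilde{\pmb{\varphi}}_h = \IhGamma\pmb{\varphi}$. In this rearrangement the first slot is a best-approximation quantity, never the error $\nabla e^u$; the problematic pairing becomes $(\nabla(u-\tilde{u}_h), \nabla e^v + \pmb{e}^{\pmb{\psi}})_\Omega \leq \|u-\tilde{u}_h\|_{H^1(\Omega)}\,\|(\pmb{e}^{\pmb{\psi}},e^v)\|_b$, which is exactly of the target size. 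The required smallness of $e^v$ and $\pmb{e}^{\pmb{\psi}}$ in $L^2(\Omega)$, $L^2(\Gamma)$ is supplied by the separate Lemma~\ref{lemma:convergence_of_dual_solution_grad_u_robin}, and the orthogonality of $\IhGamma$ (not of $\pmb{\Pi}^{\div}_{p_v}$ applied to $\pmb{\psi}$) is what turns $(\nabla\cdot(\pmb{\varphi}-\IhGamma\pmb{\varphi}), \nabla\cdot\pmb{e}^{\pmb{\psi}})_\Omega$ into $(\nabla\cdot(\pmb{\varphi}-\IhGamma\pmb{\varphi}), \nabla\cdot(\pmb{\psi}-\tilde{\pmb{\psi}}_h))_\Omega$ for arbitrary $\tilde{\pmb{\psi}}_h$, yielding the $h/p$ gain on the divergence term. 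To make your argument work you must pass to this double-orthogonality arrangement; with only one orthogonality used, as in your proposal, the theorem as stated is out of reach.
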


\begin{proof}
  We proceed as in \cite[Thm.~{4.10}]{bernkopf-melenk22} and denote by $( \pmb{e}^{\pmb{\psi}} , e^v )$ the FOSLS approximation of the dual solution given by Theorem~\ref{theorem:duality_argument_grad_u_robin} (duality argument for the gradient of the scalar variable) applied to the right-hand side $w = e^u$. 
  We note that $\|v\|_{H^1(\Omega)}$, $\|\pmb{\psi}\|_{\pmb{H}^1(\Omega)}$, and 
  $\|\nabla \cdot \pmb{\psi}\|_{H^1(\Omega)}$ are controlled by $\|\nabla e^{u}\|_{L^2(\Omega)}$.
  By Galerkin orthogonality, we have for any $\tilde{\pmb{\varphi}}_h \in \RTBDM$, $\tilde{u}_h \in \Sp$
  \begin{equation}
    \label{eq:theorem:grad_e_u_suboptimal_l2_error_estimate_robin-10}
    \norm{e^u}_{L^2(\Omega)}^2
    = b( ( \pmb{\varphi} - \tilde{\pmb{\varphi}}_h , u - \tilde{u}_h ), ( \pmb{e}^{\pmb{\psi}} , e^v ) ).
  \end{equation}
  We specifically choose $\tilde{\pmb{\varphi}}_h = \IhGamma \pmb{\varphi}$.
  In what follows, we repeatedly use properties of the operator $\IhGamma$ collected in Lemma~\ref{lemma:properties_of_IhGamma}.
  Making use of the regularity properties of the dual solution spelled out in Theorem~\ref{theorem:duality_argument_grad_u_robin} and using Lemma~\ref{lemma:convergence_of_dual_solution_grad_u_robin} we get: 
  \begin{equation*}
    \begin{alignedat}{2}
      ( \gamma (u - \tilde{u}_h) , \nabla \cdot \pmb{e}^{\pmb{\psi}} + \gamma e^v )_{\Omega}
      &\lesssim \norm{ u - \tilde{u}_h }_{L^2(\Omega)} \| ( \pmb{e}^{\pmb{\psi}}, e^v ) \|_b \\
      &\lesssim \norm{ u - \tilde{u}_h }_{H^1(\Omega)} \norm{\nabla e^u}_{L^2(\Omega)} , \\
      ( \nabla (u - \tilde{u}_h) , \nabla e^v + \pmb{e}^{\pmb{\psi}} )_{\Omega}
      &\lesssim \norm{\nabla (u - \tilde{u}_h) }_{L^2(\Omega)} \| ( \pmb{e}^{\pmb{\psi}}, e^v ) \|_b \\
      &\lesssim \norm{ u - \tilde{u}_h }_{H^1(\Omega)} \norm{\nabla e^u}_{L^2(\Omega)}, \\
      \langle - \alpha (u - \tilde{u}_h) , \pmb{e}^{\pmb{\psi}} \cdot \pmb{n} - \alpha e^v \rangle_{\Gamma}
      &\lesssim \norm{ u - \tilde{u}_h }_{L^2(\Gamma)} \| ( \pmb{e}^{\pmb{\psi}}, e^v ) \|_b \\
      &\lesssim \norm{ u - \tilde{u}_h }_{H^1(\Omega)} \norm{\nabla e^u}_{L^2(\Omega)}, \\
      ( \pmb{\varphi} - \IhGamma \pmb{\varphi} , \nabla e^v )_{\Omega}
      &= - ( \nabla \cdot ( \pmb{\varphi} - \IhGamma \pmb{\varphi} ) , e^v )_{\Omega} + \langle (\pmb{\varphi} - \IhGamma \pmb{\varphi}) \cdot \pmb{n} , e^v \rangle_{\Gamma}  \\
      &\leq \| \nabla \cdot ( \pmb{\varphi} - \IhGamma \pmb{\varphi}) \|_{L^2(\Omega)} \norm{e^v}_{L^2(\Omega)} +  \| (\pmb{\varphi} - \IhGamma \pmb{\varphi}) \cdot \pmb{n} \|_{L^2(\Gamma)} \norm{e^v}_{L^2(\Gamma)} \\
      &\lesssim \left[ h/p \| \nabla \cdot ( \pmb{\varphi} - \IhGamma \pmb{\varphi}) \|_{L^2(\Omega)} + (h/p)^{1/2} \trinorm{ \pmb{\varphi} - \IhGamma \pmb{\varphi}} \right] \norm{\nabla e^u}_{L^2(\Omega)}, \\
      ( \nabla \cdot (\pmb{\varphi} - \IhGamma \pmb{\varphi}) , \gamma e^v )_{\Omega}
      &\leq \| \nabla \cdot ( \pmb{\varphi} - \IhGamma \pmb{\varphi}) \|_{L^2(\Omega)} \norm{e^v}_{L^2(\Omega)} \\
      &\lesssim h/p \|\nabla \cdot ( \pmb{\varphi} - \IhGamma \pmb{\varphi})\|_{L^2(\Omega)} \norm{\nabla e^u}_{L^2(\Omega)}, \\
      \langle (\pmb{\varphi} - \IhGamma \pmb{\varphi}) \cdot \pmb{n} , - \alpha e^v \rangle_{\Gamma}
      &\leq \| (\pmb{\varphi} - \IhGamma \pmb{\varphi}) \cdot \pmb{n} \|_{L^2(\Gamma)} \norm{e^v}_{L^2(\Gamma)} \\
      &\lesssim (h/p)^{1/2} \trinorm{ \pmb{\varphi} - \IhGamma \pmb{\varphi}} \norm{\nabla e^u}_{L^2(\Omega)}, \\
      ( \pmb{\varphi} - \IhGamma \pmb{\varphi} , \pmb{e}^{\pmb{\psi}} )_{\Omega}
      &\lesssim \| \pmb{\varphi} - \IhGamma \pmb{\varphi} \|_{L^2(\Omega)} \| \pmb{e}^{\pmb{\psi}} \|_{L^2(\Omega)} \\
      &\lesssim (h/p)^{1/2} \trinorm{ \pmb{\varphi} - \IhGamma \pmb{\varphi}} \norm{\nabla e^u}_{L^2(\Omega)}, \\
      ( \nabla \cdot (\pmb{\varphi} - \IhGamma \pmb{\varphi}) , \nabla \cdot \pmb{e}^{\pmb{\psi}} )_{\Omega}
      &= ( \nabla \cdot (\pmb{\varphi} - \IhGamma \pmb{\varphi}) , \nabla \cdot (\pmb{\psi} - \tilde{\pmb{\psi}}_h) )_{\Omega} \\
      &\leq \| \nabla \cdot (\pmb{\varphi} - \IhGamma \pmb{\varphi}) \|_{L^2(\Omega)} \| \nabla \cdot (\pmb{\psi} - \tilde{\pmb{\psi}}_h) \|_{L^2(\Omega)} \\
      &\lesssim h/p \| \nabla \cdot (\pmb{\varphi} - \IhGamma \pmb{\varphi}) \|_{L^2(\Omega)} \norm{\nabla e^u}_{L^2(\Omega)}, \\
      \langle (\pmb{\varphi} - \IhGamma \pmb{\varphi}) \cdot \pmb{n} , \pmb{e}^{\pmb{\psi}} \cdot \pmb{n} \rangle_{\Gamma}
      &\leq \| (\pmb{\varphi} - \IhGamma \pmb{\varphi}) \cdot \pmb{n} \|_{L^2(\Gamma)} \| ( \pmb{e}^{\pmb{\psi}}, e^v ) \|_b  \\
      &\lesssim \trinorm{ \pmb{\varphi} - \IhGamma \pmb{\varphi}} \norm{\nabla \pmb{e}^{\pmb{\varphi}}}_{L^2(\Omega)}. \\
    \end{alignedat}
  \end{equation*}
Inserting these bounds in (\ref{eq:theorem:grad_e_u_suboptimal_l2_error_estimate_robin-10}),  
  canceling one power of $\norm{\nabla e^u}_{L^2(\Omega)}$ on both sides, and collecting the terms yields
  \begin{equation*}
    \norm{\nabla e^u}_{L^2(\Omega)}
    \lesssim
    \norm{ u - \tilde{u}_h }_{H^1(\Omega)} +
    \trinorm{ \pmb{\varphi} - \IhGamma \pmb{\varphi}} +
    \frac{h}{p} \| \nabla \cdot ( \pmb{\varphi} - \IhGamma \pmb{\varphi}) \|_{L^2(\Omega)}.
  \end{equation*}
  Finally exploiting the estimates of the operator $\IhGamma$ we obtain at the asserted estimate.
\end{proof}

\begin{remark}
  Theorem~\ref{theorem:grad_e_u_suboptimal_l2_error_estimate_robin} seems again suboptimal:
  Given $f \in L^2(\Omega)$ and $g \in H^{1/2}(\Gamma)$ the shift theorem gives $u \in H^2(\Omega)$ and consequently $\pmb{\varphi} \in \pmb{H}^1(\Omega)$.
  Theorem~\ref{theorem:grad_e_u_suboptimal_l2_error_estimate_robin} gives
  \begin{equation*}
    \norm{ \nabla e^u }_{L^2(\Omega)}
    \lesssim  h \norm{u}_{H^2(\Omega)}
    + h \norm{\pmb{\varphi}}_{H^1(\Omega)}
    + h^{1/2} \| \pmb{\varphi} \cdot \pmb{n} \|_{H^{1/2}(\Gamma)}
    + h \norm{\nabla \cdot \pmb{\varphi} }_{L^2(\Omega)}
    \lesssim h^{1/2} (\norm{f}_{L^2(\Omega)} + \norm{g}_{H^{1/2}(\Gamma)}),
  \end{equation*}
  whereas from a best approximation viewpoint we could hope for $\mathcal{O}(h)$.\eremk
\end{remark}

\begin{theorem}[Optimal estimate for $\norm{\pmb{e}^{\pmb{\varphi}} \cdot \pmb{n}}_{L^2(\Gamma)}$]\label{theorem:e_phi_normal_trace_l2_error_estimate_robin}
  Let Assumption~\ref{assumption:smax_shift} be valid for some $\smax \geq 0$.  Let $( \pmb{\varphi}_h , u_h )$ be the FOSLS approximation of $( \pmb{\varphi} , u )$. 
  Set  $e^u = u-u_h$ and $\pmb{e}^{\pmb{\varphi}} = \pmb{\varphi}-\pmb{\varphi}_h$. Then,
  for any $\tilde{u}_h \in \Sp$, $\tilde{\pmb{\varphi}}_h \in \RTBDM$, there holds 
  \begin{equation*}
    \norm{\pmb{e}^{\pmb{\varphi}} \cdot \pmb{n}}_{L^2(\Gamma)}
    \lesssim \left(\frac{h}{p}\right)^{1/2} \norm{u - \tilde{u}_h}_{H^1(\Omega)}
    + \left(\frac{h}{p}\right)^{1/2} \norm{\pmb{\varphi} - \tilde{\pmb{\varphi}}_h}_{L^2(\Omega)}
    + \| (\pmb{\varphi} - \tilde{\pmb{\varphi}}_h) \cdot \pmb{n} \|_{L^2(\Gamma)}
    + \frac{h}{p} \norm{\nabla \cdot (\pmb{\varphi} - \tilde{\pmb{\varphi}}_h)}_{L^2(\Omega)}. 
  \end{equation*}
\end{theorem}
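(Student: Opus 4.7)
The plan is to apply the duality argument supplied by Theorem~\ref{theorem:duality_argument_normal_trace_robin} with $\pmb{\eta} = \pmb{e}^{\pmb{\varphi}}$ and $u = e^u$ and to mimic the structure of the proof of Theorem~\ref{theorem:e_phi_suboptimal_l2_error_estimate_robin}. Calling $(\pmb{\psi}, v) \in \productspacerobin$ the resulting dual solution, the regularity assertions of Theorem~\ref{theorem:duality_argument_normal_trace_robin} give $v, \nabla \cdot \pmb{\psi} \in H^{3/2}(\Omega)$, $\pmb{\psi} \in \pmb{H}^{1/2}(\Omega)$, and $\pmb{\psi} \cdot \pmb{n} \in L^2(\Gamma)$, all controlled by $\norm{\pmb{e}^{\pmb{\varphi}} \cdot \pmb{n}}_{L^2(\Gamma)}$. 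Galerkin orthogonality then gives
\begin{equation*}
  \norm{\pmb{e}^{\pmb{\varphi}} \cdot \pmb{n}}_{L^2(\Gamma)}^2 = b((\pmb{e}^{\pmb{\varphi}}, e^u), (\pmb{\psi} - \tilde{\pmb{\psi}}_h, v - \tilde v_h))
\end{equation*}
for arbitrary $(\tilde{\pmb{\psi}}_h, \tilde v_h) \in \RTBDM \times \Sp$. I would specifically select $\tilde{\pmb{\psi}}_h = \pmb{\Pi}^{\div}_{p_v}\pmb{\psi}$ in order to exploit the built-in orthogonalities of Proposition~\ref{proposition:melenk_rojik_operator} later, and pick $\tilde v_h$ as a standard scalar interpolant.

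Next I would expand $b$ into its $(\nabla\cdot,\nabla\cdot)$-part, its $(\nabla\cdot, \cdot)$-part, and the boundary pairing, and bound each by Cauchy--Schwarz, inserting the approximation rates supplied by the dual regularity. The volume contributions are routine: $\norm{\nabla\cdot(\pmb{\psi}-\tilde{\pmb{\psi}}_h)}_{L^2(\Omega)}$ and $\norm{v-\tilde v_h}_{L^2(\Omega)}$ scale like $(h/p)^{3/2}$, while $\norm{v-\tilde v_h}_{H^1(\Omega)}$ and $\norm{\pmb{\psi}-\tilde{\pmb{\psi}}_h}_{L^2(\Omega)}$ scale like $(h/p)^{1/2}$, each multiplied by $\norm{\pmb{e}^{\pmb{\varphi}}\cdot\pmb{n}}_{L^2(\Gamma)}$. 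This yields a bound of the form $(h/p)^{1/2}\|(\pmb{e}^{\pmb{\varphi}},e^u)\|_b\,\norm{\pmb{e}^{\pmb{\varphi}}\cdot\pmb{n}}_{L^2(\Gamma)}$ for these contributions.

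The main obstacle is the boundary pairing $\langle \pmb{e}^{\pmb{\varphi}}\cdot\pmb{n}-\alpha e^u,\, (\pmb{\psi}-\tilde{\pmb{\psi}}_h)\cdot\pmb{n}-\alpha(v-\tilde v_h)\rangle_\Gamma$, since the dual normal trace $\pmb{\psi}\cdot\pmb{n}$ enjoys no regularity beyond $L^2(\Gamma)$ (this is precisely what is flagged in Remark~\ref{remark:reason_for_duality_argument_boundary}). For the critical sub-term $\langle \pmb{e}^{\pmb{\varphi}}\cdot\pmb{n},\,(\pmb{\psi}-\pmb{\Pi}^{\div}_{p_v}\pmb{\psi})\cdot\pmb{n}\rangle_\Gamma$, I use Proposition~\ref{proposition:melenk_rojik_operator}\ref{melenk_rojik_operator_prop_2}, which says that $(\pmb{\psi}-\pmb{\Pi}^{\div}_{p_v}\pmb{\psi})\cdot\pmb{n}$ is $L^2(\Gamma)$-orthogonal to every $\pmb{\chi}_h\cdot\pmb{n}$ with $\pmb{\chi}_h \in \RTBDM$. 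Since $\pmb{\varphi}_h, \tilde{\pmb{\varphi}}_h \in \RTBDM$, I may replace $\pmb{e}^{\pmb{\varphi}}\cdot\pmb{n}$ by $(\pmb{\varphi}-\tilde{\pmb{\varphi}}_h)\cdot\pmb{n}$ for any $\tilde{\pmb{\varphi}}_h\in\RTBDM$, and Cauchy--Schwarz together with $\norm{(\pmb{\psi}-\pmb{\Pi}^{\div}_{p_v}\pmb{\psi})\cdot\pmb{n}}_{L^2(\Gamma)}\leq\norm{\pmb{\psi}\cdot\pmb{n}}_{L^2(\Gamma)} \lesssim \norm{\pmb{e}^{\pmb{\varphi}}\cdot\pmb{n}}_{L^2(\Gamma)}$ produces the no-rate contribution $\norm{(\pmb{\varphi}-\tilde{\pmb{\varphi}}_h)\cdot\pmb{n}}_{L^2(\Gamma)}\norm{\pmb{e}^{\pmb{\varphi}}\cdot\pmb{n}}_{L^2(\Gamma)}$. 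For the sub-term $\alpha\langle e^u,(\pmb{\psi}-\tilde{\pmb{\psi}}_h)\cdot\pmb{n}\rangle_\Gamma$ I exploit the multiplicative trace inequality $\norm{e^u}_{L^2(\Gamma)}\lesssim\norm{e^u}_{L^2(\Omega)}^{1/2}\norm{e^u}_{H^1(\Omega)}^{1/2} + h^{-1/2}\norm{e^u}_{L^2(\Omega)}$ together with Lemma~\ref{lemma:e_u_suboptimal_l2_error_estimate_robin} to gain the factor $(h/p)^{1/2}\|(\pmb{e}^{\pmb{\varphi}},e^u)\|_b$; the remaining two boundary sub-terms $\alpha\langle\pmb{e}^{\pmb{\varphi}}\cdot\pmb{n},v-\tilde v_h\rangle_\Gamma$ and $\alpha^2\langle e^u,v-\tilde v_h\rangle_\Gamma$ are of higher order (controlled by $(h/p)\norm{\pmb{e}^{\pmb{\varphi}}\cdot\pmb{n}}_{L^2(\Gamma)}^2$ and thus absorbed).

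Combining the estimates, cancelling one factor of $\norm{\pmb{e}^{\pmb{\varphi}}\cdot\pmb{n}}_{L^2(\Gamma)}$ and absorbing the $(h/p)$-pre-factored quadratic terms into the left-hand side via Young, I arrive at
\begin{equation*}
  \norm{\pmb{e}^{\pmb{\varphi}}\cdot\pmb{n}}_{L^2(\Gamma)} \lesssim (h/p)^{1/2}\,\|(\pmb{e}^{\pmb{\varphi}},e^u)\|_b + \norm{(\pmb{\varphi}-\tilde{\pmb{\varphi}}_h)\cdot\pmb{n}}_{L^2(\Gamma)}.
\end{equation*}
The proof is finalized by invoking the quasi-optimality of the FOSLS approximation in the $b$-energy (together with the norm equivalence of Theorem~\ref{theorem:norm_equivalence_robin}) to convert $\|(\pmb{e}^{\pmb{\varphi}},e^u)\|_b$ into the best-approximation expression $\norm{u-\tilde u_h}_{H^1(\Omega)}+\norm{\pmb{\varphi}-\tilde{\pmb{\varphi}}_h}_{L^2(\Omega)}+\norm{\nabla\cdot(\pmb{\varphi}-\tilde{\pmb{\varphi}}_h)}_{L^2(\Omega)}+\norm{(\pmb{\varphi}-\tilde{\pmb{\varphi}}_h)\cdot\pmb{n}}_{L^2(\Gamma)}$ and collecting terms. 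The dominant conceptual difficulty is precisely the loss of regularity for $\pmb{\psi}\cdot\pmb{n}$, which forces the use of the $L^2(\Gamma)$-orthogonality of the Melenk--Rojik operator to recover the optimal rate.
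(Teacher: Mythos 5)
Your overall structure mirrors the paper's: duality argument from Theorem~\ref{theorem:duality_argument_normal_trace_robin}, the choice $\tilde{\pmb{\psi}}_h = \pmb{\Pi}^{\div}_{p_v}\pmb{\psi}$, and — crucially — the $L^2(\Gamma)$-orthogonality of Proposition~\ref{proposition:melenk_rojik_operator}\ref{melenk_rojik_operator_prop_2} to convert $\langle \pmb{e}^{\pmb{\varphi}}\cdot\pmb{n},\,(\pmb{\psi}-\pmb{\Pi}^{\div}_{p_v}\pmb{\psi})\cdot\pmb{n}\rangle_\Gamma$ into the best-approximation term $\|(\pmb{\varphi}-\tilde{\pmb{\varphi}}_h)\cdot\pmb{n}\|_{L^2(\Gamma)}\|\pmb{\psi}\cdot\pmb{n}\|_{L^2(\Gamma)}$. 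That is precisely the paper's mechanism. Your treatment of the cross term $\alpha\langle e^u,(\pmb{\psi}-\tilde{\pmb{\psi}}_h)\cdot\pmb{n}\rangle_\Gamma$ via the multiplicative trace inequality together with Lemma~\ref{lemma:e_u_suboptimal_l2_error_estimate_robin} is a valid alternative to the paper's integration by parts and yields the same $(h/p)^{1/2}$ gain for that term.

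However, there is a genuine gap in the last step that prevents your argument from giving the theorem as stated. You collapse the intermediate bound into
\begin{equation*}
  \norm{\pmb{e}^{\pmb{\varphi}}\cdot\pmb{n}}_{L^2(\Gamma)} \lesssim (h/p)^{1/2}\,\|(\pmb{e}^{\pmb{\varphi}},e^u)\|_b + \norm{(\pmb{\varphi}-\tilde{\pmb{\varphi}}_h)\cdot\pmb{n}}_{L^2(\Gamma)},
\end{equation*}
and then use quasi-optimality of $\|\cdot\|_b$ plus Theorem~\ref{theorem:norm_equivalence_robin}. This only produces $(h/p)^{1/2}\norm{\nabla\cdot(\pmb{\varphi}-\tilde{\pmb{\varphi}}_h)}_{L^2(\Omega)}$, whereas the theorem asserts a full power, $\frac{h}{p}\norm{\nabla\cdot(\pmb{\varphi}-\tilde{\pmb{\varphi}}_h)}_{L^2(\Omega)}$. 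The point you miss is the bootstrapping step: the paper does \emph{not} absorb $\norm{\nabla e^u}_{L^2(\Omega)}$ and $\norm{\pmb{e}^{\pmb{\varphi}}}_{L^2(\Omega)}$ into $\|(\pmb{e}^{\pmb{\varphi}},e^u)\|_b$ after the second volume term. Instead it keeps them as separate factors, arrives at the intermediate estimate
\begin{equation*}
  \norm{\pmb{e}^{\pmb{\varphi}}\cdot\pmb{n}}_{L^2(\Gamma)} \lesssim \frac{h}{p}\|(\pmb{e}^{\pmb{\varphi}},e^u)\|_b + \left(\frac{h}{p}\right)^{1/2}\left[\norm{\nabla e^u}_{L^2(\Omega)} + \norm{\pmb{e}^{\pmb{\varphi}}}_{L^2(\Omega)}\right] + \|(\pmb{\varphi}-\tilde{\pmb{\varphi}}_h)\cdot\pmb{n}\|_{L^2(\Gamma)},
\end{equation*}
and then substitutes the previously proved suboptimal estimates Theorem~\ref{theorem:e_phi_suboptimal_l2_error_estimate_robin} and Theorem~\ref{theorem:grad_e_u_suboptimal_l2_error_estimate_robin}, which carry extra factors $(h/p)^{1/2}$ and $(h/p)$ in front of $\norm{\nabla\cdot(\pmb{\varphi}-\tilde{\pmb{\varphi}}_h)}_{L^2(\Omega)}$, respectively. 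Multiplying those by $(h/p)^{1/2}$ is what produces the asserted prefactor $h/p$. Without this recursion your bound is strictly weaker on the divergence term.
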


\begin{proof}
  Let $(\pmb{\psi}, v) \in \productspacerobin$ denote the dual solution given by Theorem~\ref{theorem:duality_argument_normal_trace_robin} with $\pmb{\eta} = \pmb{e}^{\pmb{\varphi}}$.
  Theorem~\ref{theorem:duality_argument_normal_trace_robin} asserts 
$\pmb{\psi} \in \pmb{H}^{1/2}(\Omega)$, $\nabla \cdot \pmb{\psi} \in H^{3/2}(\Omega)$, $\pmb{\psi} \cdot \pmb{n} \in L^2(\Gamma)$, and $v \in H^{3/2}(\Omega)$, which are 
controlled by $\|\pmb{e}^{\pmb{\varphi}} \cdot \pmb{n}\|_{L^2(\Gamma)}$. 
  For the analysis we employ the operator $\Pi_{p_v}^{\mathrm{div}}$ from \cite{melenk-rojik18} and summarized in Proposition~\ref{proposition:melenk_rojik_operator}.
  The main features exploited in the proof are that $\Pi_{p_v}^{\mathrm{div}}$ realizes the $L^2$ orthogonal projections of the divergence as well as the normal trace.
  By Galerkin orthogonality we have for any $( \tilde{\pmb{\psi}}_h , \tilde{v}_h  )$
  \begin{equation*}
    \norm{\pmb{e}^{\pmb{\varphi}} \cdot \pmb{n}}_{L^2(\Gamma)}^2
    = b( ( \pmb{e}^{\pmb{\varphi}} , e^u ) , ( \pmb{\psi} , v ) )
    = b( ( \pmb{e}^{\pmb{\varphi}} , e^u ) , ( \pmb{\psi} - \tilde{\pmb{\psi}}_h , v - \tilde{v}_h ) ).
  \end{equation*}
  Choosing $\tilde{\pmb{\psi}}_h = \Pi_{p_v}^{\mathrm{div}}\pmb{\psi}$, we estimate 
  \begin{equation*}
    \begin{alignedat}{2}
      ( \nabla \cdot \pmb{e}^{\pmb{\varphi}} + \gamma e^u , \nabla \cdot (\pmb{\psi} - \Pi_{p_v}^{\mathrm{div}}\pmb{\psi}) + \gamma (v - \tilde{v}_h) )_{\Omega}
      &\lesssim \| ( \pmb{e}^{\pmb{\varphi}}, e^u ) \|_b \left[ \| \nabla \cdot (\pmb{\psi} - \Pi_{p_v}^{\mathrm{div}}\pmb{\psi}) \|_{L^2(\Omega)} + \norm{ v - \tilde{v}_h }_{L^2(\Omega)} \right], \\
      ( \nabla e^u + \pmb{e}^{\pmb{\varphi}} , \nabla (v - \tilde{v}_h) + \pmb{\psi} - \Pi_{p_v}^{\mathrm{div}}\pmb{\psi} )_{\Omega}
      &\lesssim \left[ \|  \nabla e^u \|_{L^2(\Omega)} + \| \pmb{e}^{\pmb{\varphi}} \|_{L^2(\Omega)} \right] \left[ \| v - \tilde{v}_h \|_{H^1(\Omega)} + \| \pmb{\psi} - \Pi_{p_v}^{\mathrm{div}}\pmb{\psi} \|_{L^2(\Omega)} \right], \\
      \langle -\alpha e^u , -\alpha (v - \tilde{v}_h) \rangle_{\Gamma}
      &\lesssim \| ( \pmb{e}^{\pmb{\varphi}}, e^u ) \|_b \| v - \tilde{v}_h \|_{L^2(\Gamma)}, \\
      \langle \pmb{e}^{\pmb{\varphi}} \cdot \pmb{n} , (\pmb{\psi} - \tilde{\pmb{\psi}}_h) \cdot \pmb{n} \rangle_{\Gamma}
      &\stackrel{\text{orth. of $\Pi^{\div}_{p_v}$}}{=} \langle (\pmb{\varphi} - \tilde{\pmb{\varphi}}_h) \cdot \pmb{n} , (\pmb{\psi} - \Pi_{p_v}^{\mathrm{div}}\pmb{\psi}) \cdot \pmb{n} \rangle_{\Gamma} \\
      &\lesssim  \| (\pmb{\varphi} - \tilde{\pmb{\varphi}}_h) \cdot \pmb{n} \|_{L^2(\Gamma)} \| (\pmb{\psi} - \Pi_{p_v}^{\mathrm{div}}\pmb{\psi}) \cdot \pmb{n} \|_{L^2(\Gamma)}, \\
    \end{alignedat}
  \end{equation*}
  The two missing boundary terms, i.e.,
  $\langle \pmb{e}^{\pmb{\varphi}} \cdot \pmb{n} , - \alpha (v - \tilde{v}_h) \rangle_{\Gamma}$ and
  $\langle -\alpha e^u , (\pmb{\psi} - \tilde{\pmb{\psi}}_h) \cdot \pmb{n} \rangle_{\Gamma}$,
  can be written as volume terms by means of partial integration
  \begin{align*}
    \langle \pmb{e}^{\pmb{\varphi}} \cdot \pmb{n} , - \alpha (v - \tilde{v}_h) \rangle_{\Gamma} 
    &= ( \nabla \cdot \pmb{e}^{\pmb{\varphi}} , - \alpha (v - \tilde{v}_h) )_{\Omega} 
    + ( \pmb{e}^{\pmb{\varphi}} , - \alpha \nabla (v - \tilde{v}_h) )_{\Omega}, \\
    \langle -\alpha e^u , (\pmb{\psi} - \tilde{\pmb{\psi}}_h) \cdot \pmb{n} \rangle_{\Gamma}
    &= ( -\alpha \nabla e^u , (\pmb{\psi} - \tilde{\pmb{\psi}}_h) )_{\Omega} 
    + ( -\alpha e^u , \nabla \cdot (\pmb{\psi} - \tilde{\pmb{\psi}}_h) )_{\Omega}
  \end{align*}
  and can therefore be controlled by the right-hand sides of the first two estimates.
  We now exploit the regularity estimates given in Theorem~\ref{theorem:duality_argument_normal_trace_robin}, the properties of $\Pi_{p_v}^{\mathrm{div}}$ given in Proposition~\ref{proposition:melenk_rojik_operator} as well as the approximation properties of the employed spaces to find $\tilde{v}_h$ such that
  \begin{align*}
    \| \nabla \cdot (\pmb{\psi} - \Pi_{p_v}^{\mathrm{div}}\pmb{\psi}) \|_{L^2(\Omega)}  & \lesssim h/p_v \| \nabla \cdot \pmb{\psi} \|_{H^1(\Omega)} \lesssim h/p_v \norm{\pmb{e}^{\pmb{\varphi}} \cdot \pmb{n}}_{L^2(\Gamma)},        \\
    \| \pmb{\psi} - \Pi_{p_v}^{\mathrm{div}}\pmb{\psi} \|_{L^2(\Omega)}                 & \lesssim (h/p_v)^{1/2} \| \pmb{\psi} \|_{\pmb{H}^{1/2}(\div,\Omega)} \lesssim (h/p_v)^{1/2} \norm{\pmb{e}^{\pmb{\varphi}} \cdot \pmb{n}}_{L^2(\Gamma)}, \\
    \| (\pmb{\psi} - \Pi_{p_v}^{\mathrm{div}}\pmb{\psi}) \cdot \pmb{n} \|_{L^2(\Gamma)} & \lesssim \| \pmb{\psi} \cdot \pmb{n} \|_{L^2(\Gamma)} \lesssim  \norm{\pmb{e}^{\pmb{\varphi}} \cdot \pmb{n}}_{L^2(\Gamma)},                  \\
    \norm{ v - \tilde{v}_h }_{L^2(\Omega)}                                              & \lesssim (h/p_s)^{3/2} \norm{ v }_{H^{3/2}(\Omega)} \lesssim (h/p_s)^{3/2} \norm{\pmb{e}^{\pmb{\varphi}} \cdot \pmb{n}}_{L^2(\Gamma)},       \\
    \norm{ v - \tilde{v}_h }_{H^1(\Omega)}                                              & \lesssim (h/p_s)^{1/2} \norm{ v }_{H^{3/2}(\Omega)} \lesssim (h/p_s)^{1/2}  \norm{\pmb{e}^{\pmb{\varphi}} \cdot \pmb{n}}_{L^2(\Gamma)},      \\
    \norm{ v - \tilde{v}_h }_{L^2(\Gamma)}                                              & \lesssim h/p_s \norm{ v }_{H^{3/2}(\Omega)} \lesssim h/p_s \norm{\pmb{e}^{\pmb{\varphi}} \cdot \pmb{n}}_{L^2(\Gamma)},
  \end{align*}
  which in turn gives after summarizing and canceling one power of $\norm{\pmb{e}^{\pmb{\varphi}} \cdot \pmb{n}}_{L^2(\Gamma)}$ on both sides of the estimate
  \begin{equation*}
    \norm{\pmb{e}^{\pmb{\varphi}} \cdot \pmb{n}}_{L^2(\Gamma)}
    \lesssim h/p \| ( \pmb{e}^{\pmb{\varphi}}, e^u ) \|_b + (h/p)^{1/2} \left[ \|  \nabla e^u \|_{L^2(\Omega)} + \| \pmb{e}^{\pmb{\varphi}} \|_{L^2(\Omega)} \right] + \| (\pmb{\varphi} - \tilde{\pmb{\varphi}}_h) \cdot \pmb{n} \|_{L^2(\Gamma)}.
  \end{equation*}
  Applying Theorems~\ref{theorem:e_phi_suboptimal_l2_error_estimate_robin} and~\ref{theorem:grad_e_u_suboptimal_l2_error_estimate_robin}
  to estimate $\| \pmb{e}^{\pmb{\varphi}} \|_{L^2(\Omega)}$ and $\|  \nabla e^u \|_{L^2(\Omega)}$ yields the result.
\end{proof}

\begin{remark}
  Theorem~\ref{theorem:e_phi_normal_trace_l2_error_estimate_robin} seems optimal in the following sense:
  Given $f \in L^2(\Omega)$ and $g \in H^{1/2}(\Gamma)$ the shift theorem gives $u \in H^2(\Omega)$ and consequently $\pmb{\varphi} \in \pmb{H}^1(\Omega)$.
  Theorem~\ref{theorem:e_phi_normal_trace_l2_error_estimate_robin} gives
  \begin{equation*}
    \norm{\pmb{e}^{\pmb{\varphi}} \cdot \pmb{n}}_{L^2(\Gamma)}
    \lesssim h^{3/2} \norm{u}_{H^2(\Omega)}
    + h^{3/2} \norm{\pmb{\varphi}}_{H^1(\Omega)}
    + h^{1/2} \| \pmb{\varphi} \cdot \pmb{n} \|_{H^{1/2}(\Gamma)}
    + h \norm{\nabla \cdot \pmb{\varphi}}_{L^2(\Omega)}
    \lesssim h^{1/2} (\norm{f}_{L^2(\Omega)} + \norm{g}_{H^{1/2}(\Gamma)}),
  \end{equation*}
  which is the rate expected from a best approximation argument.\eremk
\end{remark}

We are in position to derive an optimal estimate for $\norm{\nabla e^u}_{L^2(\Omega)}$ using the estimate given in Theorem~\ref{theorem:e_phi_normal_trace_l2_error_estimate_robin}.

\begin{theorem}[Optimal estimate for $\norm{\nabla e^u}_{L^2(\Omega)}$ ---
    Robin version of {\cite[Theorem~4.10]{bernkopf-melenk22}}]\label{theorem:grad_e_u_optimal_l2_error_estimate_robin}
  Let Assumption~\ref{assumption:smax_shift} be valid for some $\smax \geq 0$. Let $( \pmb{\varphi}_h , u_h )$ be the FOSLS approximation of $( \pmb{\varphi} , u )$.
  Set  $e^u = u-u_h$. Then,
  for any $\tilde{\pmb{\varphi}}_h \in \RTBDM$, $\tilde{u}_h \in \Sp$, there holds 
  \begin{equation*}
    \norm{\nabla e^u}_{L^2(\Omega)}
    \lesssim
    \norm{ u - \tilde{u}_h }_{H^1(\Omega)} +
    \left( \frac{h}{p} \right)^{1/2} \| \pmb{\varphi} - \tilde{\pmb{\varphi}}_h \|_{L^2(\Omega)} +
    \left( \frac{h}{p} \right)^{1/2} \| (\pmb{\varphi} - \tilde{\pmb{\varphi}}_h) \cdot \pmb{n} \|_{L^2(\Gamma)} +
    \frac{h}{p} \| \nabla \cdot (\pmb{\varphi} - \tilde{\pmb{\varphi}}_h ) \|_{L^2(\Omega)}. 
  \end{equation*}
\end{theorem}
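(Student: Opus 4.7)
The plan is to rerun the proof of Theorem~\ref{theorem:grad_e_u_suboptimal_l2_error_estimate_robin} essentially verbatim, but to sharpen the single term that was responsible for the suboptimality; the other nine estimates in that proof already produce the correct orders in $h/p$. The source of the loss is the boundary term
\begin{equation*}
\langle (\pmb{\varphi}-\IhGamma\pmb{\varphi})\cdot\pmb{n},\,\pmb{e}^{\pmb{\psi}}\cdot\pmb{n}\rangle_{\Gamma}
\lesssim \|(\pmb{\varphi}-\IhGamma\pmb{\varphi})\cdot\pmb{n}\|_{L^2(\Gamma)}\,\|\pmb{e}^{\pmb{\psi}}\cdot\pmb{n}\|_{L^2(\Gamma)},
\end{equation*}
where in the suboptimal proof one only had $\|\pmb{e}^{\pmb{\psi}}\cdot\pmb{n}\|_{L^2(\Gamma)}\le\|(\pmb{e}^{\pmb{\psi}},e^v)\|_b\lesssim\|\nabla e^u\|_{L^2(\Omega)}$, losing the desired factor $(h/p)^{1/2}$ on the boundary trace.

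The key new ingredient is to apply Theorem~\ref{theorem:e_phi_normal_trace_l2_error_estimate_robin} to the dual FOSLS approximation $(\pmb{\psi}_h,v_h)$ of the dual problem supplied by Theorem~\ref{theorem:duality_argument_grad_u_robin} with $w=e^u$. That dual problem gives $v\in H^1(\Omega)$, $\pmb{\psi}\in\pmb{H}^1(\Omega)$, $\nabla\cdot\pmb{\psi}\in H^1(\Omega)$, $\pmb{\psi}\cdot\pmb{n}\in H^{1/2}(\Gamma)$, all bounded by $\|\nabla e^u\|_{L^2(\Omega)}$. Choosing $\tilde{\pmb{\psi}}_h=\Pi^{\div}_{p_v}\pmb{\psi}$ from Proposition~\ref{proposition:melenk_rojik_operator} and $\tilde v_h$ to be a standard $\Sp$-approximation of $v$, the four approximation terms appearing on the right-hand side of Theorem~\ref{theorem:e_phi_normal_trace_l2_error_estimate_robin} satisfy: $\|\pmb{\psi}-\tilde{\pmb{\psi}}_h\|_{L^2(\Omega)}\lesssim (h/p)\|\pmb{\psi}\|_{H^1(\Omega)}$, $\|\nabla\cdot(\pmb{\psi}-\tilde{\pmb{\psi}}_h)\|_{L^2(\Omega)}\lesssim (h/p)\|\nabla\cdot\pmb{\psi}\|_{H^1(\Omega)}$, $\|(\pmb{\psi}-\tilde{\pmb{\psi}}_h)\cdot\pmb{n}\|_{L^2(\Gamma)}\lesssim (h/p)^{1/2}\|\pmb{\psi}\cdot\pmb{n}\|_{H^{1/2}(\Gamma)}$, and $\|v-\tilde v_h\|_{H^1(\Omega)}\lesssim\|v\|_{H^1(\Omega)}$. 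Inserting and using the dual regularity yields
\begin{equation*}
\|\pmb{e}^{\pmb{\psi}}\cdot\pmb{n}\|_{L^2(\Gamma)}\lesssim (h/p)^{1/2}\,\|\nabla e^u\|_{L^2(\Omega)},
\end{equation*}
which is the sharpened bound I need.

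With this in hand I reproduce line by line the list of nine estimates in the proof of Theorem~\ref{theorem:grad_e_u_suboptimal_l2_error_estimate_robin}, where Lemma~\ref{lemma:convergence_of_dual_solution_grad_u_robin} still gives the bounds on $\|(\pmb{e}^{\pmb{\psi}},e^v)\|_b$, $\|e^v\|_{L^2(\Omega)}$, $\|e^v\|_{L^2(\Gamma)}$, and $\|\pmb{e}^{\pmb{\psi}}\|_{L^2(\Omega)}$, and only the last (problematic) estimate is replaced by
\begin{equation*}
\langle (\pmb{\varphi}-\IhGamma\pmb{\varphi})\cdot\pmb{n},\pmb{e}^{\pmb{\psi}}\cdot\pmb{n}\rangle_{\Gamma}\lesssim (h/p)^{1/2}\,\trinorm{\pmb{\varphi}-\IhGamma\pmb{\varphi}}\,\|\nabla e^u\|_{L^2(\Omega)}.
\end{equation*}
Adding all contributions, using Galerkin orthogonality, and canceling one power of $\|\nabla e^u\|_{L^2(\Omega)}$ leaves
\begin{equation*}
\|\nabla e^u\|_{L^2(\Omega)}\lesssim \|u-\tilde u_h\|_{H^1(\Omega)} + (h/p)^{1/2}\,\trinorm{\pmb{\varphi}-\IhGamma\pmb{\varphi}} + (h/p)\,\|\nabla\cdot(\pmb{\varphi}-\IhGamma\pmb{\varphi})\|_{L^2(\Omega)}.
\end{equation*}
Finally I invoke Lemma~\ref{lemma:properties_of_IhGamma} to pass from $\IhGamma\pmb{\varphi}$ to an arbitrary $\tilde{\pmb{\varphi}}_h\in\RTBDM$, absorbing the leftover $(h/p)^{3/2}\,\|\nabla\cdot(\pmb{\varphi}-\tilde{\pmb{\varphi}}_h)\|_{L^2(\Omega)}$ into the $(h/p)$-divergence term, which yields the asserted estimate.

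The main obstacle is not a hidden technical difficulty but rather the need to verify that Theorem~\ref{theorem:e_phi_normal_trace_l2_error_estimate_robin} can indeed be applied to the present dual object, whose $v$-component only has $H^1(\Omega)$ regularity; the factor $(h/p)^{1/2}$ multiplying $\|v-\tilde v_h\|_{H^1(\Omega)}$ in that theorem is precisely what compensates for the lack of any $h$-decay in the $H^1$ best approximation of $v$, and the same mechanism handles the limiting regularity $\pmb{\psi}\cdot\pmb{n}\in H^{1/2}(\Gamma)$. Everything else is a careful, but routine, collection of the improved dual bound with the inequalities already established in the suboptimal proof.
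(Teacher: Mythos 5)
Your proposal is correct and follows essentially the same route as the paper's proof: you rerun the proof of Theorem~\ref{theorem:grad_e_u_suboptimal_l2_error_estimate_robin}, you identify the boundary term $\langle (\pmb{\varphi}-\IhGamma\pmb{\varphi})\cdot\pmb{n},\pmb{e}^{\pmb{\psi}}\cdot\pmb{n}\rangle_\Gamma$ as the sole source of suboptimality, you feed the dual regularity from Theorem~\ref{theorem:duality_argument_grad_u_robin} into Theorem~\ref{theorem:e_phi_normal_trace_l2_error_estimate_robin} to upgrade $\|\pmb{e}^{\pmb{\psi}}\cdot\pmb{n}\|_{L^2(\Gamma)}$ by the factor $(h/p)^{1/2}$, and you then pass from $\IhGamma\pmb{\varphi}$ to an arbitrary $\tilde{\pmb{\varphi}}_h$ via Lemma~\ref{lemma:properties_of_IhGamma}, absorbing the extra $(h/p)^{3/2}$-divergence term. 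This is precisely what the paper does, including the observation that the $(h/p)^{1/2}$ prefactor in Theorem~\ref{theorem:e_phi_normal_trace_l2_error_estimate_robin} compensates for the limited $H^1$-regularity of the dual scalar component $v$.
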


\begin{proof}
We refine the proof of Theorem~\ref{theorem:grad_e_u_suboptimal_l2_error_estimate_robin} making use of Theorem~\ref{theorem:e_phi_normal_trace_l2_error_estimate_robin}.  Reentering the proof of Theorem~\ref{theorem:grad_e_u_suboptimal_l2_error_estimate_robin} we therein estimated
  \begin{align}
\nonumber 
    \langle (\pmb{\varphi} - \IhGamma \pmb{\varphi}) \cdot \pmb{n} , \pmb{e}^{\pmb{\psi}} \cdot \pmb{n} \rangle_{\Gamma}
     & \leq \| (\pmb{\varphi} - \IhGamma \pmb{\varphi}) \cdot \pmb{n} \|_{L^2(\Gamma)} \| ( \pmb{e}^{\pmb{\psi}}, e^v ) \|_b \\
 \label{eq:theorem:grad_e_u_optimal_l2_error_estimate_robin-10}
     & \lesssim \trinorm{ \pmb{\varphi} - \IhGamma \pmb{\varphi}} \norm{\nabla e^u}_{L^2(\Omega)}.                           
  \end{align}
This estimate can now be improved: Theorem~\ref{theorem:e_phi_normal_trace_l2_error_estimate_robin} gives together with the available regularity assertions for the dual solution 
the bound 
  \begin{align*}
    \| \pmb{e}^{\pmb{\psi}} \cdot \pmb{n} \|_{L^2(\Gamma)}
     & \lesssim \left(\frac{h}{p}\right)^{1/2} \norm{v - \tilde{v}_h}_{H^1(\Omega)}
    + \left(\frac{h}{p}\right)^{1/2} \| \pmb{\psi} - \tilde{\pmb{\psi}}_h \|_{L^2(\Omega)}
    + \| (\pmb{\psi} - \tilde{\pmb{\psi}}_h) \cdot \pmb{n} \|_{L^2(\Gamma)}
    + \frac{h}{p} \| \nabla \cdot (\pmb{\psi} - \tilde{\pmb{\psi}}_h) \|_{L^2(\Omega)} \\
     & \lesssim \left(\frac{h}{p}\right)^{1/2} \norm{\nabla e^u}_{L^2(\Omega)};
  \end{align*}
in turn this enables us to sharpen the bound for $\| (\pmb{e}^{\pmb{\psi}}, e^v ) \|_b$ and improve 
    the last estimate in (\ref{eq:theorem:grad_e_u_optimal_l2_error_estimate_robin-10}) to get
  \begin{equation*}
    \langle (\pmb{\varphi} - \IhGamma \pmb{\varphi}) \cdot \pmb{n} , \pmb{e}^{\pmb{\psi}} \cdot \pmb{n} \rangle_{\Gamma}
    \lesssim (h/p)^{1/2} \trinorm{ \pmb{\varphi} - \IhGamma \pmb{\varphi}} \norm{\nabla e^u}_{L^2(\Omega)}.
  \end{equation*}
  All other estimates in the proof of Theorem~\ref{theorem:grad_e_u_suboptimal_l2_error_estimate_robin} stay the same.
  Canceling one power of $\norm{\nabla e^u}_{L^2(\Omega)}$ on both sides and collecting the terms yields
  \begin{equation*}
    \norm{\nabla e^u}_{L^2(\Omega)}
    \lesssim
    \norm{ u - \tilde{u}_h }_{H^1(\Omega)} +
    \left( \frac{h}{p} \right)^{1/2} \trinorm{ \pmb{\varphi} - \IhGamma \pmb{\varphi}} +
    \frac{h}{p} \| \nabla \cdot ( \pmb{\varphi} - \IhGamma \pmb{\varphi}) \|_{L^2(\Omega)}.
  \end{equation*}
  Finally exploiting the estimates of the operator $\IhGamma$ in Lemma~\ref{lemma:properties_of_IhGamma} we arrive at the asserted estimate.
\end{proof}

Before turning to the estimate for $\norm{e^u}_{L^2(\Omega)}$ we first derive a slightly better version of Theorem~\ref{theorem:e_phi_suboptimal_l2_error_estimate_robin}.
To that end we first analyze the convergence of the corresponding dual solution:

\begin{lemma}[Convergence of dual solution for $\pmb{e}^{\pmb{\varphi}}$]\label{lemma:convergence_of_dual_solution_phi_robin}
  Let Assumption~\ref{assumption:smax_shift} be valid for some $\smax \geq 0$. Let $( \pmb{\varphi}_h , u_h )$ be the FOSLS approximation of $( \pmb{\varphi} , u )$.
  Set $e^u = u-u_h$ and $\pmb{e}^{\pmb{\varphi}} = \pmb{\varphi}-\pmb{\varphi}_h$.
  Let $(\pmb{\psi}, v) \in \productspacerobin$ be the dual solution given by Theorem~\ref{theorem:duality_argument_phi_robin} with $\pmb{\eta} = \pmb{e}^{\pmb{\varphi}}$.
  Let $( \pmb{\psi}_h , v_h )$ be the FOSLS approximation of $( \pmb{\psi} , v )$. 
  Denote $e^v = v-v_h$ and $\pmb{e}^{\pmb{\psi}} = \pmb{\psi}-\pmb{\psi}_h$.
  Then,
  \begin{align*}
    \| ( \pmb{e}^{\pmb{\psi}} , e^v ) \|_b                 & \lesssim \norm{\pmb{e}^{\pmb{\varphi}}}_{L^2(\Omega)},                                \\
    \norm{e^v}_{L^2(\Omega)}                               & \lesssim \frac{h}{p} \norm{\pmb{e}^{\pmb{\varphi}}}_{L^2(\Omega)},                    \\
    \norm{e^v}_{L^2(\Gamma)}                               & \lesssim \left(\frac{h}{p}\right)^{1/2} \norm{\pmb{e}^{\pmb{\varphi}}}_{L^2(\Omega)}, \\
    \norm{\nabla e^v}_{L^2(\Omega)}                        & \lesssim \left(\frac{h}{p}\right)^{1/2} \norm{\pmb{e}^{\pmb{\varphi}}}_{L^2(\Omega)}, \\
    \| \pmb{e}^{\pmb{\psi}} \|_{L^2(\Omega)}               & \lesssim \norm{\pmb{e}^{\pmb{\varphi}}}_{L^2(\Omega)},                                \\
    \| \pmb{e}^{\pmb{\psi}} \cdot \pmb{n} \|_{L^2(\Gamma)} & \lesssim \left(\frac{h}{p}\right)^{1/2} \norm{\pmb{e}^{\pmb{\varphi}}}_{L^2(\Omega)}.
  \end{align*}
\end{lemma}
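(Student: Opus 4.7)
The approach mirrors the proof of Lemma~\ref{lemma:convergence_of_dual_solution_grad_u_robin}, relying on the regularity of the dual solution supplied by Theorem~\ref{theorem:duality_argument_phi_robin} applied with $\pmb{\eta} = \pmb{e}^{\pmb{\varphi}}$: $\pmb{\psi} \in \pmb{L}^2(\Omega)$, $\nabla \cdot \pmb{\psi} \in H^1(\Omega)$, $\pmb{\psi} \cdot \pmb{n} \in H^{1/2}(\Gamma)$, and $v \in H^2(\Omega)$, each controlled by $\norm{\pmb{e}^{\pmb{\varphi}}}_{L^2(\Omega)}$. The first estimate follows from the $b$-minimization property of the FOSLS approximation and the norm equivalence of Theorem~\ref{theorem:norm_equivalence_robin}, since $\norm{(\pmb{e}^{\pmb{\psi}}, e^v)}_b \leq \norm{(\pmb{\psi}, v)}_b \lesssim \norm{\pmb{\psi}}_{\HDiv} + \norm{v}_{\HOne} + \norm{\pmb{\psi} \cdot \pmb{n}}_{L^2(\Gamma)} \lesssim \norm{\pmb{e}^{\pmb{\varphi}}}_{L^2(\Omega)}$. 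The fifth estimate is then immediate from $\norm{\pmb{e}^{\pmb{\psi}}}_{L^2(\Omega)} \leq \norm{\pmb{e}^{\pmb{\psi}}}_{\HDiv} \lesssim \norm{(\pmb{e}^{\pmb{\psi}}, e^v)}_b$.

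For the second estimate I would apply Lemma~\ref{lemma:e_u_suboptimal_l2_error_estimate_robin} to the dual problem, yielding $\norm{e^v}_{L^2(\Omega)} \lesssim (h/p) \norm{(\pmb{e}^{\pmb{\psi}}, e^v)}_b$, and combine with the first bound. The boundary estimate on $e^v$ then follows from the multiplicative trace inequality $\norm{e^v}_{L^2(\Gamma)} \lesssim \norm{e^v}_{L^2(\Omega)}^{1/2} \norm{e^v}_{H^1(\Omega)}^{1/2}$ in conjunction with the just-derived $L^2(\Omega)$ bound and $\norm{e^v}_{H^1(\Omega)} \lesssim \norm{(\pmb{e}^{\pmb{\psi}}, e^v)}_b \lesssim \norm{\pmb{e}^{\pmb{\varphi}}}_{L^2(\Omega)}$.

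For $\norm{\nabla e^v}_{L^2(\Omega)}$ I would invoke the optimal estimate of Theorem~\ref{theorem:grad_e_u_optimal_l2_error_estimate_robin} on the dual problem; already the trivial choice $\tilde{\pmb{\psi}}_h = 0$ suffices, since the $(h/p)^{1/2}$ prefactors act on the stable bounds $\norm{\pmb{\psi}}_{L^2(\Omega)} + \norm{\pmb{\psi} \cdot \pmb{n}}_{L^2(\Gamma)} \lesssim \norm{\pmb{e}^{\pmb{\varphi}}}_{L^2(\Omega)}$, while the $(h/p)$-prefactor multiplies $\norm{\nabla \cdot \pmb{\psi}}_{L^2(\Omega)} \lesssim \norm{\pmb{e}^{\pmb{\varphi}}}_{L^2(\Omega)}$ and the $H^1$-best approximation of $v \in H^2(\Omega)$ contributes $\mathcal{O}(h/p)$.

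The main obstacle is the final estimate $\norm{\pmb{e}^{\pmb{\psi}} \cdot \pmb{n}}_{L^2(\Gamma)} \lesssim (h/p)^{1/2} \norm{\pmb{e}^{\pmb{\varphi}}}_{L^2(\Omega)}$, derived from Theorem~\ref{theorem:e_phi_normal_trace_l2_error_estimate_robin}: there the $L^2(\Gamma)$ normal-trace term of $\pmb{\psi}$ enters without any $h/p$ prefactor, so a trivial approximant only delivers a stability bound. To cure this I would reuse the Helmholtz-type decomposition from the proof of Theorem~\ref{theorem:e_phi_suboptimal_l2_error_estimate_robin}, writing $\pmb{\psi} = \nabla \times \pmb{\rho} + \nabla z$ with $\pmb{\rho} \in \HZeroCurl$ (so that $(\nabla \times \pmb{\rho}) \cdot \pmb{n} = 0$) and $z \in H^2(\Omega)$ solving $-\Delta z = -\nabla \cdot \pmb{\psi}$ in $\Omega$, $\partial_n z = \pmb{\psi} \cdot \pmb{n}$ on $\Gamma$. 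Since $\nabla z \in \pmb{H}^1(\div,\Omega)$, the operator $\Pi_{p_v}^{\mathrm{div}}$ of Proposition~\ref{proposition:melenk_rojik_operator} applies to $\nabla z$, and the choice $\tilde{\pmb{\psi}}_h = \Pi_{p_v}^{\mathrm{div}} \nabla z$ reduces the normal-trace contribution to the $L^2(\Gamma)$ best approximation of $\pmb{\psi} \cdot \pmb{n} = \partial_n z \in H^{1/2}(\Gamma)$ by the piecewise polynomial normal-trace space on $\Gamma$, which yields the required $(h/p)^{1/2}$ factor. The volume, divergence and $v$-terms remaining in Theorem~\ref{theorem:e_phi_normal_trace_l2_error_estimate_robin} are then of strictly higher order by the regularity of $\nabla z$, the commuting-diagram property of $\Pi_{p_v}^{\mathrm{div}}$, and standard $H^1$-approximation in $\Sp$.
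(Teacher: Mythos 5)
Your proposal is correct and follows essentially the same route as the paper: stability for the first bound, Lemma~\ref{lemma:e_u_suboptimal_l2_error_estimate_robin} for the second, the multiplicative trace inequality for the third, and Theorems~\ref{theorem:grad_e_u_optimal_l2_error_estimate_robin}, \ref{theorem:e_phi_suboptimal_l2_error_estimate_robin} (or your equivalent elementary argument), and \ref{theorem:e_phi_normal_trace_l2_error_estimate_robin} for the last three. The one place where you go beyond the paper's terse ``combine the theorems with the regularity of the dual solution'' is the sixth bound, where you correctly identify that $\pmb{\psi}\in\pmb{L}^2(\Omega)$ alone is insufficient to apply $\Pi^{\div}_{p_v}$ directly and spell out the Helmholtz split $\pmb{\psi}=\nabla\times\pmb{\rho}+\nabla z$ with $\pmb{\rho}\in\HZeroCurl$ (so $(\nabla\times\pmb{\rho})\cdot\pmb{n}=0$) and $\nabla z\in\pmb{H}^1(\div,\Omega)$, then choose $\tilde{\pmb{\psi}}_h=\Pi^{\div}_{p_v}\nabla z$; this is exactly the device used inside the cited proof of Theorem~\ref{theorem:e_phi_suboptimal_l2_error_estimate_robin}, so you are filling in a detail the paper leaves implicit rather than taking a different path.
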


\begin{proof}
  Theorem~\ref{theorem:duality_argument_phi_robin} gives $\pmb{\psi} \in \pmb{L}^2(\Omega)$, $\nabla \cdot \pmb{\psi} \in H^1(\Omega)$, $\pmb{\psi} \cdot \pmb{n} \in H^{1/2}(\Omega)$, and $v \in H^2(\Omega)$, which can be controlled by $\|\pmb{e}^{\pmb{\varphi}}\|_{L^2(\Omega)}$. Stability of the least squares method gives 
  \begin{equation}
\label{eq:lemma:convergence_of_dual_solution_phi_robin-10}
    \| ( \pmb{e}^{\pmb{\psi}} , e^v ) \|_b \leq \|(\pmb{\psi},v)\|_b 
    \lesssim \norm{\pmb{e}^{\pmb{\varphi}}}_{L^2(\Omega)}.
  \end{equation}
  Lemma~\ref{lemma:e_u_suboptimal_l2_error_estimate_robin} provides 
  \begin{equation*}
    \norm{e^v}_{L^2(\Omega)} \lesssim h/p \| ( \pmb{e}^{\pmb{\psi}} , e^v ) \|_b,
  \end{equation*}
  and together with (\ref{eq:lemma:convergence_of_dual_solution_phi_robin-10}) we arrive at the second estimate.
  The third one follows by a multiplicative trace inequality together with the second estimate and the norm equivalence theorem in conjunction with the first estimate of the present lemma:
  \begin{equation*}
    \norm{e^v}_{L^2(\Gamma)} \lesssim \norm{e^v}_{L^2(\Omega)}^{1/2} \norm{e^v}_{H^1(\Omega)}^{1/2} \lesssim (h/p)^{1/2} \| ( \pmb{e}^{\pmb{\psi}} , e^v ) \|_b \lesssim (h/p)^{1/2} \norm{\pmb{e}^{\pmb{\varphi}}}_{L^2(\Omega)}.
  \end{equation*}
  Theorems~\ref{theorem:grad_e_u_optimal_l2_error_estimate_robin},~\ref{theorem:e_phi_suboptimal_l2_error_estimate_robin} and~\ref{theorem:e_phi_normal_trace_l2_error_estimate_robin} then yield the remaining three estimates by combining the regularity assertions for the dual solution with the approximation properties of the finite element spaces. 
\end{proof}

\begin{theorem}[Suboptimal but improved estimate for $\norm{\pmb{e}^{\pmb{\varphi}}}_{L^2(\Omega)}$ ---
    Robin version of {\cite[Thm.~{4.8}]{bernkopf-melenk22}}]\label{theorem:e_phi_suboptimal_improved_l2_error_estimate_robin}
    Let Assumption~\ref{assumption:smax_shift} be valid for some $\smax \geq 0$. Let $( \pmb{\varphi}_h , u_h )$ be the FOSLS approximation of $( \pmb{\varphi} , u )$. Set  $e^u = u-u_h$ and $\pmb{e}^{\pmb{\varphi}} = \pmb{\varphi}-\pmb{\varphi}_h$. Then,
  for any $\tilde{u}_h \in \Sp$, $\tilde{\pmb{\varphi}}_h \in \RTBDM$, there holds 
  \begin{equation*}
    \norm{\pmb{e}^{\pmb{\varphi}}}_{L^2(\Omega)}
    \lesssim \norm{u - \tilde{u}_h}_{L^2(\Omega)}
    + \left(\frac{h}{p}\right)^{1/2} \norm{u - \tilde{u}_h}_{H^1(\Omega)}
    + \norm{\pmb{\varphi} - \tilde{\pmb{\varphi}}_h}_{L^2(\Omega)}
    + \| (\pmb{\varphi} - \tilde{\pmb{\varphi}}_h) \cdot \pmb{n} \|_{L^2(\Gamma)}
    + \frac{h}{p} \norm{\nabla \cdot (\pmb{\varphi} - \tilde{\pmb{\varphi}}_h)}_{L^2(\Omega)}. 
  \end{equation*}
\end{theorem}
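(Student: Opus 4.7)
The plan is to sharpen the proof of Theorem~\ref{theorem:e_phi_suboptimal_l2_error_estimate_robin} by exploiting the fact that the dual error convergence rates in Lemma~\ref{lemma:convergence_of_dual_solution_phi_robin} are now at our disposal. In the proof of Theorem~\ref{theorem:e_phi_suboptimal_l2_error_estimate_robin} we worked only with the crude stability bound $\|(\pmb{e}^{\pmb{\psi}}, e^v)\|_b \lesssim \|\pmb{e}^{\pmb{\varphi}}\|_{L^2(\Omega)}$ and had to rely on a Helmholtz decomposition to gain the half power of $h/p$. Now, with the sharper \textsl{individual} estimates $\|e^v\|_{L^2} \lesssim (h/p)\|\pmb{e}^{\pmb{\varphi}}\|_{L^2}$, $\|\nabla e^v\|_{L^2} \lesssim (h/p)^{1/2}\|\pmb{e}^{\pmb{\varphi}}\|_{L^2}$, $\|\pmb{e}^{\pmb{\psi}} \cdot \pmb{n}\|_{L^2(\Gamma)} \lesssim (h/p)^{1/2}\|\pmb{e}^{\pmb{\varphi}}\|_{L^2}$ in hand, I can bypass the Helmholtz decomposition entirely and, crucially, combine a double Galerkin orthogonality with the choice $\tilde{\pmb{\varphi}}_h = \IhGamma \pmb{\varphi}$ to gain the full $h/p$ factor on the divergence best-approximation term.

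\textbf{Execution.} Let $(\pmb{\psi}, v)$ be the dual solution of Theorem~\ref{theorem:duality_argument_phi_robin} with $\pmb{\eta} = \pmb{e}^{\pmb{\varphi}}$ and $(\pmb{\psi}_h, v_h)$ its FOSLS approximation. Using Galerkin orthogonality twice (once for the primal FOSLS and once for the dual FOSLS, combined with symmetry of $b$) I obtain
\begin{equation*}
\|\pmb{e}^{\pmb{\varphi}}\|_{L^2(\Omega)}^2 = b\bigl((\pmb{e}^{\pmb{\psi}}, e^v),\,(\pmb{\varphi} - \tilde{\pmb{\varphi}}_h,\, u - \tilde{u}_h)\bigr)
\end{equation*}
for arbitrary $\tilde{\pmb{\varphi}}_h \in \RTBDM$, $\tilde{u}_h \in \Sp$. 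I then fix $\tilde{\pmb{\varphi}}_h = \IhGamma \pmb{\varphi}$ and expand $b$ into the usual nine cross terms. The decisive term $(\nabla \cdot \pmb{e}^{\pmb{\psi}}, \nabla \cdot(\pmb{\varphi} - \IhGamma \pmb{\varphi}))_\Omega$ is treated by invoking the definition of $\IhGamma$: since $(\nabla \cdot (\pmb{\varphi} - \IhGamma \pmb{\varphi}), \nabla \cdot \pmb{\chi}_h)_\Omega = 0$ for all $\pmb{\chi}_h \in \RTBDM$, I may replace $\pmb{e}^{\pmb{\psi}}$ by $\pmb{\psi} - \tilde{\pmb{\psi}}_h$ for any $\tilde{\pmb{\psi}}_h \in \RTBDM$ and exploit $\|\nabla \cdot \pmb{\psi}\|_{H^1(\Omega)} \lesssim \|\pmb{e}^{\pmb{\varphi}}\|_{L^2(\Omega)}$ together with the approximation properties of $\RTBDM$ to obtain the bound $\lesssim (h/p)\,\|\pmb{e}^{\pmb{\varphi}}\|_{L^2(\Omega)} \|\nabla \cdot(\pmb{\varphi} - \IhGamma \pmb{\varphi})\|_{L^2(\Omega)}$. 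The new $L^2(\Omega)$ best-approximation term $\|u - \tilde{u}_h\|_{L^2(\Omega)}$ appears from the cross terms $(\nabla \cdot \pmb{e}^{\pmb{\psi}}, \gamma(u - \tilde u_h))_\Omega$ and from integration by parts of $(\pmb{e}^{\pmb{\psi}}, \nabla(u - \tilde{u}_h))_\Omega = -(\nabla \cdot \pmb{e}^{\pmb{\psi}}, u - \tilde u_h)_\Omega + \langle \pmb{e}^{\pmb{\psi}} \cdot \pmb{n}, u - \tilde u_h \rangle_\Gamma$, in each case using $\|\nabla \cdot \pmb{e}^{\pmb{\psi}}\|_{L^2} \lesssim \|\pmb{e}^{\pmb{\varphi}}\|_{L^2}$. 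The remaining cross terms are bounded by direct Cauchy-Schwarz combined with the sharp bounds of Lemma~\ref{lemma:convergence_of_dual_solution_phi_robin}, and boundary terms involving $u - \tilde{u}_h$ are controlled via the standard trace $\|u - \tilde{u}_h\|_{L^2(\Gamma)} \lesssim \|u - \tilde{u}_h\|_{H^1(\Omega)}$, which produces the $(h/p)^{1/2}\|u - \tilde u_h\|_{H^1(\Omega)}$ contribution.

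\textbf{Finishing and obstacle.} Every estimate above factors as $\|\pmb{e}^{\pmb{\varphi}}\|_{L^2(\Omega)}$ times one of the desired right-hand-side quantities, and canceling one power of $\|\pmb{e}^{\pmb{\varphi}}\|_{L^2(\Omega)}$ yields a bound stated in terms of $\trinorm{\pmb{\varphi} - \IhGamma \pmb{\varphi}}$, $\|\nabla \cdot (\pmb{\varphi} - \IhGamma \pmb{\varphi})\|_{L^2(\Omega)}$ and best-approximations of $u$. Invoking Lemma~\ref{lemma:properties_of_IhGamma} then replaces the $\IhGamma$-quantities by the corresponding quantities for an arbitrary $\tilde{\pmb{\varphi}}_h \in \RTBDM$, giving the claim. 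The main obstacle is bookkeeping: I must verify that each of the nine cross terms is either (i) direct Cauchy-Schwarz using one of the sharp rates from Lemma~\ref{lemma:convergence_of_dual_solution_phi_robin}, or (ii) the single div-div term that is saved by the $\IhGamma$ orthogonality, and that no term reintroduces a $(h/p)^{1/2}$ factor where we want $h/p$ or an $\|u - \tilde u_h\|_{H^1}$ where we want $\|u - \tilde u_h\|_{L^2}$; the integration-by-parts on $(\pmb{e}^{\pmb{\psi}}, \nabla(u - \tilde u_h))_\Omega$ is the only nontrivial manipulation.
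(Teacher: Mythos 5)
Your proposal is correct and matches the paper's proof essentially term for term: both use the double Galerkin orthogonality to write $\|\pmb{e}^{\pmb{\varphi}}\|_{L^2(\Omega)}^2 = b((\pmb{\varphi}-\tilde{\pmb{\varphi}}_h, u-\tilde{u}_h), (\pmb{e}^{\pmb{\psi}}, e^v))$, fix $\tilde{\pmb{\varphi}}_h = \IhGamma\pmb{\varphi}$, save the div-div term via the $\IhGamma$ orthogonality, integrate by parts on $(\pmb{e}^{\pmb{\psi}}, \nabla(u-\tilde{u}_h))_\Omega$ to produce the $\|u-\tilde{u}_h\|_{L^2(\Omega)}$ term, apply the sharp dual-error bounds from Lemma~\ref{lemma:convergence_of_dual_solution_phi_robin}, and close with Lemma~\ref{lemma:properties_of_IhGamma}. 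The bookkeeping you flag as the remaining obstacle is exactly what the paper carries out, and none of the cross terms spoils the claimed powers of $h/p$.
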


\begin{proof}
  We proceed as in the proof of Theorem~\ref{theorem:grad_e_u_suboptimal_l2_error_estimate_robin}. Let $( \pmb{e}^{\pmb{\psi}} , e^v )$ be the FOSLS approximation error 
of the dual solution $(\pmb{\psi},v)$ given by Theorem~\ref{theorem:duality_argument_phi_robin} (duality argument for the vector variable) corresponding to the right-hand side 
$\pmb{\eta} = \pmb{e}^{\pmb{\varphi}}$. 
We have that $\|v\|_{H^2(\Omega)}$, $\|\pmb{\psi}\|_{L^2(\Omega)}$, $\|\nabla \cdot \pmb{\psi}\|_{H^1(\Omega)}$, $\|\pmb{\psi}\cdot\pmb{n}\|_{H^{1/2}(\Gamma)}$ 
are controlled by $\|\pmb{e}^{\pmb{\varphi}}\|_{L^2(\Omega)}$.
  As before for any $\tilde{\pmb{\varphi}}_h \in \RTBDM$, $\tilde{u}_h \in \Sp$
  \begin{equation*}
    \norm{\pmb{e}^{\pmb{\varphi}}}_{L^2(\Omega)}^2
    = b( ( \pmb{\varphi} - \tilde{\pmb{\varphi}}_h , u - \tilde{u}_h ), ( \pmb{e}^{\pmb{\psi}} , e^v ) ).
  \end{equation*}
  We again choose $\tilde{\pmb{\varphi}}_h = \IhGamma \pmb{\varphi}$,
  repeatedly use properties of the operator $\IhGamma$ collected in Lemma~\ref{lemma:properties_of_IhGamma},
  utilize the regularity properties of the dual solution given in Theorem~\ref{theorem:duality_argument_phi_robin}, and 
apply Lemma~\ref{lemma:convergence_of_dual_solution_phi_robin} to get:  
  \begin{equation*}
    \begin{alignedat}{2}
      ( \gamma (u - \tilde{u}_h) , \nabla \cdot \pmb{e}^{\pmb{\psi}} )_{\Omega}
      &\lesssim \norm{ u - \tilde{u}_h }_{L^2(\Omega)} \| ( \pmb{e}^{\pmb{\psi}}, e^v ) \|_b \\
      &\lesssim \norm{ u - \tilde{u}_h }_{L^2(\Omega)} \norm{\pmb{e}^{\pmb{\varphi}}}_{L^2(\Omega)} , \\
      \langle - \alpha (u - \tilde{u}_h) , \pmb{e}^{\pmb{\psi}} \cdot \pmb{n} - \alpha e^v \rangle_{\Gamma}
      &\lesssim \norm{ u - \tilde{u}_h }_{L^2(\Gamma)} \left[ \| \pmb{e}^{\pmb{\psi}} \cdot \pmb{n} \|_{L^2(\Gamma)} + \| e^v \|_{L^2(\Gamma)} \right] \\
      &\lesssim (h/p)^{1/2} \norm{ u - \tilde{u}_h }_{L^2(\Gamma)} \norm{\pmb{e}^{\pmb{\varphi}}}_{L^2(\Omega)} , \\
      ( \nabla (u - \tilde{u}_h) , \pmb{e}^{\pmb{\psi}} )_{\Omega}
      &= -( u - \tilde{u}_h , \nabla \cdot \pmb{e}^{\pmb{\psi}} )_{\Omega} + \langle u - \tilde{u}_h , \pmb{e}^{\pmb{\psi}} \cdot \pmb{n} \rangle_{\Gamma} \\
      &\lesssim \left[ \norm{ u - \tilde{u}_h }_{L^2(\Omega)} + (h/p)^{1/2} \norm{ u - \tilde{u}_h }_{L^2(\Gamma)} \right] \norm{\pmb{e}^{\pmb{\varphi}}}_{L^2(\Omega)}, \\
      ( \gamma (u - \tilde{u}_h) , \gamma e^v )_{\Omega}
      &\lesssim \norm{ u - \tilde{u}_h }_{L^2(\Omega)} \norm{ e^v }_{L^2(\Omega)} \\
      &\lesssim h/p \norm{ u - \tilde{u}_h }_{H^1(\Omega)} \norm{\pmb{e}^{\pmb{\varphi}}}_{L^2(\Omega)} , \\
      ( \nabla (u - \tilde{u}_h) , \nabla e^v )_{\Omega}
      &\lesssim \norm{\nabla (u - \tilde{u}_h) }_{L^2(\Omega)} \norm{ \nabla e^v }_{L^2(\Omega)} \\
      &\lesssim (h/p)^{1/2} \norm{ u - \tilde{u}_h }_{H^1(\Omega)} \norm{\pmb{e}^{\pmb{\varphi}}}_{L^2(\Omega)}, \\
      ( \pmb{\varphi} - \IhGamma \pmb{\varphi} , \nabla e^v + \pmb{e}^{\pmb{\psi}} )_{\Omega}
      &\leq \| \pmb{\varphi} - \IhGamma \pmb{\varphi} \|_{L^2(\Omega)} \| ( \pmb{e}^{\pmb{\psi}}, e^v ) \|_b \\
      &\lesssim \trinorm{ \pmb{\varphi} - \IhGamma \pmb{\varphi}} \norm{\pmb{e}^{\pmb{\varphi}}}_{L^2(\Omega)}, \\
      \langle (\pmb{\varphi} - \IhGamma \pmb{\varphi}) \cdot \pmb{n} , \pmb{e}^{\pmb{\psi}} \cdot \pmb{n} - \alpha e^v \rangle_{\Gamma}
      &\leq \| (\pmb{\varphi} - \IhGamma \pmb{\varphi}) \cdot \pmb{n} \|_{L^2(\Gamma)} \| ( \pmb{e}^{\pmb{\psi}}, e^v ) \|_b \\
      &\lesssim \trinorm{ \pmb{\varphi} - \IhGamma \pmb{\varphi}} \norm{\pmb{e}^{\pmb{\varphi}}}_{L^2(\Omega)}, \\
      ( \nabla \cdot (\pmb{\varphi} - \IhGamma \pmb{\varphi}) , \nabla \cdot \pmb{e}^{\pmb{\psi}} )_{\Omega}
      &= ( \nabla \cdot (\pmb{\varphi} - \IhGamma \pmb{\varphi}) , \nabla \cdot (\pmb{\psi} - \tilde{\pmb{\psi}}_h) )_{\Omega} \\
      &\leq \| \nabla \cdot (\pmb{\varphi} - \IhGamma \pmb{\varphi}) \|_{L^2(\Omega)} \| \nabla \cdot (\pmb{\psi} - \tilde{\pmb{\psi}}_h) \|_{L^2(\Omega)} \\
      &\lesssim h/p \| \nabla \cdot (\pmb{\varphi} - \IhGamma \pmb{\varphi}) \|_{L^2(\Omega)} \norm{\pmb{e}^{\pmb{\varphi}}}_{L^2(\Omega)}
    \end{alignedat}
  \end{equation*}
  Canceling one power of $\norm{\pmb{e}^{\pmb{\varphi}}}_{L^2(\Omega)}$ on both sides and summarizing we find
  \begin{equation*}
    \norm{\pmb{e}^{\pmb{\varphi}}}_{L^2(\Omega)}
    \lesssim
    \norm{ u - \tilde{u}_h }_{L^2(\Omega)}
    + (h/p)^{1/2} \norm{ u - \tilde{u}_h }_{L^2(\Gamma)}
    + (h/p)^{1/2} \norm{ u - \tilde{u}_h }_{H^1(\Omega)}
    + \trinorm{ \pmb{\varphi} - \IhGamma \pmb{\varphi}}
    + h/p \| \nabla \cdot (\pmb{\varphi} - \IhGamma \pmb{\varphi}) \|_{L^2(\Omega)}.
  \end{equation*}
  A trace estimate, using the estimates of the operator $\IhGamma$ in Lemma~\ref{lemma:properties_of_IhGamma}, and collecting the terms yields the result.
\end{proof}

\begin{lemma}[Convergence of dual solution for $e^u$ ---
    Robin version of {\cite[Lem.~{4.11}]{bernkopf-melenk22}}]\label{lemma:convergence_of_dual_solution_u_robin}
    Let Assumption~\ref{assumption:smax_shift} be valid for some $\smax \geq 0$. Let $( \pmb{\varphi}_h , u_h )$ be the FOSLS approximation of $( \pmb{\varphi} , u )$.
  Set  $e^u = u-u_h$ and $\pmb{e}^{\pmb{\varphi}} = \pmb{\varphi}-\pmb{\varphi}_h$.
  Let $(\pmb{\psi}, v) \in \productspacerobin$ be the dual solution given by Theorem~\ref{theorem:duality_argument_robin} with $w = e^u$.
  Furthermore, let $( \pmb{\psi}_h , v_h )$ be the FOSLS approximation of $( \pmb{\psi} , v )$ and
  denote $e^v = v-v_h$ and $\pmb{e}^{\pmb{\psi}} = \pmb{\psi}-\pmb{\psi}_h$.
  Then,
  \begin{align*}
    \| ( \pmb{e}^{\pmb{\psi}} , e^v ) \|_b                 & \lesssim \frac{h}{p} \norm{e^u}_{L^2(\Omega)},                    \\
    \norm{e^v}_{L^2(\Omega)}                               & \lesssim \left(\frac{h}{p}\right)^2 \norm{e^u}_{L^2(\Omega)},     \\
    \norm{e^v}_{L^2(\Gamma)}                               & \lesssim \left(\frac{h}{p}\right)^{3/2} \norm{e^u}_{L^2(\Omega)}, \\
    \| \pmb{e}^{\pmb{\psi}} \|_{L^2(\Omega)}               & \lesssim
    \begin{cases}
      h \norm{e^u}_{L^2(\Omega)}                              & \textit{if }  \RTBDM = \pmb{\mathrm{RT}}_{0}(\mathcal{T}_h), \\
      \left(\frac{h}{p}\right)^{\min(\smax+1, 3/2)} \norm{e^u}_{L^2(\Omega)} & \textit{else},
    \end{cases}                                                                                                \\
    \| \pmb{e}^{\pmb{\psi}} \cdot \pmb{n} \|_{L^2(\Gamma)} & \lesssim
    \begin{cases}
      h \norm{e^u}_{L^2(\Omega)}                              & \textit{if } \RTBDM = \pmb{\mathrm{RT}}_{0}(\mathcal{T}_h), \\
      \left(\frac{h}{p}\right)^{\min(\smax+1, 3/2)} \norm{e^u}_{L^2(\Omega)} & \textit{else}.
    \end{cases}
  \end{align*}
\end{lemma}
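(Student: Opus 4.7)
The plan is to follow the template established by Lemma~\ref{lemma:convergence_of_dual_solution_grad_u_robin} and Lemma~\ref{lemma:convergence_of_dual_solution_phi_robin}, where each of the six estimates is obtained by invoking one of the results already proved in Section~\ref{subsection:main_error_estimates} for the FOSLS error of the dual pair $(\pmb{\psi},v)$. The key input is the enhanced regularity coming from Theorem~\ref{theorem:duality_argument_robin} applied to $w=e^u$: since $\smax\ge 0$, one has $v\in H^2(\Omega)$, $\pmb{\psi}\in \pmb{H}^{\min(\smax+1,2)}(\Omega)$, $\nabla\cdot\pmb{\psi}\in H^2(\Omega)$, and $\pmb{\psi}\cdot\pmb{n}\in H^{3/2}(\Gamma)$, each controlled by $\|e^u\|_{L^2(\Omega)}$.

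First, the $b$-norm estimate follows from quasi-optimality (Theorem~\ref{theorem:norm_equivalence_robin}) combined with Proposition~\ref{proposition:melenk_rojik_operator} and the approximation properties of $\Sp$ and $\RTBDM$: the slowest of the four best-approximation contributions (namely, $\|v-\tilde v_h\|_{H^1}$ which contributes at rate $h/p$) dictates the first estimate. The $L^2(\Omega)$ bound for $e^v$ is then obtained by inserting this into Lemma~\ref{lemma:e_u_suboptimal_l2_error_estimate_robin}, while the $L^2(\Gamma)$ bound for $e^v$ follows by the standard multiplicative trace inequality $\|e^v\|_{L^2(\Gamma)}\lesssim \|e^v\|_{L^2(\Omega)}^{1/2}\|e^v\|_{H^1(\Omega)}^{1/2}$, using the previous two estimates.

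The fourth and fifth estimates are obtained by inserting the dual's regularity into Theorem~\ref{theorem:e_phi_suboptimal_l2_error_estimate_robin} and Theorem~\ref{theorem:e_phi_normal_trace_l2_error_estimate_robin}, respectively, with the choice $\tilde{\pmb{\psi}}_h=\Pi_{p_v}^{\mathrm{div}}\pmb{\psi}$. With Proposition~\ref{proposition:melenk_rojik_operator} at hand the four best-approximation contributions satisfy, for $p_v\ge 2$ in the RT case or any BDM,
\begin{equation*}
\|v-\tilde v_h\|_{H^1(\Omega)}\lesssim h/p,\quad
\|\pmb{\psi}-\Pi_{p_v}^{\mathrm{div}}\pmb{\psi}\|_{L^2(\Omega)}\lesssim (h/p)^{\min(\smax+1,2)},\quad
\|(\pmb{\psi}-\Pi_{p_v}^{\mathrm{div}}\pmb{\psi})\cdot\pmb{n}\|_{L^2(\Gamma)}\lesssim (h/p)^{3/2},
\end{equation*}
together with $\|\nabla\cdot(\pmb{\psi}-\Pi_{p_v}^{\mathrm{div}}\pmb{\psi})\|_{L^2(\Omega)}\lesssim (h/p)^2$, each times $\|e^u\|_{L^2(\Omega)}$. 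Tracking the dominant rate after applying the theorems yields $(h/p)^{\min(\smax+1,3/2)}$ in both cases. For the lowest order space $\RTBDM=\pmb{\mathrm{RT}}_0(\mathcal{T}_h)$, the normal trace on each face is only piecewise constant and the bulk approximation is only linear, so every best-approximation contribution degrades to $\mathcal O(h)$, producing the special case $h\|e^u\|_{L^2(\Omega)}$.

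The main obstacle is not conceptual but bookkeeping: one needs to verify that the $\min(\smax+1,3/2)$ rate indeed bounds all four contributions in Theorems~\ref{theorem:e_phi_suboptimal_l2_error_estimate_robin} and~\ref{theorem:e_phi_normal_trace_l2_error_estimate_robin} simultaneously, and to keep track of why only the lowest order Raviart--Thomas space fails to deliver the optimal rate on the boundary (the polynomial degree of the normal trace on a face is $p_v-1$, which is zero precisely when $\RTBDM=\pmb{\mathrm{RT}}_0$). Once these approximation rates are in place, the estimates are an immediate application of the previously proved theorems with the dual regularity from Theorem~\ref{theorem:duality_argument_robin} substituted in.
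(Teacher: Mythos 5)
Your proof matches the paper's argument step for step: extract the dual regularity from Theorem~\ref{theorem:duality_argument_robin}, obtain the $b$-norm estimate by quasi-optimality and best approximation (with $\Pi_{p_v}^{\mathrm{div}}$ for the vector component), feed it into Lemma~\ref{lemma:e_u_suboptimal_l2_error_estimate_robin} for $\norm{e^v}_{L^2(\Omega)}$, interpolate via the multiplicative trace inequality for $\norm{e^v}_{L^2(\Gamma)}$, and invoke Theorems~\ref{theorem:e_phi_suboptimal_l2_error_estimate_robin} and~\ref{theorem:e_phi_normal_trace_l2_error_estimate_robin} for the final two bounds. One small bookkeeping caveat: for $\RTBDM=\pmb{\mathrm{BDM}}_1$ the space $\nabla\cdot\BDM$ consists only of piecewise constants, so $\|\nabla\cdot(\pmb{\psi}-\Pi_{p_v}^{\mathrm{div}}\pmb{\psi})\|_{L^2(\Omega)}$ is $\mathcal O(h/p)$ rather than $(h/p)^2$; since this term carries the prefactors $(h/p)^{1/2}$ and $h/p$ in Theorems~\ref{theorem:e_phi_suboptimal_l2_error_estimate_robin} and~\ref{theorem:e_phi_normal_trace_l2_error_estimate_robin} it remains dominated by $(h/p)^{3/2}$, so your conclusion is unaffected.
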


\begin{proof}
  Theorem~\ref{theorem:duality_argument_robin} gives 
    $\pmb{\psi} \in \pmb{H}^{\min(\smax+1, 2)}(\Omega)$, 
    $\nabla \cdot \pmb{\psi} \in H^2(\Omega)$, 
    $\pmb{\psi} \cdot \pmb{n} \in H^{3/2}(\Gamma)$ 
    and $v \in H^2(\Omega)$, which are controlled
by $\|e^u\|_{L^2(\Omega)}$. In view of the optimality of the $b$-norm we have: 
  \begin{equation}
\label{eq:foo-10}
    \| ( \pmb{e}^{\pmb{\psi}} , e^v ) \|_b
    \lesssim \frac{h}{p} \norm{e^u}_{L^2(\Omega)}.
  \end{equation}
  By Lemma~\ref{lemma:e_u_suboptimal_l2_error_estimate_robin} we have
  \begin{equation*}
    \norm{e^v}_{L^2(\Omega)} \lesssim \frac{h}{p} \| ( \pmb{e}^{\pmb{\psi}} , e^v ) \|_b,
  \end{equation*}
  which together with (\ref{eq:foo-10}) gives the second estimate of the lemma. 
  The third estimate of the lemma follows by a multiplicative trace inequality together with the second estimate and the norm equivalence theorem in conjunction with the first estimate of the present lemma:
  \begin{equation*}
    \norm{e^v}_{L^2(\Gamma)} \lesssim \norm{e^v}_{L^2(\Omega)}^{1/2} \norm{e^v}_{H^1(\Omega)}^{1/2} \lesssim (h/p)^{3/2} \| ( \pmb{e}^{\pmb{\psi}} , e^v ) \|_b \lesssim (h/p)^{3/2} \norm{e^u}_{L^2(\Omega)}.
  \end{equation*}
  Theorems~\ref{theorem:e_phi_suboptimal_l2_error_estimate_robin} and~\ref{theorem:e_phi_normal_trace_l2_error_estimate_robin} then yield the remaining estimates by exploiting the regularity of the dual solution and the approximation properties of the employed spaces.
\end{proof}

\begin{theorem}[Optimal estimate for $\norm{e^u}_{L^2(\Omega)}$ ---
    Robin version of {\cite[Theorem~4.12]{bernkopf-melenk22}}]\label{theorem:e_u_optimal_l2_error_estimate_robin}
    Let Assumption~\ref{assumption:smax_shift} be valid for some $\smax \geq 0$. Let $( \pmb{\varphi}_h , u_h )$ be the FOSLS approximation of $( \pmb{\varphi} , u )$.
  Furthermore, let  $e^u = u-u_h$. Then,
  for any $\tilde{\pmb{\varphi}}_h \in \RTBDM$, $\tilde{u}_h \in \Sp$, there holds 
  \[
    \norm{e^u}_{L^2(\Omega)}
    \lesssim
    \begin{cases}
      h \norm{ u - \tilde{u}_h }_{H^1(\Omega)} +
      h \| \pmb{\varphi} - \tilde{\pmb{\varphi}}_h \|_{L^2(\Omega)} + \\
      \quad h \| (\pmb{\varphi} - \tilde{\pmb{\varphi}}_h) \cdot \pmb{n} \|_{L^2(\Gamma)} + 
      h \| \nabla \cdot (\pmb{\varphi} - \tilde{\pmb{\varphi}}_h ) \|_{L^2(\Omega)} & \textit{if }\RTBDMZero = \pmb{\mathrm{RT}}_{0}(\mathcal{T}_h),  \\ \\
      h \norm{ u - \tilde{u}_h }_{H^1(\Omega)} +
      h^{{\min(\smax+1,3/2)}} \| \pmb{\varphi} - \tilde{\pmb{\varphi}}_h \|_{L^2(\Omega)} + \\
      \quad  h^{{\min(\smax+1,3/2)}} \| (\pmb{\varphi} - \tilde{\pmb{\varphi}}_h) \cdot \pmb{n} \|_{L^2(\Gamma)} +
      h \| \nabla \cdot (\pmb{\varphi} - \tilde{\pmb{\varphi}}_h ) \|_{L^2(\Omega)} & \textit{if }\RTBDMZero = \pmb{\mathrm{BDM}}_{1}(\mathcal{T}_h), \\ \\
      \frac{h}{p} \norm{ u - \tilde{u}_h }_{H^1(\Omega)} +
      \left( \frac{h}{p} \right)^{{\min(\smax+1,3/2)}} \| \pmb{\varphi} - \tilde{\pmb{\varphi}}_h \|_{L^2(\Omega)} + \\
      \quad \left( \frac{h}{p} \right)^{{\min(\smax+1,3/2)}} \| (\pmb{\varphi} - \tilde{\pmb{\varphi}}_h) \cdot \pmb{n} \|_{L^2(\Gamma)} + 
      \left( \frac{h}{p} \right)^2 \| \nabla \cdot (\pmb{\varphi} - \tilde{\pmb{\varphi}}_h ) \|_{L^2(\Omega)} & \textit{else.}
    \end{cases}
  \]
\end{theorem}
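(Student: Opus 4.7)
The plan is to mimic the duality argument used for the suboptimal estimates in Lemma~\ref{lemma:e_u_suboptimal_l2_error_estimate_robin} and in Theorems~\ref{theorem:e_phi_suboptimal_improved_l2_error_estimate_robin} and~\ref{theorem:grad_e_u_optimal_l2_error_estimate_robin}, but now feeding in the sharper dual-solution convergence rates provided by Lemma~\ref{lemma:convergence_of_dual_solution_u_robin}. Concretely, let $(\pmb{\psi}, v) \in \productspacerobin$ be the dual solution of Theorem~\ref{theorem:duality_argument_robin} with $w = e^u$, and let $(\pmb{\psi}_h, v_h)$ be its FOSLS approximation with errors $\pmb{e}^{\pmb{\psi}} = \pmb{\psi} - \pmb{\psi}_h$ and $e^v = v - v_h$. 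By Galerkin orthogonality one rewrites
\begin{equation*}
\norm{e^u}_{L^2(\Omega)}^2 = b\bigl((\pmb{e}^{\pmb{\varphi}}, e^u),(\pmb{\psi},v)\bigr) = b\bigl((\pmb{\varphi}-\tilde{\pmb{\varphi}}_h, u-\tilde u_h),(\pmb{e}^{\pmb{\psi}}, e^v)\bigr)
\end{equation*}
for arbitrary $\tilde{\pmb{\varphi}}_h \in \RTBDM$ and $\tilde u_h \in \Sp$. The natural choice $\tilde{\pmb{\varphi}}_h = \IhGamma\pmb{\varphi}$ is made to activate the divergence-orthogonality of $\IhGamma$ and the estimates in Lemma~\ref{lemma:properties_of_IhGamma}.

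Next, expand the bilinear form into its three groups of contributions and bound each factor by Cauchy--Schwarz against the approximation errors of $u$ and $\IhGamma\pmb{\varphi}$ on the primal side and the dual-error rates from Lemma~\ref{lemma:convergence_of_dual_solution_u_robin} on the other. The cross terms involving boundary pairings with $e^v$ or $u - \tilde u_h$ are either estimated directly using the boundary convergence $\norm{e^v}_{L^2(\Gamma)} \lesssim (h/p)^{3/2}\norm{e^u}_{L^2(\Omega)}$ and a trace inequality for $\norm{u-\tilde u_h}_{L^2(\Gamma)}$, or --- where the boundary-only bound is too weak --- integrated by parts to be converted into two volume terms that are controlled by the stronger $\norm{e^v}_{L^2(\Omega)}\lesssim (h/p)^2\norm{e^u}_{L^2(\Omega)}$ and $\norm{\nabla\cdot\pmb{e}^{\pmb{\psi}}}_{L^2(\Omega)}$. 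The divergence-orthogonality built into $\IhGamma$ allows the pure divergence--divergence term to be rewritten as
\begin{equation*}
(\nabla\cdot(\pmb{\varphi}-\IhGamma\pmb{\varphi}),\nabla\cdot\pmb{e}^{\pmb{\psi}})_\Omega = (\nabla\cdot(\pmb{\varphi}-\IhGamma\pmb{\varphi}),\nabla\cdot(\pmb{\psi}-\tilde{\pmb{\psi}}_h))_\Omega,
\end{equation*}
and combined with the $H^2$-regularity $\nabla\cdot\pmb{\psi} \in H^{\min(\smax+2,2)}(\Omega)$ from Theorem~\ref{theorem:duality_argument_robin} produces the clean $(h/p)^2$ factor on the divergence contribution in the higher-order case.

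After collecting the individual bounds and absorbing one power of $\norm{e^u}_{L^2(\Omega)}$ on both sides, what remains to be substituted are the rates of $\norm{\pmb{e}^{\pmb{\psi}}}_{L^2(\Omega)}$ and $\norm{\pmb{e}^{\pmb{\psi}}\cdot\pmb{n}}_{L^2(\Gamma)}$ from Lemma~\ref{lemma:convergence_of_dual_solution_u_robin}, which is exactly where the three-case structure in the statement originates. For $\RTBDMZero = \pmb{\mathrm{RT}}_{0}(\mathcal{T}_h)$ those rates degrade to $h$, for $\pmb{\mathrm{BDM}}_{1}$ they reach $h^{\min(\smax+1,3/2)}$ without the $p$-dependence being meaningful, and in all remaining cases the full $(h/p)^{\min(\smax+1,3/2)}$ is available; these dictate the rate on the $\pmb{\varphi}-\tilde{\pmb{\varphi}}_h$ and normal-trace contributions in the final bound, while the $(h/p)^2$ stays on the $\nabla\cdot(\pmb{\varphi}-\tilde{\pmb{\varphi}}_h)$ term (and degrades to $h$ in the low-order cases where the dual divergence is not sufficiently well approximated). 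The main obstacle is purely book-keeping: keeping each of the approximately ten resulting terms at the sharpest possible rate in each of the three regimes, and making sure that no single step in the Cauchy--Schwarz chain inadvertently forces a weaker common rate than the one displayed in the theorem.
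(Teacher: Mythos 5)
Your proposal follows the same blueprint as the paper's proof: duality from Theorem~\ref{theorem:duality_argument_robin} applied to $w = e^u$, Galerkin orthogonality to pass to the FOSLS error of the dual solution, the specific choice $\tilde{\pmb{\varphi}}_h = \IhGamma\pmb{\varphi}$ so that its divergence-orthogonality can be played off against $\nabla\cdot\pmb{e}^{\pmb{\psi}}$, and then plugging in the convergence rates of Lemma~\ref{lemma:convergence_of_dual_solution_u_robin} to generate the three-regime case distinction. That is all correct.

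The one book-keeping point to flag is the role of integration by parts: you describe IBP as a device to convert awkward \emph{boundary} pairings into volume pairings, but in this particular proof all the boundary terms are actually bounded directly --- $\langle -\alpha(u - \tilde u_h), \pmb{e}^{\pmb{\psi}}\cdot\pmb{n} - \alpha e^v\rangle_\Gamma$ via $\|u-\tilde u_h\|_{L^2(\Gamma)}\|(\pmb{e}^{\pmb{\psi}},e^v)\|_b$, and $\langle (\pmb{\varphi} - \IhGamma\pmb{\varphi})\cdot\pmb{n}, -\alpha e^v\rangle_\Gamma$ and $\langle (\pmb{\varphi} - \IhGamma\pmb{\varphi})\cdot\pmb{n}, \pmb{e}^{\pmb{\psi}}\cdot\pmb{n}\rangle_\Gamma$ via the respective $L^2(\Gamma)$-rates of $e^v$ and $\pmb{e}^{\pmb{\psi}}\cdot\pmb{n}$. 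The IBP that you cannot omit is on the \emph{volume} term $(\pmb{\varphi} - \IhGamma\pmb{\varphi}, \nabla e^v)_\Omega$: a raw Cauchy--Schwarz against $\|\nabla e^v\|_{L^2(\Omega)} \lesssim (h/p)\|e^u\|_{L^2(\Omega)}$ only yields the factor $h/p$ on $\trinorm{\pmb{\varphi}-\IhGamma\pmb{\varphi}}$, which is weaker than the stated $(h/p)^{\min(\smax+1,3/2)}$; integrating by parts into $-(\nabla\cdot(\pmb{\varphi}-\IhGamma\pmb{\varphi}), e^v)_\Omega + \langle (\pmb{\varphi}-\IhGamma\pmb{\varphi})\cdot\pmb{n}, e^v\rangle_\Gamma$ promotes the factors to $(h/p)^2$ and $(h/p)^{3/2}$ respectively. (The boundary-to-volume IBP you recall is the trick from Theorem~\ref{theorem:e_phi_normal_trace_l2_error_estimate_robin}, not this one.) Once that substitution is made, your plan closes exactly as outlined.
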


\begin{proof}
  We proceed as in the proof of Theorem~\ref{theorem:grad_e_u_suboptimal_l2_error_estimate_robin} with $( \pmb{e}^{\pmb{\psi}} , e^v )$ denoting the FOSLS approximation of the dual solution given by Theorem~\ref{theorem:duality_argument_robin} (duality argument for the scalar variable) applied to $w = e^u$
  As before for any $\tilde{\pmb{\varphi}}_h \in \RTBDM$, $\tilde{u}_h \in \Sp$
  \begin{equation*}
    \norm{e^u}_{L^2(\Omega)}^2
    = b( ( \pmb{\varphi} - \tilde{\pmb{\varphi}}_h , u - \tilde{u}_h ), ( \pmb{e}^{\pmb{\psi}} , e^v ) ).
  \end{equation*}
  We again choose $\tilde{\pmb{\varphi}}_h = \IhGamma \pmb{\varphi}$,
  utilize properties of the operator $\IhGamma$ collected in Lemma~\ref{lemma:properties_of_IhGamma},
  make use of the regularity assertions for the dual solution of Theorem~\ref{theorem:duality_argument_robin}, and apply Lemma~\ref{lemma:convergence_of_dual_solution_u_robin}:  
  \begin{equation*}
    \begin{alignedat}{2}
      ( \gamma (u - \tilde{u}_h) , \nabla \cdot \pmb{e}^{\pmb{\psi}} + \gamma e^v )_{\Omega}
      &\lesssim \norm{ u - \tilde{u}_h }_{L^2(\Omega)} \| ( \pmb{e}^{\pmb{\psi}}, e^v ) \|_b \\
      &\lesssim h/p \norm{ u - \tilde{u}_h }_{H^1(\Omega)} \norm{e^u}_{L^2(\Omega)} , \\
      ( \nabla (u - \tilde{u}_h) , \nabla e^v + \pmb{e}^{\pmb{\psi}} )_{\Omega}
      &\lesssim \norm{\nabla (u - \tilde{u}_h) }_{L^2(\Omega)} \| ( \pmb{e}^{\pmb{\psi}}, e^v ) \|_b \\
      &\lesssim h/p \norm{ u - \tilde{u}_h }_{H^1(\Omega)} \norm{e^u}_{L^2(\Omega)}, \\
      \langle - \alpha (u - \tilde{u}_h) , \pmb{e}^{\pmb{\psi}} \cdot \pmb{n} - \alpha e^v \rangle_{\Gamma}
      &\lesssim \norm{ u - \tilde{u}_h }_{L^2(\Gamma)} \| ( \pmb{e}^{\pmb{\psi}}, e^v ) \|_b \\
      &\lesssim h/p \norm{ u - \tilde{u}_h }_{H^1(\Omega)} \norm{e^u}_{L^2(\Omega)}, \\
      ( \pmb{\varphi} - \IhGamma \pmb{\varphi} , \nabla e^v )_{\Omega}
      &= - ( \nabla \cdot ( \pmb{\varphi} - \IhGamma \pmb{\varphi} ) , e^v )_{\Omega} + \langle (\pmb{\varphi} - \IhGamma \pmb{\varphi}) \cdot \pmb{n} , e^v \rangle_{\Gamma}  \\
      &\leq \| \nabla \cdot ( \pmb{\varphi} - \IhGamma \pmb{\varphi}) \|_{L^2(\Omega)} \norm{e^v}_{L^2(\Omega)} +  \| (\pmb{\varphi} - \IhGamma \pmb{\varphi}) \cdot \pmb{n} \|_{L^2(\Gamma)} \norm{e^v}_{L^2(\Gamma)} \\
      &\lesssim \left[ (h/p)^2 \| \nabla \cdot ( \pmb{\varphi} - \IhGamma \pmb{\varphi}) \|_{L^2(\Omega)} + (h/p)^{3/2} \trinorm{ \pmb{\varphi} - \IhGamma \pmb{\varphi}} \right] \norm{e^u}_{L^2(\Omega)}, \\
      ( \nabla \cdot (\pmb{\varphi} - \IhGamma \pmb{\varphi}) , \gamma e^v )_{\Omega}
      &\leq \| \nabla \cdot ( \pmb{\varphi} - \IhGamma \pmb{\varphi}) \|_{L^2(\Omega)} \norm{e^v}_{L^2(\Omega)} \\
      &\lesssim (h/p)^2 \|\nabla \cdot ( \pmb{\varphi} - \IhGamma \pmb{\varphi})\|_{L^2(\Omega)} \norm{e^u}_{L^2(\Omega)}, \\
      \langle (\pmb{\varphi} - \IhGamma \pmb{\varphi}) \cdot \pmb{n} , - \alpha e^v \rangle_{\Gamma}
      &\leq \| (\pmb{\varphi} - \IhGamma \pmb{\varphi}) \cdot \pmb{n} \|_{L^2(\Gamma)} \norm{e^v}_{L^2(\Gamma)} \\
      &\lesssim (h/p)^{3/2} \trinorm{ \pmb{\varphi} - \IhGamma \pmb{\varphi}} \norm{e^u}_{L^2(\Omega)}, \\
      ( \pmb{\varphi} - \IhGamma \pmb{\varphi} , \pmb{e}^{\pmb{\psi}} )_{\Omega}
      &\lesssim \| \pmb{\varphi} - \IhGamma \pmb{\varphi} \|_{L^2(\Omega)} \| \pmb{e}^{\pmb{\psi}} \|_{L^2(\Omega)} \\
      &\lesssim \begin{cases}
        h \trinorm{ \pmb{\varphi} - \IhGamma \pmb{\varphi}} \norm{e^u}_{L^2(\Omega)}                              & \textit{if }\RTBDM = \pmb{\mathrm{RT}}_{0}(\mathcal{T}_h), \\
        \left(\frac{h}{p}\right)^{{\min(\smax+1,3/2)}} \trinorm{ \pmb{\varphi} - \IhGamma \pmb{\varphi}} \norm{e^u}_{L^2(\Omega)} & \textit{else},
      \end{cases} \\
      ( \nabla \cdot (\pmb{\varphi} - \IhGamma \pmb{\varphi}) , \nabla \cdot \pmb{e}^{\pmb{\psi}} )_{\Omega}
      &= ( \nabla \cdot (\pmb{\varphi} - \IhGamma \pmb{\varphi}) , \nabla \cdot (\pmb{\psi} - \tilde{\pmb{\psi}}_h) )_{\Omega} \\
      &\leq \| \nabla \cdot (\pmb{\varphi} - \IhGamma \pmb{\varphi}) \|_{L^2(\Omega)} \| \nabla \cdot (\pmb{\psi} - \tilde{\pmb{\psi}}_h) \|_{L^2(\Omega)} \\
      &\lesssim \begin{cases}
        h \| \nabla \cdot (\pmb{\varphi} - \IhGamma \pmb{\varphi}) \|_{L^2(\Omega)} \norm{e^u}_{L^2(\Omega)}                            & \textit{if }p_v = 1,       \\
        \left(\frac{h}{p}\right)^{2} \| \nabla \cdot (\pmb{\varphi} - \IhGamma \pmb{\varphi}) \|_{L^2(\Omega)} \norm{e^u}_{L^2(\Omega)} & \textit{else},
      \end{cases} \\
      \langle (\pmb{\varphi} - \IhGamma \pmb{\varphi}) \cdot \pmb{n} , \pmb{e}^{\pmb{\psi}} \cdot \pmb{n} \rangle_{\Gamma}
      &\leq \| (\pmb{\varphi} - \IhGamma \pmb{\varphi}) \cdot \pmb{n} \|_{L^2(\Gamma)} \| \pmb{e}^{\pmb{\psi}} \cdot \pmb{n} \|_{L^2(\Gamma)}  \\
      &\lesssim \begin{cases}
        h \trinorm{ \pmb{\varphi} - \IhGamma \pmb{\varphi}} \norm{e^u}_{L^2(\Omega)}                              & \textit{if }\RTBDM = \pmb{\mathrm{RT}}_{0}(\mathcal{T}_h), \\
        \left(\frac{h}{p}\right)^{{\min(\smax+1,3/2)}} \trinorm{ \pmb{\varphi} - \IhGamma \pmb{\varphi}} \norm{e^u}_{L^2(\Omega)} & \textit{else}.
      \end{cases}
    \end{alignedat}
  \end{equation*}
  Canceling one power of $\norm{e^u}_{L^2(\Omega)}$ on both sides, using the estimates of the operator $\IhGamma$ and collecting the terms yields the result.
\end{proof}

\begin{corollary}\label{corollary:summary_of_estimates_for_f_in_higher_order_sobolev_space_robin}
  Let $\Gamma$ be smooth, $f \in H^s(\Omega)$ and $g \in H^{s+1/2}(\Gamma)$ for some $s \geq 0$, 
  and denote $C_{f,g} \coloneqq \norm{f}_{H^s(\Omega)} + \norm{g}_{H^{s+1/2}(\Gamma)}$.
  Then the solution to $(\ref{eq:model_problem_first_order_system_robin})$ satisfies
  $u \in H^{s+2}(\Omega)$, $\pmb{\varphi} \in \pmb{H}^{s+1}(\Omega)$, $\pmb{\varphi} \cdot \pmb{n} \in \pmb{H}^{s+1/2}(\Gamma)$, and $\nabla \cdot \pmb{\varphi} \in H^s(\Omega)$.
  Let $( \pmb{\varphi}_h , u_h )$ be the FOSLS approximation of $( \pmb{\varphi} , u )$.
  Let  $e^u = u-u_h$ and $\pmb{e}^{\pmb{\phi}} = \pmb{\varphi}-\pmb{\varphi}_h$.
  Then, for the lowest order case $p_v = 1$,
  \begin{equation*}
    \norm{e^u}_{L^2(\Omega)} \lesssim
    h^{\min(s+1, 2)} \norm{f}_{H^s(\Omega)}.
  \end{equation*}
  For $p_v > 1$ there holds
  \\
  \begin{center}
    \begin{tabular}{|c|c|}                   \hline
      $ \RTBDM = \RT $                                                                                        & $ \RTBDM = \BDM $ \\ \hline
      $ \norm{e^u}_{L^2(\Omega)} \lesssim \left(\frac{h}{p}\right)^{\min(s+1, p_s, p_v + 1/2) + 1 } C_{f,g} $ &
      $ \norm{e^u}_{L^2(\Omega)} \lesssim \left(\frac{h}{p}\right)^{\min(s+1, p_s, p_v + 1) + 1 } C_{f,g} $                       \\ \hline
    \end{tabular}
  \end{center}
  Furthermore, the estimates
  \\
  \begin{center}
    \begin{tabular}{|c|c|}                   \hline
      $ \RTBDM = \RT $                                                                                           & $ \RTBDM = \BDM $ \\ \hline
      $ \norm{\nabla e^u}_{L^2(\Omega)} \lesssim \left(\frac{h}{p}\right)^{\min(s+1, p_s, p_v + 1/2) } C_{f,g} $ &
      $ \norm{\nabla e^u}_{L^2(\Omega)} \lesssim \left(\frac{h}{p}\right)^{\min(s+1, p_s, p_v + 1) } C_{f,g} $                       \\ \hline
    \end{tabular}
  \end{center}
  and
  \\
  \begin{center}
    \begin{tabular}{|c|c|}                   \hline
      $ \RTBDM = \RT $                                                                                                          & $ \RTBDM = \BDM $ \\ \hline
      $ \norm{\pmb{e}^{\pmb{\varphi}}}_{L^2(\Omega)} \lesssim \left(\frac{h}{p}\right)^{\min(s+1/2, p_s + 1/2, p_v) } C_{f,g} $ &
      $ \norm{\pmb{e}^{\pmb{\varphi}}}_{L^2(\Omega)} \lesssim \left(\frac{h}{p}\right)^{\min(s+1/2, p_s + 1/2, p_v + 1) } C_{f,g} $.                \\ \hline
    \end{tabular}
  \end{center}
  hold. Finally we have
  \\
  \begin{center}
    \begin{tabular}{|c|c|}                   \hline
      $ \RTBDM = \RT $                                                                                                                        & $ \RTBDM = \BDM $ \\ \hline
      $ \norm{\pmb{e}^{\pmb{\varphi}} \cdot \pmb{n}}_{L^2(\Gamma)} \lesssim \left(\frac{h}{p}\right)^{\min(s+1/2, p_s + 1/2, p_v) } C_{f,g} $ &
      $ \norm{\pmb{e}^{\pmb{\varphi}} \cdot \pmb{n}}_{L^2(\Gamma)} \lesssim \left(\frac{h}{p}\right)^{\min(s+1/2, p_s + 1/2, p_v + 1) } C_{f,g} $.                \\ \hline
    \end{tabular}
  \end{center}
\end{corollary}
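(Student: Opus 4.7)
My plan is to combine the four master error estimates from the previous subsections (Theorems~\ref{theorem:e_u_optimal_l2_error_estimate_robin}, \ref{theorem:grad_e_u_optimal_l2_error_estimate_robin}, \ref{theorem:e_phi_suboptimal_improved_l2_error_estimate_robin}, and \ref{theorem:e_phi_normal_trace_l2_error_estimate_robin}) with explicit best-approximation bounds for $u$ in $\Sp$ and for $\pmb{\varphi}$ in $\RTBDM$. First I would invoke Assumption~\ref{assumption:smax_shift} (which is unrestricted for smooth $\Gamma$): from $f \in H^s(\Omega)$ and $g \in H^{s+1/2}(\Gamma)$ we get $u \in H^{s+2}(\Omega)$, whence $\pmb{\varphi} = -\nabla u \in \pmb{H}^{s+1}(\Omega)$, $\nabla \cdot \pmb{\varphi} = -\Delta u = -f + \gamma u \in H^s(\Omega)$, and $\pmb{\varphi}\cdot\pmb{n} = -\partial_n u = \alpha u - g \in H^{s+1/2}(\Gamma)$, all controlled by $C_{f,g}$.

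Next I would supply best approximants. For the scalar unknown, standard $hp$-approximation in $\Sp$ yields $\tilde{u}_h$ with $\|u-\tilde u_h\|_{H^1(\Omega)} \lesssim (h/p)^{\min(s+1,p_s)} C_{f,g}$ and $\|u-\tilde u_h\|_{L^2(\Omega)} \lesssim (h/p)^{\min(s+2,p_s+1)} C_{f,g}$. For the vector unknown I would use the operator $\Pi_{p_v}^{\mathrm{div}}$ from Proposition~\ref{proposition:melenk_rojik_operator}, which simultaneously controls $L^2(\Omega)$, $L^2(\Gamma)$ normal trace, and divergence. Tracking the polynomial content: RT gives $L^2(\Omega)$-rate $(h/p)^{\min(s+1,p_v)}$ and $L^2(\Gamma)$ normal-trace rate $(h/p)^{\min(s+1/2,p_v)}$, while BDM gains one power and gives $(h/p)^{\min(s+1,p_v+1)}$ and $(h/p)^{\min(s+1/2,p_v+1)}$, respectively. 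The divergence rate $(h/p)^{\min(s+1,p_v)}$ is the same for RT and BDM since $\nabla\cdot\RTBDM$ consists of piecewise polynomials of degree $p_v-1$ in both cases.

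Finally I would substitute these four approximation estimates into the right-hand sides of Theorems~\ref{theorem:grad_e_u_optimal_l2_error_estimate_robin}, \ref{theorem:e_phi_suboptimal_improved_l2_error_estimate_robin}, \ref{theorem:e_phi_normal_trace_l2_error_estimate_robin}, and \ref{theorem:e_u_optimal_l2_error_estimate_robin}, then take the term-by-term minimum of the resulting exponents of $h/p$. For instance, for $\|\nabla e^u\|_{L^2(\Omega)}$ in the RT case the four summands produce exponents $\min(s+1,p_s)$, $\tfrac12+\min(s+1,p_v)$, $\tfrac12+\min(s+1/2,p_v)$, and $1+\min(s+1,p_v)$, whose overall minimum simplifies to $\min(s+1,p_s,p_v+1/2)$; the analogous reduction yields $\min(s+1,p_s,p_v+1)$ for BDM. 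The same mechanical exercise delivers the stated rates for $\|\pmb{e}^{\pmb\varphi}\|_{L^2(\Omega)}$, $\|\pmb{e}^{\pmb\varphi}\cdot\pmb{n}\|_{L^2(\Gamma)}$, and $\|e^u\|_{L^2(\Omega)}$. For the last quantity one has to branch on the three cases of Theorem~\ref{theorem:e_u_optimal_l2_error_estimate_robin}: the two lowest-order scenarios ($\RT_0$ and $\BDM_1$) produce only $\mathcal{O}(h^2)$ for smooth data, which is reflected in the separate display for $p_v=1$.

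I expect no real obstacle beyond bookkeeping; the only delicate point is to use $\Pi_{p_v}^{\mathrm{div}}$ (rather than a generic interpolant) so that the $L^2(\Gamma)$ normal-trace approximation is sharp in $p$ (Proposition~\ref{proposition:melenk_rojik_operator}\ref{melenk_rojik_operator_prop_2}), and to keep track of the fact that RT and BDM differ by exactly one power in their $L^2(\Omega)$ and $L^2(\Gamma)$ rates but not in the divergence rate, which is what produces the two columns in the statement.
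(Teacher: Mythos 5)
Your plan is precisely the paper's proof: invoke Assumption~\ref{assumption:smax_shift} (unrestricted for smooth $\Gamma$) to get $u \in H^{s+2}(\Omega)$ and hence the stated regularities of $\pmb\varphi$, $\nabla\cdot\pmb\varphi$, $\pmb\varphi\cdot\pmb n$, then feed explicit $hp$-best-approximation bounds for $(u,\pmb\varphi)$ into Theorems~\ref{theorem:e_u_optimal_l2_error_estimate_robin}, \ref{theorem:grad_e_u_optimal_l2_error_estimate_robin}, \ref{theorem:e_phi_suboptimal_improved_l2_error_estimate_robin}, \ref{theorem:e_phi_normal_trace_l2_error_estimate_robin}, and minimize the exponents. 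One small slip: you state the divergence approximation rate as $(h/p)^{\min(s+1,p_v)}$, but $\nabla\cdot\pmb\varphi$ only lies in $H^s(\Omega)$ and $\nabla\cdot\RTBDM$ consists of discontinuous piecewise polynomials of degree $p_v-1$, so the correct rate is $(h/p)^{\min(s,p_v)}$ as in the paper; this does not alter the final exponents because the divergence term is never the binding one once the extra $h/p$ (or $(h/p)^2$) prefactors from the theorems are accounted for.
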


\begin{proof}
  By the smoothness of $\Gamma$, Assumption~\ref{assumption:smax_shift} holds for any $\smax$. The regularity of $\pmb{\varphi}$ follows from $\pmb{\varphi} = - \nabla u$.
  We next inspect the quantities in the estimates of Theorems
  ~\ref{theorem:e_phi_normal_trace_l2_error_estimate_robin},
  ~\ref{theorem:grad_e_u_optimal_l2_error_estimate_robin},
  ~\ref{theorem:e_phi_suboptimal_improved_l2_error_estimate_robin}, and
  ~\ref{theorem:e_u_optimal_l2_error_estimate_robin}:
  \begin{align*}
    \norm{ u - \tilde{u}_h }_{L^2(\Omega)}
     & \lesssim (h/p)^{\min(s+1, p_s)+1} \norm{u}_{H^{s+2}(\Omega)} \lesssim (h/p)^{\min(s+1, p_s)+1} C_{f,g},              \\
    \norm{ u - \tilde{u}_h }_{H^1(\Omega)}
     & \lesssim (h/p)^{\min(s+1, p_s)} \norm{u}_{H^{s+2}(\Omega)} \lesssim (h/p)^{\min(s+1, p_s)} C_{f,g},                  \\
    \norm{ u - \tilde{u}_h }_{L^2(\Gamma)}
     & \lesssim (h/p)^{\min(s+1, p_s)+1/2} \norm{u}_{H^{s+2}(\Omega)} \lesssim (h/p)^{\min(s+1, p_s)+1/2} C_{f,g},          \\
    \| \pmb{\varphi} - \tilde{\pmb{\varphi}}_h \|_{L^2(\Omega)}
     & \lesssim
    \begin{cases}
      (h/p)^{\min(s+1, p_v)}   \norm{ \pmb{\varphi} }_{H^{s+1}(\Omega)} \lesssim (h/p)^{\min(s+1, p_v)}   C_{f,g} & \RTBDM = \RT,  \\
      (h/p)^{\min(s+1, p_v+1)} \norm{ \pmb{\varphi} }_{H^{s+1}(\Omega)} \lesssim (h/p)^{\min(s+1, p_v+1)} C_{f,g} & \RTBDM = \BDM,
    \end{cases}                                                                                             \\
    \| (\pmb{\varphi} - \tilde{\pmb{\varphi}}_h) \cdot \pmb{n} \|_{L^2(\Gamma)}
     & \lesssim
    \begin{cases}
      (h/p)^{\min(s+1/2, p_v)}   \norm{ \pmb{\varphi} }_{H^{s+1}(\Omega)} \lesssim (h/p)^{\min(s+1/2, p_v)}   C_{f,g} & \RTBDM = \RT,  \\
      (h/p)^{\min(s+1/2, p_v+1)} \norm{ \pmb{\varphi} }_{H^{s+1}(\Omega)} \lesssim (h/p)^{\min(s+1/2, p_v+1)} C_{f,g} & \RTBDM = \BDM,
    \end{cases}                                                                                             \\
    \| \nabla \cdot (\pmb{\varphi} - \tilde{\pmb{\varphi}}_h ) \|_{L^2(\Omega)}
     & \lesssim (h/p)^{\min(s, p_v)} \norm{ \nabla \cdot \pmb{\varphi} }_{H^{s}(\Omega)} \lesssim h^{\min(s, p_v)} C_{f,g}.
  \end{align*}
  The bounds in Theorems~\ref{theorem:e_phi_normal_trace_l2_error_estimate_robin},
  ~\ref{theorem:grad_e_u_optimal_l2_error_estimate_robin},
  ~\ref{theorem:e_phi_suboptimal_improved_l2_error_estimate_robin} and
  ~\ref{theorem:e_u_optimal_l2_error_estimate_robin}
  together with the above estimates gives the asserted rates.
\end{proof}

\begin{remark}\label{remark:suboptimality_of_estimate_for_e_phi_in_l2_vs_e_phi_normal}
  Note that Corollary~\ref{corollary:summary_of_estimates_for_f_in_higher_order_sobolev_space_robin} predicts the same rates 
  for $\norm{\pmb{e}^{\pmb{\varphi}}}_{L^2(\Omega)}$ and $\norm{\pmb{e}^{\pmb{\varphi}} \cdot \pmb{n}}_{L^2(\Gamma)}$. 
  This again suggests the suboptimality of the estimate for $\norm{\pmb{e}^{\pmb{\varphi}}}_{L^2(\Omega)}$.\eremk
\end{remark}

% !TEX root = main.tex
\section{Numerical examples}\label{section:numerical_examples_robin}

Our numerical examples are obtained with $hp$-FEM code NETGEN/NGSOLVE by J.~Sch\"oberl,~\cite{schoeberlNGSOLVE,schoeberl97}.
In Example~\ref{example:numerics_singular_solution_robin} we consider a right-hand side 
$f \in \cap_{\varepsilon >0} \left(H^{1/2-\varepsilon}(\Omega)\right)\setminus H^{1/2}(\Omega)$  so that 
$u \in \cap_{\varepsilon >0} H^{5/2-\varepsilon}(\Omega)$ and
$\pmb{\varphi} \in \cap_{\varepsilon >0} \pmb{H}^{3/2-\varepsilon}(\Omega)$.
In all graphs presented, we plot the actual numerical results (red dots),
the rate that is guaranteed by Corollary~\ref{corollary:summary_of_estimates_for_f_in_higher_order_sobolev_space_robin} 
(in black with the number written out near the bottom right),
and a reference line for the best rate possible with the employed space $\Sp$ or $\RTBDM$ given the Sobolev regularity of the solution $u$ (in blue with the number written out near the top left).

\begin{example}\label{example:numerics_singular_solution_robin}
  Our computational domain $\Omega$ is the unit sphere in $\mathbb{R}^2$, and we take 
  $f(x,y) = \mathbbm{1}_{[0,1/2]}(\sqrt{x^2 + y^2})$, which is a step function
  supported by a disk of radius $1/2$. In (\ref{eq:model_problem_robin}) we set $\gamma = 2$ 
and $\alpha = 1$. The exact solution $u$ is determined by the condition $\partial_n u = 0$ on $\Gamma$. 
  %The exact solution $u(x,y)$ is calculated corresponding to the right-hand side 
  %$f(x,y) = \mathbbm{1}_{[0,1/2]}(\sqrt{x^2 + y^2})$, which is a step function
  %supported by a disk of radius $1/2$, $\gamma = 2$, and the requirement $\partial_n u = 0$ on $\Gamma$. 
  The right-hand side boundary data $g$ is calculated according to the choice $\alpha = 1$.  
  The solution has finite regularity $u \in H^{5/2-\varepsilon}(\Omega)$ for all $\varepsilon > 0$. 
  We perform both the $h$-version and the $p$-version of the FOSLS method.
  The solution $u$ is in fact piecewise smooth, 
  but the meshes employed for both the $h$-version and the $p$-version are not aligned with the regions of smoothness of $u$, 
  but rather such that the meshes do not resolve the circle of radius $1/2$,
  where the solution $u$ has limited regularity.
  Regarding the $h$-version, we employ every combination of $\RTBDM$ and $\Sp$ 
  for $p_v, p_s \in \{1,2,3,4,5\}$.
  For the $p$-version, we select a fixed mesh with mesh size $h \approx 0.6$ and 
chose $\RTBDM = \RT$ and $p_v = p_s = p$, for $p = 1, \cdots, 27$.
  The numerical results for the $h$-version are plotted 
  in Figures~\ref{figure:l2_error_u_RT_robin_singular} and~\ref{figure:l2_error_u_BDM_robin_singular} for $\norm{e^u}_{L^2(\Omega)}$,
  in Figures~\ref{figure:l2_error_grad_u_RT_robin_singular} and~\ref{figure:l2_error_grad_u_BDM_robin_singular} for $\norm{\nabla e^u}_{L^2(\Omega)}$, and 
  in Figures~\ref{figure:l2_error_phi_RT_robin_singular} and~\ref{figure:l2_error_phi_BDM_robin_singular} for $\norm{\pmb{e}^{\pmb{\varphi}}}_{L^2(\Omega)}$.
  The numerical results for the $p$-version are plotted
  in Figure~\ref{figure:robin_p_version_singular}. We make the following observations: 
  \begin{itemize}
  \item In Figures~\ref{figure:l2_error_u_RT_robin_singular} and~\ref{figure:l2_error_u_BDM_robin_singular} 
  for $\norm{e^u}_{L^2(\Omega)}$, we 
  observe that the convergence rates asserted in Corollary~\ref{corollary:summary_of_estimates_for_f_in_higher_order_sobolev_space_robin} 
  are attained. However, for the lowest order case $p_v = 1$, the theoretical results seem suboptimal.
  \item In Figures~\ref{figure:l2_error_grad_u_RT_robin_singular} and~\ref{figure:l2_error_grad_u_BDM_robin_singular} 
  for $\norm{\nabla e^u}_{L^2(\Omega)}$, the convergence rates guaranteed by Corollary~\ref{corollary:summary_of_estimates_for_f_in_higher_order_sobolev_space_robin} 
  are achieved and are optimal in terms of the regularity of the data. 
  \item In Figures~\ref{figure:l2_error_phi_RT_robin_singular} and~\ref{figure:l2_error_phi_BDM_robin_singular} 
  for $\norm{\pmb{e}^{\pmb{\varphi}}}_{L^2(\Omega)}$, we observe better convergence than 
  ensured by Corollary~\ref{corollary:summary_of_estimates_for_f_in_higher_order_sobolev_space_robin}, 
  which is in agreement with our comments in
  Remarks~\ref{remark:suboptimal_l2_error_estimate_robin} and~\ref{remark:suboptimality_of_estimate_for_e_phi_in_l2_vs_e_phi_normal}.
  \item In Figure~\ref{figure:robin_p_version_singular} for the $p$-version of the FOSLS method, we observe a convergence rate 
  predicted by Corollary~\ref{corollary:summary_of_estimates_for_f_in_higher_order_sobolev_space_robin} and that it is 
  optimal for $\norm{e^u}_{L^2(\Omega)}$ and $\norm{\nabla e^u}_{L^2(\Omega)}$.
  We again note the suboptimality of our estimates for $\norm{\pmb{e}^{\pmb{\varphi}}}_{L^2(\Omega)}$.
  Finally, for $\norm{\pmb{e}^{\pmb{\varphi}} \cdot \pmb{n}}_{L^2(\Gamma)}$ we remark 
  convergence $\mathcal{O}(p^{-2.5})$ (with a green reference line), 
  whereas Corollary~\ref{corollary:summary_of_estimates_for_f_in_higher_order_sobolev_space_robin}
  merely guarantees convergence $\mathcal{O}(p^{-1})$. Since 
  the solution $(\pmb{\varphi}, u)$ is smooth near $\Gamma$ a local error analysis 
  near $\Gamma$ could possibly explain this super-convergence behavior.
  \end{itemize}
\end{example}

\begin{figure}[H]
  \centering
  \includegraphics[width=\textwidth]{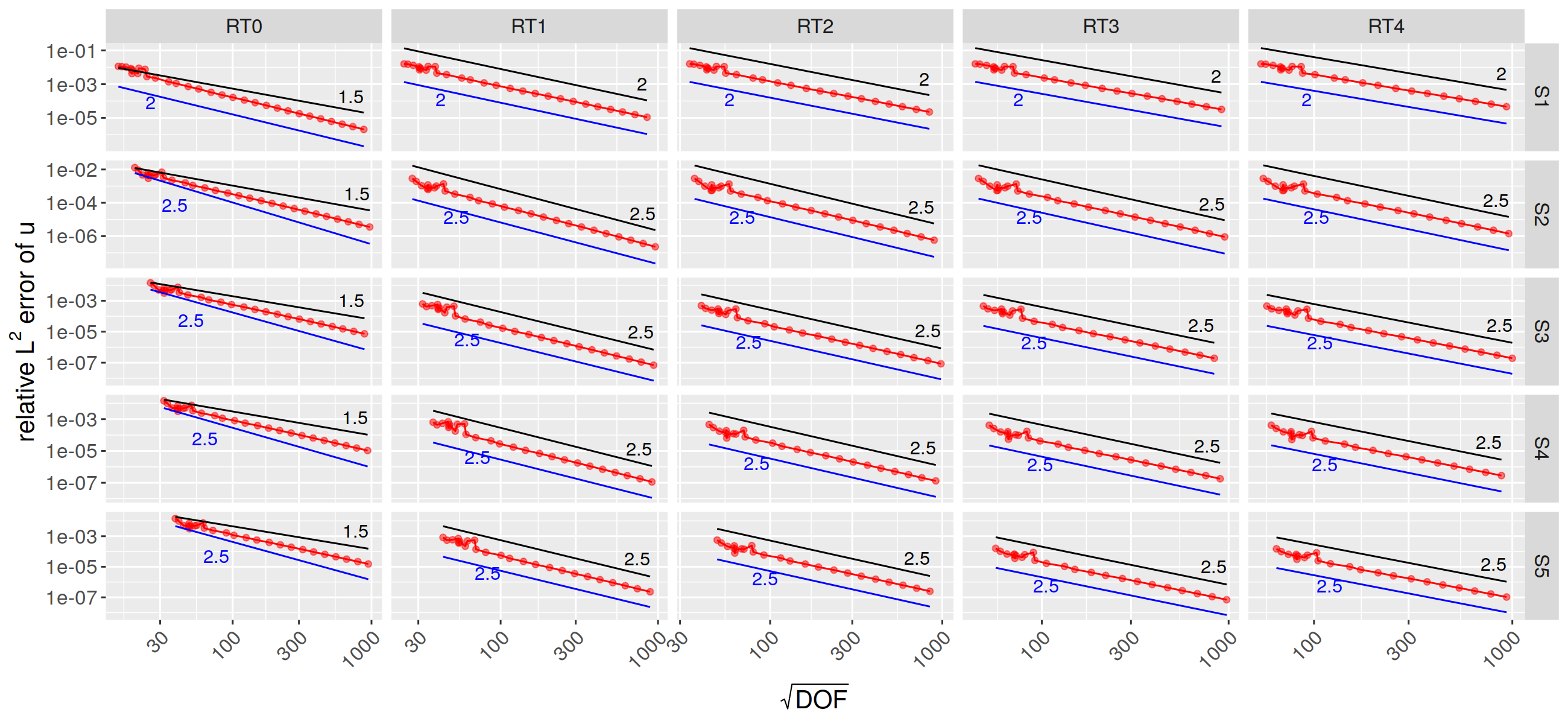}
  \caption{
    $h$-convergence of $\norm{e^u}_{L^2(\Omega)}$ using $\RTBDM  = \RT $, see Example~\ref{example:numerics_singular_solution_robin}.
  }
  \label{figure:l2_error_u_RT_robin_singular}
\end{figure}

\begin{figure}[H]
  \centering
  \includegraphics[width=\textwidth]{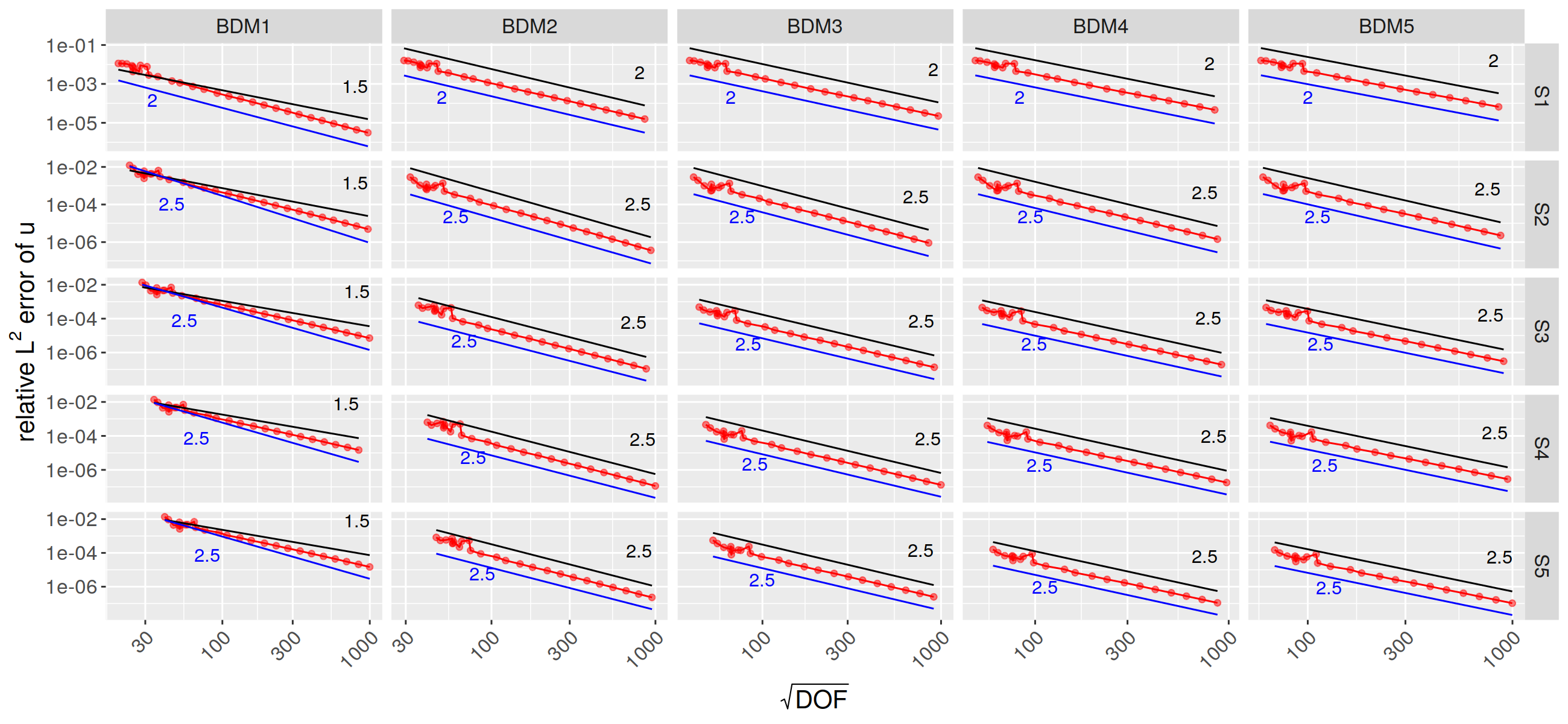}
  \caption{
    $h$-convergence of $\norm{e^u}_{L^2(\Omega)}$ using $\RTBDM  = \BDM $, see Example~\ref{example:numerics_singular_solution_robin}.
  }
  \label{figure:l2_error_u_BDM_robin_singular}
\end{figure}

\begin{figure}[H]
  \centering
  \includegraphics[width=\textwidth]{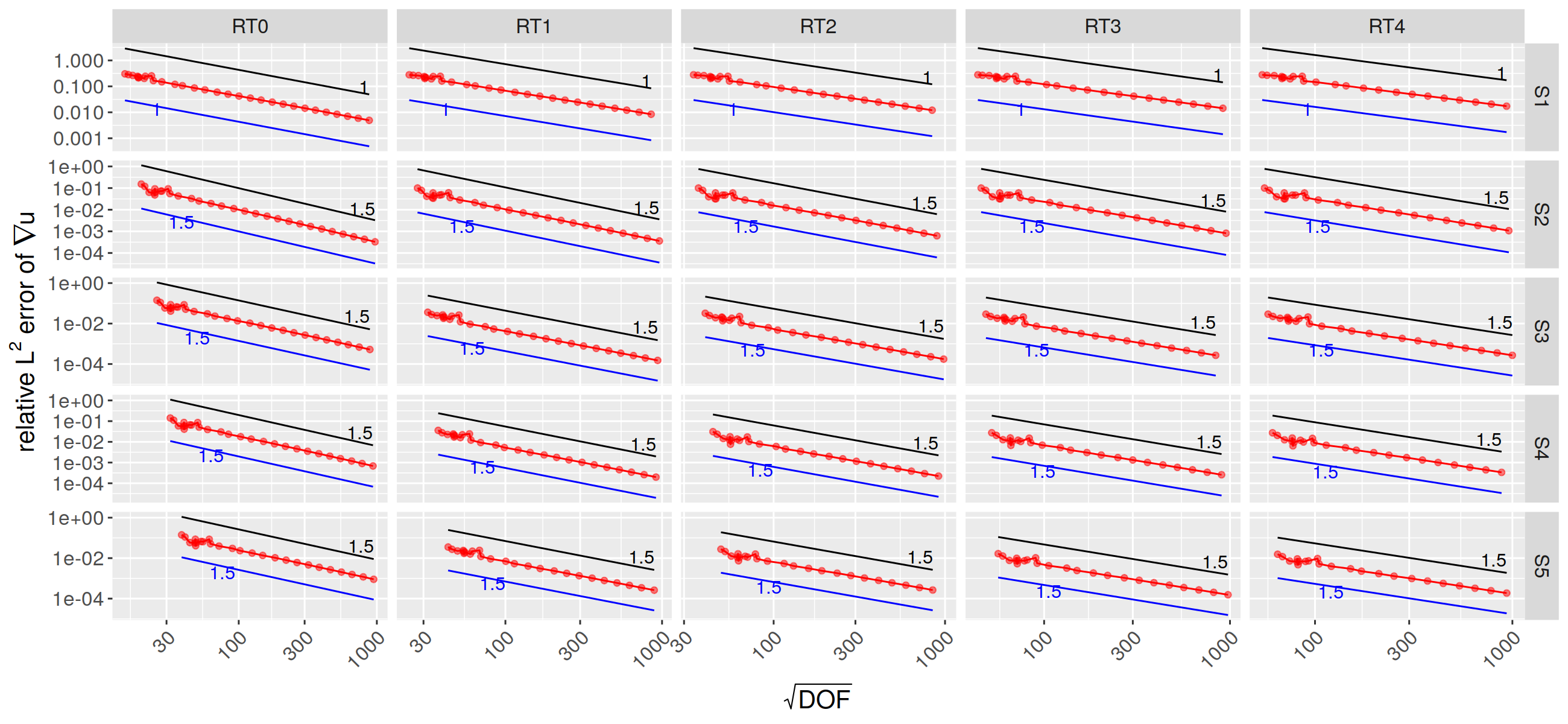}
  \caption{
    $h$-convergence of $\norm{\nabla e^u}_{L^2(\Omega)}$ using $\RTBDM  = \RT $, see Example~\ref{example:numerics_singular_solution_robin}.
  }
  \label{figure:l2_error_grad_u_RT_robin_singular}
\end{figure}

\begin{figure}[H]
  \centering
  \includegraphics[width=\textwidth]{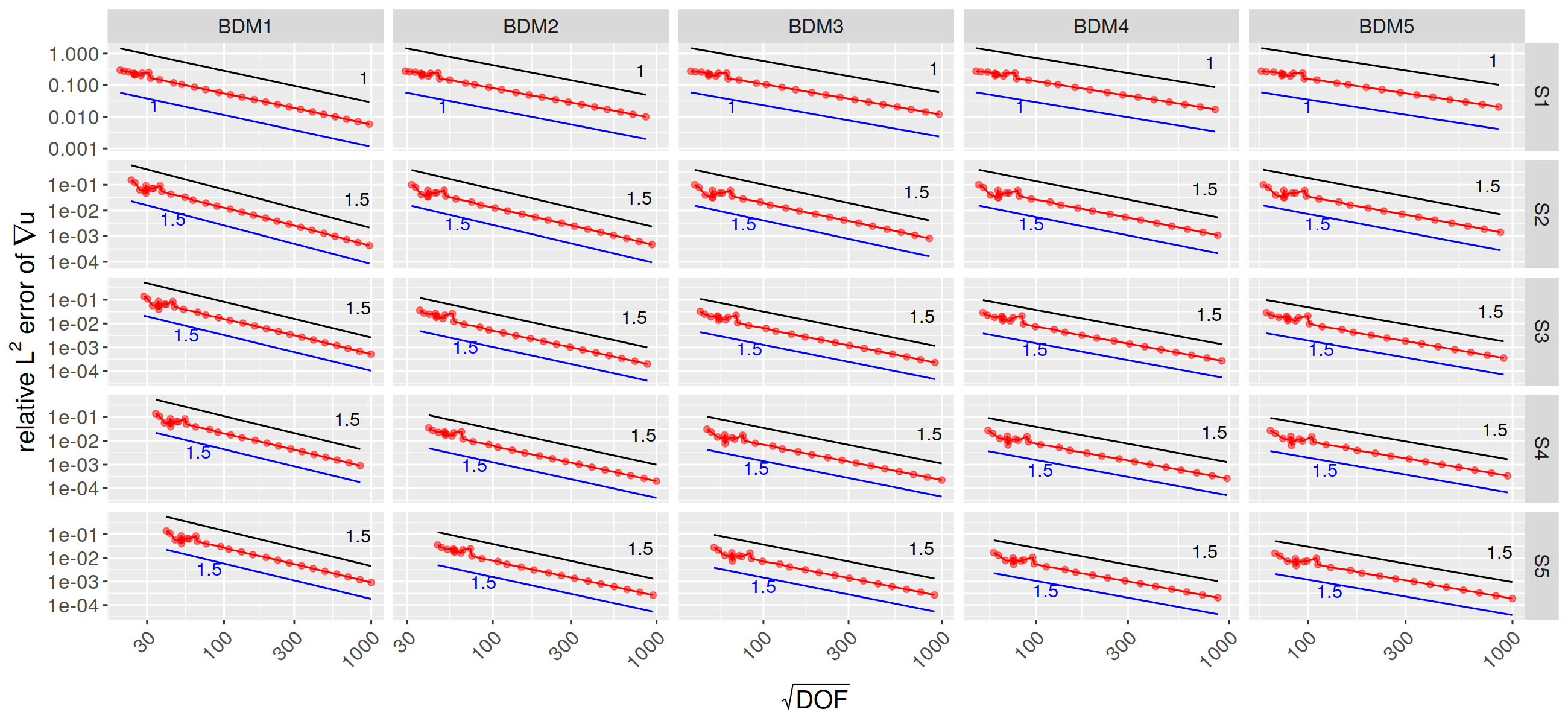}
  \caption{
    $h$-convergence of $\norm{\nabla e^u}_{L^2(\Omega)}$ using $\RTBDM  = \BDM $, see Example~\ref{example:numerics_singular_solution_robin}.
  }
  \label{figure:l2_error_grad_u_BDM_robin_singular}
\end{figure}

\begin{figure}[H]
  \centering
  \includegraphics[width=\textwidth]{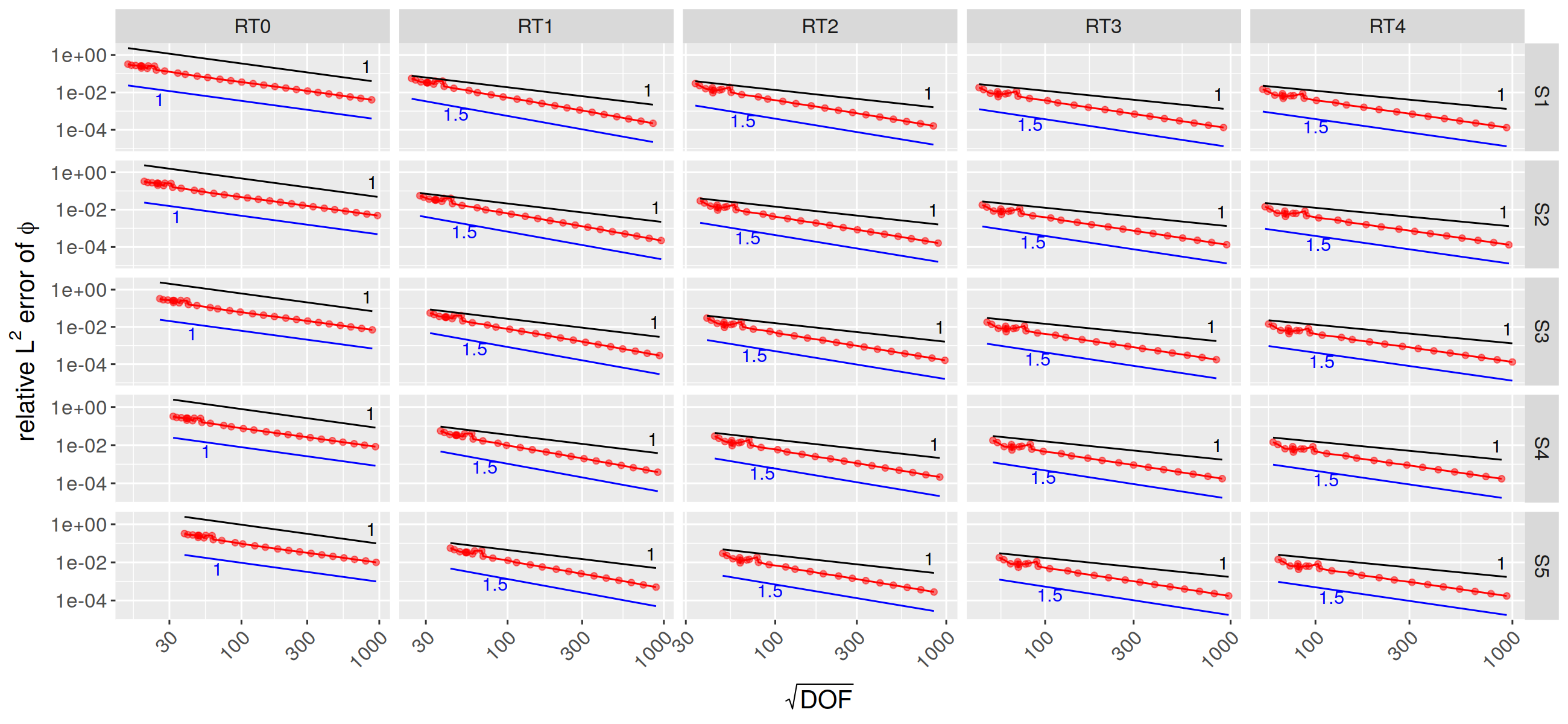}
  \caption{
    $h$-convergence of $\norm{\pmb{e}^{\pmb{\varphi}}}_{L^2(\Omega)}$ using $\RTBDM  = \RT $, see Example~\ref{example:numerics_singular_solution_robin}.
  }
  \label{figure:l2_error_phi_RT_robin_singular}
\end{figure}

\begin{figure}[H]
  \centering
  \includegraphics[width=\textwidth]{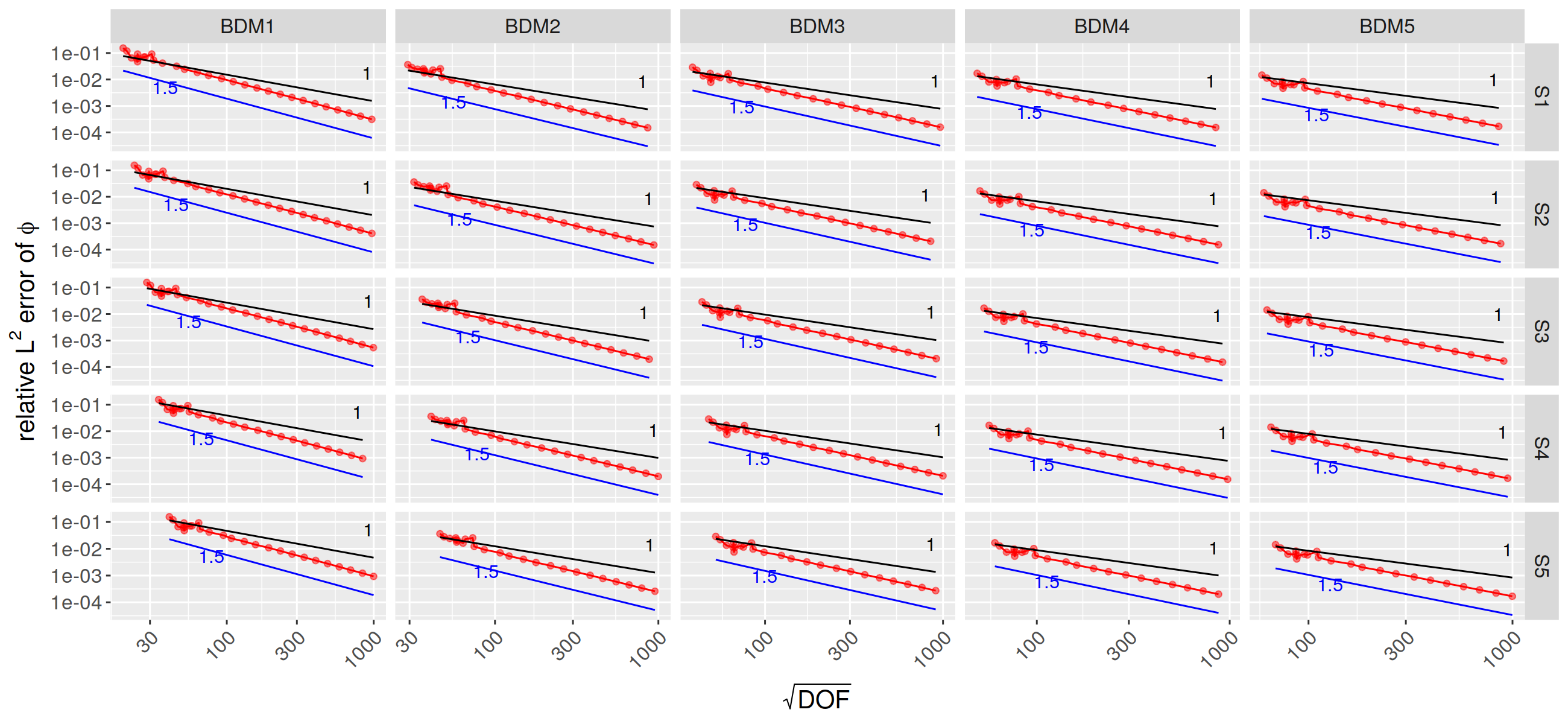}
  \caption{
    $h$-convergence of $\norm{\pmb{e}^{\pmb{\varphi}}}_{L^2(\Omega)}$ using $\RTBDM  = \BDM $, see Example~\ref{example:numerics_singular_solution_robin}.
  }
  \label{figure:l2_error_phi_BDM_robin_singular}
\end{figure}

% \begin{figure}[H]
%   \centering
%   \includegraphics[width=\textwidth]{}
%   \caption{
%   $h$-version convergence of $\norm{\pmb{e}^{\pmb{\varphi}} \cdot \pmb{n}}_{L^2(\Gamma)}$ based on $\RTBDM  = \RT $ with the setup of Example~\ref{example:numerics_singular_solution_robin}.
%   }
%   \label{figure:l2_error_phi_dot_normal_RT_robin_singular}
% \end{figure}

% \begin{figure}[H]
%   \centering
%   \includegraphics[width=\textwidth]{}
%   \caption{
%   $h$-version convergence of $\norm{\pmb{e}^{\pmb{\varphi}} \cdot \pmb{n}}_{L^2(\Gamma)}$ based on $\RTBDM  = \BDM $ with the setup of Example~\ref{example:numerics_singular_solution_robin}.
%   }
%   \label{figure:l2_error_phi_dot_normal_BDM_robin_singular}
% \end{figure}

\begin{figure}[H]
  \centering
  \includegraphics[width=\textwidth]{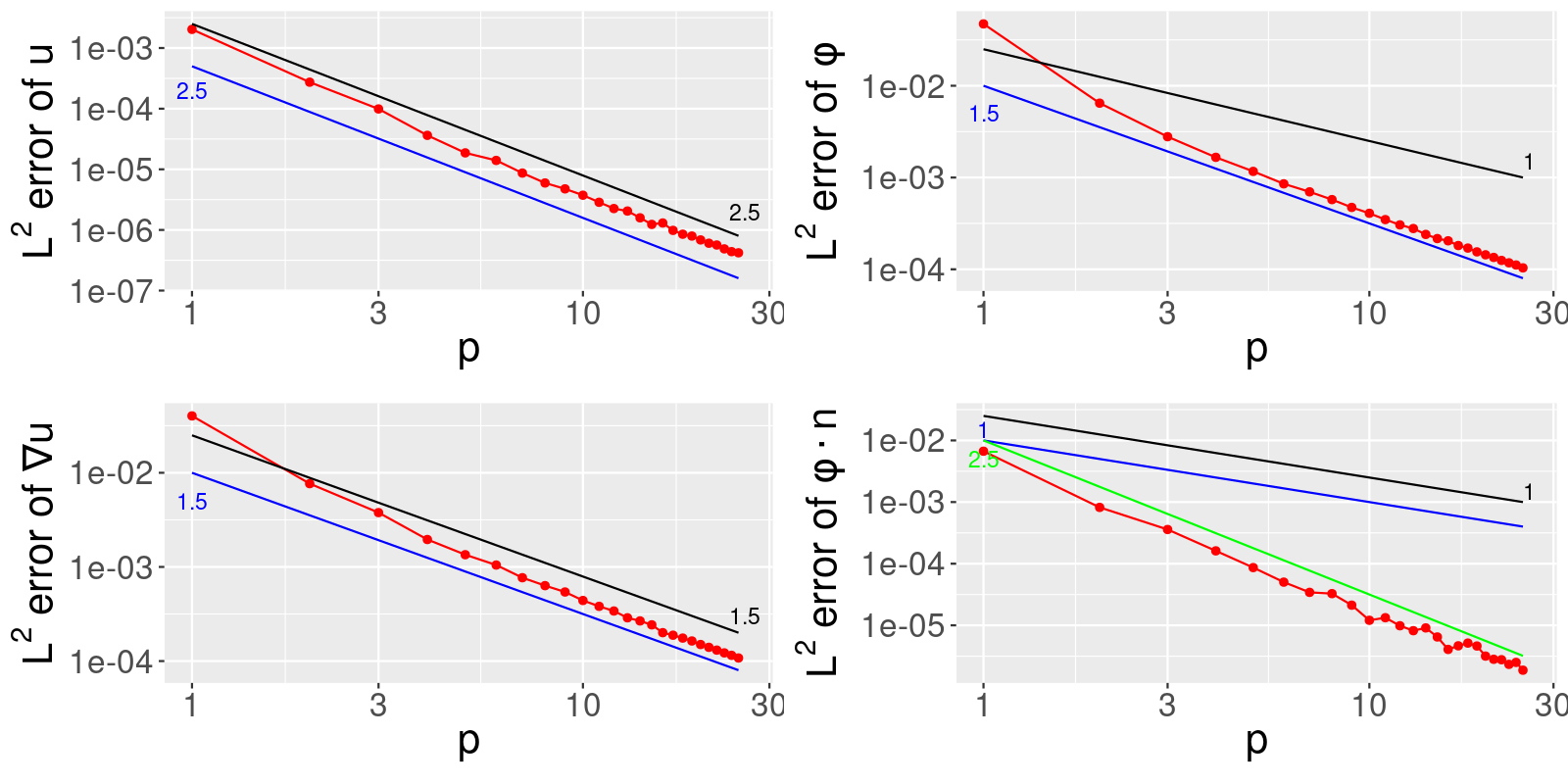}
  \caption{
    $p$-convergence of 
    $\norm{e^u}_{L^2(\Omega)}$ (top left),
    $\norm{\nabla e^u}_{L^2(\Omega)}$ (bottom left),
    $\norm{\pmb{e}^{\pmb{\varphi}}}_{L^2(\Omega)}$ (top right),
    $\norm{\pmb{e}^{\pmb{\varphi}} \cdot \pmb{n}}_{L^2(\Gamma)}$ (bottom right) 
    using $\RTBDM  = \RT$ and $\Sp$ with $p_v=p_s$, see Example~\ref{example:numerics_singular_solution_robin}.
  }
  \label{figure:robin_p_version_singular}
\end{figure}

\bibliographystyle{alpha}
\bibliography{literature.bib}

\begin{thebibliography}{CLMM94}

\bibitem[BB21]{bertrand-boffi21}
Fleurianne Bertrand and Daniele Boffi.
\newblock Least-squares formulations for eigenvalue problems associated with
  linear elasticity.
\newblock {\em Comput. Math. Appl.}, 95:19--27, 2021.

\bibitem[BB22]{bertrand-boffi22}
Fleurianne Bertrand and Daniele Boffi.
\newblock First order least-squares formulations for eigenvalue problems.
\newblock {\em IMA J. Numer. Anal.}, 42(2):1339--1363, 2022.

\bibitem[BBF13]{boffi-brezzi-fortin13}
Daniele Boffi, Franco Brezzi, and Michel Fortin.
\newblock {\em Mixed finite element methods and applications}, volume~44 of
  {\em Springer Series in Computational Mathematics}.
\newblock Springer, Heidelberg, 2013.

\bibitem[BCP19]{bertrand-cai-park19}
Fleurianne Bertrand, Zhiqiang Cai, and Eun~Young Park.
\newblock Least-squares methods for elasticity and {S}tokes equations with
  weakly imposed symmetry.
\newblock {\em Comput. Methods Appl. Math.}, 19(3):415--430, 2019.

\bibitem[BG09]{bochev-gunzburger09}
Pavel~B. Bochev and Max~D. Gunzburger.
\newblock {\em Least-squares finite element methods}, volume 166 of {\em
  Applied Mathematical Sciences}.
\newblock Springer, New York, 2009.

\bibitem[BKP05a]{bramble-kolev-pasciak05a}
J.~H. Bramble, T.~V. Kolev, and J.~E. Pasciak.
\newblock A least-squares approximation method for the time-harmonic {M}axwell
  equations.
\newblock {\em J. Numer. Math.}, 13(4):237--263, 2005.

\bibitem[BKP05b]{bramble-kolev-pasciak05}
James~H. Bramble, Tzanio~V. Kolev, and Joseph~E. Pasciak.
\newblock The approximation of the {M}axwell eigenvalue problem using a
  least-squares method.
\newblock {\em Math. Comp.}, 74(252):1575--1598, 2005.

\bibitem[BM19]{bernkopf-melenk19}
Maximilian Bernkopf and Jens~Markus Melenk.
\newblock Analysis of the $hp$-{V}ersion of a {F}irst {O}rder {S}ystem {L}east
  {S}quares {M}ethod for the {H}elmholtz {E}quation.
\newblock In Thomas Apel, Ulrich Langer, Arnd Meyer, and Olaf Steinbach,
  editors, {\em Advanced Finite Element Methods with Applications: Selected
  Papers from the 30th Chemnitz Finite Element Symposium 2017}, pages 57--84.
  Springer International Publishing, Cham, 2019.

\bibitem[BM23]{bernkopf-melenk22}
{Bernkopf, Maximilian} and {Melenk, Jens Markus}.
\newblock Optimal convergence rates in ${L}^2$ for a first order system least
  squares finite element method. {P}art {I}: homogeneous boundary conditions.
\newblock {\em ESAIM: M2AN}, 57(1):107--141, 2023.

\bibitem[CDG16]{carstensen-demkowicz-gopalakrishnan16}
C.~Carstensen, L.~Demkowicz, and J.~Gopalakrishnan.
\newblock Breaking spaces and forms for the {DPG} method and applications
  including {M}axwell equations.
\newblock {\em Comput. Math. Appl.}, 72(3):494--522, 2016.

\bibitem[CLMM94]{cai-lazarov-manteuffel-mccormick94}
Z.~Cai, R.~Lazarov, T.~A. Manteuffel, and S.~F. McCormick.
\newblock First-order system least squares for second-order partial
  differential equations. {I}.
\newblock {\em SIAM J. Numer. Anal.}, 31(6):1785--1799, 1994.

\bibitem[CLW04]{cai-lee-wang04}
Zhiqiang Cai, Barry Lee, and Ping Wang.
\newblock Least-squares methods for incompressible {N}ewtonian fluid flow:
  linear stationary problems.
\newblock {\em SIAM J. Numer. Anal.}, 42(2):843--859, 2004.

\bibitem[CQ17]{chen-qiu17}
Huangxin Chen and Weifeng Qiu.
\newblock A first order system least squares method for the {H}elmholtz
  equation.
\newblock {\em Journal of Computational and Applied Mathematics}, 309:145 --
  162, 2017.

\bibitem[CS04]{cai-starke04}
Zhiqiang Cai and Gerhard Starke.
\newblock Least-squares methods for linear elasticity.
\newblock {\em SIAM J. Numer. Anal.}, 42(2):826--842, 2004.

\bibitem[DB03]{demkowicz-babushka03}
L.~Demkowicz and I.~Babu\v{s}ka.
\newblock {$p$} interpolation error estimates for edge finite elements of
  variable order in two dimensions.
\newblock {\em SIAM J. Numer. Anal.}, 41(4):1195--1208, 2003.

\bibitem[DG10]{demkowicz-gopalakrishnan10}
L.~Demkowicz and J.~Gopalakrishnan.
\newblock A class of discontinuous {P}etrov-{G}alerkin methods. {P}art {I}: the
  transport equation.
\newblock {\em Comput. Methods Appl. Mech. Engrg.}, 199(23-24):1558--1572,
  2010.

\bibitem[DG11a]{demkowicz-gopalakrishnan11a}
L.~Demkowicz and J.~Gopalakrishnan.
\newblock Analysis of the {DPG} method for the {P}oisson equation.
\newblock {\em SIAM J. Numer. Anal.}, 49(5):1788--1809, 2011.

\bibitem[DG11b]{demkowicz-gopalakrishnan11}
L.~Demkowicz and J.~Gopalakrishnan.
\newblock A class of discontinuous {P}etrov-{G}alerkin methods. {II}. {O}ptimal
  test functions.
\newblock {\em Numer. Methods Partial Differential Equations}, 27(1):70--105,
  2011.

\bibitem[DG14]{demkowicz-gopalakrishnan14}
Leszek~F. Demkowicz and Jay Gopalakrishnan.
\newblock An overview of the discontinuous {P}etrov {G}alerkin method.
\newblock In {\em Recent developments in discontinuous {G}alerkin finite
  element methods for partial differential equations}, volume 157 of {\em IMA
  Vol. Math. Appl.}, pages 149--180. Springer, Cham, 2014.

\bibitem[DGMZ12]{demkowicz-gopalakrishnan-muga-zitelli12}
L.~Demkowicz, J.~Gopalakrishnan, I.~Muga, and J.~Zitelli.
\newblock Wavenumber explicit analysis of a {DPG} method for the
  multidimensional {H}elmholtz equation.
\newblock {\em Comput. Methods Appl. Mech. Engrg.}, 213/216:126--138, 2012.

\bibitem[FHK22]{fuehrer-heuer-karkulik22}
Thomas F\"{u}hrer, Norbert Heuer, and Michael Karkulik.
\newblock M{INRES} for second-order {PDE}s with singular data.
\newblock {\em SIAM J. Numer. Anal.}, 60(3):1111--1135, 2022.

\bibitem[FHN22]{fuehrer-heuer-niemi22}
Thomas F\"{u}hrer, Norbert Heuer, and Antti~H. Niemi.
\newblock A {DPG} method for shallow shells.
\newblock {\em Numer. Math.}, 152(1):67--99, 2022.

\bibitem[GMO14]{gopalakrishnan-muga-olivares14}
J.~Gopalakrishnan, I.~Muga, and N.~Olivares.
\newblock Dispersive and dissipative errors in the {DPG} method with scaled
  norms for {H}elmholtz equation.
\newblock {\em SIAM J. Sci. Comput.}, 36(1):A20--A39, 2014.

\bibitem[Jia98]{jiang98}
Bo-nan Jiang.
\newblock {\em The least-squares finite element method}.
\newblock Scientific Computation. Springer-Verlag, Berlin, 1998.
\newblock Theory and applications in computational fluid dynamics and
  electromagnetics.

\bibitem[Ku11]{ku11}
JaEun Ku.
\newblock Sharp ${L}_2$-norm error estimates for first-order div least-squares
  methods.
\newblock {\em SIAM Journal on Numerical Analysis}, 49(2):755--769, 2011.

\bibitem[McL00]{mclean00}
William McLean.
\newblock {\em Strongly elliptic systems and boundary integral equations}.
\newblock Cambridge University Press, Cambridge, 2000.

\bibitem[Mon03]{monk03}
Peter Monk.
\newblock {\em Finite element methods for {M}axwell's equations}.
\newblock Numerical Mathematics and Scientific Computation. Oxford University
  Press, New York, 2003.

\bibitem[MR20]{melenk-rojik18}
J.~M. Melenk and C.~Rojik.
\newblock On commuting {$p$}-version projection-based interpolation on
  tetrahedra.
\newblock {\em Math. Comp.}, 89(321):45--87, 2020.

\bibitem[MS10]{melenk-sauter10}
J.~M. Melenk and S.~Sauter.
\newblock Convergence analysis for finite element discretizations of the
  {H}elmholtz equation with {D}irichlet-to-{N}eumann boundary conditions.
\newblock {\em Math. Comp.}, 79(272):1871--1914, 2010.

\bibitem[MS21]{melenk-sauter21}
Jens~M. Melenk and Stefan~A. Sauter.
\newblock Wavenumber-explicit {$hp$}-{FEM} analysis for {M}axwell's equations
  with transparent boundary conditions.
\newblock {\em Found. Comput. Math.}, 21(1):125--241, 2021.

\bibitem[MS23]{monsuur2023pollutionfree}
Harald Monsuur and Rob Stevenson.
\newblock A pollution-free ultra-weak {FOSLS} discretization of the {H}elmholtz
  equation.
\newblock {\em Comput. Math. Appl.}, 148:241--255, 2023.

\bibitem[MSS24]{monsuur2023minimal}
Harald Monsuur, Rob Stevenson, and Johannes Storn.
\newblock Minimal residual methods in negative or fractional {S}obolev norms.
\newblock {\em Math. Comp.}, 93(347):1027--1052, 2024.

\bibitem[Roj20]{rojik20}
C.~Rojik.
\newblock {\em $p$-version projection-based interpolation}.
\newblock PhD thesis, {T}echnische {U}niversit\"at {W}ien, 2020.

\bibitem[Sch]{schoeberlNGSOLVE}
Joachim Sch\"oberl.
\newblock Finite {E}lement {S}oftware {NETGEN}/{NGS}olve version 6.2.
\newblock \url{https://ngsolve.org/}.

\bibitem[Sch97]{schoeberl97}
Joachim Sch{\"o}berl.
\newblock {NETGEN} - {A}n advancing front 2{D}/3{D}-mesh generator based on
  abstract rules.
\newblock {\em Computing and Visualization in Science}, 1(1):41--52, Jul 1997.

\end{thebibliography}

\end{document}